\renewcommand{\sl}{\mathfrak{sl}}
\newcommand{\asl}{\widehat{\sl}}
\newcommand{\Tr}{\text{Tr}}
\newcommand{\hh}{\mathfrak{h}}
\newcommand{\bb}{\mathfrak{b}}
\newcommand{\FF}{\mathcal{F}}
\newcommand{\ba}{\bar{\alpha}}
\newcommand{\wt}{\text{wt}}
\newcommand{\QQ}{\mathbb{Q}}
\newcommand{\wtilde}{\widetilde}
\renewcommand{\AA}{\mathcal{A}}
\newcommand{\CC}{\mathbb{C}}
\newcommand{\RR}{\mathbb{R}}
\newcommand{\ZZ}{\mathbb{Z}}
\newcommand{\eps}{\varepsilon}
\newcommand{\Imm}{\text{Im}}
\newcommand{\hgf}{{}_2\phi_1}
\newcommand{\Res}{\text{Res}}
\newcommand{\wmu}{\wtilde{\mu}}
\newcommand{\cP}{\mathcal{P}}
\newcommand{\wsl}{\wtilde{\sl}}
\newcommand{\whh}{\wtilde{\hh}}
\newcommand{\wrho}{\wtilde{\rho}}
\newcommand{\abb}{\widehat{\bb}}
\newcommand{\hotimes}{\widehat{\otimes}}
\newcommand{\trig}{\text{trig}}
\newcommand{\cC}{\mathcal{C}}
\newcommand{\wv}{\wtilde{v}}
\newcommand{\mult}{\text{mult}}
\newcommand{\wu}{\wtilde{u}}
\newcommand{\wT}{\wtilde{T}}
\newcommand{\wF}{\wtilde{F}}
\newcommand{\hPhi}{\widehat{\Phi}}
\newcommand{\ch}{\mathrm{h}} % dual Coxeter number
\newcommand{\fg}{\mathfrak{g}}
\newcommand{\wPsi}{\wtilde{\Psi}}
\newcommand{\wPhi}{\wtilde{\Phi}}
\newcommand{\into}{\hookrightarrow}
\newcommand{\cl}{\mathrm{cl}}
\newcommand{\Sym}{\text{Sym}}
\newcommand{\wk}{\wtilde{k}}
\newcommand{\wDelta}{\wtilde{\Delta}}
\newcommand{\wJ}{\wtilde{J}}
\newcommand{\gl}{\mathfrak{gl}}
\newcommand{\agl}{\widehat{\gl}}
\newcommand{\mk}{\mathsf{k}}
\newcommand{\sT}{\mathsf{T}}
\newcommand{\cS}{\mathcal{S}}
\newcommand{\wS}{\wtilde{\cS}}
\newcommand{\Hom}{\text{Hom}}
\newcommand{\Ind}{\text{Ind}}
\newcommand{\ann}{\widehat{\mathfrak{n}}}
\theoremstyle{definition}
\newtheorem{thm}{Theorem}[section]
\newtheorem{prop}[thm]{Proposition}
\newtheorem{lemma}[thm]{Lemma}
\newtheorem{corr}[thm]{Corollary}
\newtheorem*{remark}{Remark}
\numberwithin{equation}{section}
\begin{document}
\title{Traces of intertwiners for quantum affine $\sl_2$ and Felder-Varchenko functions}
\author{Yi Sun}
\date{\today}
\email{yisun@math.mit.edu}

\begin{abstract}
We show that the traces of $U_q(\asl_2)$-intertwiners of \cite{ESV} valued in the three-dimensional evaluation representation converge in a certain region of parameters and give a representation-theoretic construction of Felder-Varchenko's hypergeometric solutions to the $q$-KZB heat equation given in \cite{FV}.  This gives the first proof that such a trace function converges and resolves the first case of the Etingof-Varchenko conjecture of \cite{EV}.

As applications, we prove a symmetry property for traces of intertwiners and prove Felder-Varchenko's conjecture in \cite{FV4} that their elliptic Macdonald polynomials are related to the affine Macdonald polynomials defined as traces over irreducible integrable $U_q(\asl_2)$-modules in \cite{EK3}.  In the trigonometric and classical limits, we recover results of \cite{EK2, EV}.   Our method relies on an interplay between the method of coherent states applied to the free field realization of the $q$-Wakimoto module of \cite{Mat}, convergence properties given by the theta hypergeometric integrals of \cite{FV}, and rationality properties originating from the representation-theoretic definition of the trace function.
\end{abstract}

\maketitle
\setcounter{tocdepth}{1}
\tableofcontents

\section{Introduction}

The present work connects two approaches for studying a family of special functions occurring in the study of the $q$-KZB heat equation, one originating in the representation theory of quantum affine algebras and one in the theory of theta hypergeometric integrals.  In \cite{ESV}, Etingof-Schiffmann-Varchenko showed that certain generalized traces for $U_q(\widehat{\fg})$-representations solve four commuting systems of difference equations: the $q$-KZB, dual $q$-KZB, Macdonald-Ruijsenaars, and dual Macdonald-Ruijsenaars systems.  In \cite{FTV, FTV2}, Felder-Tarasov-Varchenko constructed solutions to the $q$-KZB and dual $q$-KZB systems in terms of certain theta hypergeometric integrals which we term Felder-Varchenko functions.  The general philosophy of KZ-type equations predicts that these two families of solutions should be related by a simple renormalization, and this was conjectured by Etingof-Varchenko in \cite{EV, ESV, EV3}.  In the trigonometric and classical limits, the Etingof-Varchenko conjecture was verified for the $\sl_2$-case in \cite{EK2, EV, ES2, EV3, SV}.

In this paper, we show that the traces of $U_q(\asl_2)$-intertwiners of \cite{ESV} converge in a certain region of parameters and give a representation-theoretic construction of the Felder-Varchenko functions for the three-dimensional evaluation representation of $U_q(\asl_2)$.  This gives the first proof that such a trace function converges and resolves the first case of the Etingof-Varchenko conjecture.  As applications, we prove a symmetry property for traces of intertwiners and prove Felder-Varchenko's conjecture in \cite{FV4} that their elliptic Macdonald polynomials are related to the affine Macdonald polynomials defined as traces over irreducible integrable $U_q(\asl_2)$-modules in \cite{EK3}.  We take the trigonometric and classical limits explicitly and recover results of \cite{EK2, EV}.

Our method relies on an interplay between the free field realization of the $q$-Wakimoto module of \cite{Mat}, convergence properties given by the theta hypergeometric integrals of \cite{FV}, and rationality properties originating from the representation-theoretic definition of the trace function.  We apply the method of coherent states to the $q$-vertex operator expression for the $U_q(\asl_2)$-intertwiner to express its trace as a formal Jackson integral of iterated contour integrals.  Evaluation and a non-trivial manipulation of the resulting integrals allows us to identify the Jackson integral with a renormalization of the Felder-Varchenko function and therefore prove that the trace function converges.

Our work is motivated by Felder-Varchenko's conjecture in \cite{FV2, FV} that their functions satisfy the $q$-KZB heat equation, an integral equation which endows them with a $SL(3, \ZZ)$ modular symmetry.  Felder-Varchenko proved their conjecture by hypergeometric integral computations only in two special cases.  In future work, we hope to apply and extend the representation-theoretic understanding of the Felder-Varchenko functions provided by this work to show that all Felder-Varchenko functions satisfy the $q$-KZB heat equation and prove the Felder-Varchenko conjecture.

In the remainder of this introduction, we state our results more precisely and give some additional motivation and background.  For convenience, all notations will be reintroduced in full detail in later sections.

\subsection{Trace functions for $U_q(\asl_2)$}

We define now the trace function for $U_q(\asl_2)$ valued in the irreducible three-dimensional representation which appears in our main results. A more general definition appears in Section \ref{sec:tr-fn}.  For a weight $\mu$ and level $k$, let $M_{\mu, k}$ be the Verma module for $U_q(\asl_2)$ with highest weight $\mu \rho + k\Lambda_0$ and highest weight vector $v_{\mu, k}$.  Let $\mu$ and $k$ be generic and $L_2(z)$ be the evaluation representation of $U_q(\asl_2)$ corresponding to the $3$-dimensional irreducible representation of $U_q(\sl_2)$.  For $w_0 \in L_2[0]$, by \cite[Theorem 9.3.1]{EFK} there is a unique $U_q(\asl_2)$-intertwiner 
\[
\Phi_{\mu, k}^{w_0}(z): M_{\mu, k} \to M_{\mu, k} \hotimes L_2(z)
\]
which satisfies
\[
\Phi_{\mu, k}^{w_0}(z) v_{\mu, k} = v_{\mu, k} \otimes w_0 + (\text{l.o.t.}),
\]
where $(\text{l.o.t.})$ denotes terms of lower weight in the first tensor factor. Define the trace function by
\[
T^{w_0}(q, \lambda, \omega, \mu, k) := \Tr|_{M_{(\mu - 1)\rho + (k-2)\Lambda_0}}\Big(\Phi_{\mu - 1, k - 2}^{w_0}(z) q^{2\lambda\rho + 2\omega d}\Big),
\]
where we treat the trace as a $\CC$-valued function via the identification $L_2[0] \simeq \CC \cdot w_0$ and we remark that the expression is independent of $z$. 

In \cite{ESV}, it was shown that $T^{w_0}(q, \lambda, \omega, \mu, k)$ satisfies two commuting systems of infinite difference equations in $(\lambda, \omega)$ and $(\mu, k)$, the Macdonald-Ruijsenaars and dual Macdonald-Ruijsenaars equations.  It was further shown that a generalization of $T^{w_0}(q, \lambda, \omega, \mu, k)$ valued in the tensor product of multiple representations satisfies the $q$-KZB and dual $q$-KZB equations, which are difference equations in $\lambda$ and $\mu$.

\subsection{Felder-Varchenko functions}

The other key object in the present work is the Felder-Varchenko function, which we again define for the three-dimensional representation here, leaving a more general definition to Section \ref{sec:fv-fn}.  In the region of parameters
\[
|q|, |q^{-2k}|, |q^{-2\omega}| < 1, 
\]
define the Felder-Varchenko function as the theta hypergeometric integral given by
\[
u(q, \lambda, \omega, \mu, k) := q^{-\lambda\mu - \lambda - \mu - 2} \oint_{\cC_t} \frac{dt}{2\pi i t} \Omega_{q^2}(t; q^{-2\omega}, q^{-2k}) \frac{\theta_0(tq^{2\mu}; q^{-2k})}{\theta_0(tq^{-2}; q^{-2k})} \frac{\theta_0(t q^{2\lambda}; q^{-2\omega})}{\theta_0(t q^{-2}; q^{-2\omega})},
\]
where the phase function is 
\[
\Omega_{a}(z; r, p) := \frac{(z a^{-1}; r, p)(z^{-1} a^{-1} rp; r, p)}{(z a; r, p)(z^{-1} a rp; r, p)},
\]
the theta function is $\theta_0(u; q) := (u; q) (u^{-1} q; q)$, the single and double $q$-Pochhammer symbols are $(u; q) := \prod_{n \geq 0} (1 - uq^n)$ and $(u; q, r) := \prod_{n, m \geq 0} (1 - u q^n r^m)$, and the contour $\cC_t$ is the unit circle.  Away from this region, $u$ is defined by analytic continuation and extends to a function on the region of parameters satisfying $|q^{-2\omega}| \neq 1$ and $|q^{-2k}| \neq 1$.

In \cite{FV}, the function $u$ was shown to be a projective solution to the $q$-KZB heat equation, an integral equation which is the difference analogue of the KZB heat equation and endows the functions $u$ with a $SL(3, \ZZ)$-modular symmetry.  In \cite{FTV, FTV2}, generalizations of $u$ valued in the tensor product of multiple representations were shown to satisfy the $q$-KZB and dual $q$-KZB equations.

\subsection{Statement of the main results}

Our main result links the trace function $T^{w_0}(q, \lambda, \omega, \mu, k)$ to the Felder-Varchenko function in a certain region of the parameters $q$, $q^{-2\mu}$, $q^{-2\lambda}$, $q^{-2\omega}$, and $q^{-2k}$.  We term this region the \textit{good region of parameters} and define it as the region where the parameters satisfy
{\renewcommand{\theequation}{\ref{eq:good-region}}
\begin{equation}
0 < |q^{-2\omega}| \ll |q^{-2\mu}| \ll |q^{-2\lambda}| \ll |q^{-2k}| \ll |q|, |q|^{-1}.
\end{equation}
\addtocounter{equation}{-1}}%
We show that the trace function converges and admits an integral formula on this region of parameters.

{\renewcommand{\thethm}{\ref{thm:int-trace}}
\begin{thm}
For $q^{-2\mu}$ and then $q^{-2\omega}$ sufficiently close to $0$ in the good region of parameters (\ref{eq:good-region}), the trace function converges and has value
\begin{align*}
T^{w_0}(q, \lambda, \omega, \mu, k) &= \frac{q^{\lambda\mu - \lambda + 2}(q^{-4}; q^{-2\omega})}{\theta_0(q^{2\lambda}; q^{-2\omega})(q^{2\lambda - 2} q^{-2\omega}; q^{-2\omega})(q^{-2\lambda - 2}; q^{-2\omega})} \frac{ (q^{-2k};q^{-2k})(q^4 q^{-2k}; q^{-2k})}{(q^{-2\mu + 2};q^{-2k})(q^{2\mu + 2} q^{-2k}; q^{-2k})}\\
&\phantom{==}\frac{(q^{-2 \omega + 2}; q^{-2\omega}, q^{-2 k})^2}{(q^{-2 \omega - 2}; q^{-2 \omega}, q^{-2k})^2} \oint_{\cC_t} \frac{dt}{2\pi it} \Omega_{q^2}(t; q^{-2\omega}, q^{-2k}) \frac{\theta_0(tq^{-2\mu}; q^{-2k})}{\theta_0(tq^{-2}; q^{-2k})} \frac{\theta_0(tq^{2\lambda}; q^{-2\omega})}{\theta_0(tq^{-2}; q^{-2\omega})},
\end{align*}
where the integration cycle $\cC_t$ is the unit circle.
\end{thm}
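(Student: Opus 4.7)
The plan is to compute the trace directly using the $q$-Wakimoto free field realization of $M_{\mu-1,k-2}$ from \cite{Mat}. The Verma module embeds into a Heisenberg$\,\times\,$lattice Fock space on which the Chevalley generators of $U_q(\asl_2)$ are written in terms of free bosons, and the intertwiner $\Phi_{\mu-1,k-2}^{w_0}(z)$ is realized as a product of bosonic $q$-vertex operators together with a single contour integral of a screening current in an auxiliary variable $t$ (the screening is needed because $w_0\in L_2[0]$ is two weights below the highest weight of $L_2(z)$, so one application of a lowering-type operator is required). The operators $q^{2\lambda\rho}$ and $q^{2\omega d}$ act diagonally on the lattice and degree gradings of the Fock space, so the trace splits into a lattice zero-mode sum (a formal Jackson integral in $q^{-2\mu}$) times a Heisenberg trace in the variable $q^{-2\omega}$.

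Next I would evaluate the Heisenberg trace by the method of coherent states. After normal-ordering the product of vertex operators and the screening current, the trace of a normal-ordered exponential of free bosons reduces to an explicit ratio of double $q$-Pochhammer symbols in $q^{-2\omega}$ and $q^{-2k}$; this produces the phase function $\Omega_{q^2}(t;q^{-2\omega},q^{-2k})$ and the Heisenberg contribution to the theta-function quotients in the integrand. At this point the trace is presented as a formal Jackson integral over lattice zero modes of a contour integral in $t$. The technical heart of the argument is then to collapse this Jackson integral into the theta-function quotient $\theta_0(tq^{-2\mu};q^{-2k})/\theta_0(tq^{-2};q^{-2k})$ appearing in the statement, via a residue-summation identity (effectively a Ramanujan-type bilateral ${}_1\psi_1$-summation for theta functions); the analogous summation in the $\omega$-direction produces the other theta quotient, and the remaining scalar factors assemble into the explicit prefactor of the theorem.

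The main obstacle is to justify convergence of the original series and the interchange of summations with the contour integral, which is precisely the role of the good region (\ref{eq:good-region}). The nested smallness $|q^{-2\omega}|\ll|q^{-2\mu}|\ll|q^{-2\lambda}|\ll|q^{-2k}|$ is tailored so that (i) the coherent-state expansion of the Heisenberg trace converges geometrically, (ii) the bilateral Jackson sum over lattice zero modes converges absolutely and its residue-summation closed form is valid, and (iii) the poles of the resulting theta integrand separate cleanly from the unit circle $\cC_t$. I would carry out the convergence estimates in the iterated limit "$q^{-2\mu}$ sufficiently close to $0$, then $q^{-2\omega}$ sufficiently close to $0$" stated in the theorem, where the geometric bounds on each Fock-space weight space are cleanest. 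Once the identity is established on this subregion, the rationality of matrix coefficients of $\Phi_{\mu-1,k-2}^{w_0}(z)$ coming from its representation-theoretic definition, together with the analyticity of the Felder-Varchenko integral established in \cite{FV}, will allow extension of the formula to the full good region by analytic continuation.
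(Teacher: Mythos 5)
Your plan follows essentially the same route as the paper: realize the intertwiner via the $q$-Wakimoto free field construction with a single screening current, evaluate the Heisenberg trace by coherent states to produce the phase function and theta quotients, reduce to a Jackson integral whose residue expansion is matched against the series expansion of the Felder-Varchenko contour integral, and invoke the representation-theoretic rationality of the trace coefficients in $q^{-2\mu}$, $q^{-2k}$ to pass from the formal neighborhood of $0$ in $q^{-2\mu}$ to numerical parameters in the good region. The only caveat is that you underestimate where the bulk of the work lies --- the paper must first collapse the lattice zero-mode sum and the auxiliary contours $z_0$, $w_0$, $w$ coming from the projector onto the Wakimoto module and from $X^-(w)$ (via an elliptic-function integral and a key cancellation of two of the three resulting terms under the $p$-shift of the Jackson cycle), and must separately normalize the screened vertex operator against $\Phi^{w_0}_{\mu,k}(z)$ degree by degree --- but these are elaborations of, not departures from, the strategy you describe.
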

\addtocounter{thm}{-1}}

{\renewcommand{\thethm}{\ref{corr:trace-fv}}
\begin{corr} 
For $q^{-2\mu}$ and then $q^{-2\omega}$ sufficiently close to $0$ in the good region of parameters (\ref{eq:good-region}), the trace $T^{w_0}(q, \lambda, \omega, \mu, k)$ is related to the Felder-Varchenko function by 
\begin{multline*}
T^{w_0}(q, \lambda, \omega, \mu, k) =  \frac{q^{-\mu + 4} (q^{-4}; q^{-2\omega})}{\theta_0(q^{2\lambda}; q^{-2\omega})(q^{2\lambda - 2} q^{-2\omega}; q^{-2\omega})(q^{-2\lambda - 2}; q^{-2\omega})} \\
 \frac{(q^{-2 \omega + 2}; q^{-2\omega}, q^{-2 k})^2}{(q^{-2 \omega - 2}; q^{-2 \omega}, q^{-2k})^2} \frac{ (q^{-2k};q^{-2k})(q^4 q^{-2k}; q^{-2k})}{(q^{-2\mu + 2};q^{-2k})(q^{2\mu + 2} q^{-2k}; q^{-2k})}u(q, \lambda, \omega, -\mu, k). 
\end{multline*}
\end{corr}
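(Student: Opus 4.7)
The plan is to derive Corollary \ref{corr:trace-fv} as an immediate algebraic consequence of Theorem \ref{thm:int-trace}. The key observation is that the iterated contour integral appearing on the right-hand side of the theorem is, up to an elementary monomial prefactor in $q$, the Felder-Varchenko function $u(q,\lambda,\omega,-\mu,k)$, so the corollary amounts to bookkeeping rather than new analytic input.

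First, I would substitute $\mu \mapsto -\mu$ in the definition of the Felder-Varchenko function given in Section 1.2 to obtain
\[ u(q,\lambda,\omega,-\mu,k) \;=\; q^{\lambda\mu - \lambda + \mu - 2} \oint_{\cC_t} \frac{dt}{2\pi i t}\, \Omega_{q^2}(t; q^{-2\omega}, q^{-2k})\, \frac{\theta_0(tq^{-2\mu}; q^{-2k})}{\theta_0(tq^{-2}; q^{-2k})}\, \frac{\theta_0(tq^{2\lambda}; q^{-2\omega})}{\theta_0(tq^{-2}; q^{-2\omega})}. \]
The integrand and the contour here match exactly those in Theorem \ref{thm:int-trace}, so this identity can be inverted to express the integral in the theorem as $q^{-\lambda\mu + \lambda - \mu + 2}\, u(q,\lambda,\omega,-\mu,k)$. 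This step is the heart of the argument; it is clean because the good region of parameters \eqref{eq:good-region} satisfies the convergence requirements $|q^{-2\omega}|, |q^{-2k}| \neq 1$ under which $u$ is defined, and the ordering of small parameters in \eqref{eq:good-region} is compatible with the hypotheses on $q^{-2\mu}$ and $q^{-2\omega}$ under which Theorem \ref{thm:int-trace} applies.

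Next, I would substitute this expression back into the formula of Theorem \ref{thm:int-trace}. The scalar prefactor $q^{\lambda\mu - \lambda + 2}$ in the theorem combines with the $q^{-\lambda\mu + \lambda - \mu + 2}$ produced by the inversion to yield $q^{-\mu + 4}$, which matches the monomial in the statement of the corollary. The three remaining prefactors---the ratio involving $\theta_0(q^{2\lambda}; q^{-2\omega})$ and the single $q$-Pochhammer symbols in the first line of Theorem \ref{thm:int-trace}, the ratio of $(q^{-2\mu + 2}; q^{-2k})$- and $(q^{2\mu+2}q^{-2k}; q^{-2k})$-type factors, and the square of the double $q$-Pochhammer ratio in $q^{-2\omega \pm 2}$---are carried over verbatim into the formula for the corollary.

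The main obstacle is essentially nil beyond Theorem \ref{thm:int-trace} itself: the work reduces to a careful bookkeeping of the $q$-exponent and a sanity check that the contour and the regions of definition agree. The substantive analytic content lies entirely in establishing convergence of the trace and performing the Jackson-to-contour-integral manipulation required for Theorem \ref{thm:int-trace}.
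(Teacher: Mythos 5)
Your proposal is correct and is essentially the paper's own argument: the paper's proof of the corollary simply combines Theorem \ref{thm:int-trace} with the formal identity (\ref{eq:fv-iden}), which is obtained by exactly the substitution you describe — unwinding the definition (\ref{eq:fv-def2}) of $u$ at $-\mu$, so that the prefactor $q^{\lambda\mu-\lambda+2}$ combines with $q^{-\lambda\mu+\lambda-\mu+2}$ to give $q^{-\mu+4}$ (the paper runs this substitution in the opposite direction, deriving the theorem's integral formula from (\ref{eq:fv-iden}), but the content is identical). Your exponent bookkeeping checks out.
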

\addtocounter{thm}{-1}}

\begin{remark}
Together Theorem \ref{thm:int-trace} and Corollary \ref{corr:trace-fv} form the quantum affine generalization of \cite[Theorem 8.1]{EV, ESV, EV3} and resolve the first case of Etingof-Varchenko's conjecture from \cite{EV}.  Further, Theorem \ref{thm:int-trace} gives the first proof that a trace function for the quantum affine algebra is analytic, as conjectured in \cite[Remark 1]{ESV}.\footnote{Integral formulas for traces of screened vertex operators are given in \cite{Kon2, Kon} as noted in \cite[Remark 1]{ESV}, but there has been no prior analysis of the convergence of the Jackson integrals.}
\end{remark}

We obtain also a symmetry property for a renormalization of the trace function motivated by representation theory.  Define a normalized trace by
\[
\wT^{w_0}(q, \lambda, \omega, \mu, k) := \Tr|_{M_{-\rho - 2 \Lambda_0}}(q^{2\lambda \rho + 2 \omega d})^{-1}T^{w_0}(q^{-1}, -\lambda, -\omega, \mu, k),
\]
where we interpret $T^{w_0}(q^{-1}, -\lambda, -\omega, \mu, k)$ via the quasi-analytic continuation of $T^{w_0}(q, \lambda, \omega, \mu, k)$ to the region $|q^{-2\omega}| < 1$ and $|q^{-2k}| > 1$.  By this, we mean that the coefficients of $T^{w_0}(q, \lambda, \omega, \mu, k)$ as a formal series in $q^{-2\omega}$ converge to rational functions in $q^{-2\mu}$ and $q^{-2k}$ for $|q^{-2k}| < 1$ and $|q^{-2\mu}| < 1$, and $T^{w_0}(q^{-1}, -\lambda, -\omega, \mu, k)$ is interpreted by evaluating the same rational function coefficients in the region $|q^{-2k}| > 1$ and $|q^{-2\mu}| < 1$.  We show the following symmetry property for the renormalized trace.

{\renewcommand{\thethm}{\ref{thm:tnorm-sym}}
\begin{thm} 
The function $\wT^{w_0}(q, \lambda, \omega, \mu, k)$ is symmetric under interchange of $(\lambda, \omega)$ and $(\mu, k)$.
\end{thm}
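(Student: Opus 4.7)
The plan is to combine Theorem \ref{thm:int-trace} with the Weyl-Kac character formula for the Verma module $M_{-\rho - 2\Lambda_0}$, writing $\wT^{w_0}$ as a product of factors each manifestly symmetric under $(\lambda, \omega) \leftrightarrow (\mu, k)$. The first step is to observe that the integrand of Theorem \ref{thm:int-trace},
\[
\Omega_{q^2}(t; q^{-2\omega}, q^{-2k})\, \frac{\theta_0(tq^{-2\mu}; q^{-2k})}{\theta_0(tq^{-2}; q^{-2k})}\, \frac{\theta_0(tq^{2\lambda}; q^{-2\omega})}{\theta_0(tq^{-2}; q^{-2\omega})},
\]
is symmetric under the combined operation $(\lambda, \omega) \leftrightarrow (\mu, k)$ together with $\lambda \to -\lambda$, since $\Omega_a$ is symmetric in its two nome arguments and the two $\theta_0$-ratios swap into each other. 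The substitution $(q, \lambda, \omega) \to (q^{-1}, -\lambda, -\omega)$ in the definition of $\wT^{w_0}$ is designed precisely to implement this sign flip on the $(\lambda, \omega)$ side, up to modular-type identities for $\theta_0$ under $q \to q^{-1}$ absorbed into the prefactors.

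The second step is to compute the Verma module character $\Tr|_{M_{-\rho - 2\Lambda_0}}(q^{2\lambda\rho + 2\omega d})$ using the Weyl-Kac formula for $\asl_2$, whose positive roots are $\alpha + n\delta$ ($n \geq 0$), $-\alpha + n\delta$ ($n \geq 1$), and $m\delta$ ($m \geq 1$, multiplicity one). The resulting product equals, up to an explicit power of $q$ and factors depending only on $q^{-2\omega}$, the $(\lambda, \omega)$-dependent prefactor $\frac{1}{\theta_0(q^{2\lambda}; q^{-2\omega})(q^{2\lambda-2}q^{-2\omega}; q^{-2\omega})(q^{-2\lambda-2}; q^{-2\omega})}$ in Theorem \ref{thm:int-trace}. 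Dividing by this character cancels that prefactor, while a parallel application of Pochhammer inversion identities converts the $(\mu, k)$-prefactor of Theorem \ref{thm:int-trace} into the $(\mu, k)$-analog of the same Verma-character product. Combined with the manifestly $\omega \leftrightarrow k$-symmetric cross factor $\frac{(q^{-2\omega+2}; q^{-2\omega}, q^{-2k})^2}{(q^{-2\omega-2}; q^{-2\omega}, q^{-2k})^2}$ and the symmetric renormalized integrand of the first step, this exhibits $\wT^{w_0}$ as a product of terms each symmetric under the desired swap.

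The main obstacle is the bookkeeping required under the quasi-analytic continuation: interpreting $T^{w_0}(q^{-1}, -\lambda, -\omega, \mu, k)$ requires moving the $(\mu, k)$-side nomes $q^{-2\mu}, q^{-2k}$ across the unit circle to $q^{2\mu}, q^{2k}$, and then applying the correct inversion identities for single and double Pochhammer symbols together with the quasi-periodic transformations of $\theta_0$ to return all expressions to a common $|q| < 1$ regime. Once this is done and the $(\mu, k)$-prefactor is matched with the $(\mu, k)$-specialization of the Verma character from the second step, the symmetry of $\wT^{w_0}$ follows.
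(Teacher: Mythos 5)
Your overall plan---normalize by the Verma character, use the quasi-analytic continuation of the trace to a common regime, and reduce the symmetry to a statement about the contour integral and its prefactors---is the same skeleton as the paper's proof (which combines Corollary \ref{corr:trace-fv} with Proposition \ref{prop:qa-fv-2} in Lemma \ref{lem:tr-qa}, and computes $\delta_q(\lambda,\omega)=q^{\lambda}(q^{-2\omega};q^{-2\omega})\theta_0(q^{-2\lambda};q^{-2\omega})$ in Proposition \ref{prop:tr-delta}, exactly as in your second step). But your first step contains a genuine gap. The integrand of Theorem \ref{thm:int-trace} is \emph{not} symmetric under $(\lambda,\omega)\leftrightarrow(\mu,k)$ up to a sign flip of $\lambda$ alone: using the $r\leftrightarrow p$ symmetry of $\Omega_a$, the swap carries the integrand to the integrand with \emph{both} $\lambda\to-\lambda$ and $\mu\to-\mu$, so what remains to be shown is precisely the reflection identity
\begin{multline*}
q^{-2\lambda}\oint_{|t|=1}\frac{dt}{2\pi i t}\,\Omega_{q^{2}}(t; q^{-2\omega}, q^{-2k}) \frac{\theta_0(tq^{-2\mu}; q^{-2k})}{\theta_0(tq^{-2}; q^{-2k})} \frac{\theta_0(t q^{2\lambda}; q^{-2\omega})}{\theta_0(tq^{-2}; q^{-2\omega})}\\
 = q^{-2\mu}\oint_{|t|=1}\frac{dt}{2\pi i t}\,\Omega_{q^{2}}(t; q^{-2\omega}, q^{-2k}) \frac{\theta_0(tq^{2\mu}; q^{-2k})}{\theta_0(tq^{-2}; q^{-2k})} \frac{\theta_0(t q^{-2\lambda}; q^{-2\omega})}{\theta_0(tq^{-2}; q^{-2\omega})}.
\end{multline*}
This is the actual content of the theorem, and it does not follow from any manifest symmetry of the integrand; the paper proves it by the change of variables $t\mapsto t^{-1}$ together with the reflection property of the phase function from Lemma \ref{lem:phase-trans}, namely $\Omega_{q^2}(t;q^{-2\omega},q^{-2k})=q^{4}\,\Omega_{q^2}(t^{-1};q^{-2\omega},q^{-2k})\frac{\theta_0(tq^{-2};q^{-2k})\theta_0(tq^{-2};q^{-2\omega})}{\theta_0(tq^{2};q^{-2k})\theta_0(tq^{2};q^{-2\omega})}$ and the identity $\theta_0(u^{-1};q)=-u^{-1}\theta_0(u;q)$, which is also exactly where the asymmetric prefactors $q^{-2\lambda}$ versus $q^{-2\mu}$ come from. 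Your proposal never supplies this mechanism.

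A second, related inaccuracy: the substitution $(q,\lambda,\omega)\to(q^{-1},-\lambda,-\omega)$ in the definition of $\wT^{w_0}$ does \emph{not} implement a sign flip of $\lambda$ in the integrand. After the quasi-analytic continuation of Lemma \ref{lem:tr-qa} (which replaces the nome $q^{-2k}$ by $q^{2k}$ and $q^{\pm2}$ by $q^{\mp2}$ inside the thetas), the substitution sends $q^{2\lambda}\mapsto(q^{-1})^{-2\lambda}=q^{2\lambda}$ and restores the integrand to its original form with the \emph{same} sign of $\lambda$; its role is purely to bring the continued expression back to a regime where it can be compared with the $(\mu,k)\leftrightarrow(\lambda,\omega)$-swapped expression, not to flip signs. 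So even granting your prefactor bookkeeping, the double sign flip produced by the swap is left undone, and the proof as proposed does not close.
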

\addtocounter{thm}{-1}}

\begin{remark}
Theorem \ref{thm:tnorm-sym} is the generalization to the quantum affine case of the symmetry of \cite[Proposition 6.3]{EV}.  In our setting, an additional difficulty which does not arise in the trigonometric limit is the necessity of introducing quasi-analytic continuation.  We expect the other symmetry of \cite[Theorem 1.5]{EV} to generalize to our setting as well.
\end{remark}

\subsection{Relation to elliptic and affine Macdonald polynomials}

In \cite{FV4}, Felder-Varchenko introduced the elliptic Macdonald polynomials $\wJ_{\mu, k}(q, \lambda, \omega)$ as hypergeometric theta functions in terms of their eponymous functions.  In the trigonometric limit, they showed that these functions were related to Macdonald polynomials, and they conjectured that in the general case they were related to the affine Macdonald polynomials $J_{\mu, k, 2}(q, \lambda, \omega)$ of \cite{EK3}. We prove this conjecture in Theorem \ref{thm:fv-conj}.
{\renewcommand{\thethm}{\ref{thm:fv-conj}}
\begin{thm}
Let $\wk = k + 4$.  For $q^{-2\mu}$ and then $q^{-2\omega}$ sufficiently close to $0$ in the good region of parameters (\ref{eq:good-region}), the elliptic and affine Macdonald polynomials are related by
\begin{multline*}
J_{\mu, k, 2}(q, \lambda, \omega)\\
 = \frac{\wJ_{\mu, \wk}(q, \lambda, \omega)}{f(q, q^{-2\omega})} \frac{(q^{-4}; q^{-2\omega})(q^{-2\omega}; q^{-2\omega})^3}{(q^{-4}; q^{-2\wk})(q^{-2\wk}; q^{-2\wk})} \frac{(q^{-2\omega + 2}; q^{-2\omega}, q^{-2\wk})^2}{(q^{-2\omega - 2}; q^{-2\omega}, q^{-2\wk})^2} q^{\mu + 4}(q^{-2\mu - 6}; q^{-2\wk})(q^{2\mu + 2} q^{-2\wk}; q^{-2\wk}),
\end{multline*}
where $f(q, q^{-2\omega})$ is the normalizing function of Proposition \ref{prop:ek-const}.
\end{thm}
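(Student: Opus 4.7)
The plan is to combine the trace function formula of Corollary \ref{corr:trace-fv} with two identifications: one on the affine Macdonald side and one on the elliptic Macdonald side. On the affine side, Proposition \ref{prop:ek-const} relates the affine Macdonald polynomial of \cite{EK3}, defined as a trace of intertwiners over the irreducible integrable $U_q(\asl_2)$-module, to the Verma trace $T^{w_0}(q, \lambda, \omega, \mu, k)$ via the normalization function $f(q, q^{-2\omega})$; this encapsulates the passage from the irreducible trace to the Verma trace, which contributes the character ratio. On the elliptic side, by the definition in \cite{FV4}, $\wJ_{\mu, \wk}(q, \lambda, \omega)$ is a specific hypergeometric theta function which, after clearing its normalization, is equal to the Felder-Varchenko function $u(q, \lambda, \omega, -\mu, \wk)$ at level $\wk = k + 4$, the shift by $4 = 2 h^\vee$ reflecting the difference between the representation-theoretic level in \cite{EK3} and the combinatorial level convention of \cite{FV4}.

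With these inputs in hand, the argument reduces to a substitution. First, I would write $J_{\mu, k, 2}(q, \lambda, \omega) = T^{w_0}(q, \lambda, \omega, \mu, k)/f(q, q^{-2\omega})$ using Proposition \ref{prop:ek-const}. Next, I would apply Corollary \ref{corr:trace-fv} to replace $T^{w_0}$ by its explicit prefactor times $u(q, \lambda, \omega, -\mu, k)$. The identity of \cite{FV4} then lets me replace $u(q, \lambda, \omega, -\mu, \wk)$ by $\wJ_{\mu, \wk}$ divided by its own normalization. Comparing the levels $k$ and $\wk = k+4$ in the occurrences of $u$, either via an explicit transformation of the integrand of $u$ under the level shift or by a direct algebraic comparison, produces the product of $q$-Pochhammer ratios stated in the theorem.

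The main obstacle is bookkeeping: carefully matching the various theta functions and single and double $q$-Pochhammer symbols coming from the three sources (the prefactor in Corollary \ref{corr:trace-fv}, the normalization $f(q, q^{-2\omega})$ of Proposition \ref{prop:ek-const}, and the normalization built into the definition of $\wJ_{\mu, \wk}$ in \cite{FV4}) while accounting for the level shift $k \to \wk$. In particular, one must verify that the ratio $(q^{-4}; q^{-2\omega})(q^{-2\omega}; q^{-2\omega})^3 / \bigl((q^{-4}; q^{-2\wk})(q^{-2\wk}; q^{-2\wk})\bigr)$ arises naturally from combining $f$ with the normalization of $\wJ$, and that the factor $(q^{-2\mu - 6}; q^{-2\wk})(q^{2\mu + 2} q^{-2\wk}; q^{-2\wk})$ results from $(q^{-2\mu + 2}; q^{-2k})(q^{2\mu + 2} q^{-2k}; q^{-2k})$ after the substitution $k = \wk - 4$. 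Once these prefactors are matched in the good region of parameters, the identity extends to the remainder of the parameter space by the analyticity and quasi-analytic continuation properties shared by both sides.
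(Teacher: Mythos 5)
There is a genuine gap, and it is the central one: your proposal treats both sides of the identity as single terms when each is in fact an infinite alternating sum, and it omits the mechanism (the affine BGG resolution) that produces the sum on the representation-theoretic side. The affine Macdonald polynomial $J_{\mu,k,2}$ is a ratio $\chi_{\mu,k,2}/\chi_{0,0,2}$ whose numerator is a trace over the \emph{irreducible integrable} module $L_{\mu+1,k+2}$, not over a Verma module. Proposition \ref{prop:ek-const} only evaluates the denominator $\chi_{0,0,2}$; it does not, as you assert, relate $\chi_{\mu,k,2}$ to the single Verma trace $T^{w_0}(q,\lambda,\omega,\mu,k)$, and the equation $J_{\mu,k,2} = T^{w_0}/f$ with which you open the substitution is false. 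To pass from the irreducible trace to Verma traces the paper uses the BGG resolution of Proposition \ref{prop:bgg-res}, which expresses $\chi_{\mu,k,2}$ as an alternating sum over the affine Weyl group of traces over the Verma modules $M_{w\cdot(\mu\rho+k\Lambda_0+\wrho)}$, with coefficients given by the dynamical Weyl group operators computed in Proposition \ref{prop:dyn-weyl-action}. Each summand is then evaluated by Corollary \ref{corr:trace-fv} (after justifying, via Lemma \ref{lem:rat-fn-me}, that the formula persists at the non-generic integral weights appearing in the resolution), yielding the sum over $j\in\ZZ$ of terms $q^{-2j(\omega+2)(\mu+2+j\wk)}\bigl(u(q,\lambda,\omega,-\mu-2-2j\wk,\wk)-u(q,\lambda,\omega,\mu+2+2j\wk,\wk)\bigr)$ in Theorem \ref{thm:am-tr-value}. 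None of this appears in your argument.

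The same structural issue occurs on the elliptic side: $\wJ_{\mu,\wk}$ is not ``$u$ after clearing a normalization.'' It is built from the hypergeometric theta function $\Delta_{\mu+2,\wk}$, which is an antisymmetrized sum over $j\in 2\wk\ZZ+\mu+2$ of Felder--Varchenko functions weighted by Gaussians $q^{-\frac{\omega+2}{2\wk}j^2}$ (made explicit in Lemma \ref{lem:ell-mac-trans}). The proof of the theorem consists precisely in matching these two lattice sums term by term, using the symmetry $u(q,\lambda,\omega,\mu,k)=u(q,-\lambda,\omega,-\mu,k)$ to align the two antisymmetrizations; the prefactor in the theorem then comes from the ratio $\chi^0_{\mu,k}/\bigl(\wJ^0_{\mu,\wk}\,\chi_{0,0,2}\bigr)$. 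Also, the shift $\wk=k+4$ is not a ``level-shift transformation of the integrand of $u$'': it arises because the dotted Weyl action at level $k+2$ involves $\kappa=k+4$, so the Verma traces in the resolution naturally produce $u(\cdot,\cdot,\cdot,\cdot,\wk)$ directly. Your bookkeeping concerns about Pochhammer prefactors are legitimate but secondary; without the BGG resolution and the term-by-term matching of the two Weyl-group/lattice sums, the proof does not get off the ground.
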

\addtocounter{thm}{-1}}
\begin{remark}
This theorem shows that $\wJ_{\mu, k}(q, \lambda, \omega)$ and $J_{\mu, k, 2}(q, \lambda, \omega)$ are proportional with constant of proportionality explicit aside from dependence on the normalization constant $f(q, q^{-2\omega})$ for the denominator of the affine Macdonald polynomial left undetermined in \cite{EK3}.  In future work, we plan to use the results of the present paper to explicitly evaluate $f(q, q^{-2\omega})$.
\end{remark}

\subsection{Relation to the Felder-Varchenko conjecture}

One of our motivations for studying the trace function $T(q, \lambda, \omega, \mu, k)$ originates in two streams of prior work.  In \cite{ESV}, it was shown that traces of intertwining operators for quantum affine algebras satisfy the Macdonald-Ruijsenaars, dual Macdonald-Ruijsenaars, $q$-KZB, and dual $q$-KZB equations, giving four commuting systems of difference equations.\footnote{In this paper, we consider only traces valued in the dimension $3$ irreducible representation, for which the $q$-KZB and dual $q$-KZB equations are trivial.}  On the other hand, in \cite{FTV, FTV2}, it was shown that the Felder-Varchenko functions satisfy the $q$-KZB and dual $q$-KZB equations, and in \cite{FV2, FV} it was conjectured that they also satisfy the $q$-KZB heat equation, which is non-trivial in the case considered in this paper.

In \cite{EV, ESV, EV3}, Etingof-Varchenko posed the natural conjecture that the two resulting functions are related by a simple renormalization, and in the trigonometric and classical limits proofs of this conjecture for the $\sl_2$ case were given in \cite{EV, EK2, SV}.  Corollary \ref{corr:trace-fv} resolves this conjecture in the case of functions valued in the three dimensional representation.  This gives a representation-theoretic interpretation of the Felder-Varchenko function and provides the first proof that the trace function for $U_q(\asl_2)$ converges for parameters lying in a certain region.

In future work, we aim to use this interpretation to study the Felder-Varchenko functions using the representation theory of quantum affine algebras.  In particular, we hope to prove symmetry properties similar to that of Theorem \ref{thm:tnorm-sym} and to approach the Felder-Varchenko conjecture that the Felder-Varchenko function satisfies the $q$-KZB heat equation.  This would yield generalizations of the corresponding theorems shown in the trigonometric limit in \cite{EV3}.

\subsection{Relation to geometry of Laumon spaces}

In \cite{Neg1, Neg2}, Negut realizes certain intertwiners for $\sl_n$ and $\agl_n$ via geometric actions of these algebras on the equivariant cohomology of certain moduli spaces known as (affine) Laumon spaces.  He interprets traces of these intertwiners as generating functions for the integrals of Chern polynomials of tangent bundles of the Laumon spaces, thereby relating the generating functions to Calogero-Moser systems via results of \cite{Eti3}.  This picture is expected to admit quantization, giving a relation between $U_q(\asl_n)$-intertwiners and the $K$-theory of the affine Laumon space; for instance, an action of the quantum loop algebra $U_q(L\sl_n)$ on the $K$-theory of the affine Laumon space was constructed in \cite{Tsy}.  Under this expected correspondence, our trace function would thereby encode certain intersection-theoretic computations in $K$-theory giving enumerative information about the affine Laumon spaces.

\subsection{Outline of method and organization}

We briefly outline our method. The main technical inputs to our computation of the trace function are the free field construction of the $q$-Wakimoto module of \cite{Mat} and the method of coherent states for one loop correlation functions.  Combining these tools, choosing contours carefully to ensure convergence, and performing some intricate integral manipulations yields the Jackson integral expression in Proposition \ref{prop:ff-comp} for the trace function.  In the good region of parameters, we then check that this expression agrees with the formal expansion of the Felder-Varchenko function to deduce that the trace function converges and has the integral form of Theorem \ref{thm:int-trace}.  

In the remainder of the paper, we take degenerations and give some applications of Theorem \ref{thm:int-trace}.  In Theorems \ref{thm:class-lim} and \ref{thm:trig-consist}, we take the classical and trigonometric limits of our expression and show that they reproduce prior results from \cite{EK3} and \cite{EV}.  In Theorem \ref{thm:tnorm-sym}, we show that a normalized quasi-analytic continuation of our trace function has a symmetry property predicted by representation theory.  Finally, in Theorem \ref{thm:fv-conj}, we use the BGG resolution for irreducible integrable modules to compute the affine Macdonald polynomial in terms of our trace functions.  The resulting expression takes the form of the hypergeometric theta functions posed as elliptic Macdonald polynomials by Felder-Varchenko in \cite{FV4}, resolving their conjecture on the connection between the two.

The remainder of this paper is organized as follows. The technical heart of the paper is in Sections \ref{sec:tr-fn} to \ref{sec:ci}. In Section \ref{sec:tr-fn}, we define our notations for $U_q(\asl_2)$ and for trace functions for $U_q(\asl_2)$-intertwiners.  In particular, we fix a coproduct which agrees with \cite{Mat} and is the opposite of that of \cite{ESV}.  In Section \ref{sec:fv-fn}, we fix notation for the Felder-Varchenko function and give a formal series expansion and quasi-analytic continuation for it.  In Section \ref{sec:ff}, we fix notations for the free field realization of $U_q(\asl_2)$-modules given in \cite{Mat} and compute normalizations of $q$-vertex operator expressions for intertwiners.  In Section \ref{sec:ci}, we apply the method of coherent states to the $q$-vertex operators from Section \ref{sec:ff} to obtain a contour integral formula for the trace function in Theorem \ref{thm:int-trace} and identify it with the Felder-Varchenko function in Corollary \ref{corr:trace-fv}. 

In the remaining sections, we apply and take degenerations of Theorem \ref{thm:int-trace}. In Sections \ref{sec:class} and Sections \ref{sec:trig}, we verify in Theorems \ref{thm:class-lim} and \ref{thm:trig-consist} that our computations are consistent with existing computations in the classical and trigonometric limits. In Section \ref{sec:sym}, we show that a renormalization of the trace function satisfies a symmetry property motivated by representation theory.  In Section \ref{sec:aff-mac}, we relate the affine and elliptic Macdonald polynomials to our trace functions using the BGG resolution and dynamical Weyl group for $U_q(\asl_2)$-modules and prove Felder-Varchenko's conjecture on their relation. Appendices \ref{sec:ell} and \ref{sec:comps} contain notations and estimates for elliptic functions and computations of OPE's and one loop correlation functions which occur in the method of coherent states.

\subsection{Acknowledgments} 

The author thanks P. Etingof for suggesting the problem and for many helpful discussions.  Y. S. was supported by a NSF Graduate Research Fellowship (NSF Grant \#1122374).

\section{The trace function for $U_q(\asl_2)$-intertwiners} \label{sec:tr-fn}

In this section, we give our notations and conventions for $U_q(\asl_2)$ and define the trace functions for $U_q(\asl_2)$-intertwiners which will be the focus of this work.  Our coproduct is the opposite of that in \cite{ESV} and therefore some of our variable shifts are also different.

\subsection{The Cartan subalgebra of $\asl_2$ and $\wsl_2$}

Denote by $\asl_2$ the affinization of $\sl_2$ and by $\wsl_2$ its central extension.  The Cartan subalgebra of $\wsl_2$ and its dual are given by
\[
\whh = \CC \alpha \oplus \CC c \oplus \CC d \text{ and } \whh^* = \CC \alpha \oplus \CC \Lambda_0 \oplus \CC \delta,
\]
where $\Lambda_0 = c^*$ and $\delta = d^*$.  Define $\rho = \frac{1}{2}{\alpha}$, and recall that $\sl_2$ has dual Coxeter number $\ch = 1 + (\theta, \rho) = 2$, where $\theta = \alpha$ is the highest root.  Define $\alpha_1 := \alpha$, $\alpha_0 := \delta - \alpha \in \whh^*$, and $\wrho := \rho + 2 \Lambda_0$.

The algebra $\wsl_2$ admits a non-degenerate invariant form $(-, -)$ whose restriction to $\whh$ has non-trivial values
\[
(\alpha, \alpha) = 2,  \qquad (c, d) = 1, \qquad (d, d) = 0
\]
and agrees with the form on $\hh$.  This defines the identification $\whh \simeq \whh^*$ given by
\[
\alpha \mapsto \alpha, \qquad c \mapsto \delta, \qquad d \mapsto \Lambda_0.
\]
Transporting the form to $\whh^*$ yields the non-trivial values
\[
(\alpha, \alpha) = 2, \qquad (\delta, \Lambda_0) = 1, \qquad (\Lambda_0, \Lambda_0) = 0.
\]
The Cartan matrix $A = (a_{ij})$ for $\asl_2$ is defined by $a_{ij} := (\alpha_i, \alpha_j)$. 

\subsection{The algebras $U_q(\asl_2)$ and $U_q(\wsl_2)$}

Let $q$ be a non-zero complex number transcendental over $\QQ$, and for an integer $n$ define the $q$-number by $[n] = \frac{q^n - q^{-n}}{q - q^{-1}}$, $q$-factorial by $[n]! = [n] \cdot \cdots \cdot [1]$, $q$-falling factorial by $[n]_l = [n] \cdots [n - l + 1]$, and $q$-binomial coefficient by $\binom{n}{m}_q = \frac{[n]!}{[m]![n - m]!}$.  The quantum affine algebra $U_q(\asl_2)$ is the Hopf algebra generated as an algebra by $e_i, f_i, q^{\pm h_i}$ for $i = 0, 1$ with relations
\begin{align*}
[q^{h_i}, q^{h_j}] &= 0 \qquad q^{h_i} e_j q^{-h_j} = q^{(h_i, \alpha_j)} e_j \qquad q^{h_i} f_j q^{-h_j} = q^{-(h_i, \alpha_j)} f_j \qquad [e_i, f_j] = \delta_{ij} \frac{q^{h_i} - q^{-h_i}}{q - q^{-1}} \\
\sum_{k = 0}^{1 - a_{ij}}& (-1)^k \binom{1 - a_{ij}}{k}_q e_i^{1 - a_{ij} - k} e_j e_i^k = 0 \qquad \sum_{k = 0}^{1 - a_{ij}} (-1)^k \binom{1 - a_{ij}}{k}_q f_i^{1 - a_{ij} - k} f_j f_i^k = 0.
\end{align*}
The coproduct, antipode, and counit of $U_q(\asl_2)$ are
\begin{align*}
\Delta(e_i) &= e_i \otimes 1 + q^{h_i} \otimes e_i, \qquad \Delta(f_i) = f_i \otimes q^{-h_i} + 1 \otimes f_i, \qquad \Delta(q^{h_i}) = q^{h_i} \otimes q^{h_i};\\
S(e_i) &= - q^{-h_i} e_i, \qquad S(f_i) = - f_iq^{h_i}, \qquad S(q^{h_i}) = q^{-h_i};\\
\eps(e_i) &= \eps(f_i) = 0, \qquad \eps(q^h) = 1.
\end{align*}
We centrally extend $U_q(\asl_2)$ to $U_q(\wsl_2)$ by adding a generator $q^d$ which commutes with $q^{h_i}$ and whose commutators with $e_i$ and $f_i$ are
\[
[q^d, e_i] = [q^d, f_i] = 0 \text{ for $i \neq 0$}, \qquad q^de_0 q^{-d} = q e_0, \qquad q^df_0q^{-d} = q^{-1} f_0
\]
and on which the coproduct, antipode, and counit are
\[
\Delta(q^d) = q^d \otimes q^d, \qquad S(q^d) = q^{-d}, \qquad \eps(q^d) = 1.
\]
Define the subalgebras $U_q(\abb_+) = \langle e_i, q^{\pm h_i}\rangle$, $U_q(\abb_-) = \langle f_i, q^{\pm h_i}\rangle$, $U_q(\ann_+) = \langle e_i \rangle$, and $U_q(\ann_-) = \langle f_i\rangle$ of $U_q(\asl_2)$.

\begin{remark}
This coproduct agrees with that of \cite{Mat, Kon} and is opposite to that of \cite{ESV}.
\end{remark}

\subsection{Verma modules for $U_q(\asl_2)$}

We denote by $M_{\mu, k} := M_{\mu \rho + k \Lambda_0}$ the Verma module for $U_q(\asl_2)$ with highest weight $\mu \rho + k \Lambda_0$ and by $v_{\mu, k} \in M_{\mu, k}$ a canonically chosen highest weight vector; this module is of level $k$.  We extend it to a $U_q(\wsl_2)$-module by letting $q^d$ act by $1$ on $v_{\mu, k}$. Define the restricted dual of $M_{\mu, k}$ by
\[
M_{\mu, k}^\vee := \bigoplus_{m \geq 0} M_{\mu, k}[-m \delta]^*,
\]
where the action of $U_q(\asl_2)$ is given by $(u \cdot \phi)(m) := \phi(S(u) m)$.  Let $v_{\mu, k}^*$ be the dual vector to the highest weight vector $v_{\mu, k}$. For generic $(\mu, k)$, the Verma module $M_{\mu, k}$ is irreducible; we will make this assumption throughout the paper outside of Section \ref{sec:aff-mac}.

\begin{remark}
In the notation of \cite{Mat}, $M_{\mu, k}$ is the Verma module of spin $\frac{\mu}{2}$ at level $k$.  In the notation of \cite{ESV}, $M_{\mu, k}$ is equal to $M^{ESV}_{\mu - k/2, k}$. 
\end{remark}

Define the algebra anti-automorphism $\omega: U_q(\asl_2) \to U_q(\asl_2)$ by $\omega(e_i) = f_i$, $\omega(f_i) = e_i$, and $\omega(q^h) = q^h$.  Define a symmetric form on $M_{\mu, k}$ by 
\[
\FF(a v_{\mu, k}, b v_{\mu, k}) = \langle \omega(a) v_{\mu, k}^*, b v_{\mu, k}\rangle, 
\]
where $a, b \in U_q(\ann_-)$ and $\omega(a) v_{\mu, k}^* \in M_{\mu, k}^\vee$.  On $M_{\mu, k}[\mu\rho + k \Lambda_0 - \lambda\rho - a\delta]$, it is related to the Shapovalov $F(-, -)$ form of \cite[Theorem 4.1.16]{Jos1995} by 
\[
\FF(-, -) = C_\lambda q^{-(\mu\rho + k \Lambda_0, \lambda\rho + a \delta)} F(-, -)
\]
for some constant $C_\lambda$. Noting that by \cite[Lemma 3.4.8]{Jos1995} the Kostant partition function satisfies
\[
P(\lambda\rho + a \delta)(\lambda \rho + a \delta) = \sum_{\beta > 0} \sum_{n \geq 1} P(\lambda \rho + a \delta - n \beta) \beta,
\]
we obtain the following scaling of the Kac-Kazhdan determinant formula for $\FF$.

\begin{prop}[{\cite[Theorem 4.1.16]{Jos1995}}] \label{prop:kk-det}
When restricted to $M_{\mu, k}[\mu \rho + k \Lambda_0 - \lambda \rho - a \delta]$, the form $\FF$ has determinant
\[
C \prod_{\beta > 0} \prod_{n \geq 1} (1 - q^{-2(\beta, \mu\rho + k \Lambda_0 + \wrho) + n(\beta, \beta)})^{P(\lambda\rho + a \delta - n \beta)},
\]
where $C$ is a non-zero constant depending on choice of basis, $\beta$ ranges over positive roots $\{\alpha, \pm \alpha + m \delta, m \delta \mid m > 0\}$, and $P$ denotes the Kostant partition function.
\end{prop}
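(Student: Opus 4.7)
The strategy is to invoke the Shapovalov determinant formula for $F$ from \cite[Theorem 4.1.16]{Jos1995} and redistribute the overall $q$-scaling factor relating $\FF$ to $F$ across the individual Kac-Kazhdan factors.

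By \cite[Theorem 4.1.16]{Jos1995}, the determinant of $F$ on the weight space $M_{\mu,k}[\mu\rho + k\Lambda_0 - \lambda\rho - a\delta]$ is, up to a nonzero basis-dependent constant, a product over positive roots $\beta$ of $\asl_2$ and positive integers $n$ of factors of the form
$$\left(q^{(\beta, \mu\rho + k\Lambda_0 + \wrho) - n(\beta,\beta)/2} - q^{-(\beta, \mu\rho + k\Lambda_0 + \wrho) + n(\beta,\beta)/2}\right)^{P(\lambda\rho + a\delta - n\beta)}.$$
This weight space has dimension $P(\lambda\rho + a\delta)$, so the rescaling $\FF = C_\lambda q^{-(\mu\rho + k\Lambda_0, \lambda\rho + a\delta)} F$ multiplies the determinant of $F$ by the overall scalar $C_\lambda^{P(\lambda\rho + a\delta)} q^{-P(\lambda\rho + a\delta)(\mu\rho + k\Lambda_0, \lambda\rho + a\delta)}$.

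To bring the result into the claimed form, I apply the Kostant partition function identity cited from \cite[Lemma 3.4.8]{Jos1995},
$$P(\lambda\rho + a\delta)(\lambda\rho + a\delta) = \sum_{\beta > 0} \sum_{n \geq 1} P(\lambda\rho + a\delta - n\beta)\,\beta,$$
which upon pairing both sides against $\mu\rho + k\Lambda_0$ allows the global $q$-scaling to be factored as
$$q^{-P(\lambda\rho + a\delta)(\mu\rho + k\Lambda_0, \lambda\rho + a\delta)} = \prod_{\beta > 0} \prod_{n \geq 1} q^{-(\mu\rho + k\Lambda_0, \beta) P(\lambda\rho + a\delta - n\beta)}.$$
Pulling the factor $q^{-(\mu\rho + k\Lambda_0, \beta)}$ into each symmetric Kac-Kazhdan factor converts it, after extracting a power of $q$ depending only on $\beta$, $n$, and $\wrho$ (independent of the data $\mu, k, \lambda, a$ parameterizing the weight space and thus absorbable into the overall constant $C$), into $1 - q^{-2(\beta, \mu\rho + k\Lambda_0 + \wrho) + n(\beta,\beta)}$, matching the stated formula.

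The main obstacle is purely bookkeeping: verifying that the symmetric $q^x - q^{-x}$ normalization of the Kac-Kazhdan factors in \cite{Jos1995} rescales correctly to the asymmetric $1 - q^{2x}$ form appearing in the statement, and that the positive roots of $\asl_2$ are enumerated as $\{\alpha, \pm\alpha + m\delta, m\delta \mid m > 0\}$ with imaginary roots $m\delta$ entering with multiplicity one (since $\sl_2$ has rank one). No substantively new algebraic input is required beyond the two cited results of Joseph.
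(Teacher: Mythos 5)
Your proposal is correct and follows essentially the same route as the paper: the paper likewise obtains the result by citing the Shapovalov determinant of \cite[Theorem 4.1.16]{Jos1995}, rescaling via $\FF = C_\lambda q^{-(\mu\rho + k\Lambda_0, \lambda\rho + a\delta)}F$, and using the Kostant partition function identity of \cite[Lemma 3.4.8]{Jos1995} to distribute the global $q$-power over the individual factors. The bookkeeping you flag (converting the symmetric factors to the $1 - q^{-2(\beta,\mu\rho+k\Lambda_0+\wrho)+n(\beta,\beta)}$ form, with the leftover $q$-powers absorbed into $C$) is exactly the content of the paper's argument.
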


\begin{remark}
Proposition \ref{prop:kk-det} is the quantum analogue of the Kac-Kazhdan determinant formula of \cite{KK}.
\end{remark}

\subsection{Evaluation modules for $U_q(\asl_2)$}

Let $M_{-\mu}^\vee$ denote the restricted dual Verma module of lowest weight $-\mu$ for $U_q(\sl_2)$; if $\mu$ is a positive integer, let $L_\mu \subset M_{-\mu}^\vee$ be the corresponding irreducible finite dimensional module.  Let $M_{-\mu}^\vee(z)$ and $L_\mu(z)$ denote the corresponding affinizations.  We choose an explicit basis $w_{2\mu}, \ldots, w_{-2\mu}$ for $L_{2\mu}(z)$ so that $U_q(\asl_2)$ acts by
\begin{align*}
e_1 w_{2m} \otimes z^n &= [\mu - m] w_{2m+2} \otimes z^n\\
e_0 w_{2m} \otimes z^n &= [\mu + m] w_{2m-2} \otimes z^{n+1}\\
f_1 w_{2m} \otimes z^n &= [\mu + m] w_{2m-2} \otimes z^n\\
f_0 w_{2m} \otimes z^n &= [\mu - m] w_{2m+2} \otimes z^{n-1}\\
q^{h_1} w_{2m} \otimes z^n &= q^{2m} w_{2m} \otimes z^n\\
q^{h_0} w_{2m} \otimes z^n &= q^{-2m} w_{2m} \otimes z^n \\
q^d w_{2m} \otimes z^n &= q^n w_{2m} \otimes z^n.
\end{align*}

\begin{remark}
This basis is related to the basis $v_{\mu, m}$ in \cite{Mat} by $w_{2m} = v_{\mu, -m}$.
\end{remark}

\subsection{Completed tensor product and intertwiners}

Let $V(z)$ be a finite-dimensional evaluation representation for $U_q(\asl_2)$.  Define the completed tensor product by
\[
M_{\mu, k} \hotimes V(z) := (M_{\mu, k}^\vee)^* \otimes V(z) \simeq \Hom_\CC(M_{\mu, k}^\vee, V(z)),
\]
where by $(M_{\mu, k}^\vee)^*$ we mean the full linear dual, the $U_q(\asl_2)$-action on $\Hom_\CC(M_{\mu, k}^\vee, V(z))$ is given in Sweedler notation by $(u \cdot \rho)(m) = u_{(2)} \rho(S(u_{(1)}) m)$, and the isomorphism is given by $\phi^* \otimes v \mapsto \Big(\psi \mapsto \phi^*(q^{-2\wrho} \psi) v\Big)$.  As a vector space we have an isomorphism
\[
M_{\mu, k} \hotimes V(z) = (M_{\mu, k}^\vee)^* \otimes V(z) \simeq \prod_{a \geq 0} \Big(M_{\mu, k}[-a \delta] \otimes V(z)\Big)
\]
under which elements of $M_{\mu, k} \hotimes V(z)$ are sums $\sum_{i = 0}^\infty m_i \otimes v_i$ with $m_i, v_i$ homogeneous and $\lim_{i \to \infty} \deg(m_i) = \infty$.  For $v \in V[\tau]$, if $M_{\mu, k}^\vee$ is irreducible, by \cite[Theorem 9.3.1]{EFK} there is a unique $U_q(\asl_2)$-intertwiner 
\[
\Phi_{\mu, k}^{v}(z): M_{\mu, k} \to M_{\mu - \tau, k} \hotimes V(z)
\]
which satisfies
\[
\Phi_{\mu, k}^v(z) v_{\mu, k} = v_{\mu, k} \otimes v + (\text{l.o.t.}),
\]
where $(\text{l.o.t.})$ denotes terms of lower weight in the first tensor factor. If $V_1, \ldots, V_n$ are finite-dimensional representations, $v_i \in V_i[\tau_i]$, define the iterated intertwiner by the composition of intertwiners
\[
\Phi_{\mu, k}^{v_1, \ldots, v_n}(z_1, \ldots, z_n): M_{\mu, k} \to M_{\mu - \tau_n, k} \hotimes V_n(z_n) \to \cdots \to M_{\mu - \tau_1 - \cdots - \tau_n, k} \hotimes V_1(z_1) \hotimes \cdots \hotimes V_n(z_n).
\]

\begin{remark}
These intertwiners are known as Type I intertwiners.  Due to our coproduct convention, they correspond to Type II intertwiners in the coproduct of \cite{ESV}.
\end{remark}

\subsection{Definition of the trace function}

Define the trace function $\Psi^{v_1, \ldots, v_n}(z_1, \ldots, z_n; \lambda, \omega, \mu, k)$ as the formal power series in $q^{-2\omega}$ given by
\[
\Psi^{v_1,\ldots, v_n}(z_1, \ldots, z_n; \lambda, \omega, \mu, k) := \Tr|_{M_{\mu, k}}\Big(\Phi^{v_1, \ldots, v_n}_{\mu, k}(z_1, \ldots, z_n)q^{2\lambda \rho + 2\omega d}\Big).
\]
In the case $n = 1$, the trace function $\Psi^{v}(z; \lambda, \omega, \mu, k)$ is independent of $z$, and we use the notation 
\begin{equation} \label{eq:sym-tr-def}
T^v(q, \lambda, \omega, \mu, k) := \Psi^v(z; \lambda, \omega, \mu - 1, k - 2) = \Tr|_{M_{(\mu - 1)\rho + (k-2)\Lambda_0}}\Big(\Phi_{\mu - 1, k - 2}^{v}(z) q^{2\lambda\rho + 2\omega d}\Big).
\end{equation}
If $v = w_0 \in L_{2m}[0]$, then $T^{w_0}(q, \lambda, \omega, \mu, k)$ lies in $L_{2m}[0] \simeq \CC \cdot w_0$, and we interpret it as a $\CC$-valued function via the identification $\CC \cdot w_0 \simeq \CC$.

\begin{remark}
Our notation for $\Psi^{v_1, \ldots, v_n}(z_1, \ldots, z_n; \lambda, \omega, \mu, k)$ is related to that of \cite{ESV} by the use of the opposite coproduct and the variable shifts
\[
\Psi^{v_1, \ldots, v_n}(z_1, \ldots, z_n; \lambda, \omega, \mu, k) = \Psi_{ESV}^{v_1, \ldots, v_n}(z_1, \ldots, z_n; \lambda - \omega/2, \omega, \mu - k/2, k).
\]
We choose our conventions to make the classical and trigonometric limits more transparent.
\end{remark}

\begin{remark}
As defined, all trace functions we consider are formal power series in $q^{-2\omega}$. Our main result Theorem \ref{thm:int-trace} shows that for $w_0 \in L_2$ the formal power series for $T^{w_0}(q, \lambda, \omega, \mu, k)$ converges in a certain region of parameters, meaning it is analytic.
\end{remark}

\subsection{Rationality properties of the trace function}

The goal of this subsection is to prove Proposition \ref{prop:rat-fn-trace}, which characterizes the coefficient of the power series expansion of the trace function in $q^{-2\omega}$.  We begin by giving an explicit expression for the intertwiner in terms of the form $\FF$ on $M_{\mu, k}$.  Choose a homogeneous basis $\{g^{\lambda, a}_i\}$ for $U_q(\ann_-)$ so that $\wt(g^{\lambda, a}_i) = - \lambda \rho - a\delta$, and define the matrix elements 
\[
\FF_{\lambda, a}(\mu, k)_{ij} = \FF(g^{\lambda, a}_i v_{\mu, k}, g^{\lambda, a}_j v_{\mu, k}) \qquad
\]
of the form and the elements $\FF^{-1}_{\lambda, a}(\mu, k)_{ij}$ of its inverse matrix.

\begin{lemma} \label{lemma:sing-vect}
For $v \in V(z)$, the vector
\[
\phi_{\mu, k, v} = \sum_{\lambda, a} \sum_{i, j} \FF^{-1}_{\lambda, a}(\mu, k)_{ij}\, (q^{2\wrho} g^{\lambda, a}_i v_{\mu, k}) \otimes \omega(g^{\lambda, a}_j) v
\]
in $M_{\mu, k} \hotimes V(z)$ is singular.
\end{lemma}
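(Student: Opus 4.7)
The strategy is to verify singularity by direct computation, checking $e_l \phi_{\mu, k, v} = 0$ for $l = 0, 1$; the Cartan conditions $q^{h_l} \phi_{\mu, k, v} = q^{(\mu \rho + k \Lambda_0 + \wt(v), h_l)} \phi_{\mu, k, v}$ are automatic from the weight grading of the defining sum. Conceptually, the formula for $\phi_{\mu, k, v}$ is a standard inverse-Shapovalov construction of the unique singular vector of its weight in $M_{\mu, k} \hotimes V(z)$, whose existence is guaranteed by \cite[Theorem 9.3.1]{EFK} in the generic case, so in principle one only needs to exhibit one such vector.

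I would first rewrite the formula as $\phi_{\mu, k, v} = \sum_{\lambda, a, j} (q^{2\wrho} f^{\lambda, a}_j v_{\mu, k}) \otimes \omega(g^{\lambda, a}_j) v$, where $f^{\lambda, a}_j := \sum_i \FF^{-1}_{\lambda, a}(\mu, k)_{ij}\, g^{\lambda, a}_i \in U_q(\ann_-)$ is defined so that $\{f^{\lambda, a}_j v_{\mu, k}\}$ is the $\FF$-dual basis to $\{g^{\lambda, a}_i v_{\mu, k}\}$ in each weight space. The central tool is an adjoint identity for $\FF$ with respect to the pair $(e_l, f_l)$: unfolding the definition $\FF(a v_{\mu, k}, b v_{\mu, k}) = v^*_{\mu, k}(S(\omega(a)) b v_{\mu, k})$ together with $\omega(e_l) = f_l$, $\omega(f_l) = e_l$, and the anti-automorphism property of $\omega$ yields a relation of the form $\FF(e_l x, y) = c\, \FF(x, f_l y)$ for homogeneous $x, y \in M_{\mu, k}$, where $c$ is an explicit weight-dependent scalar coming from the antipode.

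Applying $\Delta(e_l) = e_l \otimes 1 + q^{h_l} \otimes e_l$ to $\phi_{\mu, k, v}$ then produces two sums. In the first, I commute $q^{2\wrho}$ through $e_l$ using $q^{2\wrho} e_l q^{-2\wrho} = q^{2(\wrho, \alpha_l)} e_l$ so that $e_l$ acts directly on $f^{\lambda, a}_j v_{\mu, k}$. In the second, I use $e_l \omega(g^{\lambda, a}_j) = \omega(g^{\lambda, a}_j f_l)$ (from $\omega$ being an anti-automorphism with $\omega(f_l) = e_l$) to transfer the $e_l$-action in the $V$-factor back to a $f_l$-action on a $U_q(\ann_-)$-monomial pairing against $\FF^{-1}$. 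The adjoint identity above then matches the two rewritten sums after reindexing $(\lambda, a) \leftrightarrow (\lambda \pm \alpha_l, a)$, and term-by-term cancellation gives $e_l \phi_{\mu, k, v} = 0$.

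The main obstacle is careful bookkeeping of the various scalar twists: the commutation factor $q^{2(\wrho, \alpha_l)}$ from moving $e_l$ past $q^{2\wrho}$, the antipode-induced scalar $c$ in the adjoint identity, and the $q^{h_l}$-eigenvalues arising from the second piece of the coproduct acting on weight-shifted vectors. The particular factor $q^{2\wrho}$ in the defining formula is chosen precisely so that these twists balance, and verifying this balance is the only non-formal part of the argument.
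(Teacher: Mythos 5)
Your plan is the classical direct verification and is workable in outline, but it takes a genuinely different route from the paper, and it defers exactly the step that is delicate. The paper does not apply $\Delta(e_l)$ to the tensor expression at all: it uses the identification $M_{\mu,k}\hotimes V(z)\simeq \Hom_\CC(M_{\mu,k}^\vee, V(z))$ and the observation that $\phi_{\mu,k,v}$ is singular iff the induced map $\psi: M_{\mu,k}^\vee\to V(z)$ is a map of $U_q(\ann_+)$-modules. The contraction $\sum_i \FF^{-1}_{\nu,b}(\mu,k)_{ij}\FF_{\nu,b}(\mu,k)_{li}=1_{lj}$ then shows $\psi(\omega(g^{\nu,b}_l)v_{\mu,k}^*)=\omega(g^{\nu,b}_l)v$, which is visibly $U_q(\ann_+)$-equivariant since $M_{\mu,k}^\vee$ is generated by $v_{\mu,k}^*$ over $U_q(\ann_+)$ and $e_l\,\omega(g)=\omega(g f_l)$. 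No contravariance identity for $\FF$ and no tracking of antipode twists is needed; the $q^{\pm 2\wrho}$ factors are absorbed once and for all into the definition of the isomorphism with $\Hom_\CC(M_{\mu,k}^\vee,V(z))$.

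By contrast, your route requires two things you have flagged but not carried out. First, the precise contravariance relation for $\FF$: since $\FF$ is the Shapovalov form rescaled by the weight-dependent factor $C_\lambda q^{-(\mu\rho+k\Lambda_0,\lambda\rho+a\delta)}$, the relation $\FF(e_l x,y)=c\,\FF(x,f_l y)$ holds only with a weight-dependent scalar $c$ that must be computed and matched against the $q^{(2\wrho,\alpha_l)}$ and $q^{h_l}$-eigenvalue factors. Second, and more importantly, the action of $e_l$ on $M_{\mu,k}\hotimes V(z)=(M_{\mu,k}^\vee)^*\otimes V(z)$ is \emph{defined} in the paper via $(u\cdot\rho)(m)=u_{(2)}\rho(S(u_{(1)})m)$ on the $\Hom$-space, not by applying $\Delta(e_l)=e_l\otimes 1+q^{h_l}\otimes e_l$ factorwise to $M_{\mu,k}\otimes V(z)$; these agree only after one tracks the double-dual ($S^2=\mathrm{Ad}(q^{\mp 2\wrho})$) identification, so the very first line of your computation already needs justification. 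Neither point is an error, but together they constitute the whole content of the argument in your approach, and leaving them as ``the only non-formal part'' means the proof is not yet complete as written. Also note that the uniqueness of the singular vector from \cite[Theorem 9.3.1]{EFK} does not shorten the task: it tells you there is at most one candidate, not that the displayed vector is it.
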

\begin{proof}
Notice that $\phi_{\mu, k, v}$ is singular if and only if the composition
\[
\psi: M^\vee_{\mu, k} \overset{1 \otimes \phi_{\mu, k, v}} \to M^\vee_{\mu, k} \otimes (M_{\mu, k}^\vee)^* \otimes V(z) \to V(z)
\]
induced by $\phi_{\mu, k, v}$ and $m \otimes f \mapsto f(q^{-2\wrho} m)$ is a map of $U_q(\ann_+)$-modules.  This holds since
\begin{multline*}
\psi(\omega(g^{\nu, b}_l)v_{\mu, k}^*) = \sum_{\lambda, a} \sum_{i, j} \FF^{-1}_{\lambda, a}(\mu, k)_{ij}\, \langle \omega(g^{\nu, b}_l) v_{\mu, k}^*, g^{\lambda, a}_i v_{\mu, k}\rangle \omega(g^{\lambda, a}_j) v\\
= \sum_{i, j} \FF^{-1}_{\nu, b}(\mu, k)_{ij}\, \FF_{\nu, b}(\mu, k)_{li} \omega(g^{\nu, b}_j) v = \sum_j 1_{lj}\, \omega(g^{\nu, b}_j) v = \omega(g^{\nu, b}_l) v. \qedhere
\end{multline*}
\end{proof}

\begin{lemma} \label{lem:sing-denom}
The matrix elements $\FF_{\lambda, a}^{-1}(\mu, k)_{ij}$ of the inverse of $\FF$ are rational functions in $q^{-2\mu}$ and $q^{-2k}$ with at most simple poles whose denominators are products of linear terms of the form 
\[
(1 - q^{-2(\mu + 1) - 2m(k + 2) + 2n}), \qquad (1 - q^{2(\mu + 1) - 2(m + 1)(k + 2) + 2n}), \qquad (1 - q^{-2(m + 1)(k + 2)}) \qquad \text{ for $m \geq 0$, $n \geq 1$.}
\]
\end{lemma}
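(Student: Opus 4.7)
The plan is to combine Proposition~\ref{prop:kk-det} with a singular-vector argument that controls the order of each pole of $\FF^{-1}_{\lambda, a}(\mu, k)$. First, I would verify that the entries of $\FF_{\lambda, a}(\mu, k)$ are Laurent polynomials in $q^{\pm 2\mu}$ and $q^{\pm 2k}$: the pairing $\FF(g_i^{\lambda, a} v_{\mu, k}, g_j^{\lambda, a} v_{\mu, k}) = \langle \omega(g_i^{\lambda, a}) v_{\mu, k}^*, g_j^{\lambda, a} v_{\mu, k}\rangle$ is computed by commuting $\omega(g_i^{\lambda, a}) \in U_q(\ann_+)$ through $g_j^{\lambda, a} \in U_q(\ann_-)$ using $[e_i, f_j] = \delta_{ij}(q^{h_i} - q^{-h_i})/(q - q^{-1})$ and evaluating the remaining Cartan elements on $v_{\mu, k}$ as scalar powers of $q^{\pm 2\mu}$ and $q^{\pm 2k}$. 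Cramer's rule then presents each entry of $\FF^{-1}_{\lambda, a}(\mu, k)$ as a rational function in $q^{-2\mu}, q^{-2k}$ whose denominator divides $\det \FF_{\lambda, a}(\mu, k)$.

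Next, I would enumerate the positive roots of $\asl_2$ --- namely $\alpha$, $\pm\alpha + m\delta$ for $m \geq 1$, and $m\delta$ for $m \geq 1$ --- and, using $(\alpha, \alpha) = 2$, $(\alpha, \delta) = 0 = (\delta, \delta)$, $(\delta, \Lambda_0) = 1$, and $\wrho = \rho + 2\Lambda_0$, compute $-2(\beta, \mu\rho + k\Lambda_0 + \wrho) + n(\beta, \beta)$ for each positive root $\beta$. Substituting into Proposition~\ref{prop:kk-det} produces precisely the three families of factors listed in the lemma, each appearing with multiplicity $P(\lambda\rho + a\delta - n\beta)$.

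The substantive step is to show that each such factor $L = L_{\beta, n}$ appears in the denominator of $\FF^{-1}_{\lambda, a}(\mu, k)_{ij}$ to at most the first power. Let $N = P(\lambda\rho + a\delta - n\beta)$ denote its multiplicity in $\det\FF_{\lambda, a}$. For generic $q$, the listed factors cut out pairwise distinct irreducible hypersurfaces in $(q^{-2\mu}, q^{-2k})$-space, so at a generic point of an irreducible component of $\{L = 0\}$ no other factor vanishes. At such a point the Verma module $M_{\mu, k}$ has a unique proper submodule $M'$, which is itself a Verma module generated by a singular vector $s_{\beta, n}$ of weight $\mu\rho + k\Lambda_0 - n\beta$. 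The weight space $M'[\mu\rho + k\Lambda_0 - \lambda\rho - a\delta]$ has dimension $P(\lambda\rho + a\delta - n\beta) = N$ and lies in the radical of the contravariant form $\FF$, since on a Verma module the radical of $\FF$ coincides with the maximal proper submodule. Thus the corank of $\FF_{\lambda, a}$ at this generic point is at least $N$, matching the multiplicity of $L$ in $\det\FF_{\lambda, a}$. Passing to the local ring at $L = 0$, the Smith normal form of $\FF_{\lambda, a}$ therefore has exactly $N$ invariant factors equal to $L$ and the rest units, which forces every entry of $\text{adj}(\FF_{\lambda, a})$ to be divisible by $L^{N - 1}$; cancelling this common factor in numerator and denominator leaves at most a simple pole along $\{L = 0\}$.

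The principal obstacle is the imaginary-root case: the factors $1 - q^{-2(m+1)(k+2)}$ for different $m$ have reducible and partially overlapping zero loci, so one cannot simply pick a generic point where only a single Kac--Kazhdan factor vanishes. I would handle this by restricting to an irreducible component of $\{L = 0\}$ not shared with the zero locus of any other listed factor --- such a component exists for generic $q$ --- and carrying out the singular-vector argument on each such component separately; combining the resulting local bounds gives the required simple-pole bound on each factor appearing in the denominator of $\FF^{-1}_{\lambda, a}(\mu, k)_{ij}$.
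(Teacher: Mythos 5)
Your proposal is correct and is essentially the paper's own argument: the paper proves this lemma by citing \cite[Lemma 3.2]{ESt2}, whose content is exactly the singular-vector/corank argument you spell out — Cramer's rule bounds the denominator by the Kac--Kazhdan determinant of Proposition \ref{prop:kk-det}, and at a generic point of each factor's zero locus the unique singular vector generates a radical of dimension equal to the factor's multiplicity, forcing the adjugate to absorb all but one power. The only place you go beyond the paper is in flagging the overlapping zero loci of the imaginary-root factors $(1 - q^{-2(m+1)(k+2)})$, which is a genuine subtlety the one-line citation glosses over, though your proposed restriction to unshared components does not by itself control the pole order along the common component $q^{-2(k+2)} = 1$.
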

\begin{proof}
This is proven in the same way as \cite[Lemma 3.2]{ESt2}, noting that these linear terms are exactly those appearing in the Kac-Kazhdan determinant of Proposition \ref{prop:kk-det}.
\end{proof}

\begin{lemma} \label{lem:phi-denom}
If $v \in V[0]$, the only linear terms from Lemma \ref{lem:sing-denom} which appear in coefficients $\FF_{\lambda, a}^{-1}(\mu, k)_{ij}$ of $\phi_{\mu, k, v}$ are those for which $V[n\alpha] \neq 0$ or $V[-n\alpha] \neq 0$.
\end{lemma}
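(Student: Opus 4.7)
The plan is to analyze, hyperplane by hyperplane, the residue of $\phi_{\mu, k, v}$ along each potential polar locus in $(\mu, k)$-space predicted by Lemma \ref{lem:sing-denom}, and to show that this residue vanishes whenever the second tensor factor $V(z)$ lacks an appropriate weight space. By Proposition \ref{prop:kk-det}, each linear factor in Lemma \ref{lem:sing-denom} comes from a positive root $\beta$ and a level $n \geq 1$: type 1 corresponds to $\beta = \alpha + m\delta$ with $m \geq 0$, type 2 to $\beta = -\alpha + (m+1)\delta$, and type 3 to the imaginary root $\beta = (m+1)\delta$. On the hyperplane defined by the vanishing of such a linear factor, $M_{\mu, k}$ acquires a singular vector $s_{\beta, n} = S_{\beta, n} v_{\mu, k}$ with $S_{\beta, n} \in U_q(\ann_-)[-n\beta]$.

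The next step is to identify the polar part of $\FF_{\lambda, a}^{-1}$ along this hyperplane. Because $\FF$ is a symmetric bilinear form that degenerates precisely along the singular vector subspace, a standard Jantzen-type argument identifies the residue of the inverse matrix with a symmetric rank-$r$ tensor supported on the $r$-dimensional kernel at the degeneration. Substituting this into
\[
\phi_{\mu, k, v} = \sum_{\lambda, a} \sum_{i, j} \FF^{-1}_{\lambda, a}(\mu, k)_{ij}\, (q^{2\wrho} g^{\lambda, a}_i v_{\mu, k}) \otimes \omega(g^{\lambda, a}_j) v,
\]
the double sum over $i, j$ collapses in the leading polar part, and the residue of $\phi_{\mu, k, v}$ along the hyperplane reduces to a sum of terms proportional to $(q^{2\wrho} s_{\beta, n}) \otimes \omega(S_{\beta, n}) v$.

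The final step is a weight-tracking argument. Since $v \in V[0]$ and $\omega(S_{\beta, n}) \in U_q(\ann_+)[n\beta]$, the element $\omega(S_{\beta, n}) v$ lies in $V(z)[n\beta]$. The $\sl_2$-projection of $n\beta$ is $+n\alpha$ when $\beta = \alpha + m\delta$ and $-n\alpha$ when $\beta = -\alpha + (m+1)\delta$, so $\omega(S_{\beta, n}) v = 0$ whenever $V[\pm n\alpha] = 0$ in the respective types, forcing the residue and hence the offending linear factor to vanish from the denominators of $\phi_{\mu, k, v}$. The main obstacle is the second step, namely producing the factorized residue formula when the local rank drop of $\FF$ exceeds one; this parallels \cite[Lemma 3.2]{ESt2} and ultimately rests on the Jantzen filtration at the degeneration, but requires some care in the higher-rank case to ensure that every contribution to the polar part genuinely factors through $\omega(S_{\beta, n}) v$. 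Once this is verified, the weight-counting in the third step is immediate and yields the stated restriction on which linear terms can appear.
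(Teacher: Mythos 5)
Your overall strategy is the one the paper intends --- its proof of this lemma is deferred to \cite[Proposition 2.2]{ESt}, whose argument is precisely the residue analysis along Kac--Kazhdan hyperplanes that you describe --- and your indexing of the hyperplanes by pairs $(\beta, n)$ and the final weight-space criterion are correct. However, there is a genuine gap in your second and third steps, and it is not quite the one you flag. The residue of $\FF^{-1}_{\lambda, a}$ is supported on $\ker \FF_{\lambda,a}$, which in weight $-\lambda\rho - a\delta$ is spanned by vectors $u\, s_{\beta, n}$ with $u \in U_q(\ann_-)$ of weight $-\lambda\rho - a\delta + n\beta$. Substituting, the residue of $\phi_{\mu,k,v}$ is a sum of terms $(q^{2\wrho} u\, s_{\beta,n}) \otimes \omega(u\, S_{\beta,n})v = (q^{2\wrho} u\, s_{\beta,n}) \otimes \omega(S_{\beta,n})\,\omega(u)\,v$, since $\omega$ is an anti-automorphism. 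Only the top term ($u = 1$) is proportional to $s_{\beta,n} \otimes \omega(S_{\beta,n})v$; the others apply $\omega(S_{\beta,n})$ to $\omega(u)v$ rather than to $v$, and these do \emph{not} vanish by weight-counting. Concretely, for $V = L_2$, $\beta = \alpha$, $n = 2$ (so $V[2\alpha] = 0$ and the corresponding factor must be shown absent), take $u = f_0$: then $\omega(f_0 f_1^2)\,w_0 = e_1^2 e_0 w_0 = [2]\, w_2 \otimes z \neq 0$, because $e_0 w_0$ lands in the nonzero weight space $V(z)[-\alpha + \delta]$ and $e_1^2$ carries it back up to $V(z)[\alpha + \delta]$. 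So the individual lower-weight terms survive; it is only their sum that vanishes, and that cannot be seen term by term.

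The missing step is a uniqueness argument. Because the residue of $\FF^{-1}$ is killed by $\FF$ at the degeneration, the residue $R$ of $\phi_{\mu,k,v}$, viewed as a map $M_{\mu,k}^\vee \to V(z)$ as in the proof of Lemma \ref{lemma:sing-vect}, annihilates $U_q(\ann_+)v_{\mu,k}^*$ and hence factors through the restricted dual of the maximal proper submodule $J = U_q(\ann_-)\, s_{\beta,n}$. At a generic point of the hyperplane, $J$ is the isomorphic image of the irreducible Verma module $M_{\mu\rho + k\Lambda_0 - n\beta}$, so $R$ is the unique singular vector of $M_{\mu\rho + k\Lambda_0 - n\beta} \hotimes V(z)$ determined by its top coefficient $s_{\beta,n} \otimes \omega(S_{\beta,n})v$; in particular $R$ depends linearly on $\omega(S_{\beta,n})v$ and vanishes identically when that vector does. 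Only at this point does your weight-tracking apply: $\omega(S_{\beta,n})v \in V(z)[n\beta]$ vanishes unless $V[\pm n\alpha] \neq 0$ for $\beta$ real, and is automatically allowed for $\beta$ imaginary (which is why the factors $(1 - q^{-2(m+1)(k+2)})$ persist in Lemma \ref{lem:rat-fn-me}). Your worry about the rank of the degeneration is secondary --- the simple-pole statement of Lemma \ref{lem:sing-denom} already controls that --- whereas the failure of the polar part to visibly factor through $\omega(S_{\beta,n})v$ occurs in every weight space below the top and must be repaired by the singular-vector argument above.
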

\begin{proof}
This is proven in the same way as \cite[Proposition 2.2]{ESt}.
\end{proof}

\begin{lemma} \label{lem:rat-fn-me}
For $w_0 \in L_2[0]$, as functions of $q^{-2\mu}$ and $q^{-2k}$, matrix elements of $\Phi^{w_0}_{\mu, k}(z)$ in the PBW basis may be written in the form $\frac{P_{ij}(q^{-2\mu}, q^{-2k})}{Q_{ij}(q^{-2\mu}, q^{-2k})}$ where
\begin{itemize}
\item $P_{ij}(q^{-2\mu}, q^{-2k})$ is a polynomial in $q^{-2\mu}$ and Laurent polynomial in $q^{-2k}$;

\item $Q_{ij}(q^{-2\mu}, q^{-2k})$ is a polynomial in $q^{-2\mu}$ and Laurent polynomial in $q^{-2k}$ given by a product of linear terms of the form 
\[
(1 - q^{-2(\mu + 1) - 2m(k + 2) \pm 2}),\qquad  (1 - q^{2(\mu + 1) - 2(m + 1)(k + 2) \pm 2}), \qquad (1 - q^{-2(m + 1)(k + 2)}) \qquad \text{ for $m \geq 0$};
\]

\item in each graded piece $M_{\mu, k}[-a \delta]$, the $P_{ij}$ contain a finite number of distinct non-zero monomials and the $Q_{ij}$ contain a finite number of linear factors.
\end{itemize}
\end{lemma}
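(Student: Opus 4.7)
The plan is to combine the singular-vector formula of Lemma \ref{lemma:sing-vect} with Cramer's rule applied to the Shapovalov-type form $\FF$, using Lemmas \ref{lem:sing-denom} and \ref{lem:phi-denom} to constrain the denominators to exactly the linear factors allowed by the weights of $L_2$.

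First I would use the intertwining property $\Phi_{\mu,k}^{w_0}(g v_{\mu,k}) = \Delta(g) \phi_{\mu,k,w_0}$ for $g$ in a PBW basis of $U_q(\ann_-)$. Expanding $\Delta(g)$ via Sweedler notation and pairing on one side with a dual PBW basis vector and on the other with a weight basis vector of $L_2$, every matrix element of $\Phi_{\mu,k}^{w_0}(z)$ becomes a finite sum of terms, each a product of one factor $\FF^{-1}_{\lambda,a}(\mu,k)_{ij}$, one Cartan monomial in $q^{\pm 2\mu}, q^{\pm 2k}$ coming from $q^{2\wrho}$ and from the $q^{h_i}$ factors in $\Delta(g)$ acting on weight vectors, one Shapovalov entry $\FF(g^{\nu,b}_l v_{\mu,k}, g^{\nu,b}_{l'} v_{\mu,k})$, and one scalar in $q$ from the action on $L_2$. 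The Shapovalov entries themselves reduce via the $U_q(\asl_2)$ commutation relations to Laurent polynomials in $q^{\pm 2\mu}, q^{\pm 2k}$ with coefficients rational in $q$.

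Next, for the denominators: Lemma \ref{lem:sing-denom} allows three families of linear factors with general $n \geq 1$, and Lemma \ref{lem:phi-denom} restricts $n$ to values for which $L_2[\pm n\alpha] \neq 0$. Since $L_2$ has nonzero weight spaces only at $0$ and $\pm\alpha$, only $n=1$ contributes, yielding exactly the three families of linear factors claimed for $Q_{ij}$. For the numerators, Cramer's rule writes $\FF^{-1}_{\lambda,a} = \det(\FF_{\lambda,a})^{-1}\,\text{adj}(\FF_{\lambda,a})$, where the adjugate has Laurent polynomial entries in $q^{\pm 2\mu}, q^{\pm 2k}$ and $\det(\FF_{\lambda,a})$ is the Kac-Kazhdan product of Proposition \ref{prop:kk-det}. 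Multiplying by the Cartan monomial, Shapovalov entry, and $L_2$ scalar above and cancelling common linear factors between adjugate and determinant as dictated by Lemma \ref{lem:phi-denom} yields a rational function $P_{ij}/Q_{ij}$ with $Q_{ij}$ as above and $P_{ij}$ Laurent polynomial in $q^{\pm 2\mu}, q^{\pm 2k}$; absorbing any residual negative power of $q^{-2\mu}$ into the numerator gives $P_{ij}$ polynomial in $q^{-2\mu}$ while keeping Laurent polynomial dependence in $q^{-2k}$. Finiteness of the number of monomials in $P_{ij}$ and linear factors in $Q_{ij}$ in each graded piece $M_{\mu,k}[-a\delta]$ is then immediate, because that graded piece is finite-dimensional and only finitely many PBW basis vectors contribute to any given matrix element.

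The main obstacle is controlling the cancellation between the Kac-Kazhdan determinant and the adjugate matrix: one must verify carefully that after pairing with $\omega(g^{\lambda,a}_j) w_0$ (which vanishes unless the weight of $g^{\lambda,a}_j$ matches one of $0, \pm\alpha$), no linear factors survive beyond those in the three claimed families, and that the various monomial contributions from $q^{2\wrho}$ and from the coproduct expansion can be bundled to make $P_{ij}$ polynomial (rather than merely Laurent polynomial) in $q^{-2\mu}$. This is essentially the content of Lemma \ref{lem:phi-denom} specialized to $L_2$, but executing it alongside the bookkeeping of the Cartan monomials requires some care.
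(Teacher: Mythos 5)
Your proposal follows the paper's own route: reduce every matrix element to the singular vector $\phi_{\mu,k,w_0}$ of Lemma \ref{lemma:sing-vect} via $\Phi^{w_0}_{\mu,k}(z)(g v_{\mu,k}) = \Delta(g)\phi_{\mu,k,w_0}$, control the denominators of the coefficients $\FF^{-1}_{\lambda,a}(\mu,k)_{ij}$ through Lemmas \ref{lem:sing-denom} and \ref{lem:phi-denom} (only $n=1$ survives since $L_2$ has weights $0,\pm\alpha$), and note that the Sweedler expansion of $\Delta(g)$ contributes only factors with no new poles. One caution in your bookkeeping: the coefficient of a PBW basis vector in $g_{(1)}g_i v_{\mu,k}$ is a structure constant of $U_q(\ann_-)$ lying in $\CC(q)$, independent of $\mu$ and $k$, and is \emph{not} a Shapovalov entry $\FF(g^{\nu,b}_l v_{\mu,k}, g^{\nu,b}_{l'}v_{\mu,k})$ --- pairing against $\omega(g')v_{\mu,k}^*$ computes $\sum_{g''}\FF_{g'g''}c_{g''}$ rather than the PBW coefficient $c_{g'}$, and recovering $c_{g'}$ from that would reintroduce the full Kac--Kazhdan determinant of the target weight space (including factors with $n\geq 2$ not on the allowed list), so you should extract coefficients directly in the PBW basis rather than through the form.
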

\begin{proof}
This follows for $\phi_{\mu, k, w_0} = \Phi^{w_0}_{\mu, k}(z) v_{\mu, k}$ by Lemma \ref{lem:phi-denom}.  Each other matrix element of $\Phi^{w_0}_{\mu, k}(z)$ is computed by acting on $\phi_{\mu, k, w_0}$ by monomials composed of products of $\Delta(f_i)$ for different $i$.  Therefore, matrix elements in $M_{\mu, k}[-a\delta]$ for $a \leq A$ are finite linear combinations of the coefficients of $\phi_{\mu, k, w_0}$ in degree at least $(-A)$ scaled by constants independent of $\mu$ and $k$, yielding the desired for all matrix elements.
\end{proof}

We now constrain the structure of poles of the trace function.  This result is similar to that of \cite[Proposition 3.1]{ESt}.  However, in the quantum affine setting, it is possible for the trace function to have an infinite number of poles, but as a formal power series in $q^{-2\omega}$ each coefficient does not.

\begin{prop} \label{prop:rat-fn-trace}
For $w_0 \in L_2[0]$, the function $\Psi^{w_0}(z; \lambda, \omega, \mu, k)$ may be written in the form 
\[
\Psi^{w_0}(z; \lambda, \omega, \mu, k) = q^{\lambda\mu} \sum_{n \geq 0} q^{-2\omega n} \wPsi^{w_0}_n(z; \lambda, \mu, k)
\]
as a formal power series in $q^{-2\omega}$ where $\wPsi^{w_0}_n(z; \lambda, \mu, k)$ takes the form $\frac{P_n(q^{-2\mu}, q^{-2k})}{Q_n(q^{-2\mu}, q^{-2k})}$ with $P_n(q^{-2\mu}, q^{-2k})$ polynomial in $q^{-2\mu}$ and Laurent polynomial in $q^{-2k}$ and $Q_n(q^{-2\mu}, q^{-2k})$ a polynomial given by a finite product of linear terms of the form 
\[
(1 - q^{-2(\mu + 1) - 2m(k + 2) \pm 2}),\qquad (1 - q^{2(\mu + 1) - 2(m + 1)(k + 2) \pm 2}), \qquad (1 - q^{-2(m+1)(k+2)}) \qquad \text{ for $m \geq 0$}.
\]
\end{prop}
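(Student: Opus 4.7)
The plan is to expand the trace by the $\delta$-grading on $M_{\mu, k}$ to obtain a formal power series in $q^{-2\omega}$, and then to invoke Lemma \ref{lem:rat-fn-me} to control the rational structure of each coefficient in $q^{-2\mu}$ and $q^{-2k}$.

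First, I would decompose the trace by weight. Since $w_0 \in L_2[0]$, the intertwiner $\Phi^{w_0}_{\mu, k}(z)$ preserves weight, so only its diagonal matrix elements on weight spaces of $M_{\mu, k}$ contribute to the trace. Using the form-induced identification $\whh \simeq \whh^*$, one computes that $q^{2\lambda\rho + 2\omega d}$ acts on $M_{\mu, k}[\mu\rho + k\Lambda_0 - \lambda'\rho - a\delta]$ by the scalar $q^{(\mu - \lambda')\lambda - 2 a \omega}$. Grouping terms by the $\delta$-grading $a = n$ and factoring out the highest-weight factor $q^{\lambda\mu}$ yields the claimed expansion
\[
\Psi^{w_0}(z; \lambda, \omega, \mu, k) = q^{\lambda\mu}\sum_{n \geq 0} q^{-2\omega n}\,\wPsi^{w_0}_n(z; \lambda, \mu, k),
\]
in which each $\wPsi^{w_0}_n$ is a sum, over the finite-dimensional weight spaces with $\delta$-grading $n$, of the form
\[
\wPsi^{w_0}_n(z; \lambda, \mu, k) = \sum_{\lambda'} q^{-\lambda'\lambda}\,\Tr\big|_{M_{\mu, k}[\mu\rho + k\Lambda_0 - \lambda'\rho - n\delta]}\big(\Phi^{w_0}_{\mu, k}(z)\big).
\]

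Second, I would apply Lemma \ref{lem:rat-fn-me} in a weight-adapted PBW basis: each diagonal matrix element of $\Phi^{w_0}_{\mu, k}(z)$ is a rational function $P_{ij}/Q_{ij}$ with $P_{ij}$ polynomial in $q^{-2\mu}$ and Laurent polynomial in $q^{-2k}$ and $Q_{ij}$ a product of the allowed linear factors. Summing over diagonal entries in each weight space and then combining the $\lambda'$-sum over a common denominator preserves this rational form, yielding the shape $P_n/Q_n$ advertised in the proposition.

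Third, the crucial input is the graded-piece finiteness in the third bullet of Lemma \ref{lem:rat-fn-me}: within the $\delta$-graded piece $M_{\mu, k}[-n\delta]$, only finitely many distinct monomials appear among the $P_{ij}$ and only finitely many distinct linear factors appear among the $Q_{ij}$. Taking the LCM of these finitely many linear factors produces the finite-product denominator $Q_n(q^{-2\mu}, q^{-2k})$ of the proposition, and the corresponding numerator $P_n$ inherits the polynomial and Laurent-polynomial structure.

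The main obstacle is precisely this third step. The sum defining $\wPsi^{w_0}_n$ ranges a priori over infinitely many weight spaces (as $\lambda'$ varies over even integers with $\lambda' \geq -2n$), so the combined denominator could in principle involve unboundedly many linear factors. What rescues us is the uniformity statement within a fixed $\delta$-degree furnished by Lemma \ref{lem:rat-fn-me}, which itself rests on tracking how linear factors propagate under successive applications of $\Delta(f_i)$ from the leading singular vector $\phi_{\mu, k, w_0}$ of Lemma \ref{lemma:sing-vect}.
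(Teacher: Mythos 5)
Your argument is correct and is essentially the argument the paper intends: the paper's proof is a one-line deferral to the analogy with \cite[Proposition 3.1]{ESt}, and your three steps --- the weight decomposition of the trace with the scalar $q^{\lambda(\mu - \lambda') - 2\omega a}$ on each weight space, the application of Lemma \ref{lem:rat-fn-me} to the diagonal matrix elements, and the graded-piece finiteness in the third bullet of that lemma to control the a priori infinite sum over $\lambda'$ within each fixed $\delta$-degree --- spell out exactly what that citation hides. Your identification of the third bullet as the crucial input is exactly right; the only point left implicit (in the paper as well) is that after distributing the $\lambda'$-sum over the finitely many monomials and linear factors, the resulting coefficient series in $q^{-2\lambda}$ must converge, which is handled as in the trigonometric case of \cite{ESt} for $|q^{-2\lambda}|$ small.
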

\begin{proof}
This follows from Lemma \ref{lem:rat-fn-me} in the same way as \cite[Proposition 3.1]{ESt} follows from \cite[Proposition 2.2]{ESt}.
\end{proof}

\section{The Felder-Varchenko function} \label{sec:fv-fn}

In this section, we introduce the Felder-Varchenko functions, specialize their definition to the three-dimensional irreducible representation of $U_q(\sl_2)$, and derive some of their properties which we will use later.  In particular, we give a series expansion and quasi-analytic continuation.

\subsection{Definition of the Felder-Varchenko function}

For $\Lambda = (\Lambda_1, \ldots, \Lambda_n)$, let 
\[
L_\Lambda = L_{\Lambda_1} \otimes \cdots \otimes L_{\Lambda_n}
\]
be the tensor product of finite dimensional irreducible representations of $U_q(\sl_2)$ with highest weights $\Lambda_1, \ldots, \Lambda_n$ and highest weight vectors $v_{\Lambda_1}, \ldots, v_{\Lambda_n}$. In \cite{FV2}, Felder-Varchenko defined the $L_\Lambda[0] \otimes L_\Lambda[0]$-valued functions
\begin{multline} \label{eq:fv-def}
u(z_1, \ldots, z_n, q, \lambda, \omega, \mu, k, q) = q^{-\lambda\mu} \int_\gamma \prod_{i, j}\Omega_{q^{\Lambda_j}}\Big(\frac{t_i}{z_j}; q^{-2\omega}, q^{-2k}\Big) \prod_{i < j}\Omega_{q^{-2}}\Big(\frac{t_i}{t_j}; q^{-2\omega}, q^{-2k}\Big) \\ \sum_{I, J} \omega_I(t_1, \ldots, t_m, z_1, \ldots, z_n, \lambda, q^{-2\omega}, \Lambda) \, \omega_J^\vee(t_1, \ldots, t_m, z_1, \ldots, z_n, \mu, q^{-2k}, \Lambda) \frac{dt_1 \cdots dt_m}{(2\pi i)^m t_1 \cdots t_m} e_I \otimes e_J.
\end{multline}
The notations in this definition are as follows.  The summation is over $I$ and $J$ so that for $e_I = e^{i_1} v_{\Lambda_1} \otimes \cdots \otimes e^{i_n} v_{\Lambda_n}$ and $e_J = e^{j_1} v_{\Lambda_1} \otimes \cdots \otimes e^{j_n} v_{\Lambda_n}$, $e_I$ and $e_J$ form bases of $L_\Lambda[0]$.  The phase function $\Omega$ is defined in (\ref{eq:phase-func-def}) by 
\[
\Omega_{a}(z; r, p) := \frac{(z a^{-1}; r, p)(z^{-1} a^{-1} rp; r, p)}{(z a; r, p)(z^{-1} a rp; r, p)},
\]
where $(u; r, p)$ is the double $q$-Pochhammer symbol of Subsection \ref{sec:theta-def}.  The weight function $\omega_I$ is defined by
\begin{align*}
\omega_{(i_1, \ldots, i_n)}(t_1, \ldots, t_m, z_1, \ldots, z_n, \lambda, r, \Lambda) &= \prod_{i < j}\frac{\theta(\frac{t_i}{t_j}; r)}{\theta(\frac{t_i}{t_j} q^2; r)} \sum_{I_1, \ldots, I_n} \prod_{l = 1}^n \prod_{i \in I_l}\prod_{j = 1}^{l-1} \frac{\theta(\frac{t_i}{z_j} q^{\Lambda_j}; r)}{\theta(\frac{t_i}{z_j} q^{-\Lambda_j}; r)}\\
&\phantom{=} \prod_{h < l} \prod_{i \in I_h, j \in I_l} \frac{\theta(\frac{t_i}{t_j}q^2; r)}{\theta(\frac{t_i}{t_j}; r)} \prod_{h = 1}^n \prod_{j \in I_h} \frac{\theta(\frac{t_j}{z_h}q^{2\lambda - \Lambda_h + 2 i_h - 2 \sum_{l = 1}^{h-1}(\Lambda_l - 2i_l)}; r)}{\theta(\frac{t_j}{z_h} q^{-\Lambda_h}, r)},
\end{align*}
where $\theta(-)$ is Jacobi's first theta function as defined in (\ref{eq:theta-def}) and the summation is over disjoint subsets $I_1, \ldots, I_n$ of $\{1, \ldots, m\}$ so that $|I_h| = i_h$.  The mirror weight function $\omega_I^\vee$ is defined by
\[
\omega_{i_1, \ldots, i_n}^\vee(t_1, \ldots, t_m, z_1, \ldots, z_n, \mu, p, \Lambda_1, \ldots, \Lambda_n) = \omega_{i_n, \ldots, i_1}(t_1, \ldots, t_m, z_n, \ldots, z_1, \mu, p, \Lambda_n, \ldots, \Lambda_1).
\]
Finally, the cycle of integration $\gamma$ is defined by analytic continuation from the domain on which 
\[
|q|, |q^{-2k}|, |q^{-2\omega}|, |q^{\Lambda_j}| < 1,
\]
where it is the $m$-fold product of unit circles.  We refer to the functions $u(z_1, \ldots, z_n, q, \lambda, \omega, \mu, k)$ as \textit{Felder-Varchenko} functions.

\begin{remark}
We have adopted different notations from those used in \cite{FV2}; in particular, we use multiplicative notation.  For $q = e^{2\pi i \eta}$, we have
\begin{equation} \label{eq:fv-trans}
u(q, \lambda, \omega, \mu, k) = u^{FV}(2 \lambda \eta, 2\mu \eta, -2\omega \eta, -2k\eta, \eta),
\end{equation}
where $u^{FV}(\lambda, \mu, \tau, p, \eta)$ denotes the function from \cite{FV2}.
\end{remark}

Let $n = 1$ and $\Lambda = (2)$, so that $L_\Lambda = L_2$, where $L_2$ is the $3$-dimensional irreducible representation of $U_q(\sl_2)$.  Notice then that $L_2[0] \otimes L_2[0]$ is $1$-dimensional, so we may consider $u(z, \lambda, \mu, \tau, p, \eta)$ as a numerical function.  In this case, we have
\[
\omega_1(t, z, \lambda, r) = \frac{\theta(\frac{t}{z}q^{2\lambda}; r)}{\theta(\frac{t}{z}q^{-2}; r)} \qquad \text{ and } \qquad \omega_1^\vee(t, z, \mu, p) = \frac{\theta(\frac{t}{z} q^{2\mu}; p)}{\theta(\frac{t}{z} q^{-2}; p)}.
\]
Specializing (\ref{eq:fv-def}) and noting that the result is independent of $z$ yields
\begin{equation} \label{eq:fv-def2}
u(q, \lambda, \omega, \mu, k) = q^{-\lambda\mu - \lambda - \mu - 2} \oint_{\cC_t} \frac{dt}{2\pi i t} \Omega_{q^2}(t; q^{-2\omega}, q^{-2k}) \frac{\theta_0(tq^{2\mu}; q^{-2k})}{\theta_0(tq^{-2}; q^{-2k})} \frac{\theta_0(t q^{2\lambda}; q^{-2\omega})}{\theta_0(t q^{-2}; q^{-2\omega})},
\end{equation}
where for $|q|, |q^{-2k}|, |q^{-2\omega}| < 1$ the contour $\cC_t$ is the unit circle and we omit the $z$ argument.  The remainder of this section will be devoted to proving some properties of the Felder-Varchenko function.

\subsection{Series expansion of the Felder-Varchenko function}

In this section, we give formal power series expansions for the Felder-Varchenko function in formal neighborhoods of $0$ and $\infty$ in $q^{-2\mu}$. Recall that the terminating $q$-Pochhammer symbol is $(u; q)_m = \frac{(u; q)}{(uq^m; q)}$ as in Subsection \ref{sec:theta-def}.

\begin{prop} \label{prop:fv-series}
As a formal power series in $q^{-2\omega}$, we have in a formal neighborhood of $0$ in $q^{-2\mu}$ that 
\begin{multline*}
q^{\lambda\mu}\frac{u(q, \lambda, \omega, \mu, k)}{\theta_0(q^{2\mu + 2}; q^{-2k})} =-q^{- \lambda - \mu - 2} \frac{(q^4;q^{-2\omega})(q^{-4}; q^{-2\omega}, q^{-2k})}{(q^{-4}; q^{-2k})(q^4; q^{-2\omega}, q^{-2k})} \frac{1}{(q^{-2k}; q^{-2k})(q^{-2\omega};q^{-2\omega})}\\
 \sum_{n \geq 0} q^{-(2\mu + 2)n} \frac{\theta_0(q^{2\lambda + 2}q^{-2kn}; q^{-2\omega})}{\theta_0(q^4 q^{-2kn};q^{-2\omega})} \prod_{l = 1}^{n} \frac{\theta_0(q^4 q^{-2kl}; q^{-2\omega})}{\theta_0(q^{-2kl}; q^{-2\omega})}
\end{multline*}
and in a formal neighborhood of $\infty$ in $q^{-2\mu}$ that 
\begin{multline*}
q^{\lambda\mu + \mu} \frac{u(q, \lambda, \omega, \mu, k)}{\theta_0(q^{2\mu - 2}; q^{-2k})} = q^{- \lambda - 2} \frac{(q^{-4}q^{-2\omega}; q^{-2k}, q^{-2\omega})}{(q^4 q^{-2k} q^{-2\omega}; q^{-2k}, q^{-2\omega})} \frac{1}{(q^{-2k}; q^{-2k})(q^{-2\omega}; q^{-2\omega})}\frac{1}{(q^{4}q^{-2k}; q^{-2k})} \\
\sum_{n \geq 0} q^{(2\mu + 2)n} \frac{\theta_0(q^{2\lambda - 2} q^{2kn}; q^{-2\omega})}{\theta_0(q^{-4}q^{2kn}; q^{-2\omega})} \prod_{l = 1}^n \frac{\theta_0(q^{-4}q^{2kl}; q^{-2\omega})}{\theta_0(q^{2kl}; q^{-2\omega})}.
\end{multline*}
\end{prop}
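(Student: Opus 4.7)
The plan is to evaluate the contour integral defining $u(q, \lambda, \omega, \mu, k)$ by residues, working order-by-order in the formal power series in $q^{-2\omega}$. At each such order the integrand becomes a meromorphic function of $t$ with only finitely many poles inside the unit disk, so the residue calculation is standard. For the first expansion, the contributions organize themselves into a sum indexed by the poles $t_n = q^{2-2kn}$ for $n \geq 0$ of $\theta_0(tq^{-2}; q^{-2k})^{-1}$ inside the unit circle. For $n \geq 1$ the residue of $\theta_0(tq^{-2}; q^{-2k})^{-1}\frac{dt}{t}$ at $t_n$ is computed from $\theta_0'(q^{-2kn}; q^{-2k}) = (-1)^{n+1}q^{kn(n+1)}(q^{-2k}; q^{-2k})^2$, obtained by iterating the functional equation $\theta_0(uq^{-2k}; q^{-2k}) = -u^{-1}\theta_0(u; q^{-2k})$. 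Next, the factor $\theta_0(t_n q^{2\mu}; q^{-2k})$ is simplified by the quasi-periodicity
\[
\theta_0(u(q^{-2k})^n; q^{-2k}) = (-1)^n u^{-n}q^{kn(n-1)}\theta_0(u; q^{-2k}),
\]
extracting the common factor $\theta_0(q^{2\mu+2}; q^{-2k})$ (cancelling the LHS denominator) and producing the geometric factor $q^{-(2\mu+2)n}$. The ratio $\theta_0(t_n q^{2\lambda}; q^{-2\omega})/\theta_0(t_n q^{-2}; q^{-2\omega})$ gives $\theta_0(q^{2\lambda+2}q^{-2kn}; q^{-2\omega})/\theta_0(q^{-2kn}; q^{-2\omega})$, and direct manipulation of the double $q$-Pochhammers under the shift $t \mapsto tq^{-2k}$ shows that $\Omega_{q^2}(t_n; q^{-2\omega}, q^{-2k})$ combined with $\theta_0(q^{-2kn}; q^{-2\omega})/\theta_0(q^4 q^{-2kn}; q^{-2\omega})$ telescopes into the $n = 0$ value times $\prod_{l=1}^n \theta_0(q^4 q^{-2kl}; q^{-2\omega})/\theta_0(q^{-2kl}; q^{-2\omega})$.

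The $n = 0$ contribution requires care: at $t = q^2$ the zero of $\theta_0(tq^{-2}; q^{-2k})$ cancels a simultaneous zero of $(tq^{-2}; q^{-2\omega}, q^{-2k})$ in $\Omega_{q^2}$, leaving only the simple pole of $\theta_0(tq^{-2}; q^{-2\omega})^{-1}$ to contribute. Using the near-diagonal asymptotics $(u; r, p) \sim (1-u)(r; r)(p; p)(rp; r, p)$ and $\theta_0(u; p) \sim (1-u)(p;p)^2$ as $u \to 1$, together with $\Res_{t = q^2}\theta_0(tq^{-2}; q^{-2\omega})^{-1}\frac{dt}{t} = -(q^{-2\omega}; q^{-2\omega})^{-2}$, yields the overall normalization factor
\[
\frac{(q^4; q^{-2\omega})(q^{-4}; q^{-2\omega}, q^{-2k})}{(q^{-4}; q^{-2k})(q^4; q^{-2\omega}, q^{-2k})(q^{-2k}; q^{-2k})(q^{-2\omega}; q^{-2\omega})}
\]
appearing in the first formula.

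For the second expansion (around $q^{-2\mu} = \infty$), the plan is analogous but uses the dual pole family $t'_n = q^{-2+2kn}$ arising from zeros of $(tq^2; q^{-2\omega}, q^{-2k})$ in the denominator of $\Omega_{q^2}$, which lie outside the unit disk for $n \geq 1$. At each formal order of $q^{-2\omega}$, the unit-circle integral equals the negative sum of outside residues (up to a residue-at-infinity contribution handled by the explicit asymptotics of the integrand). Applying the dual quasi-periodicity
\[
\theta_0(u(q^{-2k})^{-n}; q^{-2k}) = (-1)^n u^n q^{kn(n+1)}\theta_0(u; q^{-2k})
\]
to $\theta_0(t'_n q^{2\mu}; q^{-2k})$ now extracts $\theta_0(q^{2\mu-2}; q^{-2k})$ and produces $q^{(2\mu-2)n}$; combining with the $q^{\mu}$ prefactor on the LHS gives the geometric series in $q^{(2\mu+2)n}$. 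The parallel telescoping and $n = 0$ analysis yield the second formula.

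The hardest part will be verifying the telescoping identity for $\Omega_{q^2}(t_n; q^{-2\omega}, q^{-2k})$ and the exact identification of the $n = 0$ prefactor as the stated ratio of double $q$-Pochhammer symbols, both of which require delicate manipulation of Pochhammer identities. A secondary issue is accounting for additional interior poles from $\theta_0(tq^{-2}; q^{-2\omega})^{-1}$ at $t = q^{2-2\omega m}$ and from $(t^{-1}q^2 q^{-2\omega}q^{-2k}; q^{-2\omega}, q^{-2k})^{-1}$, which are correctly absorbed by the formal expansion of the theta functions in $q^{-2\omega}$ appearing in the stated sum.
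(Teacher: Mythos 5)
Your strategy coincides with the paper's: expand the integrand formally in $q^{-2\omega}$, push the contour to $0$ (resp.\ $\infty$), sum the residues at $t=q^2q^{-2kn}$ (resp.\ $t=q^{-2}q^{2kn}$), and simplify using the quasi-periodicity of $\theta_0$. Your identification of the two pole families, the $n=0$ cancellation between the zero of $(tq^{-2};q^{-2\omega},q^{-2k})$ and that of $\theta_0(tq^{-2};q^{-2k})$, the telescoping of $\Omega_{q^2}$ under $t\mapsto q^{-2k}t$, and the reason the $q^{-2\omega}$-dependent poles are invisible order by order (each coefficient of the formal expansion is $v_0$ times a Laurent polynomial, so its poles are those of $v_0$) all match the paper's argument.

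The genuine gap is the sentence ``at each such order the integrand becomes a meromorphic function of $t$ with only finitely many poles inside the unit disk, so the residue calculation is standard.'' This is false: already the order-zero coefficient has poles at $t=q^2q^{-2kn}$ for \emph{every} $n\ge 0$, accumulating at $t=0$ (and dually at $\infty$), so the residue theorem does not apply off the shelf. You must (i) show the contour can be deformed to $0$ with vanishing boundary contribution and (ii) make sense of the infinite residue sum. This is exactly where the hypothesis ``formal neighborhood of $0$ in $q^{-2\mu}$'' earns its keep: by Corollary \ref{corr:theta-ratio} one has $\bigl|\theta_0(tq^{2\mu};q^{-2k})/\theta_0(tq^{-2};q^{-2k})\bigr|\le C\,|t|^{(\mu+1)/k}$ on suitable contours avoiding the poles, so for $\tfrac{\mu+1}{k}$ large the integrand times any fixed Laurent monomial decays at $0$ and the boundary term dies, while the quasi-periodicity you quote shows the $n$-th residue carries exactly the factor $q^{-(2\mu+2)n}$, making the sum a well-defined formal series in $q^{-(2\mu+2)}$; the mirror estimate is needed at $\infty$ for the second formula. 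Without this step your argument does not close. Two smaller bookkeeping slips: the factor $q^{(2\mu+2)n}$ in the second expansion comes from the ratio $\theta_0(q^{2\mu-2}q^{2kn};q^{-2k})/\theta_0(q^{-4}q^{2kn};q^{-2k})$, not from the ($n$-independent) prefactor $q^{\mu}$; and telescoping $\Omega_{q^2}$ from $t=q^2$ produces an indeterminate $0\cdot\infty$ in its first step (since $\Omega_{q^2}(q^2;r,p)=0$ while $\theta_0(1;r)=0$ appears in the denominator), which is why the paper evaluates each residue directly and only afterwards recognizes the product $\prod_{l=1}^{n}\theta_0(q^4q^{-2kl};q^{-2\omega})/\theta_0(q^{-2kl};q^{-2\omega})$.
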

\begin{proof}
Denote the integrand of the Felder-Varchenko function by 
\[
v(t, q, \lambda, \omega, \mu, k) = \Omega_{q^2}(t; q^{-2\omega}, q^{-2k}) \frac{\theta_0(tq^{2\mu}; q^{-2k})}{\theta_0(tq^{-2}; q^{-2k})} \frac{\theta_0(t q^{2\lambda}; q^{-2\omega})}{\theta_0(t q^{-2}; q^{-2\omega})},
\]
and take the formal expansion 
\[
v(t, q, \lambda, \omega, \mu, k) = \sum_{n \geq 0} v_n(t, q, \lambda, \mu, k) q^{-2\omega n}.
\]
We may write $v(t, q, \lambda, \omega, \mu, k) = v_0(t, q, \lambda, \mu, k) \wv(t, q, \lambda, \omega, \mu, k)$, where $\wv$ has power series expansion
\[
\wv(t, q, \lambda, \omega, \mu, k) = 1 + \sum_{n > 0} \wv_n(t, q, \lambda, \mu, k) q^{-2\omega n}
\]
with each $\wv_n$ a Laurent polynomial in $t$ of degree at most $n$.  In particular, all poles of $v_n$ in $\CC^\times$ are those of $v_0$. For small enough $\eps > 0$, we may choose contours for $t$ with $|t| \to 0$ satisfying the hypotheses of Corollary \ref{corr:theta-ratio} for $\eps$.  On such contours, we have the estimate
\begin{equation} \label{eq:fv-contour-est}
\left|\frac{\theta_0(tq^{2\mu}; q^{-2k})}{\theta_0(tq^{-2}; q^{-2k})} t^n\right| \leq D_1(q^{-2k}, \eps) |q|^{\mu + 1} \left|t^2 q^{2\mu - 2}\right|^{\frac{\mu + 1}{2k}} |t|^n \leq C\, |t|^{\frac{\mu + 1}{k} + n}
\end{equation}
for a constant $C$ is independent of $t$.  

On a formal neighborhood of $0$ in $q^{-2\mu}$, because $\wv_n$ is a Laurent polynomial in $t$ of degree at most $n$, by (\ref{eq:fv-contour-est}) we may compute $u(q, \lambda, \omega, \mu, k)$ by deforming $\cC_t$ to $0$ and summing residues at $t = q^2 q^{-2kn}$.  As a power series in $q^{-2\omega}$, we obtain
\begin{align*} 
q^{\lambda\mu}\frac{u(q, \lambda, \omega, \mu, k)}{\theta_0(q^{2\mu + 2}; q^{-2k})}
&= \frac{q^{- \lambda - \mu - 2}}{\theta_0(q^{2\mu + 2}; q^{-2k})} \sum_{n \geq 0} \Res_{t = q^{2}q^{-2kn}} \Big(\frac{dt}{2\pi i t} \Omega_{q^2}(t; q^{-2\omega}, q^{-2k}) \frac{\theta_0(tq^{2\mu}; q^{-2k})}{\theta_0(tq^{-2}; q^{-2k})} \frac{\theta_0(tq^{2\lambda}; q^{-2\omega})}{\theta_0(tq^{-2}; q^{-2\omega})}\Big)\\
%& =q^{-\lambda\mu - \lambda - \mu - 2} \sum_{n \geq 0} \Res_{t = q^{2}q^{-2kn}} \left(\frac{dt}{2\pi i t} \frac{(t q^{-2} q^{-2k}; q^{-2\omega}, q^{-2k})(t^{-1}q^{-2} q^{-2\omega} q^{-2k}; q^{-2\omega}, q^{-2k})}{(tq^2; q^{-2\omega}, q^{-2k})(t^{-1} q^2 q^{-2\omega} q^{-2k}; q^{-2\omega}, q^{-2k})}\frac{\theta_0(tq^{2\mu}; q^{-2k})}{\theta_0(tq^{-2}; q^{-2k})} \frac{\theta_0(t q^{2\lambda}; q^{-2\omega})}{(t^{-1}q^2 q^{-2\omega}; q^{-2\omega})}\right)\\
&= \frac{q^{- \lambda - \mu - 2}}{\theta_0(q^{2\mu + 2}; q^{-2k})} \sum_{n \geq 0} \frac{(q^{-2k(n+1)}; q^{-2\omega}, q^{-2k})(q^{-4}q^{-2\omega} q^{2k(n-1)}; q^{-2\omega}, q^{-2k})}{(q^4 q^{-2kn}; q^{-2\omega}, q^{-2k})(q^{-2\omega} q^{2k(n-1)}; q^{-2\omega}, q^{-2k})} \\
&\phantom{======}\frac{\theta_0(q^{2\mu + 2} q^{-2kn}; q^{-2k})}{(q^{-2k}; q^{-2k}) (q^{2k(n - 1)}; q^{-2k})_{n-1} (q^{-2kn}; q^{-2k})} \frac{\theta_0(q^{2\lambda + 2}q^{-2kn}; q^{-2\omega})}{(q^{-2\omega} q^{2kn}; q^{-2\omega})}\\
%&=-q^{-\lambda\mu - \lambda - \mu - 2} \frac{(q^{-2k}; q^{-2\omega}, q^{-2k})(q^{-4}q^{-2\omega}; q^{-2\omega}, q^{-2k})}{(q^4; q^{-2\omega}, q^{-2k})(q^{-2\omega}; q^{-2\omega}, q^{-2k})} \frac{\theta_0(q^{2\mu + 2}; q^{-2k})}{(q^{-2k}; q^{-2k})^2}\\
%&\phantom{====} \sum_{n \geq 0} q^{-(2\mu + 2)n} \frac{\theta_0(q^{2\lambda + 2}q^{-2kn}; q^{-2\omega})}{(q^{-2\omega} q^{2kn}; q^{-2\omega})}\prod_{l = 1}^{n} \frac{(q^4q^{-2k(l-1)}; q^{-2\omega})}{(q^{-2kl}; q^{-2\omega})} \prod_{l = 1}^{n - 1} \frac{(q^{-4}q^{-2\omega} q^{2kl}; q^{-2\omega})}{(q^{-2\omega} q^{2kl}; q^{-2k})}\\
%&=-q^{-\lambda\mu - \lambda - \mu - 2} \frac{(q^4;q^{-2\omega})(q^{-2k}; q^{-2\omega}, q^{-2k})(q^{-4}q^{-2\omega}; q^{-2\omega}, q^{-2k})}{(q^4; q^{-2\omega}, q^{-2k})(q^{-2\omega}; q^{-2\omega}, q^{-2k})} \frac{\theta_0(q^{2\mu + 2}; q^{-2k})}{(q^{-2k}; q^{-2k})^2}\\
%&\phantom{====} \sum_{n \geq 0} q^{-(2\mu + 2)n} \frac{\theta_0(q^{2\lambda + 2}q^{-2kn}; q^{-2\omega})}{\theta_0(q^4 q^{-2kn};q^{-2\omega})} \prod_{l = 1}^{n} \frac{\theta_0(q^4 q^{-2kl}; q^{-2\omega})}{\theta_0(q^{-2kl}; q^{-2\omega})}\\
&=-q^{- \lambda - \mu - 2} \frac{(q^4;q^{-2\omega})(q^{-4}; q^{-2\omega}, q^{-2k})}{(q^{-4}; q^{-2k})(q^4; q^{-2\omega}, q^{-2k})} \frac{1}{(q^{-2k}; q^{-2k})(q^{-2\omega};q^{-2\omega})}\\
&\phantom{====} \sum_{n \geq 0} q^{-(2\mu + 2)n} \frac{\theta_0(q^{2\lambda + 2}q^{-2kn}; q^{-2\omega})}{\theta_0(q^4 q^{-2kn};q^{-2\omega})} \prod_{l = 1}^{n} \frac{\theta_0(q^4 q^{-2kl}; q^{-2\omega})}{\theta_0(q^{-2kl}; q^{-2\omega})}.
\end{align*}
On a formal neighborhood of $\infty$ in $q^{-2\mu}$, we may similarly compute $u(q, \lambda, \omega, \mu, k)$ by deforming $\cC_t$ to $\infty$ and summing residues at $t = q^{-2}q^{2kn}$, yielding
\begin{align*}
\frac{q^{\lambda\mu + \mu}u(q, \lambda, \omega, \mu, k)}{\theta_0(q^{2\mu - 2}; q^{-2k})} &= \frac{q^{- \lambda - 2}}{\theta_0(q^{2\mu - 2}; q^{-2k})} \sum_{n \geq 0} \Res|_{t = q^{-2} q^{2kn}}\Big(\frac{dt}{2\pi it} \Omega_{q^2}(t; q^{-2\omega}, q^{-2k}) \frac{\theta_0(tq^{2\mu}; q^{-2k})}{\theta_0(tq^{-2}; q^{-2k})} \frac{\theta_0(tq^{2\lambda}; q^{-2\omega})}{\theta_0(tq^{-2}; q^{-2\omega})}\Big)\\
&= \frac{q^{- \lambda - 2}}{\theta_0(q^{2\mu - 2}; q^{-2k})} \sum_{n \geq 0} \frac{\theta_0(q^{2\mu - 2} q^{2kn}; q^{-2k})}{\theta_0(q^{-4}q^{2kn}; q^{-2k})} \frac{\theta_0(q^{2\lambda - 2} q^{2kn}; q^{-2\omega})}{\theta_0(q^{-4} q^{2kn}; q^{-2\omega})} \\
&\phantom{=====}\frac{(q^{-4} q^{2kn}; q^{-2k}, q^{-2\omega})(q^{-2kn} q^{-2k}q^{-2\omega}; q^{-2k} q^{-2\omega})}{(q^{2kn} q^{-2\omega}; q^{-2k}, q^{-2\omega}) (q^{2kn}; q^{-2k})_n (q^{-2k}; q^{-2k}) (q^4 q^{-2kn} q^{-2k}q^{-2\omega}; q^{-2k}, q^{-2\omega})} \\
%&= q^{-\lambda\mu - \lambda - \mu - 2} \frac{(q^{-4}; q^{-2k}, q^{-2\omega})}{(q^4 q^{-2k} q^{-2\omega}; q^{-2k}, q^{-2\omega})} \frac{1}{(q^{-2k}; q^{-2k})(q^{-2\omega}; q^{-2\omega})}\frac{\theta_0(q^{2\mu - 2}; q^{-2k})}{\theta_0(q^{-4}; q^{-2k})} \\
%&\phantom{=====}\sum_{n \geq 0} q^{(2\mu + 2)n} \frac{\theta_0(q^{2\lambda - 2} q^{2kn}; q^{-2\omega})}{\theta_0(q^{-4}q^{2kn}; q^{-2\omega})} \prod_{l = 1}^n \frac{\theta_0(q^{-4}q^{2kl}; q^{-2\omega})}{\theta_0(q^{2kl}; q^{-2\omega})}\\
&= \frac{q^{- \lambda - 2}}{\theta_0(q^{2\mu - 2}; q^{-2k})} \frac{(q^{-4}q^{-2\omega}; q^{-2k}, q^{-2\omega})}{(q^4 q^{-2k} q^{-2\omega}; q^{-2k}, q^{-2\omega})} \frac{1}{(q^{-2k}; q^{-2k})(q^{-2\omega}; q^{-2\omega})}\frac{\theta_0(q^{2\mu - 2}; q^{-2k})}{(q^{4}q^{-2k}; q^{-2k})} \\
&\phantom{=====}\sum_{n \geq 0} q^{(2\mu + 2)n} \frac{\theta_0(q^{2\lambda - 2} q^{2kn}; q^{-2\omega})}{\theta_0(q^{-4}q^{2kn}; q^{-2\omega})} \prod_{l = 1}^n \frac{\theta_0(q^{-4}q^{2kl}; q^{-2\omega})}{\theta_0(q^{2kl}; q^{-2\omega})}. \qedhere
\end{align*}
\end{proof}

\subsection{Quasi-analytic continuation of the Felder-Varchenko function}

In what follows, we require the concept of quasi-analytic continuation of a formal power series in multiple variables.  Let $f^1(z, w_1, \ldots, w_m)$ and $f^2(z, w_1, \ldots, w_m)$ be formal power series of the form
\[
f^i(z, w_1, \ldots, w_m) = \sum_{n \geq 0} f_n^i(w_1, \ldots, w_m) z^n.
\]
Suppose that for $|z| < 1$, $f^1(z, w_1, \ldots, w_m)$ and $f^2(z, w_1, \ldots, w_m)$ converge on possibly disjoint regions $U_1$ and $U_2$ for $w = (w_1, \ldots, w_m)$ and that each $f_n^i(w_1, \ldots, w_m)$ admits analytic continuation to a rational function of $w$.  We say that $f^1$ is a \textit{quasi-analytic continuation} of $f^2$ if for each $n$ we have an equality of analytic continuations $f_n^1(w_1, \ldots, w_m) = f_n^2(w_1, \ldots, w_m)$. We denote this by $f^1(z, w_1, \ldots, w_m) \equiv f^2(z, w_1, \ldots, w_m)$.  As an example, we compute a quasi-analytic continuation we will use later.

\begin{lemma} \label{lem:qa-comps}
As quasi-analytic continuations in the formal variable $p$ from $|r| < 1$ to $|r| > 1$, we have 
\[
(p; r) \equiv (r^{-1}p; r^{-1})^{-1} \qquad \text{ and } \qquad (ap; p, r) \equiv (ar^{-1}p; p, r^{-1})^{-1}.
\]
\end{lemma}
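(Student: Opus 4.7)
The plan is to establish both identities by expanding each side as a formal power series in $p$ and comparing the coefficients, which are rational functions of $r$ given by the same algebraic formulas in each of the two regimes $|r| < 1$ and $|r| > 1$. Since the quasi-analytic continuation relation is by definition just coefficient-wise equality of analytic continuations, this mechanical approach suffices.

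For the first identity, I would invoke the two classical Euler identities
\[
(p; r) = \sum_{m \geq 0} \frac{(-1)^m r^{m(m-1)/2}}{(r;r)_m}\, p^m, \qquad \frac{1}{(p; r)} = \sum_{m \geq 0} \frac{p^m}{(r;r)_m},
\]
valid as formal power series in $p$ with rational coefficients in $r$. Applying the second identity with $p$ replaced by $r^{-1}p$ and $r$ replaced by $r^{-1}$ yields
\[
(r^{-1}p; r^{-1})^{-1} = \sum_{m \geq 0} \frac{r^{-m} p^m}{(r^{-1}; r^{-1})_m}.
\]
Next I would use the elementary rearrangement $1 - r^{-k} = -r^{-k}(1 - r^k)$ to obtain
\[
(r^{-1}; r^{-1})_m = (-1)^m r^{-m(m+1)/2}\, (r; r)_m,
\]
and substitute. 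Using the arithmetic identity $-m + m(m+1)/2 = m(m-1)/2$ in the exponent of $r$ then matches this series term-by-term to the first Euler identity applied to $(p;r)$, giving coefficient-wise equality as rational functions of $r$.

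For the second identity, I would reduce to the first via the factorizations
\[
(ap; p, r) = \prod_{n \geq 1}(a p^n; r), \qquad (ar^{-1}p; p, r^{-1}) = \prod_{n \geq 1}(a r^{-1} p^n; r^{-1}),
\]
which follow from a straightforward reindexing of the double product. The first identity with $p$ replaced by the single monomial $ap^n$ gives $(ap^n; r) \equiv (ar^{-1}p^n; r^{-1})^{-1}$; this substitution is legitimate because each factor starts $1 + O(p^n)$, so only finitely many factors contribute to any fixed coefficient of $p^N$ in the infinite product. Multiplying over $n \geq 1$ and inverting then yields the claimed identity, with each coefficient of $p^N$ a finite product of rational functions of $r$ that agree on both sides.

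The main point requiring care is the bookkeeping of powers of $r$ when converting $(r^{-1}; r^{-1})_m$ into $(r; r)_m$ and cancelling against the factor $r^{-m}$ coming from $(r^{-1}p)^m$; beyond this, the argument is purely formal.
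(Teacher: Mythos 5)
Your proof is correct and follows essentially the same route as the paper: the first identity via the Euler $q$-exponential expansions and the exact rational-function identity $(r^{-1};r^{-1})_m = (-1)^m r^{-m(m+1)/2}(r;r)_m$, and the second by factoring the double Pochhammer symbol as $\prod_{n\geq 1}(ap^n;r)$ and applying the first identity factor-by-factor. Your added remark that each factor is $1+O(p^n)$, so only finitely many factors contribute to a given coefficient, is a worthwhile explicit justification of a step the paper leaves implicit.
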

\begin{proof}
For the first claim, notice that 
\[
(p; r) = \prod_{n \geq 0} (1 - p r^n) = \sum_{k \geq 0} \frac{(-1)^k p^k r^{\binom{k}{2}}}{(1 - r) \cdots (1 - r^k)} \equiv \sum_{k \geq 0} \frac{p^k r^{-k}}{(1 - r^{-1}) \cdots (1 - r^{-k})} = (r^{-1}p; r^{-1})^{-1}.
\]
For the second claim, by the first claim, we have
\[
(ap; p, r) = \prod_{n \geq 1} (ap^n; r) \equiv \prod_{n \geq 1} (a r^{-1} p^n; r^{-1})^{-1} = (a r^{-1}p; p, r^{-1})^{-1}. \qedhere
\]
\end{proof}

\begin{remark}
The domains $U_1$ and $U_2$ on which $f^1$ and $f^2$ converge in $w$ are allowed not only to be disjoint but also to be separated by a dense set of singularities, as occurs in Lemma \ref{lem:qa-comps}.  This is forbidden when analytic continuation is considered instead of quasi-analytic continuation.
\end{remark}

In what follows, we consider quasi-analytic continuation in the formal variable $q^{-2\omega}$.  We give a quasi-analytic continuation of a normalization of the Felder-Varchenko function to the region $|q| > 1, |q^{-2k}| > 1$.   Note that the integrand of $u(q, \lambda, \omega, \mu, k)$ does not admit quasi-analytic continuation in $q^{-2k}$ when $q^{-2\omega}$ is considered as a formal variable, but the normalized result of the integration does admit such a continuation.  Recall the terminating $q$-Pochhammer symbol $(u; q)_m$ of Subsection \ref{sec:theta-def}. 

\begin{lemma} \label{lem:fv-null}
For $|q| < 1$, $|q^{-2k}| < 1$, and $\frac{2(\mu + 1)}{k} > m$, we have the integral
\begin{align*}
I_m &= \oint_{|t| = 1} \frac{dt}{2\pi i t} \frac{(tq^{-2}; q^{-2k})}{(tq^2; q^{-2k})} \frac{\theta_0(t q^{2\mu}; q^{-2k})}{\theta_0(tq^{-2}; q^{-2k})} \frac{1 - t q^{2\lambda}}{1 - tq^{-2}} t^m\\
& = - q^{2m} \frac{(q^{2\mu + 2}; q^{-2k})(q^{-2\mu + 2 - 2k}; q^{-2k})}{(q^{-2k};q^{-2k})(q^4; q^{-2k})} \frac{(q^{-2\mu - 2 - 2k}; q^{-2k})_m}{(q^{-2\mu + 2 - 2k}; q^{-2k})_m} \frac{1 - q^{2\lambda + 2} - q^{-2\mu + 2 - 2km} + q^{2\lambda - 2\mu - 2km}}{1 - q^{-2\mu - 2 - 2km}}.
\end{align*}
\end{lemma}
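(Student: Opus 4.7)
The plan is to reduce $I_m$ to a bilateral basic hypergeometric sum that Ramanujan's ${}_1\psi_1$ summation formula evaluates explicitly, then to extract a single Laurent coefficient and rewrite the result in the form given. First, I exploit linearity in $\lambda$: writing $1 - tq^{2\lambda}$ as $1 - q^{2\lambda}\cdot t$ yields $I_m = J_m - q^{2\lambda}J_{m+1}$, where $J_\ell$ denotes the corresponding integral with $(1 - tq^{2\lambda})$ removed and with $t^\ell$ in place of $t^m$. Thus it suffices to compute $J_\ell$ for $\ell = m, m+1$.

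Second, I would simplify the integrand of $J_\ell$. Expanding $\theta_0(u;q) = (u;q)(u^{-1}q;q)$ with $q = q^{-2k}$ allows cancellation of the common factor $(tq^{-2}; q^{-2k})$ between the numerator and $\theta_0(tq^{-2}; q^{-2k})$ in the denominator. The elementary identity $(1 - tq^{-2})(t^{-1}q^{2-2k}; q^{-2k}) = -tq^{-2}(t^{-1}q^2; q^{-2k})$ then absorbs the remaining $(1 - tq^{-2})$ factor, leaving
\[ J_\ell = -q^2 \oint_{|t|=1}\frac{dt}{2\pi it}\, t^{\ell - 1}\, \frac{(tq^{2\mu}; q^{-2k})(t^{-1}q^{-2\mu-2k}; q^{-2k})}{(tq^2; q^{-2k})(t^{-1}q^2; q^{-2k})}, \]
which is now in standard Ramanujan form.

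Third, I apply Ramanujan's ${}_1\psi_1$ summation with base $q^{-2k}$ and parameters $a = q^{2\mu - 2}$, $b = q^{2\mu+2}$, $z = tq^2$; the convergence condition $|q|^4 < |tq^2| < 1$ is satisfied on $|t|=1$ under $|q|<1$. This expresses the rational-theta factor as $A \sum_{n \in \ZZ}\frac{(q^{2\mu-2}; q^{-2k})_n}{(q^{2\mu+2}; q^{-2k})_n}(tq^2)^n$, where $A = \frac{(q^{2\mu+2}; q^{-2k})(q^{-2\mu+2-2k}; q^{-2k})}{(q^{-2k}; q^{-2k})(q^4; q^{-2k})}$ is the Pochhammer prefactor appearing in the statement. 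Integrating term-by-term extracts only the $n = 1-\ell$ summand, yielding $J_\ell = -q^{4-2\ell} A \cdot (q^{2\mu-2}; q^{-2k})_{1-\ell}/(q^{2\mu+2}; q^{-2k})_{1-\ell}$.

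Fourth, I convert to the finite-Pochhammer form in the statement using $(a; q)_{1-\ell} = (a;q)_\infty / (aq^{1-\ell}; q)_\infty$ and the reflection $1 - aq^{-N} = -aq^{-N}(1 - a^{-1}q^N)$ to rearrange negative-index Pochhammers into $(q^{-2\mu-2-2k}; q^{-2k})_\ell / (q^{-2\mu+2-2k}; q^{-2k})_\ell$ times a boundary factor $(1 - q^{-2\mu+2-2k\ell})/(1 - q^{-2\mu-2-2k\ell})$ that absorbs the index-dependent monomial. Assembling $I_m = J_m - q^{2\lambda}J_{m+1}$, the Pochhammer ratios at indices $m$ and $m+1$ differ by an explicit factor that telescopes against the boundary piece, producing the four-term numerator $1 - q^{2\lambda+2} - q^{-2\mu+2-2km} + q^{2\lambda-2\mu-2km}$ over $1 - q^{-2\mu-2-2km}$. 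The main obstacle is the careful bookkeeping of the negative-index Pochhammer manipulations in this step; the hypothesis $\frac{2(\mu+1)}{k} > m$ is the technical assumption under which the Ramanujan series converges absolutely on the contour and the term-by-term exchange is justified uniformly.
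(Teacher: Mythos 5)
Your proposal is correct, but it takes a genuinely different route from the paper. The paper's proof never expands the integrand in a series on the contour: it uses the quasi-periodicity estimate of Corollary \ref{corr:theta-ratio} to show that the integrand decays as $t \to 0$ under the hypothesis $\frac{2(\mu+1)}{k} > m$, deforms $|t|=1$ to a shrinking contour around the origin, and identifies $I_m$ with the sum of residues at $t = q^2 q^{-2kn}$, $n \geq 0$; the resulting unilateral sum is then closed up with the $q$-binomial theorem. You instead keep the contour fixed, reduce the integrand (after the same elementary cancellations the paper performs, arriving at the identical normal form $-q^2 t^{m-1}\,\theta_0(tq^{2\mu};q^{-2k})/\big((tq^2;q^{-2k})(t^{-1}q^2;q^{-2k})\big)$) to the product side of Ramanujan's ${}_1\psi_1$ with base $q^{-2k}$, $a = q^{2\mu-2}$, $b = q^{2\mu+2}$, $z = tq^2$, and read off the single Laurent coefficient $n = 1-\ell$; I checked your parameters and the resulting $J_\ell = -q^{4-2\ell}A\,(q^{2\mu-2};q^{-2k})_{1-\ell}/(q^{2\mu+2};q^{-2k})_{1-\ell}$ against the stated formula at $m=0$ and it agrees. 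The two methods are of course cousins — the ${}_1\psi_1$ is the bilateral $q$-binomial theorem — but yours trades the contour deformation and the delicate asymptotic control near $t=0$ for an appeal to a stronger classical summation plus some negative-index Pochhammer bookkeeping.

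One inaccuracy worth flagging: you attribute the hypothesis $\frac{2(\mu+1)}{k} > m$ to convergence of the Ramanujan series, but that series converges exactly when $|b/a| < |z| < 1$, i.e.\ $|q|^4 < |q|^2 < 1$, which is automatic from $|q| < 1$ and independent of $\mu$, $k$, and $m$. In your approach the hypothesis is simply not used (so you in fact prove the identity on a larger parameter set, consistent by analytic continuation in $q^{2\mu}$); it is the paper's contour deformation to the origin that genuinely requires it, since without it the contribution from the shrinking contour need not vanish and the residue sum need not equal $I_m$. This does not damage your proof — proving the statement without one of its hypotheses is still proving it — but the stated justification for the term-by-term exchange should instead be uniform absolute convergence of the ${}_1\psi_1$ on the compact annulus containing $|z| = |q|^2$.
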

\begin{proof}
By Corollary \ref{corr:theta-ratio}, we have for
\begin{equation} \label{eq:t-bounds}
\min_n \Big|\log |t q^{2\mu} q^{-2kn}| \Big| > \eps \text{ and } \min_n \Big|\log |t q^{-2} q^{-2kn}| \Big| > \eps
\end{equation}
that there is some $D_1(q, \eps)$ for which
\[
\left|\frac{\theta_0(t q^{2\mu}; q^{-2k})}{\theta_0(tq^{-2}; q^{-2k})}\right| \leq D_1(q, \eps) |q|^{\mu + 1} |t^2 q^{2\mu - 2}|^{\frac{\mu + 1}{k}} = D_1(q, \eps) |q|^{\mu + 1 + \frac{2(\mu^2 - 1)}{k}} |t|^{\frac{2(\mu + 1)}{k}}.
\]
Because all other terms converge to $1$ as $t \to 0$, for $\frac{2(\mu + 1)}{k} > m$ the integral vanishes after deforming the $t$-contour to a contour near $0$ which satisfies (\ref{eq:t-bounds}).  Therefore, the value of $I_m$ is the sum of residues near $0$, which occur at the poles $t = q^2 q^{-2nk}$ for $n \geq 0$.  

To compute the sum of residues, notice that 
\begin{align*}
I_m &= \oint_{|t| = 1} \frac{dt}{2\pi i t} \frac{(tq^{-2}; q^{-2k})}{(tq^2; q^{-2k})} \frac{\theta_0(t q^{2\mu}; q^{-2k})}{\theta_0(tq^{-2}; q^{-2k})} \frac{1 - t q^{2\lambda}}{1 - tq^{-2}} t^m\\
&= - q^2 \oint_{|t| = 1} \frac{dt}{2\pi i t}  \frac{\theta_0(t q^{2\mu}; q^{-2k})}{(tq^2; q^{-2k}) (t^{-1}q^{2}; q^{-2k})} (1 - t q^{2\lambda}) t^{m-1}\\
&= - q^2 \sum_{n \geq 0} \frac{\theta_0(q^{2\mu + 2} q^{-2kn}; q^{-2k})}{(q^4 q^{-2kn}; q^{-2k}) (q^{2kn}; q^{-2k})_n (q^{-2k}; q^{-2k})} (1 - q^{2\lambda + 2} q^{-2kn}) q^{2(m - 1)} q^{-2(m-1)kn} \\
&= - q^{2m} \frac{\theta_0(q^{2\mu + 2}; q^{-2k})}{(q^{-2k};q^{-2k})(q^4; q^{-2k})} \sum_{n \geq 0} \frac{(q^4; q^{-2k})_n}{(q^{-2k}; q^{-2k})_n}  (1 - q^{2\lambda + 2} q^{-2kn})q^{-(2\mu + 2)n}q^{-2kmn}\\
&= - q^{2m} \frac{\theta_0(q^{2\mu + 2}; q^{-2k})}{(q^{-2k};q^{-2k})(q^4; q^{-2k})} \Big(\frac{(q^{-2\mu + 2 - 2km}; q^{-2k})}{(q^{-2\mu - 2 - 2km}; q^{-2k})} - q^{2\lambda + 2} \frac{(q^{-2\mu + 2 - 2k(m + 1)}; q^{-2k})}{(q^{-2\mu - 2 - 2k(m + 1)}; q^{-2k})}\Big)\\
%&=  - q^{2m} \frac{\theta_0(q^{2\mu + 2}; q^{-2k})}{(q^{-2k};q^{-2k})(q^4; q^{-2k})} \frac{(q^{-2\mu + 2 - 2k(m+1)}; q^{-2k})}{(q^{-2\mu - 2 - 2k(m+1)}; q^{-2k})}\Big(\frac{1 - q^{-2\mu + 2 - 2km}}{1 - q^{-2\mu - 2 - 2km}} - q^{2\lambda + 2}\Big)\\
%&=  - q^{2m} \frac{\theta_0(q^{2\mu + 2}; q^{-2k})}{(q^{-2k};q^{-2k})(q^4; q^{-2k})} \frac{(q^{-2\mu + 2 - 2k(m+1)}; q^{-2k})}{(q^{-2\mu - 2 - 2k(m+1)}; q^{-2k})} \frac{1 - q^{2\lambda + 2} - q^{-2\mu + 2 - 2km} + q^{2\lambda - 2\mu - 2km}}{1 - q^{-2\mu - 2 - 2km}}\\
&=  - q^{2m} \frac{(q^{2\mu + 2}; q^{-2k})(q^{-2\mu + 2 - 2k}; q^{-2k})}{(q^{-2k};q^{-2k})(q^4; q^{-2k})} \frac{(q^{-2\mu - 2 - 2k}; q^{-2k})_m}{(q^{-2\mu + 2 - 2k}; q^{-2k})_m} \frac{1 - q^{2\lambda + 2} - q^{-2\mu + 2 - 2km} + q^{2\lambda - 2\mu - 2km}}{1 - q^{-2\mu - 2 - 2km}}. \qedhere
\end{align*}
\end{proof}

\begin{lemma} \label{lem:fv-null-flip}
For $|q| < 1$, $|q^{2k}| < 1$, and $\frac{2(\mu + 1)}{k} > m$, we have the integral
\begin{align*}
I_m' &= \oint_{\cC_t} \frac{dt}{2 \pi i t} \frac{(t q^2; q^{2k})}{(tq^{-2}; q^{2k})} \frac{\theta_0(tq^{-2\mu}; q^{2k})}{\theta_0(tq^2; q^{2k})} \frac{1 - tq^{2\lambda}}{1 - tq^2} t^m\\
&= - q^{-2m} \frac{(q^{-2\mu - 2}; q^{2k}) (q^{2\mu - 2 + 2k}; q^{2k})}{(q^{-4}; q^{2k})(q^{2k}; q^{2k})} \frac{(q^{2\mu + 2 + 2k}; q^{2k})_m}{(q^{2\mu - 2 + 2k}; q^{2k})_m} \frac{1 - q^{2\lambda - 2} - q^{2\mu - 2 + 2km} + q^{2\lambda + 2\mu + 2km}}{1 - q^{2\mu + 2 + 2km}},
\end{align*}
where $\cC_t$ is a contour containing the poles inside $|t| = 1$ except $t = q^2$ and excluding the poles outside $|t| = 1$ except $t = q^{-2}$.
\end{lemma}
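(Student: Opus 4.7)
The plan is to mirror the residue-summation approach used for Lemma \ref{lem:fv-null}, now on the contour $\cC_t$ with base $q^{2k}$. First, I would simplify the integrand by using $\theta_0(tq^2; q^{2k}) = (tq^2; q^{2k})(t^{-1}q^{-2+2k}; q^{2k})$ to cancel the common $(tq^2; q^{2k})$ factor, reducing the integrand to
\[
\frac{\theta_0(tq^{-2\mu}; q^{2k})}{(tq^{-2}; q^{2k})\,(t^{-1}q^{-2+2k}; q^{2k})\,(1 - tq^2)}\,(1 - tq^{2\lambda})\,t^m.
\]
The simple poles lie at $t = q^{2-2kn}$ for $n \geq 0$, at $t = q^{-2+2k(n+1)}$ for $n \geq 0$, and at $t = q^{-2}$; the description of $\cC_t$ singles out as enclosed precisely the pole $t = q^{-2}$ together with the family $t_n := q^{-2+2k(n+1)}$ for $n \geq 0$.

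Next I would apply Corollary \ref{corr:theta-ratio} to bound $|\theta_0(tq^{-2\mu}; q^{2k})/\theta_0(tq^2; q^{2k})|$ by a constant times $|t|^{-2(\mu+1)/k}$ on contours avoiding the pole lattice. The hypothesis $\frac{2(\mu+1)}{k} > m$ makes the integrand decay as $|t| \to \infty$, so $\cC_t$ can be deformed outward with vanishing contribution at infinity, expressing $I_m'$ as the sum of the enclosed residues. Direct substitution at $t = q^{-2}$ contributes
\[
-(1 - q^{2\lambda - 2})\,q^{-2m}\,\frac{\theta_0(q^{-2\mu - 2};\, q^{2k})}{(q^{-4};\, q^{2k})(q^{2k};\, q^{2k})},
\]
while at $t_n$ the identity $\Res_{t=t_n}(t^{-1}q^{-2+2k}; q^{2k})^{-1} = t_n/[(q^{-2k}; q^{-2k})_n (q^{2k}; q^{2k})]$ yields a closed-form expression containing $(1 - q^{2\lambda - 2 + 2k(n+1)})$ and a ratio of terminating $q$-Pochhammer factors in $q^{2k}$.

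The sum over $n$ is a terminating ${}_1\phi_0$ series in $q^{-(2\mu+2+2km)}$ that I would evaluate by the $q$-binomial theorem, exactly mirroring the corresponding step in the proof of Lemma \ref{lem:fv-null}. Combining this with the $t = q^{-2}$ residue and aligning denominators via $\theta_0(q^{-2\mu-2}; q^{2k}) = (q^{-2\mu-2}; q^{2k})(q^{2\mu+2+2k}; q^{2k})$ assembles the two contributions into the bracket $1 - q^{2\lambda - 2} - q^{2\mu - 2 + 2km} + q^{2\lambda + 2\mu + 2km}$ divided by $1 - q^{2\mu + 2 + 2km}$, with the prefactors reorganizing into the claimed product of ordinary and terminating $q$-Pochhammers. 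The main obstacle is the careful bookkeeping of Pochhammer factors; this is streamlined by the observation that the integrand of $I_m'(q, \lambda, \mu, k)$ is obtained from that of $I_m(q, \lambda, \mu, k)$ by the formal substitution $(q, \lambda, \mu, k) \mapsto (q^{-1}, -\lambda, -\mu, -k)$, so the final algebraic collapse can be verified against the closed form in Lemma \ref{lem:fv-null} after the same substitution.
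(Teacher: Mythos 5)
Your residue calculus at the individual poles and the final $q$-binomial assembly track the paper's computation, but the step that converts the contour integral into a residue sum is backwards, and as written it fails. You propose to deform $\cC_t$ \emph{outward} and invoke decay at $|t|\to\infty$; an outward deformation picks up (minus) the residues at the poles \emph{excluded} by $\cC_t$, namely the family $t=q^{2}q^{-2kn}$ coming from $(tq^{-2};q^{2k})$, not the enclosed family $t=q^{-2}q^{2kn}$ that you then proceed to sum over. Moreover the enclosed poles accumulate at $t=0$, so ``the sum of the enclosed residues'' is not an instance of the residue theorem by itself: it must be justified by an \emph{inward} deformation together with a proof that the integral over a small contour near $0$ tends to zero. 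That is exactly what the hypothesis $\frac{2(\mu+1)}{k}>m$ and Corollary \ref{corr:theta-ratio} are for: near $t=0$ every factor of the integrand other than $\theta_0(tq^{-2\mu};q^{2k})/\theta_0(tq^2;q^{2k})$ and $t^m$ tends to $1$, and the theta ratio is bounded by a power of $|t|$, giving the required vanishing. At $|t|\to\infty$ the situation is entirely different: $(tq^{-2};q^{2k})$ and $\theta_0(tq^{-2\mu};q^{2k})$ both have Gaussian growth, the excluded poles accumulate there, and the exponent governing the power-law behavior of the surviving ratio is not the one in your hypothesis; so the claimed decay at infinity is neither what Corollary \ref{corr:theta-ratio} provides nor the relevant estimate.

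Two smaller points. The substitution you propose for cross-checking, $(q,\lambda,\mu,k)\mapsto(q^{-1},-\lambda,-\mu,-k)$, leaves the base $q^{-2k}$ and the exponent $q^{2\mu}$ unchanged and so does not carry the integrand of $I_m$ to that of $I_m'$; the correct symmetry is $(q,\lambda)\mapsto(q^{-1},-\lambda)$ with $\mu$ and $k$ fixed. Also, the ${}_1\phi_0$ series you obtain is not terminating; it is an infinite series summed by the $q$-binomial theorem. Neither of these affects the final formula, but the direction of the contour deformation does need to be corrected before the residue sum is legitimate.
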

\begin{proof}
By Corollary \ref{corr:theta-ratio}, we have for 
\begin{equation} \label{eq:t-bounds-flip}
\min_n \Big|\log|tq^{-2\mu} q^{2kn}|\Big| > \eps \text{ and } \min_n \Big|\log|tq^2q^{2kn}|\Big| > \eps
\end{equation}
that there is some $D_1(q, \eps)$ for which
\[
\left|\frac{\theta_0(tq^{-2\mu}; q^{2k})}{\theta_0(tq^2; q^{2k})}\right| \leq D_1(q, \eps) |q|^{-\mu - 1} |t^2 q^{-2\mu + 2}|^{\frac{2(\mu + 1)}{k}}.
\]
All other terms in the integrand converge to $1$ as $t \to 0$, so for $\frac{2(\mu + 1)}{k} > m$, the integral vanishes after deforming the $t$-contour to a contour near $0$ which satisfies (\ref{eq:t-bounds-flip}).  Therefore, the value of $I_m'$ is the sum of residues which are picked up when deforming the contour to $0$, which occur at the poles $t = q^{-2} q^{2kn}$ for $n \geq 0$.  We deform the contour and pick up residues to find
\begin{align*}
I_m' &= \oint_{\cC_t} \frac{dt}{2 \pi i t} \frac{(t q^2; q^{2k})}{(tq^{-2}; q^{2k})} \frac{\theta_0(tq^{-2\mu}; q^{2k})}{\theta_0(tq^2; q^{2k})} \frac{1 - tq^{2\lambda}}{1 - tq^2} t^m \\
&= -\oint_{\cC_t} \frac{dt}{2 \pi i t} \frac{\theta_0(t q^{-2\mu}; q^{2k})}{(tq^{-2}; q^{2k})(t^{-1} q^{-2}; q^{2k})} t^{m - 1} q^{-2} (1 - tq^{2\lambda})\\
&= - \sum_{n \geq 0} \frac{\theta_0(q^{-2\mu - 2} q^{2kn}; q^{2k})}{(q^{-4} q^{2kn}; q^{2k}) (q^{-2kn}; q^{2k})_n (q^{2k}; q^{2k})} q^{-2m} q^{2kn(m - 1)} (1 - q^{2\lambda - 2}q^{2kn})\\
&= - q^{-2m} \frac{\theta_0(q^{-2\mu - 2}; q^{2k})}{(q^{-4}; q^{2k})(q^{2k}; q^{2k})} \sum_{n \geq 0} \frac{(q^{-4}; q^{2k})_n}{(q^{2k}; q^{2k})_n} q^{(2\mu + 2)n + 2kmn} (1 - q^{2\lambda - 2} q^{2kn})\\
&= - q^{-2m} \frac{\theta_0(q^{-2\mu - 2}; q^{2k})}{(q^{-4}; q^{2k})(q^{2k}; q^{2k})} \Big(\frac{(q^{2\mu - 2 + 2km}; q^{2k})}{(q^{2\mu + 2 + 2km}; q^{2k})} - q^{2\lambda - 2} \frac{(q^{2\mu - 2 + 2k(m + 1)}; q^{2k})}{(q^{2\mu + 2 + 2k(m + 1)}; q^{2k})}\Big)\\
&= - q^{-2m} \frac{(q^{-2\mu - 2}; q^{2k})(q^{2\mu - 2 + 2k}; q^{2k})}{(q^{-4}; q^{2k})(q^{2k}; q^{2k})} \frac{(q^{2\mu + 2 + 2k}; q^{2k})_m}{(q^{2\mu - 2 + 2k}; q^{2k})_m} \frac{1 - q^{2\lambda - 2} - q^{2\mu - 2 + 2km} + q^{2\lambda + 2\mu + 2km}}{1 - q^{2\mu + 2 + 2km}}. \qedhere
\end{align*}
\end{proof}

\begin{remark}
In Lemma \ref{lem:fv-null}, the quantity
\[
\frac{(q^{-2k};q^{-2k})(q^4; q^{-2k})}{(q^{2\mu + 2}; q^{-2k})(q^{-2\mu + 2 - 2k}; q^{-2k})} I_m = - q^{2m} \frac{(q^{-2\mu - 2 - 2k}; q^{-2k})_m}{(q^{-2\mu + 2 - 2k}; q^{-2k})_m} \frac{1 - q^{2\lambda + 2} - q^{-2\mu + 2 - 2km} + q^{2\lambda - 2\mu - 2km}}{1 - q^{-2\mu - 2 - 2km}}
\]
is a rational function in all variables.  This suggests the correct function to quasi-analytically continue is
\[
\frac{(q^{-2k};q^{-2k})(q^4; q^{-2k})}{(q^{2\mu + 2}; q^{-2k})(q^{-2\mu + 2 - 2k}; q^{-2k})}u(q, \lambda, \omega, \mu, k).
\]
\end{remark}
Define the function
\begin{equation} \label{eq:f-def}
f(t, q, \lambda, \omega, k) =\frac{(tq^{-2} q^{-2\omega}; q^{-2\omega}, q^{-2k})(t^{-1} q^{-2} q^{-2\omega} q^{-2k}; q^{-2\omega}, q^{-2k})}{(tq^{2} q^{-2\omega}; q^{-2\omega}, q^{-2k})(t^{-1} q^2 q^{-2\omega} q^{-2k}; q^{-2\omega}, q^{-2k})} \frac{(tq^{2\lambda} q^{-2\omega}; q^{-2\omega})(t^{-1} q^{-2\lambda} q^{-2\omega}; q^{-2\omega})}{(tq^{-2} q^{-2\omega}; q^{-2\omega})(t^{-1} q^2 q^{-2\omega}; q^{-2\omega})}
\end{equation}
so that 
\[
\Omega_{q^2}(t; q^{-2\omega}, q^{-2k}) \frac{\theta_0(tq^{2\mu}; q^{-2k})}{\theta_0(tq^{-2}; q^{-2k})} \frac{\theta_0(t q^{2\lambda}; q^{-2\omega})}{\theta_0(t q^{-2}; q^{-2\omega})} = \frac{(tq^{-2}; q^{-2k})}{(tq^2; q^{-2k})} \frac{\theta_0(t q^{2\mu}; q^{-2k})}{\theta_0(tq^{-2}; q^{-2k})} \frac{1 - t q^{2\lambda}}{1 - tq^{-2}} f(t, q, \lambda, \omega, k).
\]
Note that $f(t, q, \lambda, \omega, k)$ admits a formal expansion
\[
f(t, q, \lambda, \omega, k) = \sum_{n \geq 0} \sum_{m = -n}^n q^{-2\omega n} t^m f_{n, m}(q, \lambda, k)
\]
for some rational functions $f_{n, m}(q, \lambda, k)$ of $q$, $q^{2\lambda}$, and  $q^{-2k}$ with $f_{0, 0}(q, \lambda, k) = 1$.  We now find a quasi-analytic continuation of $f(t, q, \lambda, \omega, k)$ and then use it in conjunction with our computations in Lemmas \ref{lem:fv-null} and \ref{lem:fv-null-flip} to obtain the desired continuation of the Felder-Varchenko function in Proposition \ref{prop:qa-fv-2}.

\begin{lemma} \label{lem:qa-phase}
The quasi-analytic continuation of $f(t, q, \lambda, \omega, k)$ to $|q^{-2k}| > 1$ is
\[
f(t, q, \lambda, \omega, k) \equiv \frac{(tq^{2} q^{-2\omega} q^{2k}; q^{-2\omega}, q^{2k})(t^{-1} q^2 q^{-2\omega}; q^{-2\omega}, q^{2k})}{(tq^{-2} q^{-2\omega} q^{2k}; q^{-2\omega}, q^{2k})(t^{-1} q^{-2} q^{-2\omega}; q^{-2\omega}, q^{2k})} \frac{(tq^{2\lambda} q^{-2\omega}; q^{-2\omega})(t^{-1} q^{-2\lambda} q^{-2\omega}; q^{-2\omega})}{(tq^{-2} q^{-2\omega}; q^{-2\omega})(t^{-1} q^2 q^{-2\omega}; q^{-2\omega})}.
\]
\end{lemma}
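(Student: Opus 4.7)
The plan is to observe that the second ratio in the definition \eqref{eq:f-def} of $f(t, q, \lambda, \omega, k)$ involves only single $q$-Pochhammer symbols with base $q^{-2\omega}$ and carries no dependence on $q^{-2k}$ at all. Hence this ratio is unaffected by quasi-analytic continuation in $q^{-2\omega}$ from $|q^{-2k}| < 1$ to $|q^{-2k}| > 1$, and the entire content of the lemma lies in continuing the first ratio, which is built from four double $q$-Pochhammer symbols $(\,\cdot\,;q^{-2\omega},q^{-2k})$.

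Next I would apply Lemma \ref{lem:qa-comps} to each of these four factors individually. Each has the form $(ap;p,r)$ with $p = q^{-2\omega}$ the formal variable of expansion and $r = q^{-2k}$ the variable in which we are continuing, so the identity $(ap;p,r) \equiv (ar^{-1}p;p,r^{-1})^{-1}$ applies directly. Two of the factors, namely $(tq^{\pm 2} q^{-2\omega}; q^{-2\omega}, q^{-2k})$, have $a = tq^{\pm 2}$ and thus continue to $(tq^{\pm 2} q^{2k} q^{-2\omega}; q^{-2\omega}, q^{2k})^{-1}$; the other two, $(t^{-1} q^{\pm 2} q^{-2\omega} q^{-2k}; q^{-2\omega}, q^{-2k})$, have $a = t^{-1}q^{\pm 2} q^{-2k}$, and the factor of $q^{-2k}$ inside cancels against the $r^{-1} = q^{2k}$ from the identity, leaving $(t^{-1} q^{\pm 2} q^{-2\omega}; q^{-2\omega}, q^{2k})^{-1}$.

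Assembling the four replacements and inverting the resulting product of reciprocals (which exchanges numerator and denominator) recovers exactly the first ratio in the stated formula, with the $q^{2k}$ shifts appearing on the $t$ variable for the arguments coming from $tq^{\pm 2}q^{-2\omega}$ and no $q^{2k}$ shift for the arguments coming from $t^{-1}q^{\pm 2}q^{-2\omega}q^{-2k}$. Combining with the unchanged second ratio yields the claim. The only subtle point is ensuring the bookkeeping of $q^{-2k}$ factors inside the arguments, where the distinction between the two shapes of double Pochhammer arguments produces the asymmetric placement of $q^{2k}$ in the final formula; beyond this, the argument is a direct application of Lemma \ref{lem:qa-comps}, so I do not anticipate any significant obstacle.
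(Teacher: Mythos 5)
Your proposal is correct and is essentially the paper's own proof, which simply states that the result follows by applying Lemma \ref{lem:qa-comps} repeatedly; you have just made the bookkeeping explicit, and your tracking of the $q^{2k}$ shifts (present for the $tq^{\pm 2}$ arguments, cancelled for the $t^{-1}q^{\pm 2}q^{-2k}$ arguments) matches the stated formula exactly. The only nitpick is phrasing: the second ratio is unaffected because it is independent of $q^{-2k}$, the variable being continued, while $q^{-2\omega}$ is the formal expansion variable throughout.
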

\begin{proof}
This follows by applying the computation of Lemma \ref{lem:qa-comps} repeatedly.
\end{proof}

\begin{prop} \label{prop:qa-fv}
The quasi-analytic continuation of
\[
\frac{(q^{-2k};q^{-2k})(q^4; q^{-2k})}{(q^{2\mu + 2}; q^{-2k})(q^{-2\mu + 2 - 2k}; q^{-2k})}u(q, \lambda, \omega, \mu, k)
\]
to $|q| < 1$, $|q^{-2\omega}| < 1$, and $|q^{-2k}| > 1$ is
\[
q^{-\lambda\mu - \lambda - \mu + 2} \frac{(q^{-4}; q^{2k})(q^{2k}; q^{2k})}{(q^{-2\mu - 2}; q^{2k})(q^{2\mu - 2 + 2k}; q^{2k})}\oint_{\cC_t} \frac{dt}{2 \pi i t} \Omega_{q^{-2}}(t; q^{-2\omega}, q^{2k}) \frac{\theta_0(tq^{-2\mu}; q^{2k})}{\theta_0(tq^2; q^{2k})} \frac{\theta_0(t q^{2\lambda}; q^{-2\omega})}{\theta_0(tq^{2}; q^{-2\omega})},
\]
where $\cC_t$ contains poles inside $|t| = 1$ except $t = q^2$ and excludes poles outside $|t| = 1$ except $t = q^{-2}$.
\end{prop}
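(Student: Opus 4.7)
The strategy is to expand both sides as formal power series in $q^{-2\omega}$ and check coefficient by coefficient that the two sides agree as quasi-analytic continuations in $q^{-2k}$ of rational functions in $q^{-2\mu}$ and $q^{2\lambda}$. The main tools are the two residue formulas of Lemmas \ref{lem:fv-null} and \ref{lem:fv-null-flip} and the phase-function continuation of Lemma \ref{lem:qa-phase}.

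First I would decompose each integrand using the factorization motivated by (\ref{eq:f-def}). On the LHS this reads
\[
\text{LHS integrand} = \frac{(tq^{-2};q^{-2k})}{(tq^2;q^{-2k})} \frac{\theta_0(tq^{2\mu};q^{-2k})}{\theta_0(tq^{-2};q^{-2k})} \frac{1-tq^{2\lambda}}{1-tq^{-2}} \cdot f(t, q, \lambda, \omega, k),
\]
and writing the RHS integrand in the parallel form
\[
\text{RHS integrand} = \frac{(tq^2;q^{2k})}{(tq^{-2};q^{2k})} \frac{\theta_0(tq^{-2\mu};q^{2k})}{\theta_0(tq^2;q^{2k})} \frac{1-tq^{2\lambda}}{1-tq^2} \cdot f'(t, q, \lambda, \omega, k)
\]
defines an analogous function $f'$. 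A direct $q$-Pochhammer calculation, splitting off the $n=0$ layer in each double Pochhammer in $\Omega$, shows that $f'$ coincides exactly with the quasi-analytic continuation of $f$ supplied by Lemma \ref{lem:qa-phase}. Writing $f = \sum_{n,m} q^{-2\omega n} t^m f_{n,m}(q,\lambda,k)$ and similarly for $f'$, each $f'_{n,m}$ is therefore the quasi-analytic continuation in $q^{-2k}$ of $f_{n,m}$.

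Next, on the range $\frac{2(\mu+1)}{k} > m$ (which, for any fixed $n$, involves only the finitely many $m$ appearing in the coefficient of $q^{-2\omega n}$), I would interchange $t$-integration with the $q^{-2\omega}$-expansion on both sides. This reduces the LHS coefficient of $q^{-2\omega n}$ to a finite linear combination of the integrals $I_m$ of Lemma \ref{lem:fv-null}, and the RHS coefficient to a finite combination of the $I_m'$ of Lemma \ref{lem:fv-null-flip}. The normalization factors in the statement of Proposition \ref{prop:qa-fv} are chosen precisely to absorb the residue prefactors in those lemmas, leaving rational expressions on both sides. The crux is then the identity
\[
\frac{(q^{-2k};q^{-2k})(q^4;q^{-2k})}{(q^{2\mu+2};q^{-2k})(q^{-2\mu+2-2k};q^{-2k})} I_m \;\equiv\; q^4 \cdot \frac{(q^{-4};q^{2k})(q^{2k};q^{2k})}{(q^{-2\mu-2};q^{2k})(q^{2\mu-2+2k};q^{2k})} I_m'
\]
as quasi-analytic continuations in $q^{-2k}$. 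Using $(1-q^{-a}) = -q^{-a}(1-q^a)$, this reduces to the Pochhammer identity $(q^{-2\mu-2-2k};q^{-2k})_m/(q^{-2\mu+2-2k};q^{-2k})_m = q^{-4m}(q^{2\mu+2+2k};q^{2k})_m/(q^{2\mu-2+2k};q^{2k})_m$ together with the matching, after multiplying by $-q^{2\mu+2+2km}$, of the four-term $\lambda$-numerator from Lemma \ref{lem:fv-null} with $q^4$ times its counterpart from Lemma \ref{lem:fv-null-flip}. The extra factor $q^4$ combines with the prefactor $q^{-\lambda\mu-\lambda-\mu-2}$ from the definition of $u$ to yield the prefactor $q^{-\lambda\mu-\lambda-\mu+2}$ on the RHS of the statement.

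The main obstacle is the bookkeeping needed to argue that the equality holds genuinely as a quasi-analytic continuation in $q^{-2k}$: one must track that each comparison produces matching rational functions rather than merely formal equalities, and that the hypothesis $\frac{2(\mu+1)}{k} > m$ of Lemmas \ref{lem:fv-null}--\ref{lem:fv-null-flip} can be satisfied term by term within each fixed coefficient of $q^{-2\omega n}$ by taking $\mu$ sufficiently large. The rationality then extends the identity to the full claimed parameter region, completing the argument.
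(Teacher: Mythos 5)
Your proposal is correct and follows essentially the same route as the paper: decompose both integrands via $f$ and its counterpart, reduce the $q^{-2\omega}$-coefficients to the residue evaluations $I_m$ and $I_m'$ of Lemmas \ref{lem:fv-null} and \ref{lem:fv-null-flip}, match them term by term through exactly the Pochhammer/four-term identity you state (whose $q^4$ accounts for the shift from $q^{-\lambda\mu-\lambda-\mu-2}$ to $q^{-\lambda\mu-\lambda-\mu+2}$), and invoke Lemma \ref{lem:qa-phase} to continue the phase factor. The only cosmetic difference is that you work from both sides toward the middle, while the paper runs the same chain of equalities in one direction starting from the left-hand side.
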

\begin{proof}
Define $\wu(q, \lambda, \omega, \mu, k) = q^{\lambda \mu + \lambda + \mu + 2} u(q, \lambda, \omega, \mu, k)$.  By definition, we have
\[
\wu(q, \lambda, \omega, \mu, k) = \oint_{|t| = 1} \frac{dt}{2 \pi i t} \frac{(tq^{-2}; q^{-2k})}{(tq^2; q^{-2k})} \frac{\theta_0(t q^{2\mu}; q^{-2k})}{\theta_0(tq^{-2}; q^{-2k})} \frac{1 - t q^{2\lambda}}{1 - tq^{-2}} \sum_{n \geq 0} \sum_{m = -n}^n q^{-2n\omega} t^m f_{n, m}(q, \lambda, k).
\]
By Lemmas \ref{lem:fv-null} and \ref{lem:fv-null-flip}, we have on a formal neighborhood of $0$ in $q^{-2\mu}$ that 
\begin{align*}
&\frac{(q^{-2k};q^{-2k})(q^4; q^{-2k})}{(q^{2\mu + 2}; q^{-2k})(q^{-2\mu + 2 - 2k}; q^{-2k})} \wu(q, \lambda, \omega, \mu, k)\\
&\phantom{=}= - \sum_{n \geq 0}q^{-2n\omega} \sum_{m = -n}^n q^{2m} \frac{(q^{-2\mu - 2 - 2k}; q^{-2k})_m}{(q^{-2\mu + 2 - 2k}; q^{-2k})_m} \frac{1 - q^{2\lambda + 2} - q^{-2\mu + 2 - 2km} + q^{2\lambda - 2\mu - 2km}}{1 - q^{-2\mu - 2 - 2km}} f_{n, m}(q, \lambda, k)\\
&\phantom{=}\equiv q^4 \sum_{n \geq 0}q^{-2n\omega} \sum_{m = -n}^n q^{-2m} \frac{(q^{2\mu + 2 + 2k}; q^{2k})_m}{(q^{2\mu - 2 + 2k}; q^{2k})_m} \frac{q^{2\mu - 2 + 2km} - q^{2\lambda + 2\mu + 2km} - 1 + q^{2\lambda - 2}}{1 - q^{2\mu + 2 + 2km}} f_{n, m}(q, \lambda, k)\\
&\phantom{=}= q^4 \frac{(q^{-4}; q^{2k})(q^{2k}; q^{2k})}{(q^{-2\mu - 2}; q^{2k})(q^{2\mu - 2 + 2k}; q^{2k})}\sum_{n \geq 0} q^{-2\omega n} \sum_{m = -n}^n f_{n, m}(q, \lambda, k) I_m'\\
&\phantom{=}= q^4 \frac{(q^{-4}; q^{2k})(q^{2k}; q^{2k})}{(q^{-2\mu - 2}; q^{2k})(q^{2\mu - 2 + 2k}; q^{2k})}\sum_{n \geq 0} q^{-2\omega n}\!\!\! \sum_{m = -n}^n f_{n, m}(q, \lambda, k) \oint_{\cC_t} \frac{dt}{2 \pi i t} \frac{(t q^2; q^{2k})}{(tq^{-2}; q^{2k})} \frac{\theta_0(tq^{-2\mu}; q^{2k})}{\theta_0(tq^2; q^{2k})} \frac{1 - tq^{2\lambda}}{1 - tq^2} t^m\\
&\phantom{=}= q^4 \frac{(q^{-4}; q^{2k})(q^{2k}; q^{2k})}{(q^{-2\mu - 2}; q^{2k})(q^{2\mu - 2 + 2k}; q^{2k})}\oint_{\cC_t} \frac{dt}{2 \pi i t} \frac{(t q^2; q^{2k})}{(tq^{-2}; q^{2k})} \frac{\theta_0(tq^{-2\mu}; q^{2k})}{\theta_0(tq^2; q^{2k})} \frac{1 - tq^{2\lambda}}{1 - tq^2} f(t, q, \lambda, \omega, k).
\end{align*}
Thus, the coefficients of each formal expansion in $q^{-2\omega}$ have the same analytic continuations as formal series in $q^{-2\mu}$ and $q^{-2k}$ for the first and formal series in $q^{2\mu}$ and $q^{2k}$ for the second, so the above equality holds at the level of formal series in $q^{-2\omega}$ with coefficients which are rational functions in $q^{2\mu}$ and $q^{2k}$.  Substituting the quasi-analytic continuation of $f(t, q, \lambda, \omega, k)$ from Lemma \ref{lem:qa-phase}, we conclude that 
\begin{align*}
&\frac{(q^{-2k};q^{-2k})(q^4; q^{-2k})}{(q^{2\mu + 2}; q^{-2k})(q^{-2\mu + 2 - 2k}; q^{-2k})} \wu(q, \lambda, \omega, \mu, k)\\
&\phantom{==}\equiv q^4 \frac{(q^{-4}; q^{2k})(q^{2k}; q^{2k})}{(q^{-2\mu - 2}; q^{2k})(q^{2\mu - 2 + 2k}; q^{2k})}\oint_{\cC_t} \frac{dt}{2 \pi i t} \frac{(t q^2; q^{2k})}{(tq^{-2}; q^{2k})} \frac{\theta_0(tq^{-2\mu}; q^{2k})}{\theta_0(tq^2; q^{2k})} \frac{1 - tq^{2\lambda}}{1 - tq^2}\\
&\phantom{======} \frac{(tq^{2} q^{-2\omega} q^{2k}; q^{-2\omega}, q^{2k})(t^{-1} q^2 q^{-2\omega}; q^{-2\omega}, q^{2k})}{(tq^{-2} q^{-2\omega} q^{2k}; q^{-2\omega}, q^{2k})(t^{-1} q^{-2} q^{-2\omega}; q^{-2\omega}, q^{2k})} \frac{(tq^{2\lambda} q^{-2\omega}; q^{-2\omega})(t^{-1} q^{-2\lambda} q^{-2\omega}; q^{-2\omega})}{(tq^{-2} q^{-2\omega}; q^{-2\omega})(t^{-1} q^2 q^{-2\omega}; q^{-2\omega})}\\
%&\phantom{==}= q^4 \frac{(q^{-4}; q^{2k})(q^{2k}; q^{2k})}{(q^{-2\mu - 2}; q^{2k})(q^{2\mu - 2 + 2k}; q^{2k})}\oint_{\cC_t} \frac{dt}{2 \pi i t} \frac{\theta_0(tq^{-2\mu}; q^{2k})}{\theta_0(tq^2; q^{2k})} \frac{1 - tq^{2\lambda}}{1 - tq^{-2}}\\
%&\phantom{======} \frac{(tq^{2} q^{2k}; q^{-2\omega}, q^{2k})(t^{-1} q^2 q^{-2\omega}; q^{-2\omega}, q^{2k})}{(tq^{-2}q^{2k}; q^{-2\omega}, q^{2k})(t^{-1} q^{-2} q^{-2\omega}; q^{-2\omega}, q^{2k})} \frac{(tq^{2\lambda} q^{-2\omega}; q^{-2\omega})(t^{-1} q^{-2\lambda} q^{-2\omega}; q^{-2\omega})}{(tq^{-2} q^{-2\omega}; q^{-2\omega})(t^{-1} q^2 q^{-2\omega}; q^{-2\omega})}\\
%&\phantom{==}= q^4 \frac{(q^{-4}; q^{2k})(q^{2k}; q^{2k})}{(q^{-2\mu - 2}; q^{2k})(q^{2\mu - 2 + 2k}; q^{2k})}\oint_{\cC_t} \frac{dt}{2 \pi i t} \frac{\theta_0(tq^{-2\mu}; q^{2k})}{\theta_0(tq^2; q^{2k})} \frac{\theta_0(t q^{2\lambda}; q^{-2\omega})}{\theta_0(tq^{-2}; q^{-2\omega})} \Omega_{q^{-2}}(tq^{2k}; q^{-2\omega}, q^{2k})\\
&\phantom{==}= q^4 \frac{(q^{-4}; q^{2k})(q^{2k}; q^{2k})}{(q^{-2\mu - 2}; q^{2k})(q^{2\mu - 2 + 2k}; q^{2k})}\oint_{\cC_t} \frac{dt}{2 \pi i t} \Omega_{q^{-2}}(t; q^{-2\omega}, q^{2k}) \frac{\theta_0(tq^{-2\mu}; q^{2k})}{\theta_0(tq^2; q^{2k})} \frac{\theta_0(t q^{2\lambda}; q^{-2\omega})}{\theta_0(tq^{2}; q^{-2\omega})},
\end{align*}
which implies the desired.
\end{proof}

\begin{prop} \label{prop:qa-fv-2}
The quasi-analytic continuation of
\[
\frac{(q^{-2k};q^{-2k})(q^4; q^{-2k})}{(q^{2\mu + 2}; q^{-2k})(q^{-2\mu + 2 - 2k}; q^{-2k})}u(q, \lambda, \omega, \mu, k)
\]
to $|q| > 1$, $|q^{-2\omega}| < 1$, and $|q^{-2k}| > 1$ is
\[
q^{-\lambda\mu - \lambda - \mu + 2} \frac{(q^{-4}; q^{2k})(q^{2k}; q^{2k})}{(q^{-2\mu - 2}; q^{2k})(q^{2\mu - 2 + 2k}; q^{2k})}\oint_{|t| = 1} \frac{dt}{2 \pi i t} \Omega_{q^{-2}}(t; q^{-2\omega}, q^{2k}) \frac{\theta_0(tq^{-2\mu}; q^{2k})}{\theta_0(tq^2; q^{2k})} \frac{\theta_0(t q^{2\lambda}; q^{-2\omega})}{\theta_0(tq^{2}; q^{-2\omega})}.
\]
\end{prop}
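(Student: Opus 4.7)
The plan is to deduce Proposition \ref{prop:qa-fv-2} directly from Proposition \ref{prop:qa-fv} by a contour deformation argument, since the only difference between the two statements is the domain in $q$ and the corresponding choice of integration contour. The key observation is that the contour $\cC_t$ in Proposition \ref{prop:qa-fv} is characterized by having $t = q^{-2}$ inside and $t = q^{2}$ outside while otherwise coinciding with the unit circle; for $|q| > 1$, these conditions are automatically satisfied by $|t| = 1$ itself, since $|q^{-2}| < 1$ and $|q^{2}| > 1$.

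More concretely, I would view the integrand as a formal power series in $q^{-2\omega}$, whose coefficients are rational in $t$ with poles depending on $q$, $q^{2\lambda}$, $q^{\pm 2\mu}$, and $q^{\pm 2k}$. For each such coefficient, the integral over $\cC_t$ yields a rational function of these variables, which by the proof of Proposition \ref{prop:qa-fv} represents the quasi-analytic continuation of the normalized Felder-Varchenko function to $|q^{-2k}| > 1$. I would then verify that, under the standing hypothesis $|q^{-2k}| > 1$, none of the other poles of the integrand—arising from $\theta_0(tq^{2};q^{2k})$, from $\Omega_{q^{-2}}(t;q^{-2\omega},q^{2k})$, and from the higher $q^{-2\omega}$-corrections of $\theta_0(tq^{2};q^{-2\omega})$—cross the unit circle as $|q|$ varies through $1$. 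Since these shifted poles lie at points of the form $t = q^{\pm 2}q^{\pm 2km}$ (together with their $q^{-2\omega}$-shifted analogues) with $m \geq 1$, the assumption $|q^{-2k}| > 1$ keeps them well separated from $|t| = 1$ throughout a neighborhood of $|q| = 1$.

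Combining the two observations, the integrals over $\cC_t$ and over the unit circle agree in the region $|q| > 1$, so substituting this into the formula of Proposition \ref{prop:qa-fv} yields Proposition \ref{prop:qa-fv-2}. The main obstacle is the pole analysis sketched above: one must organize a case-by-case check of every factor of the integrand and confirm that only $t = q^{\pm 2}$ cross $|t| = 1$ as $|q|$ crosses unity. This is elementary but requires some care to track the $q^{-2\omega}$-corrections, which introduce additional poles at points that must also be shown not to collide with the unit circle.
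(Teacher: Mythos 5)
Your proposal is correct and is essentially the argument the paper gives: the paper simply packages your pole-tracking into a single citation of \cite[Theorem 10.2]{Rai}, which guarantees that an integral over a cycle separating two prescribed families of poles extends meromorphically in the parameters, so that for $|q|>1$ the unit circle (which now automatically excludes $t=q^{2}$ and includes $t=q^{-2}$) computes the continuation of the $\cC_t$-integral of Proposition \ref{prop:qa-fv}. Your hand-check that the remaining poles $t=q^{\pm 2}q^{\pm 2km}$ and their $q^{-2\omega}$-shifts stay away from $|t|=1$ is exactly the hypothesis that theorem encodes, so the two proofs coincide in substance.
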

\begin{proof}
If we view $q$, $q^{-2\omega}$, $q^{-2k}$, $q^{2\lambda}$, and $q^{2\mu}$ as algebraically independent variables, the integrand of Proposition \ref{prop:qa-fv} depends meromorphically on $q$ and $t$ and verifies the conditions of \cite[Theorem 10.2]{Rai}, hence our modification of the integration cycle yields the desired meromorphic extension of the quasi-analytic continuation of Proposition \ref{prop:qa-fv} to $|q| > 1$.
\end{proof}

\section{Free field realization and $q$-Wakimoto modules for $U_q(\asl_2)$} \label{sec:ff}

Our approach to computing the traces of intertwiners is to realize Verma modules for $U_q(\asl_2)$ as $q$-Wakimoto modules.  In the free-field realization of \cite{Mat}, we apply the method of coherent states to the expression for intertwiners given by \cite{Kon} to obtain contour integral formulas for the traces.  In this section we formulate the necessary vertex operators for this free field realization.

\subsection{Fock modules for $U_q(\asl_2)$}

Fix a level $k$.  For $\star \in \{\alpha, \ba, \beta\}$, define the Heisenberg algebras $H_\star$ to be generated by $\{\star_n \mid n \in \ZZ\}$ with $\star_0$ central and relations
\[
[\alpha_m, \alpha_n] = \delta_{n + m, 0} \frac{[2m][km]}{m}, \qquad [\ba_m, \ba_n] = - \delta_{n + m, 0} \frac{[2m][km]}{m}, \qquad [\beta_m, \beta_n] = \delta_{n + m, 0} \frac{[2m][(k + 2)m]}{m}.
\]
Each Heisenberg algebra is the direct sum of the subalgebras $H_{\star, n}$ generated by $\star_{-n}$ and $\star_n$ for $n \geq 0$.  Define the bosonic Fock space $\FF_{\star, a}$ to be the highest weight $H_\star$-module generated by a vector $v_{\star, a}$ so that
\[
\star_0 v_{\star, a} = 2a v_{\star, a} \text{ for $\star \in \{\alpha, \beta\}$} \qquad \text{ and } \qquad \ba_0 v_{\ba, a} = -2a v_{\ba, a}.
\]
We define also the action of $q^\delta$ so that 
\[
q^\delta \star_m q^{-\delta} = q^m \star_m \text{ for } \star \in \{\alpha, \ba, \beta\}
\]
and $q^\delta v_{\star, a} = v_{\star, a}$ for all $a$. Each bosonic Fock space admits the tensor decomposition
\[
\FF_{\star, a} = \bigotimes_{n \geq 0} \FF_{\star, a, n},
\]
where $\FF_{\star, a, 0} = \CC v_{\star, a}$ and $\FF_{\star, a, n}$ for $n > 0$ is the Fock space for $H_{\star, n}$.  Define tensor products of these Fock spaces by
\[
\FF_{a, b_1, b_2} := \FF_{\beta, a} \otimes \FF_{\alpha, b_1} \otimes \FF_{\ba, b_2} \qquad \text{ and } \qquad \FF_{a, b} := \FF_{a, b, b}.
\]

\subsection{$q$-Wakimoto modules for $U_q(\asl_2)$ and the free field construction}

The $q$-Wakimoto module $W_{\mu, k}$ of level $k$ is defined by
\[
W_{\mu, k} := \ker\Big(\eta_0 : \bigoplus_s \FF_{\mu, s}  \to \bigoplus_s \FF_{\mu, s + 1}\Big),
\]
where $\eta_0$ is the zero mode of the operator $\eta(w_0) = \sum_{m \in \ZZ} \eta_m w_0^{-m - 1}$ of Subsection \ref{sec:vert-op}. In \cite{Mat}, a free field representation of $U_q(\asl_2)$ on $W_{\mu, k}$ was given in terms of the vertex operators listed in Subsection \ref{sec:vert-op}, and $W_{\mu, k}$ was identified with a Verma module for $U_q(\asl_2)$ at generic $\mu$ and $k$.

\begin{prop}[{\cite[Proposition 3.1]{Mat}}] \label{prop:mat-ff}
There is a $U_q(\wsl_2)$-action $\pi'_\mu$ on $W_{\mu, k}$ given by
\begin{align*}
\pi'_\mu(e_1) &= x_0^+, \qquad \pi'_\mu(f_1) = x_0^-, \qquad \pi'_\mu(q^{h_1}) = q^{\alpha_0},\\
\pi'_\mu(e_0) &= x_1^{-} q^{-\alpha_0}, \qquad \pi'_\mu(f_0) = q^{\alpha_0} x_{-1}^+, \qquad \pi'_\mu(q^{h_0}) = q^k q^{-\alpha_0}, \qquad \pi'_\mu(q^d) = q^\delta.
\end{align*}
\end{prop}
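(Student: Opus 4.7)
The plan is to verify the defining relations of $U_q(\wsl_2)$ from Section 2 directly on the bosonic Fock space by computing operator product expansions of the vertex operators $x^\pm(z)$, $q^{\pm \alpha_0}$, and $q^\delta$, and then to check that the resulting action descends to $W_{\mu, k} = \ker \eta_0$.

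For the Cartan-type relations, I would first note that $q^{\alpha_0} x^\pm(z) q^{-\alpha_0} = q^{\pm 2} x^\pm(z)$ is immediate from the commutation of $\alpha_0$ with the zero-mode/shift part of $x^\pm(z)$, and yields the $q^{h_1}$ relations with $e_j, f_j$ after taking modes. The relation $q^\delta \star_m q^{-\delta} = q^m \star_m$ translates into a spectral parameter shift $q^\delta x^\pm(z) q^{-\delta} = x^\pm(qz)$, which at the level of modes produces $q^d e_0 q^{-d} = q e_0$ and $q^d f_0 q^{-d} = q^{-1} f_0$ while leaving $e_1, f_1$ invariant. The level identity $\pi'_\mu(q^{h_0 + h_1}) = q^k$ follows at once from $\pi'_\mu(q^{h_0})\pi'_\mu(q^{h_1}) = q^k q^{-\alpha_0} q^{\alpha_0}$.

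The $[e_i, f_j]$ relations form the heart of the computation. I would expand $x^+(z) x^-(w)$ via Wick's theorem for the three Heisenberg algebras $H_\beta$, $H_\alpha$, $H_{\ba}$; the resulting scalar prefactor has simple poles only at $z/w = q^{\pm 2}$, and one assembles the answer into an identity of the form
\[
[x^+(z), x^-(w)] = \frac{1}{(q - q^{-1})\,zw}\Bigl(\delta(q^{-2}w/z)\,\psi^+(q^{-1}w) - \delta(q^{2}w/z)\,\psi^-(qw)\Bigr)
\]
for suitable currents $\psi^\pm(w)$ in $q^{\pm \alpha_0}$ and the $\alpha_n, \ba_n$. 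Matching the zero and first modes of this identity, accounting for the cofactors $q^{\mp \alpha_0}$ in $\pi'_\mu(e_0), \pi'_\mu(f_0)$, then yields $[e_1, f_1] = (q^{h_1} - q^{-h_1})/(q - q^{-1})$ and $[e_0, f_0] = (q^{h_0} - q^{-h_0})/(q - q^{-1})$, while the mixed brackets $[e_0, f_1] = [e_1, f_0] = 0$ follow from mode combinations with no singular contribution.

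The quantum Serre relations reduce, after an analogous OPE calculation for $x^\epsilon(z_1) x^\epsilon(z_2) x^\epsilon(z_3)$, to a $q$-analogue of a symmetric function identity for the antisymmetrization of the triple scalar prefactor in the $z_i$. Finally, descent to $W_{\mu, k}$ follows from commuting $\eta_0$ through each generator: the OPE of $x^\pm(z)$ with $\eta(w)$ is non-singular in the relevant sense, so $[\eta_0, x_n^\pm] = 0$ and $\ker \eta_0$ is preserved; the operators $q^{\pm \alpha_0}$ and $q^\delta$ visibly commute with $\eta_0$. The main obstacle is the bookkeeping in the triple OPE required for the Serre identities, together with the delicate combinatorics of normal-ordering coefficients across the three Heisenberg algebras; this is precisely the computation carried out in \cite{Mat}, on which a rigorous argument would ultimately rely.
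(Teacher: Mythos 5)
Your outline of the verification (OPEs for the current commutators, Serre relations via triple OPEs, and preservation of $\ker\eta_0$) is the standard and correct strategy, but the paper itself supplies no proof of this statement: it is imported verbatim as \cite[Proposition 3.1]{Mat}, and your proposal likewise defers all of the substantive computations to that reference. So in effect you and the paper take the same route — reliance on \cite{Mat} — with your sketch serving as a reasonable roadmap of what that reference actually checks rather than an independent argument.
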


\begin{prop}[{\cite[Corollary 4.4]{Mat}}] \label{prop:waki-verma}
For generic $\mu$ and $k$, we have an isomorphism of $U_q(\asl_2)$ modules
\[
W_{\mu, k} \simeq M_{2 \mu \rho + k \Lambda_0} = M_{2\mu, k}.
\]
\end{prop}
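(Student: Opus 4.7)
The plan is a standard highest-weight-plus-character argument. First, I would exhibit the Fock vacuum $v_{\mu,0,0} \in \FF_{\mu,0}$ as a highest-weight vector for the action $\pi'_\mu$. Since $v_{\mu,0,0}$ is annihilated by the positive modes of all three Heisenberg algebras $H_\alpha, H_{\ba}, H_\beta$, a direct mode computation using the formulas of Proposition \ref{prop:mat-ff} together with the explicit bosonization of the vertex operators $x_n^\pm$ and of $\eta$ shows that $\pi'_\mu(e_1) = x_0^+$ and $\pi'_\mu(e_0) = x_1^- q^{-\alpha_0}$ both annihilate $v_{\mu,0,0}$, and that $\eta_0 v_{\mu,0,0} = 0$, so $v_{\mu,0,0} \in W_{\mu,k}$. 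The Cartan eigenvalues can be read off from the zero-mode conventions of Proposition \ref{prop:mat-ff} (the relevant linear combinations of $\alpha_0, \ba_0, \beta_0$ acting on $v_{\beta,\mu} \otimes v_{\alpha,0} \otimes v_{\ba,0}$), yielding a vector of weight $2\mu\rho + k\Lambda_0$.

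Second, the universal property of the Verma module produces a $U_q(\asl_2)$-homomorphism
\[
\phi: M_{2\mu, k} \to W_{\mu,k}, \qquad v_{2\mu,k} \mapsto v_{\mu,0,0}.
\]
For generic $(\mu, k)$ the module $M_{2\mu,k}$ is irreducible by the Kac--Kazhdan determinant formula (Proposition \ref{prop:kk-det}), so $\phi$ is automatically injective.

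For surjectivity I would compare $q^{2\lambda\rho + 2\omega d}$-graded characters. The graded character of each Fock space $\FF_{\mu,s}$ is computed directly from the Heisenberg commutation relations as an explicit zero-mode weight factor times $\prod_{n\geq 1}(1 - q^{-n\delta})^{-3}$. The direct sum $\bigoplus_s \FF_{\mu,s}$ carries the screening operator $\eta_0$, and one organizes it into a Felder-style two-sided complex. At generic level one checks that this complex is acyclic away from $s = 0$, so that $\mathrm{ch}\,W_{\mu,k}$ reduces to the character of $\ker(\eta_0|_{\FF_{\mu,0}})$; matching the three Heisenberg partition functions with the Weyl--Kac denominator for the affine positive roots $\{\alpha,\ n\delta \pm \alpha,\ n\delta : n \geq 1\}$ yields $\mathrm{ch}\,W_{\mu,k} = \mathrm{ch}\,M_{2\mu,k}$, forcing the injection $\phi$ to be surjective.

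The main obstacle is the character-matching step, i.e. the acyclicity of the Felder-type complex at generic $(\mu,k)$. This is the representation-theoretic input that makes the $q$-Wakimoto realization equivalent to a Verma-module realization, and is effectively what is carried out in \cite{Mat}; any self-contained proof would require a careful analysis of the screening current and its iterated contour integrals, parallel to Bernard--Felder's resolution for $\wsl_2$ at generic level. The highest-weight computation and the irreducibility input from Proposition \ref{prop:kk-det} are comparatively routine once the bosonization formulas are in hand.
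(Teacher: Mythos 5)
The paper does not actually prove this statement: it is imported verbatim as \cite[Corollary 4.4]{Mat}, so there is no in-paper argument to compare against. Your outline (exhibit a singular Fock vacuum of the right weight, map the Verma module onto it, get injectivity from irreducibility at generic $(\mu,k)$ via the Kac--Kazhdan determinant, and get surjectivity by matching graded characters through the exactness of the $\eta_0$-complex) is the standard route and is essentially how the cited result is established, so the strategy is sound.

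Two corrections, one of them substantive for the first step. The highest weight vector cannot be $v_{\mu,0,0}\in\FF_{\mu,0}$: since $\pi'_\mu(q^{h_1})=q^{\alpha_0}$ and $\alpha_0 v_{\alpha,a}=2a\,v_{\alpha,a}$, the vector $v_{\beta,\mu}\otimes v_{\alpha,0}\otimes v_{\ba,0}$ has $h_1$-weight $0$, not $2\mu$. The correct cyclic vector is $v_{\mu,\mu,\mu}\in\FF_{\mu,\mu}$ (so that $q^{h_1}$ acts by $q^{2\mu}$ and $q^\delta$ acts by $1$, giving weight $2\mu\rho+k\Lambda_0$); this is the vector the paper itself uses as the highest weight vector of $M_{2\mu,k}=W_{\mu,k}$ in Propositions \ref{prop:inter-mat-elt} and \ref{prop:jack-conv}. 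Second, the character-matching step is more delicate than ``three Heisenberg partition functions versus the Weyl--Kac denominator'': the $d$-grading of the Fock vacua entering the $\eta_0$-complex is governed by the $q$-Sugawara operator of Proposition \ref{prop:q-sug} and varies with the charge shifts induced by $e^{(k+2)\beta+k\ba}$ in $\eta$, and it is exactly this bookkeeping plus the generic-level exactness of the complex that constitutes the content of \cite[Corollary 4.4]{Mat}. You correctly flag this as the main obstacle, but as written it remains a citation to the literature rather than a proof.
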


The following $q$-Sugawara construction for $L_0$ gives the action of $q^\delta$ on Fock space in terms of the free field construction.

\begin{prop}[{\cite[Equation 3.13]{Mat}}] \label{prop:q-sug}
For the operator $L_0$ defined by
\[
L_0 = \sum_{m > 0} \frac{m^2}{[2m][km]}(\alpha_{-m} \alpha_m - \bar{\alpha}_{-m} \bar{\alpha}_m) + \frac{1}{4k} (\alpha_0^2 - \bar{\alpha}_0^2) + \sum_{m > 0} \frac{m^2}{[2m][(k+2)m]} \beta_{-m} \beta_m + \frac{1}{4(k+2)}(\beta_0^2 + 2 \beta_0),
\]
we have $\delta + L_0 = \frac{\mu(\mu+1)}{k + 2}$ on $\FF_{\mu, m_1, m_2}$.
\end{prop}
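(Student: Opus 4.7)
The plan is to verify that $\delta + L_0$ commutes with every generator of the three Heisenberg algebras $H_\alpha, H_{\ba}, H_\beta$, deduce that it acts as a scalar on each Fock module, and then compute that scalar on the highest weight vector.

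First I would compute $[L_0, \star_m]$ for $\star \in \{\alpha, \ba, \beta\}$ and $m \neq 0$. Expanding
\[
[L_0|_\alpha, \alpha_p] = \sum_{n>0} \frac{n^2}{[2n][kn]}\bigl(\alpha_{-n}[\alpha_n, \alpha_p] + [\alpha_{-n}, \alpha_p]\alpha_n\bigr)
\]
and using $[\alpha_a, \alpha_b] = \delta_{a+b,0}\frac{[2a][ka]}{a}$ together with the identity $[-n] = -[n]$, exactly one term survives for each $p \neq 0$ and collapses to $-p\alpha_p$. Identical calculations yield $[L_0, \ba_p] = -p\ba_p$ and $[L_0, \beta_p] = -p\beta_p$; in particular, the extra minus sign in $[\ba_a, \ba_b] = -\delta_{a+b,0}\frac{[2a][ka]}{a}$ is exactly cancelled by the explicit minus in the $\ba$-piece of $L_0$. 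Since $q^\delta \star_m q^{-\delta} = q^m \star_m$ formally encodes $[\delta, \star_m] = m \star_m$, this proves $[\delta + L_0, \star_m] = 0$ for all nonzero $m$; the zero-mode generators $\alpha_0, \ba_0, \beta_0$ commute with $\delta + L_0$ by inspection.

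Because $\FF_{\mu, m_1, m_2}$ is irreducible under the combined action of the three Heisenberg algebras with fixed zero-mode eigenvalues, $\delta + L_0$ now acts by a scalar. I would compute this scalar by evaluating on $v = v_{\beta, \mu} \otimes v_{\alpha, m_1} \otimes v_{\ba, m_2}$: the positive-mode sums annihilate $v$, the zero-mode pieces contribute
\[
\frac{1}{4k}(4 m_1^2 - 4 m_2^2) + \frac{1}{4(k+2)}(4\mu^2 + 4\mu) = \frac{m_1^2 - m_2^2}{k} + \frac{\mu(\mu+1)}{k+2},
\]
and $\delta v = 0$ by the convention $q^\delta v_{\star, a} = v_{\star, a}$. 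On the diagonal Fock modules $\FF_{\mu, b} = \FF_{\mu, b, b}$ that enter the $q$-Wakimoto construction, the first term vanishes and we recover the claimed eigenvalue $\frac{\mu(\mu+1)}{k+2}$.

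The only real difficulty is careful sign bookkeeping: one must track the interplay of $[-n] = -[n]$ with the two cross terms in the bracket and verify that the precise coefficients $\frac{n^2}{[2n][kn]}$ and $\frac{n^2}{[2n][(k+2)n]}$ in the Sugawara sum are exactly those that line up the signs to produce the clean commutator $[L_0, \star_p] = -p\star_p$. Everything else is a routine direct computation.
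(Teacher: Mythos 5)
The paper does not prove this proposition; it is imported verbatim from \cite[Equation 3.13]{Mat}, so there is no internal argument to compare against. Your verification is the standard one — show $[L_0,\star_p]=-p\star_p$ so that $\delta+L_0$ is central for the three Heisenberg algebras, then evaluate on the highest weight vector — and the sign bookkeeping you describe ($[-n]=-[n]$ cancelling against the explicit minus in the $\ba$-piece, the coefficients $\tfrac{n^2}{[2n][kn]}$ and $\tfrac{n^2}{[2n][(k+2)n]}$ exactly inverting the central terms of the brackets) checks out. Your zero-mode computation is also correct: with $\alpha_0=2m_1$, $\ba_0=-2m_2$, $\beta_0=2\mu$ on the vacuum, the eigenvalue of $\delta+L_0$ on $\FF_{\mu,m_1,m_2}$ is $\tfrac{m_1^2-m_2^2}{k}+\tfrac{\mu(\mu+1)}{k+2}$, not $\tfrac{\mu(\mu+1)}{k+2}$ alone. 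So you have correctly identified that the proposition as literally stated holds only when $m_1^2=m_2^2$; this is harmless for the paper, since the $q$-Wakimoto module and the trace computation in Section 5 only ever use the diagonal spaces $\FF_{\mu,s}=\FF_{\mu,s,s}$ (and the intertwiner $\phi_j(z):\FF_{r,m_1,m_2}\to\FF_{r+j,m_1+j,m_2+j}$ preserves $m_1-m_2$), but flagging the restriction is the one substantive point your proof adds beyond the citation. One small remark: the appeal to irreducibility of the Fock module is more than you need — since $\delta+L_0$ commutes with all $\star_{-m}$ and every vector is of the form $P(\star_{-})v$ for the cyclic vector $v$, the scalar action follows directly.
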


Finally, to compute traces in the free field realization over $W_{\mu, k}$, we require the following reduction to a trace over the big Fock space.

\begin{prop}[{\cite[Section 6]{Kon2}}] \label{prop:ff-trace-comp}
For an operator $\psi: \bigoplus_s \FF_{\mu, s} \to \bigoplus_s \FF_{\mu, s}$ which preserves $W_{\mu, k}$, we have $\Tr|_{W_{\mu, k}}(\psi) = \Tr|_{\bigoplus_s \FF_{\mu, s}}(\cP \circ \psi)$, where $\cP$ is the projection onto $W_{\mu, k}$ defined by
\[
\cP := \oint \oint \frac{dw dz}{(2\pi i)^2} \frac{1}{z} \eta(w) \xi(z),
\]
where the contours are loops enclosing $w = 0$ and $z = 0$.
\end{prop}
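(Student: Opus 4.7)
The plan is to reduce the claim to the assertion that $\cP$ is the projector onto $W_{\mu,k}$ inside the big Fock space $\bigoplus_s \FF_{\mu,s}$, after which the trace identity becomes a block-matrix calculation.  First, I would unpack the contour integrals via the standard mode expansions of the bosonic ghost fields of Subsection \ref{sec:vert-op}, namely $\eta(w) = \sum_{m \in \ZZ} \eta_m w^{-m-1}$ of conformal weight $1$ and $\xi(z) = \sum_{m \in \ZZ} \xi_m z^{-m}$ of conformal weight $0$.  The contour in $w$ around $0$ then extracts $\eta_0$ and the contour in $z$ around $0$, together with the extra factor $z^{-1}$, extracts $\xi_0$, so that
\[
\cP = \eta_0 \xi_0.
\]

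Next, I would verify that $\cP$ is a projector onto $W_{\mu,k}$.  The $\eta$-$\xi$ pair is a free fermion system with OPE $\eta(w)\xi(z) \sim (w-z)^{-1}$ and regular self-OPEs; the mode anticommutators $\{\xi_m, \eta_n\} = \delta_{m+n,0}$ and the nilpotencies $\eta_0^2 = \xi_0^2 = 0$ follow in the usual way from contour arguments.  Using $\{\eta_0, \xi_0\} = 1$, I compute
\[
\cP^2 = \eta_0 \xi_0 \eta_0 \xi_0 = \eta_0(1 - \xi_0 \eta_0)\xi_0 = \eta_0 \xi_0 = \cP.
\]
Since $\eta_0 \cP = \eta_0^2 \xi_0 = 0$, the image of $\cP$ lies in $\ker \eta_0 = W_{\mu,k}$; and for any $v \in W_{\mu,k}$, $\cP v = \eta_0 \xi_0 v = (1 - \xi_0 \eta_0)v = v$.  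Hence $\cP$ is the projector onto $W_{\mu,k}$ along its kernel $W' := \ker\cP$, yielding a direct sum decomposition $\bigoplus_s \FF_{\mu,s} = W_{\mu,k} \oplus W'$.

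Finally, since $\psi$ preserves $W_{\mu,k}$, in block form with respect to this decomposition it is upper triangular,
\[
\psi = \begin{pmatrix} \psi|_{W_{\mu,k}} & B \\ 0 & D \end{pmatrix},
\]
while $\cP$ has block form $\mathrm{diag}(I,0)$.  Consequently $\cP\psi$ agrees with $\psi$ on $W_{\mu,k}$ and vanishes on $W'$, so that its trace over $\bigoplus_s \FF_{\mu,s}$ equals $\Tr|_{W_{\mu,k}}(\psi)$, as claimed.

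The algebraic manipulation is the standard bosonic ghost projector identity; the only point requiring care is the convergence of the trace on the infinite-dimensional space $\bigoplus_s \FF_{\mu,s}$.  In the applications, $\psi$ will carry a compatible grading by $q^{\delta}$ and $q^{\alpha_0}$ for which each graded piece is finite-dimensional and the block decomposition is graded, so the identity is to be interpreted as an equality of formal series in these grading parameters and the manipulations above go through termwise.
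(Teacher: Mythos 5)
Your argument is correct, and it is the standard one: the paper itself gives no proof of this statement but simply cites \cite[Section 6]{Kon2}, so what you have written is a faithful reconstruction of the argument that the paper outsources to that reference. The contour extraction $\cP = \eta_0\xi_0$ matches the mode conventions $\eta(w) = \sum_m \eta_m w^{-m-1}$ and $\xi(z) = \sum_m \xi_m z^{-m}$ (the extra $z^{-1}$ is exactly what makes the $z$-integral pick out $\xi_0$), and the two facts you establish --- $\eta_0 \cP = \eta_0^2\xi_0 = 0$, so $\operatorname{im}\cP \subseteq \ker\eta_0 = W_{\mu,k}$, and $\cP v = (1-\xi_0\eta_0)v = v$ for $v \in \ker\eta_0$ --- already force $\cP$ to be the projection onto $W_{\mu,k}$ along $\ker\cP$, after which the block-triangular trace computation is immediate. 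Your closing remark about interpreting the trace graded-piece by graded-piece is the right caveat and is consistent with how the paper uses the result.

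One small slip: in the displayed verification of $\cP^2 = \cP$ you substituted $\xi_0\eta_0 = 1 - \xi_0\eta_0$ rather than $\xi_0\eta_0 = 1 - \eta_0\xi_0$; as literally written the chain reads $\cP^2 = \cP - \cP^2$, which would give $\cP^2 = \tfrac12\cP$. The correct line is $\cP^2 = \eta_0(1-\eta_0\xi_0)\xi_0 = \eta_0\xi_0 - \eta_0^2\xi_0^2 = \cP$. This is cosmetic --- and in fact the idempotence computation is redundant, since the two containments you prove in the next sentence already imply $\cP^2 = \cP$ --- but it should be fixed.
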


\subsection{Vertex operators and Jackson integrals for intertwiners}

We use the vertex operator expression for intertwiners of \cite{Mat}, which we summarize here. For the map $\phi_j(z): \FF_{r, m_1, m_2} \to \FF_{r + j, m_1 + j, m_2 + j}$ defined by (\ref{eq:phi-def}), define maps $\phi_{j, m}(z)$ by $\phi_{j, j}(z) = \phi_j(z)$ and 
\[
\phi_{j, m - 1}(z) := \frac{1}{[j - m + 1]} [\phi_{j, m}(z), x_0^-]_{q^{2m}}.
\]
Let also $\Delta_\mu = \frac{\mu(\mu + 1)}{k + 2}$.  The Jackson integral of a function $f(t)$ for a period $p$ on the cycle $s$ is the formal sum
\[
\int_0^{s \cdot \infty} f(t) \frac{d_p t}{t} := \sum_{n \in \ZZ} f(s p^n).
\]
Recall that a quasi-meromorphic function is a function of the form $f(t) = t^a g(t)$ for some $a \in \CC$; if $g(t)$ is defined on an open domain and $\wtilde{g}(t)$ is a meromorphic continuation of $g(t)$, outside that domain, we say that $\wtilde{f}(t) = t^a \wtilde{g}(t)$ is the meromorphic continuation\footnote{We use the term meromorphic continuation instead of quasi-meromorphic continuation to avoid confusion with quasi-analytic continuation.} of $f(t)$.  We will sometimes consider the Jackson integral of a quasi-meromorphic function $f(t)$ which is defined only on a subset of the range of the Jackson cycle.  In these cases, we interpret this notation with $f(t)$ replaced by its meromorphic continuation to a branch of $t^a$ along the Jackson cycle.

\begin{prop}[{\cite[Theorem 5.4]{Mat}}] \label{prop:mat-ff-inter}
For $p = q^{2k + 4}$, any $\tau \leq \nu$, and Jackson cycles $s_1, \ldots, s_\tau$ for which matrix elements on $M_{2\mu, k}[-a\delta]$ of the Jackson integral expression
\[
\wPhi^{\mu + \nu - \tau}_{\mu, \nu}(z) := z^{\Delta_\mu + \Delta_\nu - \Delta_{\mu + \nu - \tau}} \int_{0}^{s_1 \cdot \infty} \cdots \int_0^{s_\tau \cdot \infty} \sum_{m \geq 0} S(t_1) \cdots S(t_\tau) \phi_{\nu, \nu - m}(z) \otimes w_{2m - 2\nu}\, d_pt_1 \cdots d_pt_\tau
\]
converge for $a \leq A$, they coincide with matrix elements of an intertwining operator $M_{2\mu, k} \to M_{2\mu + 2\nu - 2\tau, k} \hotimes L_{2\nu}(z)$.
\end{prop}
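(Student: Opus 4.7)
The plan is to verify directly that the Jackson integral operator $\wPhi^{\mu+\nu-\tau}_{\mu,\nu}(z)$ intertwines the action of $U_q(\asl_2)$ and has the correct leading term, and then to invoke the uniqueness of intertwining operators from \cite[Theorem 9.3.1]{EFK}. Throughout, I would identify $M_{2\mu,k}$ with the $q$-Wakimoto module $W_{\mu,k}$ via Proposition \ref{prop:waki-verma}, so that every operator in sight is expressed in terms of bosonic oscillators on a subquotient of Fock space and the Chevalley generators act via the explicit vertex operator formulas of Proposition \ref{prop:mat-ff}.

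The technical input comes from three families of OPE/commutation relations. First, the screening current $S(t)$ has the property that its commutator with each Chevalley generator $e_i, f_i$ is a total $p$-difference in $t$ with $p = q^{2k+4}$; this is the defining property of screening operators in the free-field construction. Second, the vertex operator $\phi_\nu(z) = \phi_{\nu,\nu}(z)$ has explicit OPEs with the currents $x^\pm(w)$ that realize the tensor product action of $U_q(\asl_2)$ on $L_{2\nu}(z)$ at its extremal weight vector. Third, the recursion $\phi_{\nu, m-1}(z) = \tfrac{1}{[\nu-m+1]}[\phi_{\nu,m}(z), x_0^-]_{q^{2m}}$ ensures that the sum $\sum_m \phi_{\nu,\nu-m}(z) \otimes w_{2m-2\nu}$ transforms under $\Delta(f_1)$ exactly as prescribed by the coproduct on $M \hotimes L_{2\nu}(z)$, while commutation with $e_1$ produces a telescoping identity in $m$. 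The prefactor $z^{\Delta_\mu + \Delta_\nu - \Delta_{\mu+\nu-\tau}}$ accounts for the $q^d$-weight via the $q$-Sugawara formula of Proposition \ref{prop:q-sug}.

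Combining these, for each Chevalley generator $x$ the commutator
\[
\Delta(x)\, \wPhi^{\mu+\nu-\tau}_{\mu,\nu}(z) - \wPhi^{\mu+\nu-\tau}_{\mu,\nu}(z)\, x
\]
can be rewritten at the level of integrands as a sum of total $p$-differences in the variables $t_1, \ldots, t_\tau$ plus terms which cancel by the telescoping in $m$. Evaluating under each Jackson integral $\int_0^{s\cdot\infty} \tfrac{d_p t}{t}$, the total $p$-differences telescope to zero provided boundary contributions at the two ends of the cycle $s \cdot p^{\ZZ}$ decay, which is precisely the content of the convergence hypothesis on $M_{2\mu,k}[-a\delta]$ for $a \leq A$. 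The highest-weight condition is then checked by evaluating the leading contribution of the screenings on the Fock vacuum of $\FF_{\mu,s}$, extracting the term proportional to $v_{2\mu+2\nu-2\tau,k} \otimes w_{-2\nu+2\tau}$, and verifying that its coefficient is nonzero; uniqueness in \cite[Theorem 9.3.1]{EFK} then identifies $\wPhi^{\mu+\nu-\tau}_{\mu,\nu}(z)$ with the canonical intertwining operator on these matrix elements.

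The main obstacle is the control of the Jackson integrals: the integrands are quasi-meromorphic with branch cuts from the fractional powers appearing in $S(t)$ and $\phi_\nu(z)$, so the ``telescoping to zero'' of total $p$-differences requires justifying that the boundary terms at $0$ and at $s\cdot\infty$ actually vanish along each Jackson cycle. For applications in Section \ref{sec:ci}, the cycles will be chosen concretely so that convergence can be verified directly on the good region of parameters, at which point the identification with the canonical intertwiner follows from the outline above.
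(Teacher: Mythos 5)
Your proposal reconstructs the standard free-field argument (screening currents commuting with the $U_q(\asl_2)$-action up to total $p$-differences, telescoping under the Jackson integral, then uniqueness via \cite[Theorem 9.3.1]{EFK}), and that is indeed the mechanism behind the result. But the paper does not reprove this: it cites \cite[Theorem 5.4]{Mat} for exactly that content and only supplies what is \emph{missing} from Matsuo's statement. That missing piece is the one point your proposal glosses over, and it is a genuine gap relative to what actually needs to be argued here.

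Concretely, Matsuo's theorem is stated with target $M_{-2\nu}^\vee(z)$ and with $2\nu$ not an integer, whereas the proposition needs integral $2\nu$ and the finite-dimensional target $L_{2\nu}(z)$. Your integrand carries the sum $\sum_{m\geq 0}\phi_{\nu,\nu-m}(z)\otimes w_{2m-2\nu}$, and for $m>2\nu$ the vectors $w_{2m-2\nu}$ live in $M_{-2\nu}^\vee$ but not in $L_{2\nu}$; the recursion $\phi_{\nu,m-1}(z)=\frac{1}{[\nu-m+1]}[\phi_{\nu,m}(z),x_0^-]_{q^{2m}}$ does not obviously terminate, so it is not automatic that the operator factors through the submodule $L_{2\nu}(z)\into M_{-2\nu}^\vee(z)$. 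The paper's remark handles this by first observing that Matsuo's proof applies to all values of $2\nu$ with target $M_{-2\nu}^\vee(z)$, so the degree-zero part is a $U_q(\sl_2)$-intertwiner $\psi: M_{2\mu}\to M_{2\mu+2\nu-2\tau}\hotimes M_{-2\nu}^\vee$; one then produces a second intertwiner $\psi'$ valued in $M_{2\mu+2\nu-2\tau}\hotimes L_{2\nu}\into M_{2\mu+2\nu-2\tau}\hotimes M_{-2\nu}^\vee$ with the same highest-weight matrix element, concludes $\psi=\psi'$ by uniqueness, and hence that the operator factors through $L_{2\nu}(z)$. Your closing paragraph on meromorphic continuation of the quasi-meromorphic integrand along the full Jackson cycle does correspond to the paper's second remark, so that part is in order; but without the factorization argument above, your proof establishes an intertwiner into $M_{-2\nu}^\vee(z)$ rather than the stated one into $L_{2\nu}(z)$.
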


\begin{remark}
In \cite[Theorem 5.4]{Mat}, the content of Proposition \ref{prop:mat-ff-inter} is stated with $M_{-2\nu}^\vee(z)$ in place of $L_{2\nu}(z)$ and with $2\nu$ not an integer.  To obtain Proposition \ref{prop:mat-ff-inter}, note first that the proof of \cite{Mat} via \cite[Proposition 5.3]{Mat} applies for $M_{-2\nu}^\vee(z)$ for all values of $2\nu$, meaning that matrix elements of $\wPhi^{\mu + \nu - \tau}_{\mu, \nu}(z)$ coincide with matrix elements of $M_{2\mu, k} \to W_{2\mu + 2\nu - 2\tau, k} \hotimes M_{-2\nu}^\vee(z)$ when they converge.  The degree zero part therefore consists of matrix elements of an intertwiner
\[
\psi: M_{2\mu} \to M_{2\mu + 2\nu - 2\tau} \hotimes M_{-2\nu}^\vee
\]
of $U_q(\sl_2)$-modules.  Now, we may find another intertwiner 
\[
\psi': M_{2\mu} \to M_{2\mu + 2\nu - 2\tau} \hotimes L_{2\nu} \into M_{2\mu + 2\nu - 2\tau} \hotimes M_{-2\nu}^\vee
\]
with the same highest weight matrix element as $\psi$.  As intertwiners $M_{2\mu} \to M_{2\mu + 2\nu - 2\tau} \hotimes M_{-2\nu}^\vee$, we then have $\psi' = \psi$, hence matrix elements of $\wPhi^{\mu + \nu - \tau}_{\mu, \nu}(z)$ coincide with those of the intertwiner associated to $\psi'$ by \cite[Theorem 9.3.1]{EFK}.  In particular, it factors through the submodule $L_{2\nu}(z)$ of $M_{-2\nu}^\vee(z)$, yielding Proposition \ref{prop:mat-ff-inter} as stated here.
\end{remark}

\begin{remark}
In \cite[Theorem 5.4]{Mat}, it is shown that the operator $S(t_1) \cdots S(t_\tau) \phi_{\nu, \nu - m}(z)$ converges and commutes with the operators implementing the $U_q(\asl_2)$-action of Proposition \ref{prop:mat-ff} up to total $q^{2k + 4}$-difference in an open region for $t_1, \ldots, t_\tau$.  This implies that the same is true of its meromorphic continuation along the full Jackson cycle, allow us to interpret Proposition \ref{prop:mat-ff} with this meromorphic continuation.
\end{remark}

\subsection{Convergence and normalization of vertex operator expression for intertwiners}

We now show that the Jackson integral in the vertex operator expression for the intertwiner of Proposition \ref{prop:mat-ff-inter} converges in a certain region of parameters when $\nu = \tau = 1$ and the Jackson cycle is chosen to be $s_1 = zq^{-2}$.  Consider the parameter region
\begin{equation} \label{eq:me-conv-region}
0 < |q^{-2\mu}| \ll |q^{-2k}| \ll |q|, |q|^{-1}.
\end{equation}
For the rest of the paper, we use $\wPhi^\mu_{\mu, 1}(z)$ to denote the intertwiner of Proposition \ref{prop:mat-ff-inter} with this Jackson cycle. We first show in Proposition \ref{prop:inter-mat-elt} that the matrix element of the highest weight vector converges and compute its value.  For convenience, we use the notation $\kappa := k + 2$.

\begin{prop} \label{prop:inter-mat-elt}
In the region of parameters (\ref{eq:me-conv-region}), if $\nu = \tau = 1$ and $s_1 = zq^{-2}$, the diagonal matrix element of the Jackson integral for $\wPhi^\mu_{\mu, 1}(z)$ on the highest weight vector of $M_{2\mu, k}$ converges and equals
\[
C_{\mu, 1} := -(1 + q^2)q^{-2\mu-3} q^{\frac{4\mu + 4}{\kappa}}  \frac{(q^{-2\kappa};q^{-2\kappa})}{(q^{-4\mu-4}; q^{-2\kappa})} \frac{(q^{-4\mu};q^{-2\kappa})}{(q^4; q^{-2\kappa})}.
\]
\end{prop}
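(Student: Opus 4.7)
The plan is to extract the coefficient of $v_{2\mu, k} \otimes w_0$ in $\wPhi^\mu_{\mu, 1}(z) v_{2\mu, k}$ directly in the free field realization. With $\nu = \tau = 1$, only the $m = 1$ summand of Proposition \ref{prop:mat-ff-inter} contributes to the $w_0$-component, so the task reduces to evaluating
\[
C_{\mu, 1} = z^{\Delta_1} \int_0^{zq^{-2}\cdot\infty} \langle v^*_{2\mu, k},\, S(t)\, \phi_{1, 0}(z)\, v_{2\mu, k}\rangle\, d_p t
\]
with $\Delta_1 = 2/\kappa$ and $p = q^{2\kappa}$. Unpacking $\phi_{1, 0}(z) = [\phi_1(z), x_0^-]_{q^2}$ reduces the vacuum correlator inside the integral to a linear combination of two standard computations between the screening current $S(t)$ and a normal-ordered exponential vertex operator evaluated on $v_{2\mu, k}$; these are carried out using the OPE formulas of Appendix \ref{sec:comps}. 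The combined result should take the form $z^{-\Delta_1}\, A(\mu, q)\, (t/z)^\gamma\, R(t/z)$, where $\gamma$ is linear in $\mu$ (set by the zero-mode shifts), $A(\mu, q)$ is an elementary prefactor carrying the $(1 + q^2)$ coming from the $q^2$-commutator, and $R$ is an explicit ratio of $q$-Pochhammer symbols in base $p$.

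The $z^{\Delta_1}$ prefactor of the intertwiner then cancels the $z^{-\Delta_1}$, and specializing $t = zq^{-2} p^n$ in the Jackson sum gives
\[
C_{\mu, 1} = A(\mu, q) \sum_{n \in \ZZ} (q^{-2}p^n)^\gamma R(q^{-2} p^n),
\]
which is manifestly independent of $z$ as expected from the zero weight of $w_0$. In the region (\ref{eq:me-conv-region}), I would verify convergence using the theta estimates of Appendix \ref{sec:ell}: the combined exponent $\gamma$ together with the hypothesis $|q^{-2\mu}| \ll |q^{-2k}|$ ensures that the summand decays geometrically as $n \to \pm\infty$, so that only a convergent tail at each end needs to be controlled. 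The summation itself is of bilateral $q$-hypergeometric type, and I would apply Ramanujan's ${}_1\psi_1$ identity (or an equivalent bilateral theta evaluation) to convert it into a ratio of theta functions. Rewriting these as ratios of single $q$-Pochhammer symbols in base $q^{-2\kappa}$ produces the factor $\frac{(q^{-2\kappa};q^{-2\kappa})(q^{-4\mu};q^{-2\kappa})}{(q^{-4\mu - 4};q^{-2\kappa})(q^4;q^{-2\kappa})}$ of the statement.

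The main obstacle will be the detailed bookkeeping required to match the exact prefactor $-(1+q^2) q^{-2\mu - 3} q^{(4\mu + 4)/\kappa}$. This requires tracking how the zero-mode contribution $A(\mu, q)$, the evaluation $(q^{-2})^\gamma = q^{-2\gamma}$ at the Jackson base point $s_1 = zq^{-2}$, the $q^2$-commutator sign conventions, and the leading coefficient from the ${}_1\psi_1$ summation combine. The structural steps — the vertex-operator computation, the Jackson-to-series conversion, and the bilateral summation — are each essentially standard in isolation once the integrand is identified; the delicate step is putting them together without sign or normalization errors.
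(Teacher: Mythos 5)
Your overall skeleton---isolate the $m=1$ term so that the relevant correlator is $\langle v^*, S(t)\,[\phi_1(z),x_0^-]_{q^2}\,v\rangle$, evaluate it via the OPEs, convert the Jackson integral to a series, and sum---is the same route the paper takes, and you correctly locate where the $(1+q^2)$ and the Pochhammer ratio come from. The concrete gap is at the step you describe as ``a linear combination of two standard computations between the screening current $S(t)$ and a normal-ordered exponential vertex operator.'' The zero mode $x_0^-$ is extracted from the current $X^-(w)$ by a contour integral in $w$, and the two orderings $\phi_1(z)X^-_b(w)$ and $X^-_b(w)\phi_1(z)$ have OPEs converging on opposite sides of the poles at $w=zq^{k}$ and $w=zq^{k+4}$. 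The paper therefore introduces two contours $\cC_{w,\pm1}$ (inside and outside these poles), rewrites the $q^2$-commutator via (\ref{eq:phi-x-comm}) as an integral over the difference cycle $\cC_w=\cC_{w,-1}-\cC_{w,1}$ plus a correction term, evaluates the difference cycle by the residue at $w=zq^{\kappa+2b}$ (after which only $b=1$ survives because of the factor $q^2-q^{-2b}$, producing the $(q+q^{-1})q^{2\mu+2}$ prefactor), and separately verifies that the correction term $C_{\mu,1,2}$ vanishes. None of this contour analysis appears in your plan, and without it you cannot identify the integrand of the $t$-Jackson integral: the correlator is not a product of $S(t)$ with a single normal-ordered exponential, and the residual term is not obviously zero.

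A second, smaller inaccuracy: the final summation is not genuinely bilateral. Evaluating at $t=zq^{-2}p^{n}$ produces a factor $(q^{-2\kappa n};q^{-2\kappa})$ in the numerator, which vanishes identically for $n\le 0$, so the Jackson sum truncates to $n\ge 1$ and is summed by the one-sided $q$-binomial theorem, giving the ratio $(q^{-4\mu};q^{-2\kappa})/(q^{-4\mu-4};q^{-2\kappa})$ of single Pochhammer symbols rather than a ratio of theta functions; Ramanujan's ${}_1\psi_1$ is not needed. Correspondingly, your convergence heuristic of ``geometric decay as $n\to\pm\infty$'' is not what happens: one tail is identically zero, and the other converges because $|q^{-4\mu-4}|$ times a bounded Pochhammer ratio is small in the region (\ref{eq:me-conv-region}).
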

\begin{proof}
For $|t| > |z q^{4k}|$, we may choose contours $\cC_{w, c}$ around $w = 0$ so that for $w \in \cC_{w, c}$, we have $|t| > |wq^{b \kappa}|$ for all $c$, $|w| < |zq^k|, |z q^{k+4}|$ for $c = 1$, and $|w| > |zq^k|, |zq^{k+4}|$ for $c = -1$.  Define also a cycle $\cC_w = \cC_{w, -1} - \cC_{w, 1}$ around $zq^k$ and $zq^{k+4}$ but not $0$.  Applying Proposition \ref{prop:mat-ff} and substituting (\ref{eq:phi-x-comm}), we find for $p = q^{2k + 4}$ that
\begin{align*}
&C_{\mu, 1} = \left\langle v_{\mu, \mu, \mu}^*, \wPhi^\mu_{\mu, 1}(z) v_{\mu, \mu, \mu}\right\rangle \\
%&= -\frac{z^{\Delta_1}}{(q - q^{-1})} \sum_{b \in \{\pm 1\}} (-1)^{\frac{1 - b}{2}}\int_0^{zq^{-2}\cdot \infty} d_pt \oint_{\cC}\frac{dw}{2\pi iw} \left\langle v_{\mu, \mu, \mu}^*,  S(t) [\phi_1(z), X_b^-(w)]_{q^2}v_{\mu, \mu, \mu}\right\rangle \\
&= -\frac{z^{\Delta_1}}{(q - q^{-1})} \sum_{b, c \in \{\pm 1\}} \!\!\!(-1)^{\frac{2 - b - c}{2}} \!\!\!\int_0^{zq^{-2}\cdot \infty} \!\!\!\!\!\!\!\!d_pt \oint_{\cC_{w, c}} \frac{dw}{2\pi iw} q^{2b} \frac{w q^{-2c} - z q^{k + 2}}{w - zq^{k + 2 + 2b}} \left\langle v_{\mu, \mu, \mu}^*, S(t) :\phi_1(z) X_b^-(w): v_{\mu, \mu, \mu}\right\rangle\\
&= -\frac{z^{\Delta_1}}{(q - q^{-1})} \sum_{b \in \{\pm 1\}} (-1)^{\frac{1 - b}{2}} \int_0^{zq^{-2}\cdot \infty} \!\!\!\!\!\!d_pt \Big( - \oint_{\cC_w} \frac{dw}{2\pi iw} q^{2b} \frac{w q^{2} - z q^{k + 2}}{w - zq^{k + 2 + 2b}} \left\langle v_{\mu, \mu, \mu}^*, S(t) :\phi_1(z) X_b^-(w): v_{\mu, \mu, \mu}\right\rangle\\
&\phantom{===} + \oint_{\cC_{w, 1}} \frac{dw}{2\pi iw} q^{2b} \frac{w (q^{-2} - q^{2})}{w - zq^{k + 2 + 2b}} \left\langle v_{\mu, \mu, \mu}^*, S(t) :\phi_1(z) X_b^-(w): v_{\mu, \mu, \mu}\right\rangle\Big).
\end{align*}
Denote the first term by $C_{\mu, 1, 1}$ and the second by $C_{\mu, 1, 2}$.  Noting that $|t| > |zq^2|, |zq^{-2}|, |wq^{\kappa b}|$ for all $w \in \cC_w$, we have by the OPE's in Subsection \ref{sec:ope-comp} that 
\[
\left\langle v_{\mu, \mu, \mu}^*, S_a(t) :\phi_1(z) X_b^-(w): v_{\mu, \mu, \mu}\right\rangle = t^{-\frac{2}{\kappa}} \frac{(zt^{-1} q^{-2}; q^{-2\kappa})}{(zt^{-1}q^2; q^{-2\kappa})}  q^a \frac{t - w q^{(k + 1) b - a}}{t - w q^{\kappa b}} q^{2b} \frac{w q^{2} - z q^{\kappa}}{w - zq^{\kappa + 2b}} q^{2b\mu}z^{\frac{2\mu}{k + 2}} t^{-\frac{2\mu}{\kappa}}.
\]
Applying meromorphic continuation to all $t$ on the Jackson cycle, we may substitute and take the poles at $w = zq^{\kappa + 2b}$ to find that 
\begin{align*}
C_{\mu, 1, 1} &= -\frac{z^{\frac{2\mu + 2}{\kappa}}}{(q - q^{-1})^2} \sum_{a, b \in \{\pm 1\}} (-1)^{\frac{2 - a - b}{2}} \!\!\int_0^{zq^{-2}\cdot \infty}\! \frac{d_pt}{t} \oint_{\cC_w}\frac{dw}{2\pi i w} t^{-\frac{2\mu + 2}{\kappa}} q^{a + 2b + 2 b\mu}\\
&\phantom{=====================}\frac{(zt^{-1} q^{-2}; q^{-2\kappa})}{(zt^{-1}q^2; q^{-2\kappa})} \frac{t - w q^{(k + 1) b - a}}{t - w q^{\kappa b}} \frac{w q^{2} - z q^{\kappa}}{w - zq^{\kappa + 2b}}\\
%&= -\frac{z^{\frac{2\mu + 2}{\kappa}}}{q - q^{-1}} \sum_{b \in \{\pm 1\}} (-1)^{1/2 - b/2} \int_0^{zq^{-2}\cdot \infty} \frac{d_pt}{t} \oint_{C_z}\frac{dw}{2\pi i w} t^{-\frac{2\mu + 2}{\kappa}} q^{2b + 2 b\mu}\frac{(zt^{-1} q^{-2}; q^{-2\kappa})}{(zt^{-1}q^2; q^{-2\kappa})} \frac{t}{t - w q^{\kappa b}} \frac{w q^{2} - z q^{\kappa}}{w - zq^{\kappa + 2b}}\\
&= -\frac{z^{\frac{2\mu + 2}{\kappa}}}{q - q^{-1}} \sum_{b \in \{\pm 1\}} (-1)^{\frac{1 - b}{2}} \int_0^{zq^{-2}\cdot \infty} \frac{d_pt}{t} t^{-\frac{2\mu + 2}{\kappa}} \frac{(zt^{-1} q^{-2}; q^{-2\kappa})}{(zt^{-1}q^2; q^{-2\kappa})}\frac{t q^{2b + 2 b\mu}}{t - z q^{\kappa + (\kappa + 2)b}}(q^{2} - q^{-2b})\\
%&= -\frac{z^{\frac{2\mu + 2}{\kappa}}}{q - q^{-1}} \int_0^{zq^{-2}\cdot \infty} \frac{d_pt}{t} t^{-\frac{2\mu + 2}{\kappa}} \frac{(zt^{-1} q^{-2}; q^{-2\kappa})}{(zt^{-1}q^2; q^{-2\kappa})}\frac{tq^{2\mu + 2}}{t - z q^{2\kappa + 2}}(q^{2} - q^{-2}) \\
&= -(q + q^{-1})q^{2\mu + 2} z^{\frac{2\mu + 2}{\kappa}} \int_0^{zq^{-2}\cdot \infty} \frac{d_pt}{t} t^{-\frac{2\mu + 2}{\kappa}} \frac{(zt^{-1} q^{- 2}; q^{-2\kappa})}{(zt^{-1}q^{2\kappa + 2}; q^{-2\kappa})}\\
&= -(q + q^{-1})q^{2\mu + 2} q^{\frac{4\mu + 4}{\kappa}} \sum_{n \geq 1} q^{(-4\mu - 4)n} \frac{(q^{-2\kappa n}; q^{-2\kappa})}{(q^4 q^{-2\kappa n} q^{2\kappa}; q^{-2\kappa})} \\ \nonumber
 &= -(q + q^{-1})q^{-2\mu - 2} q^{\frac{4\mu + 4}{\kappa}} \frac{(q^{-2\kappa};q^{-2\kappa})}{(q^4; q^{-2\kappa})} \sum_{n \geq 0} q^{(-4\mu - 4)n} \frac{(q^4; q^{-2\kappa})_n}{(q^{-2\kappa}; q^{-2\kappa})_n}\\ 
&= -(1 + q^2)q^{-2\mu-3} q^{\frac{4\mu + 4}{\kappa}}  \frac{(q^{-2\kappa};q^{-2\kappa})}{(q^{-4\mu-4}; q^{-2\kappa})} \frac{(q^{-4\mu};q^{-2\kappa})}{(q^4; q^{-2\kappa})},
\end{align*}
where convergence holds because we are in the region (\ref{eq:me-conv-region}).  For $C_{\mu, 1, 2}$, the contour $\cC_{w, 1}$ contains only poles at $w = 0$, which vanish by the manipulations
\begin{align*}
C_{\mu, 1, 2} &= -\frac{z^{\frac{2\mu + 2}{\kappa}}}{(q - q^{-1})^2} \sum_{a, b \in \{\pm 1\}} (-1)^{\frac{2 - a - b}{2}}\! \int_0^{zq^{-2}\cdot \infty} \frac{d_pt}{t} \oint_{\cC_{w, 1}} \frac{dw}{2\pi i} t^{-\frac{2\mu + 2}{\kappa}} q^{a + 2b + 2 b\mu}\\
&\phantom{==========================}\frac{(zt^{-1} q^{-2}; q^{-2\kappa})}{(zt^{-1}q^2; q^{-2\kappa})} \frac{t - w q^{(k + 1) b - a}}{t - w q^{\kappa b}} \frac{(q^{-2} - q^2)}{w - zq^{\kappa + 2b}}\\
&= (q + q^{-1})z^{\frac{2\mu + 2}{\kappa}} \sum_{b \in \{\pm 1\}} (-1)^{\frac{1 - b}{2}}\! \int_0^{zq^{-2}\cdot \infty}\frac{d_pt}{t} \oint_{\cC_{w, 1}} \frac{dw}{2\pi i} \frac{(zt^{-1} q^{-2}; q^{-2\kappa})}{(zt^{-1}q^2; q^{-2\kappa})} \frac{t}{t - w q^{\kappa b}} \frac{t^{-\frac{2\mu + 2}{\kappa}} q^{2b + 2 b\mu}}{w - zq^{\kappa + 2b}}\\
&= 0.
\end{align*}
The result follows because $C_{\mu, 1} = C_{\mu, 1, 1} + C_{\mu, 1, 2} = C_{\mu, 1, 1}$.
\end{proof}

We would like now to relate $\wPhi^{\mu}_{\mu, 1}(z)$ to a multiple of $\Phi^{w_0}_{2\mu, k}(z)$.  Because the space of intertwiners has dimension $1$, the constant of proportionality is constrained to be $C_{\mu, 1}$.  Unfortunately, it is not true that all matrix elements of $\wPhi^{\mu}_{\mu, 1}(z)$ converge simultaneously in any open neighborhood of $0$ in $q^{-2\mu}$.  Instead, we show in Proposition \ref{prop:jack-conv} that in each degree, matrix elements of the the Jackson integral converge on a open neighborhood of $0$ in $q^{-2\mu}$ dependent on degree and coincide with matrix elements of the intertwiner.  This will suffice for our later computations.

\begin{prop} \label{prop:jack-conv}
If $\nu = \tau = 1$ and $s_1 = zq^{-2}$, for each $A \geq 0$, there exists an open neighborhood of $0$ in $q^{-2\mu}$ so that in the region of parameters (\ref{eq:me-conv-region}), matrix elements of the operator $\wPhi^{\mu}_{\mu, 1}(z)$ on the space 
\[
\bigoplus_{a \leq A} M_{\mu, k}[-a \delta]
\]
of vectors of degree at least $-A$ converge and equal matrix elements of $C_{\mu, 1} \Phi^{w_0}_{2\mu, k}(z)$.
\end{prop}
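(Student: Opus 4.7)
The plan has three steps: establish convergence of matrix elements on the bounded-degree subspace on a suitable neighborhood of $q^{-2\mu} = 0$, identify the resulting operator as a multiple of the unique intertwiner, and pin down the proportionality constant using the highest weight matrix element already computed in Proposition \ref{prop:inter-mat-elt}.

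For the convergence step, I would fix $A \geq 0$ and consider a matrix element of $\wPhi^\mu_{\mu, 1}(z)$ between a PBW basis vector of $\bigoplus_{a \leq A} M_{2\mu, k}[-a \delta]$ and an element of its restricted dual. Using the free field realization of Proposition \ref{prop:mat-ff} and the OPE computations of Appendix \ref{sec:comps}, the matrix element unfolds, just as in the proof of Proposition \ref{prop:inter-mat-elt}, into a finite sum of Jackson integrals of quasi-meromorphic functions of the form $t^{-(2\mu + 2)/\kappa + N} g(t)$ paired with finite contour integrals in the $w$-variables of $X_b^\pm(w)$. Here $N$ ranges over a finite set of integers whose size is controlled by $A$, and $g(t)$ is a ratio of infinite products whose meromorphic continuation along the Jackson cycle $s_1 = z q^{-2}$ is well-defined. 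The Jackson sum $\sum_{n \in \ZZ} f(z q^{-2} p^n)$ then reduces to a finite combination of geometric series convergent once $|q^{-4\mu - 4}|$ lies in a range dictated by the finitely many relevant $N$ and $|q^{-2\kappa}|$. Intersecting these finitely many conditions yields a nonempty open neighborhood $U_A$ of $0$ in $q^{-2\mu}$ on which every such matrix element converges in the region (\ref{eq:me-conv-region}).

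On $U_A$, Proposition \ref{prop:mat-ff-inter} guarantees that $\wPhi^\mu_{\mu, 1}(z)$ restricted to $\bigoplus_{a \leq A} M_{2\mu, k}[-a\delta]$ coincides with the corresponding restriction of a $U_q(\asl_2)$-intertwiner $M_{2\mu, k} \to M_{2\mu, k} \hotimes L_2(z)$. By \cite[Theorem 9.3.1]{EFK} the space of such intertwiners is one-dimensional and spanned by $\Phi^{w_0}_{2\mu, k}(z)$, so there is a scalar $C$ (a priori depending on $A$) with $\wPhi^\mu_{\mu, 1}(z)|_{\leq A} = C \cdot \Phi^{w_0}_{2\mu, k}(z)|_{\leq A}$. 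The highest weight matrix element lies in degree zero and hence in $\bigoplus_{a \leq A} M_{2\mu, k}[-a\delta]$ for every $A$, so comparing it via Proposition \ref{prop:inter-mat-elt} forces $C = C_{\mu, 1}$ independently of $A$, completing the argument.

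The main obstacle is the convergence analysis itself: one must track how the commutators used to form $\phi_{1, 1-m}(z)$ from $\phi_1(z)$, together with the contour integrals of $S(t)$ against the currents $X_b^\pm(w)$ and $\phi_j(z)$, shift the asymptotic exponent of the Jackson integrand at both $|t| \to 0$ and $|t| \to \infty$ along the cycle. The saving observation, without which one could not hope for a single neighborhood to suffice, is that for fixed $A$ only finitely many such shifts appear and each alters the exponent $-(2\mu + 2)/\kappa$ by a bounded integer, so the finite intersection of the resulting convergence regions remains an open neighborhood of $q^{-2\mu} = 0$.
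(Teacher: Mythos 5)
Your proposal is correct and follows essentially the same route as the paper: both reduce matrix elements on the bounded-degree subspace to finitely many Jackson integrals of $t^{-(2\mu+2)/\kappa}$ times Laurent-polynomial data of degree bounded by $A$, intersect the finitely many resulting convergence conditions to get a uniform neighborhood of $0$ in $q^{-2\mu}$, and then use one-dimensionality of the space of intertwiners together with Proposition \ref{prop:inter-mat-elt} to identify the constant as $C_{\mu,1}$. The only cosmetic difference is that the paper phrases the identification step via uniqueness of solutions to the intertwining relations rather than citing Proposition \ref{prop:mat-ff-inter} directly, but the logic is the same.
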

\begin{proof}
By Propositions \ref{prop:mat-ff} and \ref{prop:waki-verma}, every matrix element is a finite linear combination of expressions of the form
\begin{equation} \label{eq:mat-elt-form}
\int_0^{z q^{-2} \cdot \infty} \!\!\frac{d_pt}{t} \langle v_{\mu, \mu, \mu}^*, \prod_{i = 1}^m x^{c_i}_{a_i} S(t) [\phi_1(z), x^-_0]_{q^2} \prod_{j = 1}^l x^{d_j}_{b_j} v_{\mu, \mu, \mu}\rangle.
\end{equation}
for some choice of $m$, $l$, and $a_i, b_j, c_i, d_j$.  Notice that
\begin{align*}
S(t) [\phi_1(z), x^-_0]_{q^2} &= S(t) \phi_1(z) x^-_0 - q^2 S(t) x_0^- \phi_1(z)\\
&= S(t) \phi_1(z) x^-_0 - q^2 x_0^- S(t)\phi_1(z) - q^2 [x_0^-, S(t)] \phi_1(z).
\end{align*}
By the OPE's of Subsection \ref{sec:ope-comp}, we have
\[
S(t) \phi_1(z) = t^{-\frac{2}{\kappa}} \frac{(zt^{-1} q^{-2}; q^{-2\kappa})}{(zt^{-1}q^2; q^{-2\kappa})} : S(t) \phi_1(z): \qquad |t| > |zq^2|, |zq^{-2}|
\]
and by Lemma \ref{lem:x0comm}, we have 
\begin{multline*}
:[x_0^-, S(t)] \phi_1(z): = - \frac{1}{(q - q^{-1})t} \Big(t^{-\frac{2}{\kappa}}\frac{(\frac{z}{t}q^{-2} q^{2\kappa}; q^{-2\kappa})}{(\frac{z}{t} q^2 q^{2\kappa}; q^{-2\kappa})} q^2  : U(t) Y^-(tq^{-\kappa}) W_+(t q^{-\frac{k}{2} - 2})^{-1} W_+(t q^{-\frac{k}{2}})^{-1}\phi_1(z): \\
- t^{-\frac{2}{\kappa}}\frac{(\frac{z}{t}q^{-2}; q^{-2\kappa})}{(\frac{z}{t} q^2; q^{-2\kappa})} q^{-2} : U(t) Y^-(tq^{\kappa}) W_-(t q^{\frac{k}{2} + 2})^{-1} W_-(t q^{\frac{k}{2}})^{-1}\phi_1(z):\Big).
\end{multline*}
These computations imply that $\prod_{i = 1}^m x^{c_i}_{a_i} S(t) [\phi_1(z), x^-_0]_{q^2} \prod_{j = 1}^l x^{d_j}_{b_j}$ is a linear combination of expressions of the form 
\[
t^{-\frac{2}{\kappa}}\frac{(\frac{z}{t}q^{-2}q^{2\kappa s}; q^{-2\kappa})}{(\frac{z}{t} q^2q^{2\kappa s}; q^{-2\kappa})} \prod_{i = 1}^m x^{c_i}_{a_i} :U(t) F(t)\phi_1(z): \prod_{j = 1}^l x^{d_j}_{b_j}
\]
for $s \geq 0$ and $F(t)$ a $q$-vertex operator whose degree $0$ term does not contain $t^{\beta_0}$, $t^{\alpha_0}$, or $t^{\ba_0}$.  Observe now that
\[
\langle v_{\mu, \mu, \mu}^*, t^{-\frac{2}{\kappa}}\frac{(\frac{z}{t}q^{-2}q^{2\kappa s}; q^{-2\kappa})}{(\frac{z}{t} q^2q^{2\kappa s}; q^{-2\kappa})} \prod_{i = 1}^m x^{c_i}_{a_i} :U(t) F(t)\phi_1(z): \prod_{j = 1}^l x^{d_j}_{b_j} v_{\mu, \mu, \mu} \rangle = t^{-\frac{2\mu + 2}{\kappa}} \frac{(\frac{z}{t}q^{-2}q^{2\kappa s}; q^{-2\kappa})}{(\frac{z}{t} q^2q^{2\kappa s}; q^{-2\kappa})} g(t)
\]
for some Laurent polynomial $g(t)$.  The Jackson integral of this expression converges on a neighborhood of $0$ in $q^{-2\mu}$ depending on the maximum degree of $g(t)$.  By Proposition \ref{prop:mat-ff}, matrix elements in degree at least $-A$ are computed with $\prod_{j = 1}^l x^{d_j}_{b_j} v_{\mu, \mu, \mu}$ of degree at most $A + 1$ in $\FF_\mu$, meaning that in degree $A$, we have $\deg g \leq A + 1$.  This implies that $\deg g(t)$ is bounded for matrix elements of degree bounded below, hence the neighborhood of $0$ for $q^{-2\mu}$ may be chosen uniformly in degree.

Because the space of intertwiners is one-dimensional, the matrix elements of $\Phi^{w_0}_{2\mu, k}(z)$ are determined as the unique solutions to a system of linear equations expressing the intertwining relations.  Therefore, if matrix elements of $C_{\mu, 1}^{-1} \wPhi^\mu_{\mu, 1}(z)$ of at least a fixed degree converge in some region, they coincide with the corresponding matrix elements of $\Phi^{w_0}_{2\mu, k}(z)$, giving the claim.
\end{proof}

\section{Contour integral formula for the trace in the three-dimensional representation} \label{sec:ci}

In this section, we combine the tools we have assembled to give a contour integral formula for the trace function in Theorem \ref{thm:int-trace}.  First, we use the free field realization of Section \ref{sec:ff} to represent the trace function as a Jackson integral of an iterated contour integral via the method of coherent states.  We then simplify the integral and identify the formal expansion of the Jackson integral with the expansion of a renormalization of the Felder-Varchenko function computed in Section \ref{sec:fv-fn}.  The contour integral formula for the Felder-Varchenko function then yields convergence of the trace function and our desired integral formula.

\subsection{Statement of the result}

We will compute the trace $T^{w_0}(q, \lambda, \omega, \mu, k)$ of (\ref{eq:sym-tr-def}) when $V = L_2(z)$ is the irreducible three-dimensional evaluation representation of $U_q(\asl_2)$.  Recall it is defined by 
\[
T^{w_0}(q, \lambda, \omega, \mu, k) := \Tr|_{M_{(\mu - 1)\rho + (k-2)\Lambda_0}}\Big(\Phi_{\mu - 1, k - 2}^{w_0}(z) q^{2\lambda\rho + 2\omega d}\Big).
\]
Define the \textit{good region of parameters} to be the region with
\begin{equation} \label{eq:good-region}
0 < |q^{-2\omega}| \ll |q^{-2\mu}| \ll |q^{-2\lambda}| \ll |q^{-2k}| \ll |q|, |q|^{-1}.
\end{equation}
Notice that this region includes the region (\ref{eq:me-conv-region}) on which we have shown convergence of the vertex operator expression for the intertwiner.  Our main result is the following computation of the trace function.  

\begin{thm} \label{thm:int-trace}
For $q^{-2\mu}$ and then $q^{-2\omega}$ sufficiently close to $0$ in the good region of parameters (\ref{eq:good-region}), the trace function converges and has value
\begin{align*}
T^{w_0}(q, \lambda, \omega, \mu, k) &= \frac{q^{\lambda\mu - \lambda + 2}(q^{-4}; q^{-2\omega})}{\theta_0(q^{2\lambda}; q^{-2\omega})(q^{2\lambda - 2} q^{-2\omega}; q^{-2\omega})(q^{-2\lambda - 2}; q^{-2\omega})} \frac{ (q^{-2k};q^{-2k})(q^4 q^{-2k}; q^{-2k})}{(q^{-2\mu + 2};q^{-2k})(q^{2\mu + 2} q^{-2k}; q^{-2k})}\\
&\phantom{==}\frac{(q^{-2 \omega + 2}; q^{-2\omega}, q^{-2 k})^2}{(q^{-2 \omega - 2}; q^{-2 \omega}, q^{-2k})^2} \oint_{\cC_t} \frac{dt}{2\pi it} \Omega_{q^2}(t; q^{-2\omega}, q^{-2k}) \frac{\theta_0(tq^{-2\mu}; q^{-2k})}{\theta_0(tq^{-2}; q^{-2k})} \frac{\theta_0(tq^{2\lambda}; q^{-2\omega})}{\theta_0(tq^{-2}; q^{-2\omega})},
\end{align*}
where the integration cycle $\cC_t$ is the unit circle.
\end{thm}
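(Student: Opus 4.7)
The plan is to combine the free field realization of Section \ref{sec:ff} with the method of coherent states to produce a Jackson integral representation of the trace, and then to match that representation term-by-term with the formal series expansion of the Felder--Varchenko function computed in Proposition \ref{prop:fv-series}.

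First, I would insert the vertex operator formula of Proposition \ref{prop:mat-ff-inter} for the intertwiner $\wPhi^{\mu}_{\mu,1}(z)$ into the trace, after shifting $(\mu,k) \mapsto (\mu-1, k-2)$ to match the convention of (\ref{eq:sym-tr-def}) and dividing out the normalizing constant $C_{\mu,1}$ of Proposition \ref{prop:inter-mat-elt}. By Proposition \ref{prop:jack-conv}, for each graded piece $M_{\mu-1, k-2}[-a\delta]$ the matrix elements of $\wPhi^{\mu}_{\mu,1}(z)$ and $C_{\mu,1}\Phi^{w_0}_{2\mu-2,k-2}(z)$ agree on a neighborhood of $q^{-2\mu}=0$, so the trace on this piece is computed from the free field Jackson integral. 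The Wakimoto realization identifies the Verma module with $W_{\mu-1,k-2}$ by Proposition \ref{prop:waki-verma}, and Proposition \ref{prop:ff-trace-comp} reduces the trace over $W_{\mu-1,k-2}$ to a trace over $\bigoplus_s \FF_{\mu-1,s}$ together with the insertion of the projector $\cP = \oint\oint \frac{dw\,d\zeta}{(2\pi i)^2 \zeta}\eta(w)\xi(\zeta)$.

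Next, I would apply the method of coherent states on each of the three Heisenberg Fock spaces $\FF_\beta$, $\FF_\alpha$, $\FF_{\ba}$, using the OPE computations from Appendix \ref{sec:comps}. The operator $q^{2\omega d}$ plays the role of the loop parameter, so each Heisenberg trace evaluates to an infinite product of double $q$-Pochhammer symbols in $q^{-2\omega}$ determined by the OPE exponents of the screener $S(t)$, the vertex operator $\phi_{1,-1}(z)$, and the projector currents $\eta(w),\xi(\zeta)$. Summing the zero-mode contributions (geometric series in $q^{2(\lambda-d)}$ from $\beta_0$ and auxiliary sums from $\alpha_0,\ba_0$) produces the prefactor $q^{\lambda\mu-\lambda+2}$ along with the factors $\theta_0(q^{2\lambda};q^{-2\omega})^{-1}$ and $(q^{2\lambda-2}q^{-2\omega};q^{-2\omega})^{-1}(q^{-2\lambda-2};q^{-2\omega})^{-1}$ in the claimed answer. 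The double-Pochhammer block $\Omega_{q^2}(\,\cdot\,;q^{-2\omega},q^{-2k})$ arises from the nontrivial modes of the $\alpha,\ba$ Heisenberg traces, while the ratios $\theta_0(tq^{-2\mu};q^{-2k})/\theta_0(tq^{-2};q^{-2k})$ and $\theta_0(tq^{2\lambda};q^{-2\omega})/\theta_0(tq^{-2};q^{-2\omega})$ come from the zero-mode shifts produced by $S(t)$ and $\phi_1(z)$ against $q^{2\lambda\rho+2\omega d}$. This is what Proposition \ref{prop:ff-comp} packages.

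After collapsing the $w$ and $\zeta$ contour integrals of the projector (whose residues pick up the singular part of the OPE $\eta(w)\xi(\zeta)$) and computing the resulting Jackson integral over $t$ with Jackson cycle $s_1 = zq^{-2}$, I would compare the outcome, viewed as a formal power series in $q^{-2\omega}$, with the series expansion of $u(q,\lambda,\omega,-\mu,k)$ near $q^{-2\mu}=0$ in Proposition \ref{prop:fv-series}. By Proposition \ref{prop:rat-fn-trace}, the trace function is a formal power series in $q^{-2\omega}$ with coefficients that are rational in $q^{-2\mu},q^{-2k}$; matching the two formal series term-by-term and rescaling by the explicit ratio of the Felder--Varchenko normalization prefactors against $C_{\mu,1}$ therefore forces an identity of rational functions in $q^{-2\mu}$ in each degree. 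The contour integral formula (\ref{eq:fv-def2}), which converges on the good region, then upgrades the formal identity to an analytic one and yields the stated expression for $T^{w_0}$.

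The main obstacle will be the integral manipulation in the coherent state computation: keeping all OPE convergence regions compatible (which is precisely why one must restrict to the hierarchy (\ref{eq:good-region})), correctly moving the $w,\zeta$ projector contours through the poles of $S(t)\phi_{1,-1}(z)$, and then collapsing the Jackson sum to a residue series that can be recognized as the expansion of Proposition \ref{prop:fv-series}. The bookkeeping of the zero-mode prefactors $C_{\mu,1}$, the normalization $(q^{-2\omega+2};q^{-2\omega},q^{-2k})^2/(q^{-2\omega-2};q^{-2\omega},q^{-2k})^2$ from the $\eta\xi$ loop, and the ratio of $q$-Pochhammers in the Felder--Varchenko expansion is delicate and is where the specific form of the coefficient in Theorem \ref{thm:int-trace} will be pinned down.
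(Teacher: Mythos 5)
Your proposal follows essentially the same route as the paper: free field realization plus coherent states to get the Jackson integral (Proposition \ref{prop:ff-comp}), evaluation of that integral as a residue series on a formal neighborhood of $0$ in $q^{-2\mu}$ (Proposition \ref{prop:int-formal}), matching against the series expansion of the Felder--Varchenko function (Proposition \ref{prop:fv-series}), and crucially using the rationality of the $q^{-2\omega}$-coefficients from Proposition \ref{prop:rat-fn-trace} to pass from the formal neighborhood to numerical parameters before invoking the contour integral to get convergence. Aside from minor indexing slips (the $w_0$-component of the intertwiner involves $\phi_{1,0}(z)=[\phi_1(z),x_0^-]_{q^2}$ rather than $\phi_{1,-1}(z)$, and the Wakimoto module is $W_{(\mu-1)/2,\,k-2}$ by the spin convention of Proposition \ref{prop:waki-verma}), this is the paper's argument.
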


\subsection{Structure of the proof of Theorem \ref{thm:int-trace}}

We now outline the proof of Theorem \ref{thm:int-trace}, which occupies the remainder of Section \ref{sec:ci}.  We will compute formally the trace of the free field intertwiner $\wPhi^\mu_{\mu, 1}(z)$ in two steps.  First, we compute it as a formal series in $q^{-2\omega}$ and then $q^{-2\mu}$ in Proposition \ref{prop:ff-comp}, for which we work in the \textit{doubly formal good region} of parameters 
\begin{equation} \label{eq:dformal-good-region}
0 < |q^{-2\lambda}| \ll |q^{-2k}| \ll |q|, |q|^{-1}.
\end{equation}
This computation proceeds through the method of coherent states applied to the free field construction summarized in Section \ref{sec:ff} and some intricate contour integral manipulations which hold for parameter values in (\ref{eq:dformal-good-region}).  We then show in Proposition \ref{prop:int-formal} that the result of Proposition \ref{prop:ff-comp} converges as a formal series in $q^{-2\omega}$ and then $q^{-2\mu}$.  The proof of Theorem \ref{thm:int-trace} follows by matching this expansion with that of the Felder-Varchenko function in Proposition \ref{prop:fv-series} and then applying Proposition \ref{prop:rat-fn-trace} and the integral expression for the Felder-Varchenko function to show that the resulting equality holds for small numerical values of $q^{-2\mu}$ and $q^{-2\omega}$ in the good region (\ref{eq:good-region}).

Define the trace of the Jackson integral by
\begin{equation} \label{eq:un-shifted-tr}
\Xi(q, \lambda, \omega, \mu, k) := \Tr|_{M_{2\mu \rho + k\Lambda_0}}\Big(\wPhi^\mu_{\mu, 1}(z) q^{2\lambda\rho + 2\omega d}\Big),
\end{equation}
where the quantity on the right does not depend on $z$.  It is related to the trace function in the doubly formal good region as follows.

\begin{lemma}\label{lem:trace-relation}
In the doubly formal region, as formal power series in $q^{-2\omega}$ and then $q^{-2\mu}$, we have the equality
\[
T^{w_0}(q, \lambda, \omega, \mu, k) = C_{\frac{\mu - 1}{2}, 1}^{-1}\, \Xi\Big(q, \lambda, \omega, \frac{\mu - 1}{2}, k - 2\Big),
\]
where $C_{\mu, 1}$ is the normalizing constant of Proposition \ref{prop:inter-mat-elt}.
\end{lemma}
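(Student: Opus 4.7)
The plan is to apply Proposition \ref{prop:jack-conv} coefficient by coefficient in the $q^{-2\omega}$-expansion of $\Xi$ and then substitute $(\mu, k) \mapsto ((\mu-1)/2, k-2)$ to match the variable conventions of $T^{w_0}$. To set this up, I would first expand $\Xi$ using the $\delta$-grading of $M_{2\mu, k}$: since $q^{2\omega d}$ acts on $M_{2\mu, k}[-a\delta]$ by $q^{-2\omega a}$, the trace has the form
\[
\Xi(q, \lambda, \omega, \mu, k) = \sum_{a \geq 0} q^{-2\omega a} \, \Tr|_{M_{2\mu, k}[-a\delta]}\bigl(\wPhi^\mu_{\mu, 1}(z)\, q^{2\lambda\rho}\bigr),
\]
whose coefficient of $q^{-2\omega a}$ is a finite linear combination of diagonal matrix elements of $\wPhi^\mu_{\mu, 1}(z)$ on weight vectors of degree $\geq -a$, weighted by the eigenvalues of $q^{2\lambda\rho}$ on those weight spaces.

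Next I would apply Proposition \ref{prop:jack-conv} with $A = a$: on a neighborhood of $0$ in $q^{-2\mu}$ depending on $a$, these matrix elements converge and are equal to $C_{\mu, 1}$ times the corresponding matrix elements of $\Phi^{w_0}_{2\mu, k}(z)$. By Lemma \ref{lem:rat-fn-me}, the latter are rational in $q^{-2\mu}$ and $q^{-2k}$ with denominators not vanishing at $q^{-2\mu} = 0$, so they admit formal power series expansions at that point; equality of holomorphic functions on an open set then forces equality of the associated formal series in $q^{-2\mu}$. Applied to each coefficient of $q^{-2\omega a}$ separately, this yields
\[
\Xi(q, \lambda, \omega, \mu, k) \;=\; C_{\mu, 1} \cdot \Psi^{w_0}(z; \lambda, \omega, 2\mu, k)
\]
as a formal series in $q^{-2\omega}$ with coefficients that are formal power series in $q^{-2\mu}$. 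The lemma then follows upon substituting $\mu \mapsto (\mu - 1)/2$ and $k \mapsto k - 2$, applying the defining relation $T^{w_0}(q, \lambda, \omega, \mu, k) = \Psi^{w_0}(z; \lambda, \omega, \mu - 1, k - 2)$ from (\ref{eq:sym-tr-def}), and dividing by the formally nonzero constant $C_{(\mu-1)/2, 1}$ of Proposition \ref{prop:inter-mat-elt}.

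The principal subtlety, and the main place where care is required, is bridging the analytic statement of Proposition \ref{prop:jack-conv}, whose neighborhood of validity in $q^{-2\mu}$ shrinks with the degree bound $A$, to the formal identity of double series that the lemma asserts. This is resolved entirely by the grading observation above: each coefficient of $q^{-2\omega a}$ involves only finitely many matrix elements of bounded degree, so a single fixed neighborhood of $q^{-2\mu} = 0$ controls it, and the resulting analytic identity may then be expanded formally at $q^{-2\mu} = 0$ to produce the desired identity of formal series.
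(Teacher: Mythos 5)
Your argument is correct and is essentially the paper's own proof, which likewise reduces the lemma to Proposition \ref{prop:jack-conv} (giving $\wPhi^\mu_{\mu, 1}(z) = C_{\mu, 1} \Phi^{w_0}_{2\mu, k}(z)$ as formal series in $q^{-2\omega}$ and then $q^{-2\mu}$), the identification $W_{\mu, k} \simeq M_{2\mu, k}$ of Proposition \ref{prop:waki-verma}, and the definition (\ref{eq:sym-tr-def}); your grading argument simply makes explicit why the degree-dependent neighborhoods of $q^{-2\mu} = 0$ in Proposition \ref{prop:jack-conv} suffice coefficient by coefficient. One small correction: for fixed $\delta$-degree $a$ the space $M_{2\mu, k}[-a\delta]$ is still infinite-dimensional (it is further graded by the $\alpha$-weight), so the coefficient of $q^{-2\omega a}$ is an infinite sum over $\sl_2$-weight spaces rather than a finite linear combination, convergent because $|q^{-2\lambda}| \ll 1$ in the doubly formal region --- this does not affect your use of Proposition \ref{prop:jack-conv}, whose neighborhood is uniform over all $\alpha$-weights at bounded $\delta$-degree.
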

\begin{proof}
This follows from the identification $M_{2\mu \rho + k\Lambda_0} = W_{\mu, k}$ between the $q$-Wakimoto module and the Verma module and the fact that Proposition \ref{prop:jack-conv} shows $\wPhi^\mu_{\mu, 1}(z) = C_{\mu, 1} \Phi^{w_0}_{2\mu, k}(z)$ as formal series in $q^{-2\omega}$ and then $q^{-2\mu}$ in the doubly formal good region.
\end{proof}

We now state and give integration cycles for Proposition \ref{prop:ff-comp}, which gives a formal computation of the trace function.  Its proof occupies Subsections \ref{sec:int-cycle} to \ref{sec:sub-int}.  Recall the notation $\kappa = k + 2$.

\begin{prop} \label{prop:ff-comp}
In the doubly formal good region of parameters (\ref{eq:dformal-good-region}), as a formal series in $q^{-2\omega}$, the trace function $\Xi(q, \lambda, \omega, \mu, k)$ has formal Jackson integral expansion
\[
\Xi(q, \lambda, \omega, \mu, k) = C(\lambda, \mu) \int_0^{q^{-2} \cdot \infty} \frac{d_pt}{t} t^{- \frac{2(\mu + 1)}{\kappa}}  \Omega_{q^2}(t; q^{-2\omega}, q^{-2\kappa}) \frac{\theta_0(tq^{2}; q^{-2\kappa})\theta_0(tq^{2\lambda}; q^{-2\omega})}{\theta_0(tq^{-2};q^{-2\kappa}) \theta_0(tq^{-2}; q^{-2\omega})}
\]
for $p = q^{2\kappa}$ and 
\[
C(\lambda, \mu) = \frac{1}{q - q^{-1}} \frac{q^{2\lambda\mu - 2\mu - 2}(q^{-4}; q^{-2\omega})}{\theta_0(q^{2\lambda}; q^{-2\omega})(q^{2\lambda - 2} q^{-2\omega}; q^{-2\omega})(q^{-2\lambda - 2}; q^{-2\omega})}\frac{(q^{-2 \omega + 2}; q^{-2\omega}, q^{-2\kappa})^2}{(q^{-2 \omega - 2}; q^{-2 \omega}, q^{-2\kappa})^2}.
\]
\end{prop}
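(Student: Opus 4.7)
The plan is to combine the free field realization of Section \ref{sec:ff} with the projection trace formula of Proposition \ref{prop:ff-trace-comp} and then evaluate the resulting bosonic trace by the method of coherent states. First, I will use the isomorphism $M_{2\mu\rho + k\Lambda_0} \simeq W_{\mu, k}$ of Proposition \ref{prop:waki-verma} to rewrite $\Xi$ as a trace over the $q$-Wakimoto module, and then apply Proposition \ref{prop:ff-trace-comp} to convert it to a trace over the big Fock space $\bigoplus_s \FF_{\mu, s}$ with the projector $\cP = \oint\oint \frac{dw\, dz'}{(2\pi i)^2 z'} \eta(w)\xi(z')$ inserted. Substituting the Jackson integral for $\wPhi^\mu_{\mu, 1}(z)$ from Proposition \ref{prop:mat-ff-inter} with $\nu = \tau = 1$ and $s_1 = zq^{-2}$, and projecting onto the $w_0 \in L_2[0]$ component (the $m = 1$ summand, in which $\phi_{1, 0}(z) = \frac{1}{[1]}[\phi_1(z), x_0^-]_{q^2}$ appears), reduces the task to computing the Fock space trace of the vertex operator product $\eta(w)\xi(z')S(t)\phi_{1, 0}(z)$ against $q^{2\lambda\rho + 2\omega d}$, wrapped in the Jackson integral in $t$ and the contour integrals in $w, z'$ coming from $\cP$.

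Next I will evaluate the bosonic trace through the method of coherent states: the trace of a normally ordered exponential of oscillators against $q^{2\omega d}$ factorizes on each Heisenberg mode into a one-loop correlator of the explicit form worked out in Appendix \ref{sec:comps}. This produces an integrand whose non-trivial content is built from double $q$-Pochhammer symbols in $q^{-2\omega}$ and $q^{-2\kappa}$ from the $S$-$S$ self-correlator, which I expect to collect into the $\Omega_{q^2}(t; q^{-2\omega}, q^{-2\kappa})$ phase function together with the $(q^{-2\omega \pm 2}; q^{-2\omega}, q^{-2\kappa})^{\pm 2}$ prefactors appearing in $C(\lambda, \mu)$; a ratio of theta functions in $q^{-2\kappa}$ from the $S$-$\phi_1$ correlator producing $\theta_0(tq^{2}; q^{-2\kappa})/\theta_0(tq^{-2}; q^{-2\kappa})$; a ratio of theta functions in $q^{-2\omega}$ producing $\theta_0(tq^{2\lambda}; q^{-2\omega})/\theta_0(tq^{-2}; q^{-2\omega})$ once the $q^{2\lambda}$ shift from the $\alpha_0$ zero-mode of the $x_0^-$ piece has been absorbed; and the overall $t^{-2(\mu + 1)/\kappa}$ factor from the $\alpha_0$ and $\beta_0$ zero-mode eigenvalues on $v_{\mu, \mu, \mu}$.

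I will then evaluate the $w$ and $z'$ integrals by picking up the residues dictated by the OPE's of $\eta(w)$ and $\xi(z')$ with $\phi_1(z)$, $x_0^-$, and $S(t)$. Expanding the $q^2$-commutator in $\phi_{1, 0}(z)$ splits this into two contributions which must combine so that all $z$-dependence cancels, as required by the $z$-independence of $\Xi$, leaving a single Jackson integral in $t$. Assembling the zero-mode trace factors, the residues from the $w, z'$ contours, and the $z^{\Delta_1}$ prefactor in the Jackson integral definition should then yield the claimed constant $C(\lambda, \mu)$. The parameter region (\ref{eq:dformal-good-region}) enters in ensuring that the $w, z'$ contour integrals converge and pick up the correct residues, and in justifying the expansion as a formal series in $q^{-2\omega}$ and then $q^{-2\mu}$ (in the same spirit as the matrix element computation of Proposition \ref{prop:inter-mat-elt}).

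The main obstacle will be the detailed bookkeeping of the many $q$-Pochhammer and theta function factors generated by the coherent state calculation: after the $w, z'$ residue evaluations and the expansion of the $q^2$-commutator, they must collapse through non-trivial $q$-Pochhammer identities into the clean form of $\Omega_{q^2}(t; q^{-2\omega}, q^{-2\kappa})$ multiplied by the two theta function ratios with the stated prefactor $C(\lambda, \mu)$. In particular, the $w, z'$ contours must be chosen so that the $S$-$S$ OPE poles interact compatibly with the $\eta\xi$-projection residues, as already previewed by the careful contour choices in the proof of Proposition \ref{prop:inter-mat-elt}, and the manipulations needed to produce the correctly shifted theta arguments in $q^{-2\omega}$ and to identify the $\theta_0(q^{2\lambda}; q^{-2\omega})(q^{2\lambda - 2}q^{-2\omega}; q^{-2\omega})(q^{-2\lambda - 2}; q^{-2\omega})$ denominator in $C(\lambda, \mu)$ will be the most delicate computational step.
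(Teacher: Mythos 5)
Your overall strategy is the paper's: Wakimoto realization, the projector $\cP$ of Proposition \ref{prop:ff-trace-comp}, the Jackson integral form of $\wPhi^\mu_{\mu,1}(z)$, coherent states, and then evaluation of the auxiliary contour integrals. The skeleton is right, but two of the steps you defer to ``bookkeeping'' and ``residues'' are in fact the places where the proof needs specific ideas you have not supplied, and without them the computation does not close.

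First, the three auxiliary integrals are not all residue computations. The $z_0$-integral ($\xi$-variable) is evaluated only after summing over the Fock sectors $s \in \mu + \ZZ$; that sum turns the integral into a Fourier inversion (Lemma \ref{lem:mero-res-extract}) which \emph{evaluates} the integrand at a point rather than extracting a residue, and this is what produces the $\theta_0(q^{-2\lambda - b + a}; q^{-2\omega})$ denominators feeding into $C(\lambda,\mu)$. The $w_0$-integral ($\eta$-variable) is then a full period integral of an elliptic function with no net residue; it is computed via Weierstrass $\zeta$-functions (Lemma \ref{lem:w0-ell}), and only the subsequent sum over the sign $a = \pm 1$ recombines the $\zeta$-terms into a quotient of theta functions. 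Describing both as ``picking up the residues dictated by the OPE's'' would lead you to an incorrect (or empty) answer for the $w_0$-integral.

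Second, and more seriously: after the genuine residue computation in the remaining $w$-variable (the spectral variable of $x_0^-$), the integrand contains $\hgf$ $q$-hypergeometric series (Lemma \ref{lem:w-int-vals-new}), and these do \emph{not} collapse via $q$-Pochhammer identities as you anticipate. The mechanism that removes them is structural: writing the integrand as $K^1_1(t) + K^2_1(t) - K^1_{-1}(t)$, one shows $K^1_1(pt) = K^1_{-1}(t)$ (Lemma \ref{eq:k-cancel}), so the two $\hgf$-bearing terms cancel against each other under the $p$-shift invariance of the Jackson integral, leaving only the clean term $K^2_1$. This shift-cancellation under the Jackson sum is the key idea of the whole proof and is not recoverable from ``non-trivial $q$-Pochhammer identities''; your plan as stated would stall at an integrand containing unresolved $\hgf$'s. (The $z$-dependence, incidentally, is not cancelled between the two halves of the $q^2$-commutator as you suggest, but is removed only at the very end by the substitution $t \mapsto zt$ in the Jackson integral.)
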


\begin{remark}
Although the Jackson integral involves summation of the integrand at values of $t$ where the one loop correlation functions from the method of coherent states may not converge, we may replace them by their meromorphic continuations in these regions.
\end{remark}

\subsection{Definition of integration cycles} \label{sec:int-cycle}

We now define integration cycles for parameters in the doubly formal good region of parameters (\ref{eq:dformal-good-region}).  First, for numerical $q^{-2\omega}$, define the \textit{good spectral region} $\cS_c$ by 
\begin{equation} \label{eq:spec1}
|w_0| \gg |z_0| \gg |tq^{k}|, |tq^{-k}| \gg |zq^k|, |zq^{k+4}|, |w| > |z| \gg |w_0 q^{-2\omega}|
\end{equation}
and
\begin{equation} \label{eq:spec3}
\begin{cases} |w| < |zq^{k}|, |zq^{k+4}| & c = 1 \\ |w| > |zq^k|, |zq^{k+4}| & c = -1 \end{cases}.
\end{equation}
By $a \gg b$, we mean that $\frac{a}{b} > |q^{100 \lambda}|, |q^{100k}|, |q^{100}|$ so that for each instance of $q^{C\lambda}$, $q^{Ck}$, or $q^C$ which appears in our formulas, we have $\frac{a}{b} > |q^{C\lambda}|, |q^{Ck}|, |q^C|$.  Notice that if $(t, z_0, w_0, w)$ lies in the good spectral region, then all requirements for convergence of one loop correlation functions in Table \ref{tab:two-point} are satisfied.  Now, define the \textit{formal good spectral region} $\wS_c$ to be the region of spectral parameters satisfying (\ref{eq:spec3}) and 
\begin{equation} \label{eq:fspec1}
|w_0| \gg |z_0| \gg |tq^{k}|, |tq^{-k}| \gg |zq^k|, |zq^{k+4}|, |w| > |z|,
\end{equation}
where we use $\gg$ in the same way as in (\ref{eq:spec1}). If $(t, z_0, w_0, w)$ lies in the formal good spectral region $\wS_c$, then for $q^{-2\omega}$ sufficiently close to $0$ it lies in the good spectral region $\cS_c$.  Therefore, on a formal neighborhood of $0$ in $q^{-2\omega}$, if the spectral parameters lie in the formal good spectral region, then the requirements for convergence of one loop correlation functions in Table \ref{tab:two-point} are satisfied.

\begin{lemma} \label{lem:good-ineq}
In the formal good spectral region and the doubly formal good region of parameters, we have
\[
|z_0| > |wq^{b(k + 1)}|, |tq^{-a}| \qquad \text{ and } \qquad |w_0 q^{-2\lambda - b + a}| > |wq^{b(k + 1)}|.
\]
\end{lemma}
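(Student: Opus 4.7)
The plan is to verify each inequality by directly unpacking the definitions, since the formal good spectral region $\wS_c$ already encodes the needed hierarchy and the symbol $\gg$ is defined precisely to absorb polynomial factors in $q^\lambda$, $q^k$, and $q$. Recall that $a, b \in \{\pm 1\}$ in the free-field vertex operator manipulations of Section \ref{sec:ff}, so each of $q^{-a}$, $q^{b(k+1)}$, and $q^{-2\lambda - b + a}$ is of the form $q^{C + C'\lambda + C''k}$ for small fixed integers $C, C', C''$, all comfortably bounded by the threshold exponent $100$ appearing in the definition of $\gg$.

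For the bound $|z_0| > |tq^{-a}|$: the formal good spectral region (\ref{eq:fspec1}) gives $|z_0| \gg |tq^{k}|$ and $|z_0| \gg |tq^{-k}|$, so in particular each of the ratios $|z_0|/|tq^{\pm k}|$ exceeds $|q^{100\lambda}|$, $|q^{100k}|$, and $|q^{100}|$. Multiplying by $|q^{\pm k \mp a}|$, which is polynomially bounded in $q^k$ and $q$ with exponent of absolute value at most $k+1$, this inequality remains intact and yields $|z_0| > |tq^{-a}|$. For the bound $|z_0| > |wq^{b(k+1)}|$: directly from (\ref{eq:fspec1}) we have $|z_0| \gg |w|$, and the factor $|q^{b(k+1)}|$ is absorbed by $\gg$ since the exponent $b(k+1)$ is linear in $k$ with coefficient $\pm 1$, far below the $100k$ threshold.

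For the bound $|w_0 q^{-2\lambda - b + a}| > |wq^{b(k+1)}|$, rewrite it as $|w_0|/|w| > |q^{2\lambda + b(k + 2) - a}|$. Iterating the chain $|w_0| \gg |z_0| \gg |w|$ from (\ref{eq:fspec1}), the ratio $|w_0|/|w|$ exceeds the square of $\max(|q^{100\lambda}|, |q^{100k}|, |q^{100}|)$, which comfortably dominates $|q^{2\lambda + b(k+2) - a}|$ since each of the exponents $2\lambda$, $b(k+2)$, and $-a$ is bounded by a small integer multiple of $\lambda$, $k$, and $1$ respectively. No step presents a real obstacle; the entire content of the lemma is that these polynomial-in-$q^\lambda, q^k, q$ corrections in $a, b, k$ are exactly the kind of perturbation that the definition of $\gg$ was designed to absorb, so the proof amounts to a line-by-line check of exponents.
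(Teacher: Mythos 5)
Your proposal is correct and follows essentially the same route as the paper, whose entire proof is the one line ``This follows from (\ref{eq:fspec1}) and the definition of $\gg$''; you have simply unpacked that line by checking that each correction factor $q^{-a}$, $q^{b(k+1)}$, $q^{2\lambda + b(k+2)-a}$ has exponents linear in $\lambda$, $k$, $1$ with small coefficients, which is exactly what the $\gg$ threshold is designed to absorb. The only cosmetic imprecision is that $|z_0| \gg |w|$ is obtained by transitivity through $|tq^{\pm k}|$ rather than ``directly'' from (\ref{eq:fspec1}), but this only strengthens the bound.
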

\begin{proof}
This follows from (\ref{eq:fspec1}) and the definition of $\gg$.
\end{proof}

\begin{lemma} \label{lem:cont-choice}
There is some $R > 0$, so that for $t$ lying in the open region $\cC_t = \{|t| > R|z|\}$ and any $q, q^{-2\lambda}, q^{-2k}$ in the doubly formal region of parameters (\ref{eq:dformal-good-region}), we may choose circular contours $\cC_{w_0}$, $\cC_{z_0}$, and $\cC_{w, c}$ enclosing $0$ so that for $z_0 \in \cC_{z_0}$, $w_0 \in \cC_{w_0}$, and $w \in \cC_{w, c}$, the spectral parameters $(t, z_0, w_0, w)$ lie in the formal good spectral region.
\end{lemma}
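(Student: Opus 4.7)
The plan is to build the three circular contours outward by successive choice, relying on the magnitude hierarchy of the doubly formal good region (\ref{eq:dformal-good-region}). The argument is essentially an exercise in bookkeeping the $\gg$-relations, with the main content being that the inequalities defining the formal good spectral region admit \emph{any} simultaneous solution at all; once this is set up for $\cC_{w,c}$, the contours $\cC_{z_0}$ and $\cC_{w_0}$ can then be expanded outward to absorb the remaining $\gg$-requirements.

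First, I would record the key consequence of the hypothesis: because $|q^{-2k}| \ll |q|, |q|^{-1}$, both $|q^k|$ and $|q^{k+4}|$ exceed $1$ in the strong $\gg$-sense. Indeed, $|q^{-2k}| \ll |q|^{\pm 1}$ gives $|q^k|^2 \gg |q|^{\mp 1}$, so $|q^k| \gg 1$; the estimate for $|q^{k+4}|$ follows since $|q^4|$ is a fixed quantity absorbed by $\gg$. This preliminary observation is what makes the $c=1$ case of the construction nonempty and is, in my view, the main (mild) obstacle in the proof.

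Next, I would construct $\cC_{w,c}$. For $c=-1$, take a circle of radius $r_w$ strictly greater than $\max(|zq^k|, |zq^{k+4}|)$; the constraint $|w|>|z|$ in (\ref{eq:fspec1}) is then automatic. For $c=1$, take $r_w$ in the annulus $\{|z| < r_w < \min(|zq^k|, |zq^{k+4}|)\}$, which is nonempty by the preliminary observation. Both options satisfy (\ref{eq:spec3}) together with $|w|>|z|$. I would then choose $\cC_{z_0}$ and $\cC_{w_0}$ in turn: take $r_{z_0}$ larger than $\max(|tq^k|, |tq^{-k}|, |zq^k|, |zq^{k+4}|, r_w)$ by a sufficient power of $|q|, |q^k|, |q^\lambda|$ to achieve $|z_0| \gg |tq^k|, |tq^{-k}|$, and likewise take $r_{w_0} \gg r_{z_0}$.

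Finally, I would choose $R$ so that the hypothesis $|t| > R|z|$ forces $|tq^{\pm k}| \gg \max(|zq^k|, |zq^{k+4}|, r_w)$; since $r_w$ is a bounded multiple of $|z|$ by the previous step and $|tq^{\pm k}| \geq |t|\,|q^k|^{-1}$, taking $R$ to exceed a sufficiently large power of $|q^k|$ (absorbing all relevant $\gg$-factors) suffices. Since each contour is chosen \emph{after} the ones it must dominate, all $\gg$-inequalities can be arranged independently, and verifying that $(t, z_0, w_0, w)$ lies in the formal good spectral region is a direct inspection of the resulting magnitudes.
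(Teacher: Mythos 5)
Your proposal is correct and follows essentially the same route as the paper: nested circular contours with radii chosen successively to satisfy the $\gg$-chain in (\ref{eq:fspec1}), with the simultaneous constraints $|tq^{\pm k}| \gg |w| > |zq^k|, |zq^{k+4}|$ on $\cC_{w,-1}$ resolved by taking $|t/z| > R$ sufficiently large. Your explicit check that $|q^k|, |q^{k+4}| > 1$ (so the annulus for $\cC_{w,1}$ is nonempty) is a detail the paper leaves implicit, but it does not change the argument.
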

\begin{proof}
We take the contours to be nested circles around the origin with radii obeying (\ref{eq:fspec1}).  The only obstacle is that the relations $|tq^k|, |tq^{-k}| \gg |w|$ and $|w| > |zq^k|, |zq^{k+4}|$ must simultaneously hold on $\cC_{w, -1}$, which is possible for $|t/z|$ sufficiently large, as needed.
\end{proof}

\begin{remark}
For the rest of the proof of Proposition \ref{prop:ff-comp}, we will compute the integrand of the Jackson integral by meromorphic continuation from the region $\cC_t$. 
\end{remark}

\subsection{Reducing to a trace over Fock space}

Assume now that all parameters lie in the doubly formal good region of parameters. For $t$ in the region $\cC_t$ of Lemma \ref{lem:cont-choice}, fix spectral variables on the cycles $\cC_{w_0}$, $\cC_{z_0}$, and $\cC_{w, c}$ of Lemma \ref{lem:cont-choice} so that they lie in the formal good spectral region.  By Propositions \ref{prop:waki-verma} and \ref{prop:ff-trace-comp}, we have
\begin{align*}
\Xi(q, \lambda, \omega, \mu, k) &= \sum_{s\in \mu + \ZZ} \Tr|_{\FF_{\mu, s}} \Big(\oint_{\cC_{w_0}} \oint_{\cC_{z_0}} \frac{dw_0 dz_0}{(2 \pi i)^2} \frac{1}{z_0} \eta(w_0) \xi(z_0) \wPhi^\mu_{\mu, 1}(z)q^{2\lambda\rho + 2 \omega d}\Big) \\
&= \sum_{s \in \mu + \ZZ} \oint_{\cC_{w_0}} \oint_{\cC_{z_0}} \frac{dw_0 dz_0}{(2 \pi i)^2} \frac{1}{z_0} \Tr|_{\FF_{\mu, s}}\Big(\eta(w_0) \xi(z_0) \wPhi^\mu_{\mu, 1}(z)q^{2\lambda\rho + 2\omega d}\Big)
\end{align*}
since cycles $\cC_{w_0}$ and $\cC_{z_0}$ enclose $w_0 = 0$ and $z_0 = 0$.

\begin{remark}
The interchange of sum and integral is valid when
\[
\Tr|_{\FF_{\mu, s}}\Big(\eta(w_0) \xi(z_0) \wPhi^\mu_{\mu, 1}(z)q^{2\lambda \rho+ 2\omega d}\Big)
\]
is convergent, which holds for our choice of contours.
\end{remark}

Recall that we identified $\CC \cdot w_0 \simeq \CC$.  Using $S_{\pm 1}$ and $X^-_{\pm 1}$ to denote $S_{\pm}$ and $X^-_{\pm}$, we obtain by the free field realization of Proposition \ref{prop:mat-ff} that for $p = q^{2\kappa}$, we have
\begin{multline*}
\Tr|_{\FF_{\mu, s}}\Big(\eta(w_0) \xi(z_0) \wPhi^\mu_{\mu, 1}(z)q^{2\lambda\rho + 2\omega d}\Big) = \frac{z^{\Delta_1}}{(q - q^{-1})^2z} \sum_{a, b \in \{\pm 1\}} (-1)^{\frac{2 - a - b}{2}}\\
\int_0^{z q^{-2} \cdot \infty} \frac{d_pt}{t} \oint_{\cC_{w, c}} \frac{dw}{2\pi i w} \Tr|_{\FF_{\mu, s}}\Big(\eta(w_0)\xi(z_0) S_a(t) [\phi_1(z), X^-_b(w)]_{q^2}q^{2\lambda\rho + 2\omega d}\Big),
\end{multline*}
where $\cC_{w, 1}$ applies for $\phi_1(z) X^-_b(w)$ and $\cC_{w, -1}$ applies for $X^-_b(w) \phi_1(z)$.  Putting these observations together and noting that (\ref{eq:phi-x-comm}) applies for parameters on the chosen contours, we obtain
\begin{align*}
\Xi(q, \lambda, \omega, \mu, k) &= \frac{z^{\Delta_1}}{(q - q^{-1})^2} \sum_{a, b, c \in \{\pm1\}} (-1)^{\frac{3 - a  - b - c}{2}} \sum_{s \in \mu + \ZZ} \frac{d_pt}{t}\oint_{\cC_{w, c}} \oint_{\cC_{w_0}} \oint_{\cC_{z_0}}  \frac{dw_0 dz_0 dw}{(2 \pi i)^3 z_0 z w } q^{2b} \frac{w q^{-2c} - z q^{k + 2}}{w - zq^{k + 2 + 2b}}\\
&\phantom{===========}\Tr|_{\FF_{\mu, s}}\Big(\eta(w_0)\xi(z_0) S_a(t) :\phi_1(z) X^-_b(w): q^{2\lambda \rho+ 2\omega d}\Big),
\end{align*}
where the OPE for $\phi_1(z)$ and $X^-_b(w)$ is valid on a formal neighborhood of $0$ in $q^{-2\omega}$ because the parameters lie in the formal good spectral region.

\subsection{Applying the method of coherent states}

We now use the method of coherent states as applied in \cite{KT} to compute 
\[
T_{a, b} := \Tr|_{\FF_{\mu, s}}\Big(\eta(w_0)\xi(z_0) S_a(t) :\phi_1(z) X^-_b(w):q^{2\lambda\rho +2 \omega d}\Big).
\]
Note that each Fock space takes the form 
\[
\FF_{\mu, s} = \FF_{\beta, \mu} \otimes \FF_{\alpha, s} \otimes \FF_{\bar{\alpha}, s} = \bigotimes_{m \geq 0} \FF_{\beta, \mu, m} \otimes \FF_{\alpha, s, m} \otimes \FF_{\bar{\alpha}, s, m}
\]
and that each mode involved in our vertex operators acts only in a single tensor component.  Define 
\[
\FF_{\mu, s}^0 := \FF_{\beta, \mu, 0} \otimes \FF_{\alpha, s, 0} \otimes \FF_{\ba, s, 0} \qquad \text{ and } \FF_{\mu, s}^{>0} := \bigotimes_{m > 0} \FF_{\beta, \mu, m} \otimes \FF_{\alpha, s, m} \otimes \FF_{\bar{\alpha}, s, m}
\]
so that $\FF_{\mu, s} = \FF_{\mu, s}^0 \otimes \FF_{\mu, s}^{>0}$ and $T_{a, b}$ is the product of traces over $\FF_{\mu, s}^0$ and $\FF_{\mu, s}^{>0}$.  We compute the trace $\Tr|_{\FF_{\mu, s}^0}$ over $\FF_{\mu, s}^0$ in Subsection \ref{sec:zero-mode-trace}.  By Corollary \ref{corr:scale-trace}, the trace over $\FF_{\mu, s}^{>0}$ is a product of one loop correlation functions
\[
T_{PQ} := (q^{-2\omega}; q^{-2\omega})^3 \prod_{m \geq 1} \Tr|_{\FF_{\beta, \mu, m} \otimes \FF_{\alpha, s, m} \otimes \FF_{\bar{\alpha}_s, m}} \Big(P(z) Q(w) q^{2\omega d}\Big)
\]
for each pair of vertex operators $P(z)$ and $Q(w)$ appearing in the free field realization and a global factor $T_g = (q^{-2\omega}; q^{-2\omega})^{-3}$ coming from collecting the prefactors in Corollary \ref{corr:scale-trace} over $m \geq 1$ for each of $\alpha$, $\ba$, and $\beta$.  In Subsection \ref{sec:one-loop-comp}, we compute these one loop correlation functions and their regions of convergence, with results recorded in Table \ref{tab:two-point}.  From these computations, we conclude that $T_{a, b}$ is convergent with value
\[
T_{a, b} = \Tr|_{\FF_{\mu, s}^0} T_g T_{\eta\eta} T_{\xi\xi} T_{S_aS_a} T_{\phi\phi} T_{X^-_bX^-_b} T_{\eta\xi} T_{\eta S_a} T_{\eta X^-_b} T_{\xi S_a} T_{\xi \phi} T_{\xi X^-_b} T_{S_a \phi} T_{S_a X^-_b} T_{:\phi X^-_b:}
\]
for $w_0, z_0, w$ lying on the specified cycles and $t \in \cC_t$.  Substituting the values of the degree $0$ trace and the one loop correlation functions from Subsections \ref{sec:zero-mode-trace} and \ref{sec:one-loop-comp} to compute $T_{a, b}$, we obtain
\begin{align*}
&\Xi(q, \lambda, \omega, \mu, k)\\
&= \frac{z^{\Delta_1}}{(q - q^{-1})^2} \sum_{a, b, c \in \{\pm1\}} (-1)^{\frac{3 - a - b - c}{2}} \sum_{s\in \mu + \ZZ}\int_0^{z q^{-2} \cdot \infty}\frac{d_pt}{t} \oint_{\cC_{w, c}} \oint_{\cC_{w_0}} \oint_{\cC_{z_0}} \frac{dw_0 dz_0 dw}{(2 \pi i)^3 z_0 z w}  q^{2b} \frac{w q^{-2c} - z q^{k + 2}}{w - zq^{k + 2 + 2b}} T_{a, b}\\
&= \frac{z^{\Delta_1}}{(q - q^{-1})^2}  \frac{(q^{-2 \omega + 2}; q^{-2\omega}, q^{-2\kappa})^2}{(q^{-2 \omega - 2}; q^{-2 \omega}, q^{-2\kappa})^2} \frac{(q^{-2\omega}; q^{-2\omega})}{(q^{-2\omega - 2}; q^{-2\omega})(q^{-2\omega+2}; q^{-2\omega})}\sum_{a, b, c \in \{\pm1\}} (-1)^{\frac{3 - a - b - c}{2}} \\
&\phantom{===} \sum_{s \in \mu + \ZZ}\int_0^{z q^{-2} \cdot \infty} \frac{d_pt}{t} \oint_{\cC_{w, c}}  \oint_{\cC_{w_0}} \oint_{\cC_{z_0}}\frac{dw_0 dz_0 dw}{(2 \pi i)^3 z_0 z w} q^{(2\lambda + b - a)s + (a + b)\mu + a + 2b} z^{\frac{2\mu}{\kappa}} t^{-\frac{2(\mu + 1)}{\kappa}} z_0^{-\mu + s} w_0^{\mu - s - 1} \\
&\phantom{===} \theta_0\Big(\frac{z_0}{w_0}; q^{-2\omega}\Big)^{-1} \frac{\theta_0\Big(\frac{t}{z_0} q^{-a}; q^{-2\omega}\Big)}{\theta_0\Big(\frac{t}{w_0} q^{-a}; q^{-2\omega}\Big)} \frac{\theta_0\Big(\frac{w}{w_0} q^{b(k + 1)}; q^{-2 \omega}\Big)}{\theta_0\Big(\frac{w}{z_0} q^{b(k + 1)}; q^{-2 \omega}\Big)}\Omega_{q^2}(\frac{t}{z}; q^{-2\omega}, q^{-2\kappa}) \frac{\theta_0(\frac{t}{z} q^{2}; q^{-2\kappa})\theta_0(\frac{z}{t} q^{-2};q^{-2\omega})}{\theta_0(\frac{t}{z}q^{-2};q^{-2\kappa}) \theta_0(\frac{z}{t}q^2; q^{-2\omega})} \\
& \phantom{===}
\begin{cases}  \frac{\theta_0(\frac{w}{t}q^{ak}; q^{-2\omega})}{\theta_0(\frac{w}{t}q^{a(k+2)}; q^{-2\omega})} & a = b \\ 1 & a \neq b \end{cases}
\begin{cases} \frac{(\frac{z}{w} q^k q^{-2\omega}; q^{-2\omega})}{(\frac{z}{w} q^{k+4} q^{-2\omega}; q^{-2\omega})} & b = 1 \\ \frac{(\frac{w}{z}q^{-k-4}q^{-2\omega}; q^{-2\omega})}{(\frac{w}{z} q^{-k}q^{-2\omega}; q^{-2\omega})} & b = -1 \end{cases} \frac{w q^{-2c} - z q^{k + 2}}{w - zq^{k + 2 + 2b}}.
\end{align*}
Changing the index of summation to $r = \mu - s$, we can simplify this expression to 
\begin{align*}
&\Xi(q, \lambda, \omega, \mu, k) \\
&= \frac{1}{(q - q^{-1})^2} \frac{(q^{-2 \omega + 2}; q^{-2\omega}, q^{-2\kappa})^2}{(q^{-2 \omega - 2}; q^{-2 \omega}, q^{-2\kappa})^2} \frac{(q^{-2\omega}; q^{-2\omega})}{(q^{-2\omega - 2}; q^{-2\omega})(q^{-2\omega+2}; q^{-2\omega})} q^{2\mu \lambda} z^{\frac{2\mu + 2}{\kappa}}\sum_{a, b, c \in \{\pm1\}} (-1)^{\frac{3 - a - b - c}{2}}  \\
&\phantom{===}  \sum_{r \in \ZZ} \int_0^{z q^{-2} \cdot \infty} \frac{d_pt}{t} \oint_{\cC_{w, c}}  \oint_{\cC_{w_0}} \oint_{\cC_{z_0}} \frac{dw_0 dz_0 dw}{(2 \pi i)^3 z_0 z w}q^{2b \mu + a + 2b}  t^{- \frac{2(\mu + 1)}{\kappa}} q^{(-2\lambda - b + a)r} z_0^{-r} w_0^{r - 1} \theta_0\Big(\frac{z_0}{w_0}; q^{-2\omega}\Big)^{-1} \\
&\phantom{===} \frac{\theta_0\Big(\frac{t}{z_0} q^{-a}; q^{-2\omega}\Big)}{\theta_0\Big(\frac{t}{w_0} q^{-a}; q^{-2\omega}\Big)} \frac{\theta_0\Big(\frac{w}{w_0} q^{b(k + 1)}; q^{-2 \omega}\Big)}{\theta_0\Big(\frac{w}{z_0} q^{b(k + 1)}; q^{-2 \omega}\Big)}\Omega_{q^2}(\frac{t}{z}; q^{-2\omega}, q^{-2\kappa}) \frac{\theta_0(\frac{t}{z} q^{2}; q^{-2\kappa})\theta_0(\frac{z}{t} q^{-2};q^{-2\omega})}{\theta_0(\frac{t}{z}q^{-2};q^{-2\kappa}) \theta_0(\frac{z}{t}q^2; q^{-2\omega})}\\
&\phantom{===} \begin{cases}  \frac{\theta_0(\frac{w}{t}q^{ak}; q^{-2\omega})}{\theta_0(\frac{w}{t}q^{a(k+2)}; q^{-2\omega})} & a = b \\ 1 & a \neq b \end{cases} \begin{cases} \frac{(\frac{z}{w} q^k q^{-2\omega}; q^{-2\omega})}{(\frac{z}{w} q^{k+4} q^{-2\omega}; q^{-2\omega})} & b = 1 \\ \frac{(\frac{w}{z}q^{-k-4}q^{-2\omega}; q^{-2\omega})}{(\frac{w}{z} q^{-k}q^{-2\omega}; q^{-2\omega})} & b = -1 \end{cases} \frac{w q^{-2c} - z q^{k + 2}}{w - zq^{k + 2 + 2b}}.
\end{align*}
We now simplify the expression by performing the integrals over $z_0$, $w_0$, and $w$ in that order. 

\begin{center}
\begin{table}  \caption{Values and regions of convergence of one loop correlation functions}\label{tab:two-point}
\begin{tabular}{llll} \hline
Corr. fn. & Value & Region of convergence \\ \hline
$T_{\eta\xi}$ & $\theta_0\Big(\frac{z_0}{w_0}; q^{-2\omega}\Big)^{-1}$ & $|w_0 q^{-2\omega}| < |z_0| < |w_0|$\\
$T_{\eta S_a}$ & $\theta_0\Big(\frac{t}{w_0} q^{-a}; q^{-2 \omega}\Big)^{-1}$ & $|w_0 q^{-2\omega}| < |tq^{-a}| < |w_0|$\\
$T_{\eta \phi}$ & $1$ & all\\
$T_{\eta X^-_b}$ & $\theta_0\Big(\frac{w}{w_0} q^{b(k + 1)}; q^{-2 \omega}\Big)$ & $|w_0 q^{-2\omega}| < |w q^{(k + 1)b}| < |w_0|$\\
$T_{\xi S_a}$ & $\theta_0\Big(\frac{t}{z_0} q^{-a}; q^{-2 \omega}\Big)$ & $|z_0 q^{-2\omega}| < |tq^{-a}| < |z_0|$\\
$T_{\xi \phi}$ & $1$ & all\\
$T_{\xi X^-_b}$ & $\theta_0\Big(\frac{w}{z_0} q^{b(k + 1)}; q^{-2 \omega}\Big)^{-1}$ & $|z_0 q^{-2\omega}| < |w q^{(k + 1)b}| < |z_0|$\\
$T_{S_a \phi}$ & $\Omega_{q^2}(\frac{t}{z}; q^{-2\omega}, q^{-2\kappa}) \frac{\theta_0(\frac{t}{z} q^{2}; q^{-2\kappa})\theta_0(\frac{z}{t} q^{-2};q^{-2\omega})}{\theta_0(\frac{t}{z}q^{-2};q^{-2\kappa}) \theta_0(\frac{z}{t}q^2; q^{-2\omega})}$ & $|tq^{-2} q^{-2 \omega} q^{-2\kappa}|, |tq^2 q^{-2\omega} q^{-2\kappa}| < |z| < |tq^2|, |tq^{-2}|$\\
$T_{S_1 X^-_1}$ & $\frac{\theta_0(\frac{w}{t}q^k; q^{-2\omega})}{\theta_0(\frac{w}{t}q^{k+2}; q^{-2\omega})}$ & $|tq^{-k} q^{-2\omega}|, |tq^{-k-2}q^{-2\omega}| < |w| < |tq^{-k}|, |tq^{-k-2}|$\\
$T_{S_{-1}, X^-_{-1}}$ & $\frac{\theta_0(\frac{w}{t}q^{-k}; q^{-2\omega})}{\theta_0(\frac{w}{t} q^{-k-2}; q^{-2\omega})}$ & $|tq^k q^{-2\omega}|, |tq^{k+2}q^{-2\omega}| < |w| < |tq^k|, |tq^{k+2}|$\\
$T_{S_{a}, X^-_{-a}}$ & $1$ & all\\
$T_{:\phi X^-_1:}$ & $\frac{(\frac{z}{w} q^k q^{-2\omega}; q^{-2\omega})}{(\frac{z}{w} q^{k+4} q^{-2\omega}; q^{-2\omega})}$ & $|zq^k q^{-2\omega}|, |zq^{k+4}q^{-2\omega}| < |w|$\\
$T_{:\phi X^-_{-1}:}$ & $\frac{(\frac{w}{z}q^{-k-4}q^{-2\omega}; q^{-2\omega})}{(\frac{w}{z} q^{-k} q^{-2\omega}; q^{-2\omega})}$ & $|w| < |zq^4 q^{2\omega}|, |zq^{k+4} q^{2\omega}|$\\ \hline
\end{tabular}
\end{table}
\end{center}

\subsection{Computing the $z_0$ contour integral}

We apply the following complex analysis lemma.

\begin{lemma} \label{lem:mero-res-extract}
Let $f(z)$ be a meromorphic function of $z$ with annulus of convergence $r < |z| < R$ near $0$ for some $0 < r < R$.  For $r < r_0, |w| < R$ we have that 
\[
\sum_{m \in \ZZ} \oint_{|z| = r_0} \frac{w^m}{z^{m + 1}} f(z) \frac{dz}{2\pi i} = f(w).
\]
\end{lemma}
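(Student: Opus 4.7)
The plan is to reduce the claim to the uniqueness of Laurent expansions: the contour integrals on the left-hand side simply extract Laurent coefficients of $f$, and then the sum reassembles $f(w)$ from its Laurent series. Since $f(z)$ is meromorphic with Laurent series converging in the annulus $r < |z| < R$, I first write
\[
f(z) = \sum_{n \in \ZZ} a_n z^n,
\]
with convergence absolute and uniform on compact subsets of the annulus.

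Next, for any fixed $r_0 \in (r, R)$ and any integer $m$, I would apply the orthogonality relation $\oint_{|z| = r_0} z^{n - m - 1}\, \frac{dz}{2\pi i} = \delta_{n,m}$. Because the Laurent series converges uniformly on the compact contour $|z| = r_0$, one may interchange summation and integration to get
\[
\oint_{|z| = r_0} \frac{f(z)}{z^{m + 1}}\, \frac{dz}{2\pi i} \;=\; \sum_{n \in \ZZ} a_n \oint_{|z| = r_0} z^{n - m - 1}\, \frac{dz}{2\pi i} \;=\; a_m.
\]
This identifies each term in the outer sum of the lemma with the corresponding Laurent coefficient of $f$ at $r_0$, independent of the choice of $r_0 \in (r, R)$.

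Finally, assembling the sum over $m$ turns the left-hand side into
\[
\sum_{m \in \ZZ} a_m w^m,
\]
which is precisely the Laurent expansion of $f$ at $z = w$. Since $|w|$ lies in the annulus of convergence, this sum converges to $f(w)$, giving the claim. The only point that requires care is the interchange of sum and integral, but this is routine given the uniform convergence of the Laurent series on $|z| = r_0$, so there is no genuine obstacle.
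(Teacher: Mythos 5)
Your proof is correct and follows essentially the same route as the paper's: both expand $f$ in its Laurent series on the annulus, use uniform convergence on the compact circle $|z| = r_0$ to integrate term by term and identify each contour integral as a Laurent coefficient, and then resum the series at $w$, which converges to $f(w)$ since $r < |w| < R$. Nothing further is needed.
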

\begin{proof}
Because $r < r_0 < R$, the Laurent series expansion of $f(z)$ at $0$ converges uniformly on the compact contour $|z| = r_0$.  Therefore, we may take the term-by-term residue expansion of the integral to obtain a Laurent series expansion which converges to $f(w)$ because $r < |w| < R$. 
\end{proof}

In the formal good spectral region, we have $|w_0| > |z_0| > |wq^{b(k + 1)}|$, which implies that the $q^{-2\omega}$-coefficients of the formal series expansion of 
\[
\frac{\theta_0(\frac{t}{z_0} q^{-a}; q^{-2\omega})}{\theta_0(\frac{z_0}{w_0}; q^{-2\omega})\theta_0(\frac{w}{z_0} q^{b(k + 1)}; q^{-2 \omega})}
\]
admit convergent Laurent series expansion in $z_0$.  In the formal good spectral region and in the doubly formal good region of parameters, we have $|w_0| > |w_0 q^{-2\lambda - b + a}| > |wq^{b(k + 1)}|$ by Lemma \ref{lem:good-ineq}, so we may apply Lemma \ref{lem:mero-res-extract} to each coefficient to obtain
\[
\sum_{r \in \ZZ}\oint_{\cC_{z_0}} \frac{z_0^{-r-1} w_0^{r-1} q^{(-2\lambda - b + a)r}\theta_0(\frac{t}{z_0} q^{-a}; q^{-2\omega})}{\theta_0(\frac{z_0}{w_0}; q^{-2\omega})\theta_0(\frac{w}{z_0} q^{b(k + 1)}; q^{-2 \omega})} \frac{dz_0}{2 \pi i} = \frac{1}{w_0} \frac{\theta_0(\frac{t}{w_0} q^{2\lambda + b - 2a}; q^{-2\omega})}{\theta_0(\frac{w}{w_0} q^{2\lambda + b \kappa - a}; q^{-2\omega}) \theta_0(q^{-2\lambda - b + a}; q^{-2\omega})}
\]
as formal series in $q^{-2\omega}$.  Substitution into the original expression yields
\begin{align*}
\Xi(q, \lambda, \omega,& \mu, k) = \frac{1}{(q - q^{-1})^2} \frac{(q^{-2 \omega + 2}; q^{-2\omega}, q^{-2\kappa})^2}{(q^{-2 \omega - 2}; q^{-2 \omega}, q^{-2\kappa})^2} \frac{(q^{-2\omega}; q^{-2\omega})}{(q^{-2\omega - 2}; q^{-2\omega})(q^{-2\omega+2}; q^{-2\omega})} q^{2\mu \lambda} z^{\frac{2\mu + 2}{\kappa}}  \\
&\phantom{===} \sum_{a, b, c \in \{\pm1\}} (-1)^{\frac{3 - a - b - c}{2}} \int_0^{z q^{-2} \cdot \infty} \frac{d_pt}{t} \oint_{\cC_{w, c}}  \oint_{\cC_{w_0}} \frac{dw_0 dw}{(2 \pi i)^2 w_0 z w}
\frac{q^{2b \mu + a + 2b}  t^{- \frac{2(\mu + 1)}{\kappa}}}{\theta_0(q^{-2\lambda - b + a}; q^{-2\omega})} \\
&\phantom{===}\frac{\theta_0(\frac{t}{w_0} q^{2\lambda + b-2a}; q^{-2\omega})}{\theta_0(\frac{t}{w_0} q^{-a}; q^{-2\omega})} \frac{\theta_0(\frac{w}{w_0} q^{b(k + 1)}; q^{-2 \omega})}{\theta_0(\frac{w}{w_0} q^{2\lambda + b\kappa -a}; q^{-2 \omega})}\Omega_{q^2}(\frac{t}{z}; q^{-2\omega}, q^{-2\kappa}) \frac{\theta_0(\frac{t}{z} q^{2}; q^{-2\kappa})\theta_0(\frac{z}{t} q^{-2};q^{-2\omega})}{\theta_0(\frac{t}{z}q^{-2};q^{-2\kappa}) \theta_0(\frac{z}{t}q^2; q^{-2\omega})}\\
&\phantom{===}
\begin{cases}  \frac{\theta_0(\frac{w}{t}q^{ak}; q^{-2\omega})}{\theta_0(\frac{w}{t}q^{a(k+2)}; q^{-2\omega})} & a = b \\ 1 & a \neq b \end{cases}
\begin{cases} \frac{(\frac{z}{w} q^k q^{-2\omega}; q^{-2\omega})}{(\frac{z}{w} q^{k+4} q^{-2\omega}; q^{-2\omega})} & b = 1 \\ \frac{(\frac{w}{z}q^{-k-4}q^{-2\omega}; q^{-2\omega})}{(\frac{w}{z} q^{-k}q^{-2\omega}; q^{-2\omega})} & b = -1 \end{cases} \frac{w q^{-2c} - z q^{k + 2}}{w - zq^{k + 2 + 2b}}.
\end{align*}

\subsection{Computing the $w_0$ contour integral}

We compute the integral of the convergent formal series in $q^{-2\omega}$ by computing it for sufficiently small $q^{-2\omega}$.  In this subsection, take $q^{-2\omega}$ sufficiently small so that the spectral parameters lie in the good spectral region $\cS_c$.  We first compute the integral of an elliptic function.

\begin{lemma} \label{lem:w0-ell}
For constants $c_1, c_2, c_3$ so that $|c_1|, |c_2|, |c_2|/|c_3|, |c_1|/|c_3| \neq |q|^{-2\omega n}$ for any $n$ and a contour $|w_0| = r$ satisfying $r|q^{-2\omega}| < |c_2|, |c_1|/|c_3| < r$, we have
\begin{multline*}
\int_{|w_0| = r} \frac{\theta_0(\frac{c_1}{w_0}; q^{-2\omega}) \theta_0(\frac{c_2}{c_3 w_0}; q^{-2\omega})}{\theta_0(\frac{c_2}{w_0}; q^{-2\omega})\theta_0(\frac{c_1}{c_3 w_0}; q^{-2\omega})} \frac{dw_0}{2 \pi i w_0}\\
 = \frac{\theta_0(\frac{c_1}{c_2}; q^{-2\omega}) \theta_0(\frac{1}{c_3}; q^{-2\omega})}{\theta_0(\frac{c_1}{c_2c_3}; q^{-2\omega}) (q^{-2\omega}; q^{-2\omega})^2}\Big(2 \eta_1 (C_1 - C_2 - C_3) - \zeta(C_1 - C_2) + \zeta(C_3)\Big),
\end{multline*}
where $C_i = \frac{1}{2\pi i} \log(c_i)$, $\zeta$ and $\sigma$ are the Weierstrass zeta and sigma functions with periods $1$ and $\frac{1}{2\pi i}\log(q^{-2\omega})$, nome $q^{-\omega}$, and $\eta_1 = \zeta(\frac{1}{2})$. 
\end{lemma}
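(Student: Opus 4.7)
The integrand $F(w_0) := \frac{\theta_0(c_1/w_0; p)\, \theta_0(c_2/(c_3 w_0); p)}{\theta_0(c_2/w_0; p)\, \theta_0(c_1/(c_3 w_0); p)}$ with $p := q^{-2\omega}$ is elliptic in $w_0$ under $w_0 \mapsto p w_0$: the quasi-periodicity $\theta_0(pu; p) = -u^{-1}\theta_0(u; p)$ produces four correction factors whose ratio (numerator over denominator) is $1$ because the multiplicative balance of arguments matches between numerator and denominator (both contribute $c_1 c_2/c_3$ in $w_0^{-2}$). Setting $w_0 = e^{2\pi i W}$ and $c_i = e^{2\pi i C_i}$ converts $F$ into a meromorphic elliptic function of $W \in \CC/(\ZZ + \tau \ZZ)$ with $\tau = (2\pi i)^{-1}\log p$, and the cycle $|w_0| = r$ becomes the horizontal segment $W \in [0, 1] + i V_0$ where $e^{-2\pi V_0} = r$.

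The plan is to translate to the Weierstrass sigma function via the identity $\theta_0(e^{2\pi i z}; p) = -2\pi i (p; p)^2 \, e^{-\eta_1 z^2 + i\pi z} \sigma(z)$, derived by matching the quasi-periodicities of the two sides (consistency under $z \mapsto z + \tau$ follows from the Legendre relation $\eta_1 \tau - \eta_2 = i\pi$) and the leading behaviors $\theta_0'(1) = -(p; p)^2$ versus $\sigma(z) \sim z$ near $z = 0$. Substituting into $F$, the Gaussian and linear-exponential prefactors collect into a single scalar $e^{2\eta_1 C_3(C_2 - C_1)}$ and leave $F$ expressed as a sigma quotient $\sigma(W - C_1)\sigma(W - C_2 + C_3)/[\sigma(W - C_2)\sigma(W - C_1 + C_3)]$ with explicit zeros at $W = C_1,\, C_2 - C_3$ and poles at $W = C_2,\, C_1 - C_3$.

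Next I would write the partial fraction expansion $F(W) = A\bigl[\zeta(W - C_2) - \zeta(W - C_1 + C_3)\bigr] + C_0$. The residue $A$ at $W = C_2$ is read off by a one-term Laurent expansion using $\theta_0'(1) = -(p; p)^2$ and reproduces the theta-quotient prefactor of the claimed formula. The constant term $C_0$ is the crux of the argument and is extracted from the finite part of $F$ at $W = C_2$: writing $F$ in sigma form and expanding through order $\eps^0$ in $\eps = W - C_2$, using $\zeta = \sigma'/\sigma$, the oddness of $\sigma$ and $\zeta$, and the derivative of the Gaussian exponent, then combining with $A\zeta(C_2 - C_1 + C_3)$ and simplifying, produces $C_0 = A\bigl[\zeta(C_3) - \zeta(C_1 - C_2)\bigr]$.

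Finally, the integration along the horizontal segment uses $\int_{iV_0}^{1 + iV_0}\zeta(W - b)\, dW = \log\sigma(W - b)\bigr|_{iV_0}^{1 + iV_0}$, and the shift law $\sigma(z + 1) = -e^{2\eta_1(z + 1/2)}\sigma(z)$ gives $\int_{iV_0}^{1 + iV_0}[\zeta(W - C_2) - \zeta(W - C_1 + C_3)]\, dW = 2\eta_1(C_1 - C_2 - C_3)$; the $\log(-1)$ and $2\pi i \ZZ$ branch ambiguities cancel between the two zeta terms since the coefficient $A$ is common to both. Adding $C_0$ and reassembling the theta-function prefactor yields the claimed formula. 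The principal obstacle is the accurate extraction of $C_0$, which demands a careful order-$\eps^0$ Taylor expansion at a simple pole together with the precise derivative of the Gaussian prefactor; once this is in hand, the remainder is mechanical manipulation of sigma and zeta identities and the Legendre relation.
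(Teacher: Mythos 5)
Your proposal follows essentially the same route as the paper: both express the elliptic integrand as a constant multiple of $\zeta(W-C_2)-\zeta(W-C_1+C_3)$ plus an additive constant (the paper by matching zeros, poles, and residues; you via the theta-to-sigma identity and partial fractions), and both integrate over the period cycle using $\frac{\sigma(1+a)}{\sigma(a)}=-e^{\eta_1+2\eta_1 a}$ to produce the $2\eta_1(C_1-C_2-C_3)$ term, with the branch ambiguities controlled by the hypothesis $r|q^{-2\omega}|<|c_2|,|c_1|/|c_3|<r$. The one place you make extra work for yourself is the extraction of the constant $C_0$ from the order-$\varepsilon^0$ Laurent coefficient at the pole $W=C_2$: since the two sides already have matching principal parts and hence differ by a constant, evaluating at the zero $W=C_1$ of the integrand yields $C_0=A\bigl[\zeta(C_3)-\zeta(C_1-C_2)\bigr]$ immediately, which is what the paper's ``matching zeroes'' implicitly does.
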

\begin{proof}
Fix a branch of $\log(-)$ and change variables to the additive coordinate $W_0 = \frac{1}{2 \pi i} \log \frac{w_0}{r}$.  Define $R = \frac{1}{2\pi i} \log r$.  By matching zeroes, poles, and residues of functions elliptic in $W_0$, the integrand is given by
\begin{multline} \label{eq:ell-int}
\frac{\theta_0(\frac{c_1}{w_0}; q^{-2\omega}) \theta_0(\frac{c_2}{c_3 w_0}; q^{-2\omega})}{\theta_0(\frac{c_2}{w_0}; q^{-2\omega})\theta_0(\frac{c_1}{c_3 w_0}; q^{-2\omega})} \\ = \frac{\theta_0(\frac{c_1}{c_2}; q^{-2\omega}) \theta_0(\frac{1}{c_3}; q^{-2\omega})}{\theta_0(\frac{c_1}{c_2c_3}; q^{-2\omega}) (q^{-2\omega}; q^{-2\omega})^2}\Big(\zeta(W_0 - C_2 + R) - \zeta(W_0 - C_1 + C_3 + R) - \zeta(C_1 - C_2) + \zeta(C_3)\Big).
\end{multline}
Because $r|q^{-2\omega}| < |c_2|, |c_1|/|c_3| < r$, we have 
\begin{equation} \label{eq:rc-region}
-\Imm\Big(\frac{1}{2 \pi i} \log q^{-2\omega}\Big) < \Imm(R - C_2), \Imm(R - C_1 + C_3) < 0,
\end{equation}
which means that 
\begin{align*}
\int_0^1 \Big(\zeta(W_0 - C_2 + R) - \zeta(W_0 - C_1 + C_3 + R)\Big) dW_0 &= \log \frac{\sigma(1 - C_2 + R)}{\sigma(- C_2 + R)} - \log \frac{\sigma(1 - C_1 + C_3 + R)}{\sigma(- C_1 + C_3)} \\
&= \Big(\eta_1 + 2 \eta_1(-C_2 + R)\Big) - \Big(\eta_1 + 2\eta_1(-C_1 + C_3 + R)\Big)\\
&= 2\eta_1 (C_1 - C_2 - C_3),
\end{align*}
where we may take the same branch of $\log(\sigma(-))$ in both terms of the first equality by (\ref{eq:rc-region}) and we apply $\frac{\sigma(1 + a)}{\sigma(a)} = - e^{\eta_1 + 2\eta_1 a}$ for the second.  Noting that $\int_{|w_0| = r} f(w_0) \frac{dw_0}{2 \pi i w_0} = \int_0^1 f(re^{2\pi i W_0}) dW_0$, integrating each term in (\ref{eq:ell-int}) separately, and substituting, we obtain the desired integral value of
\[
\frac{\theta_0(\frac{c_1}{c_2}; q^{-2\omega}) \theta_0(\frac{1}{c_3}; q^{-2\omega})}{\theta_0(\frac{c_1}{c_2c_3}; q^{-2\omega}) (q^{-2\omega}; q^{-2\omega})^2} \Big(2\eta_1 (C_1 - C_2 - C_3) - \zeta(C_1 - C_2) + \zeta(C_3)\Big). \qedhere
\]
\end{proof}

On our choice of contours, the integral over $w_0$ satisfies the hypotheses of Lemma \ref{lem:w0-ell} with $c_1 = w q^{b(k+1)}$, $c_2 = tq^{-a}$, and $c_3 = q^{-2\lambda - b + a}$ for $q^{-2\omega}$ sufficiently close to $0$ by Lemma \ref{lem:good-ineq}, yielding
\begin{multline*}
\oint_{\cC_{w_0}} \frac{dw_0}{2 \pi i w_0} \frac{\theta_0(\frac{t}{w_0} q^{2\lambda + b-2a}; q^{-2\omega})\theta_0(\frac{w}{w_0} q^{b(k + 1)}; q^{-2 \omega})}{\theta_0(\frac{t}{w_0} q^{-a}; q^{-2\omega})\theta_0(\frac{w}{w_0} q^{2\lambda + b\kappa -a}; q^{-2 \omega})} \\
= \frac{\theta_0(\frac{w}{t} q^{b(k+1) + a}; q^{-2\omega}) \theta_0(q^{2\lambda + b - a}; q^{-2\omega})}{\theta_0(\frac{w}{t} q^{2\lambda + b\kappa}; q^{-2\omega}) (q^{-2\omega}; q^{-2\omega})^2} \frac{2\eta_1 \frac{w}{t} q^{2\lambda + b \kappa} - \zeta(\log(\frac{w}{t} q^{b(k+1) + a})) + \zeta(\log(q^{-2\lambda - b + a}))}{2\pi i}.
\end{multline*}
Substituting in the result and noting that $(q^{-2\omega - 2}; q^{-2\omega})(q^{-2\omega + 2}; q^{-2\omega}) = \frac{\theta_0(q^2; q^{-2\omega})}{1 - q^2}$, we find that
\begin{align*}
\Xi(q, &\lambda, \omega, \mu, k)
%&= \frac{1 - q^2}{(q - q^{-1})^2} \frac{(q^{-2 \omega + 2}; q^{-2\omega}, q^{-2\kappa})^2}{(q^{-2\omega - 2}; q^{-2\omega}, q^{-2\kappa})^2} \frac{1}{\theta_0(q^2; q^{-2\omega})(q^{-2\omega}; q^{-2\omega})} q^{2\mu \lambda} z^{\frac{2\mu + 2}{\kappa}}  \\
%&\phantom{===} \sum_{a, b, c} (-1)^{\frac{1}{2}(a + b + c) + 3/2}  \int_0^{z q^{-2} \cdot \infty} \frac{d_pt}{t} \oint_{\cC_{w, c}} \frac{dw}{2 \pi i z w }q^{2b \mu + a + 2b}  t^{- \frac{2(\mu + 1)}{k + 2}} \frac{\theta_0(q^{2\lambda + b - a}; q^{-2\omega})}{\theta_0\Big(q^{-2\lambda - b + a}; q^{-2\omega}\Big)}\\
%&\phantom{===} \frac{\theta_0(\frac{w}{t} q^{b(k+1) + a}; q^{-2\omega})}{\theta_0(\frac{w}{t} q^{2\lambda + b\kappa}; q^{-2\omega})}\Big(\log(2\eta_1) + \log(\frac{w}{t} q^{2\lambda + b\kappa}) - \zeta(\log(\frac{w}{t} q^{b(k+1) + a})) + \zeta(\log(q^{-2\lambda - b + a}))\Big)\\
%& \phantom{===}\Omega_{q^2}(\frac{t}{z}; q^{-2\omega}, q^{-2\kappa}) \frac{\theta_0(\frac{t}{z} q^{2}; q^{-2\kappa})\theta_0(\frac{z}{t}q^{-2};q^{-2\omega})}{\theta_0(\frac{t}{z}q^{-2};q^{-2\kappa}) \theta_0(\frac{z}{t}q^2; q^{-2\omega})}\begin{cases}  \frac{\theta_0(\frac{w}{t}q^{ak}; q^{-2\omega})}{\theta_0(\frac{w}{t}q^{a(k+2)}; q^{-2\omega})} & a = b \\ 1 & a \neq b \end{cases}\\
%&\phantom{===}\begin{cases} \frac{(\frac{z}{w} q^k q^{-2\omega}; q^{-2\omega})}{(\frac{z}{w} q^{k+4} q^{-2\omega}; q^{-2\omega})} & b = 1 \\ \frac{(\frac{w}{z}q^{-k-4}q^{-2\omega}; q^{-2\omega})}{(\frac{w}{z} q^{-k}q^{-2\omega}; q^{-2\omega})} & b = -1 \end{cases} \frac{w q^{-2c} - z q^{k + 2}}{w - zq^{k + 2 + 2b}}\\
= -\frac{1}{q - q^{-1}} \frac{(q^{-2 \omega + 2}; q^{-2\omega}, q^{-2\kappa})^2}{(q^{-2\omega - 2}; q^{-2\omega}, q^{-2\kappa})^2} \frac{1}{\theta_0(q^2; q^{-2\omega})(q^{-2\omega}; q^{-2\omega})} q^{2\lambda(\mu + 1) + 1} z^{\frac{2\mu + 2}{\kappa}}  \\
&\phantom{=} \sum_{a, b, c \in \{\pm1\}} (-1)^{\frac{3 - a - b - c}{2}}  \int_0^{z q^{-2} \cdot \infty} \frac{d_pt}{t}\oint_{\cC_{w, c}} \frac{dw}{2 \pi i z w } q^{2b \mu + 3b}  t^{- \frac{2(\mu + 1)}{\kappa}}\\
&\phantom{=} \frac{\theta_0(\frac{w}{t} q^{bk}; q^{-2\omega})}{\theta_0(\frac{w}{t} q^{2\lambda + b(k+2)}; q^{-2\omega})}\Big(2\eta_1\frac{w}{t} q^{2\lambda + b\kappa} - \zeta(\log(\frac{w}{t} q^{b(k+1) + a})) + \zeta(\log(q^{-2\lambda - b + a}))\Big)\\
& \phantom{=}
 \Omega_{q^2}(\frac{t}{z}; q^{-2\omega}, q^{-2\kappa}) \frac{\theta_0(\frac{t}{z} q^{2}; q^{-2\kappa})\theta_0(\frac{z}{t} q^{-2};q^{-2\omega})}{\theta_0(\frac{t}{z}q^{-2};q^{-2\kappa}) \theta_0(\frac{z}{t}q^2; q^{-2\omega})} \begin{cases} \frac{(\frac{z}{w} q^k q^{-2\omega}; q^{-2\omega})}{(\frac{z}{w} q^{k+4} q^{-2\omega}; q^{-2\omega})} & b = 1 \\ \frac{(\frac{w}{z}q^{-k-4}q^{-2\omega}; q^{-2\omega})}{(\frac{w}{z} q^{-k}q^{-2\omega}; q^{-2\omega})} & b = -1 \end{cases} \frac{w q^{-2c} - z q^{k + 2}}{w - zq^{k + 2 + 2b}}.
\end{align*}
Summing over $a \in \{\pm1\}$, we obtain
\begin{align*}
\Xi(q, &\lambda, \omega, \mu, k) = -\frac{1}{q - q^{-1}} \frac{(q^{-2 \omega + 2}; q^{-2\omega}, q^{-2\kappa})^2}{(q^{-2\omega - 2}; q^{-2\omega}, q^{-2\kappa})^2} \frac{1}{\theta_0(q^2; q^{-2\omega})(q^{-2\omega}; q^{-2\omega})} q^{2\lambda(\mu + 1) + 1} z^{\frac{2\mu + 2}{\kappa}}  \\
&\phantom{=} \sum_{b, c \in \{\pm1\}} (-1)^{\frac{2 - b - c}{2}}  \int_0^{z q^{-2} \cdot \infty} \frac{d_pt}{t} \oint_{\cC_{w, c}} \frac{dw}{2 \pi i z w }q^{2b \mu + 3b}  t^{- \frac{2(\mu + 1)}{\kappa}} \frac{\theta_0(\frac{w}{t} q^{bk}; q^{-2\omega})}{\theta_0(\frac{w}{t} q^{2\lambda + b\kappa}; q^{-2\omega})}\\
&\phantom{===}\Big(\zeta(\log(\frac{w}{t} q^{b(k+1) - 1}))-\zeta(\log(\frac{w}{t} q^{b(k+1) + 1})) + \zeta(\log(q^{-2\lambda - b + 1}))- \zeta(\log(q^{-2\lambda - b - 1}))\Big)\\
& \phantom{===}
 \Omega_{q^2}(\frac{t}{z}; q^{-2\omega}, q^{-2\kappa}) \frac{\theta_0(\frac{t}{z} q^{2}; q^{-2\kappa})\theta_0(\frac{z}{t} q^{-2};q^{-2\omega})}{\theta_0(\frac{t}{z}q^{-2};q^{-2\kappa}) \theta_0(\frac{z}{t}q^2; q^{-2\omega})}\begin{cases} \frac{(\frac{z}{w} q^k q^{-2\omega}; q^{-2\omega})}{(\frac{z}{w} q^{k+4} q^{-2\omega}; q^{-2\omega})} & b = 1 \\ \frac{(\frac{w}{z}q^{-k-4}q^{-2\omega}; q^{-2\omega})}{(\frac{w}{z} q^{-k}q^{-2\omega}; q^{-2\omega})} & b = -1 \end{cases} \frac{w q^{-2c} - z q^{k + 2}}{w - zq^{k + 2 + 2b}}.
\end{align*}
By matching zeroes, poles, and residues of elliptic functions in $\log t$, we see that 
\begin{multline*}
\frac{1}{2\pi i} \Big(\zeta(\log \frac{w}{t} q^{b(k+1) -1}) - \zeta(\log(\frac{w}{t} q^{b(k+1) + 1})) + \zeta(\log(q^{-2\lambda + 1 - b})) - \zeta(\log(q^{-2\lambda - 1 - b}))\Big)\\
 = \frac{\theta_0(\frac{w}{t} q^{b\kappa + 2 \lambda}; q^{-2\omega}) \theta_0(\frac{w}{t} q^{bk - 2\lambda}; q^{-2\omega})}{\theta_0(\frac{w}{t} q^{b(k+1) - 1}; q^{-2\omega}) \theta_0(\frac{w}{t} q^{b(k+1) + 1}; q^{-2\omega})}
\frac{\theta_0(q^2; q^{-2\omega}) (q^{-2\omega}; q^{-2\omega})^2}{\theta_0(q^{-b + 1 - 2\lambda}; q^{-2\omega}) \theta_0(q^{b + 1 + 2\lambda}; q^{-2\omega})}.
\end{multline*}
Substituting in yields
\begin{align*}
\Xi(q,& \lambda, \omega, \mu, k) = -\frac{1}{q - q^{-1}} \frac{(q^{-2 \omega + 2}; q^{-2\omega}, q^{-2\kappa})^2}{(q^{-2\omega - 2}; q^{-2\omega}, q^{-2\kappa})^2} (q^{-2\omega}; q^{-2\omega}) q^{2\lambda(\mu + 1) + 1} z^{\frac{2\mu + 2}{\kappa}} \sum_{b, c \in \{\pm1\}} (-1)^{\frac{2 - b - c}{2}} \int_0^{z q^{-2} \cdot \infty} \frac{d_pt}{t} \\
&\phantom{=}  \oint_{\cC_{w, c}} \frac{dw}{2 \pi i z w}  \frac{q^{2b \mu + 3b}  t^{- \frac{2(\mu + 1)}{\kappa}}}{\theta_0(q^{-b + 1 - 2\lambda}; q^{-2\omega}) \theta_0(q^{b + 1 + 2\lambda}; q^{-2\omega})} \Omega_{q^2}(\frac{t}{z}; q^{-2\omega}, q^{-2\kappa}) \frac{\theta_0(\frac{t}{z} q^{2}; q^{-2\kappa})\theta_0(\frac{z}{t} q^{-2};q^{-2\omega})}{\theta_0(\frac{t}{z}q^{-2};q^{-2\kappa}) \theta_0(\frac{z}{t}q^2; q^{-2\omega})}\\
&\phantom{=} \frac{\theta_0(\frac{w}{t} q^{bk}; q^{-2\omega}) \theta_0(\frac{w}{t} q^{bk - 2\lambda}; q^{-2\omega})}{\theta_0(\frac{w}{t} q^{b(k+1) - 1}; q^{-2\omega}) \theta_0(\frac{w}{t} q^{b(k+1) + 1}; q^{-2\omega})}\begin{cases} \frac{(\frac{z}{w} q^k q^{-2\omega}; q^{-2\omega})}{(\frac{z}{w} q^{k+4} q^{-2\omega}; q^{-2\omega})} & b = 1 \\ \frac{(\frac{w}{z}q^{-k-4}q^{-2\omega}; q^{-2\omega})}{(\frac{w}{z} q^{-k}q^{-2\omega}; q^{-2\omega})} & b = -1 \end{cases} \frac{w q^{-2c} - z q^{k + 2}}{w - zq^{k + 2 + 2b}},
\end{align*}
where we may again interpret the expression as a formal series in $q^{-2\omega}$. 

\subsection{Computing the $w$ contour integrals}

We now evaluate the contour integrals over $w$. We again will compute the integral for numerical $q^{-2\omega}$ sufficiently close to $0$ that the parameters lie in the good spectral region.  Define
\[
I_{w, 1, c}(t) := \oint_{\cC_{w, c}} \frac{dw}{2 \pi i w} \frac{\theta_0(\frac{w}{t} q^{k - 2 \lambda}; q^{-2\omega})}{\theta_0(\frac{w}{t} q^{k+2}; q^{-2\omega})} \frac{(\frac{z}{w} q^k q^{-2\omega}; q^{-2\omega})}{(\frac{z}{w} q^{k+4}; q^{-2\omega})} (q^{-2c} - z w^{-1}q^{k + 2})
\]
and
\[
I_{w, -1, c}(t) := \oint_{\cC_{w, c}} \frac{dw}{2\pi i w}\frac{\theta_0(\frac{w}{t} q^{-k - 2 \lambda}; q^{-2\omega})}{\theta_0(\frac{w}{t} q^{-k - 2}; q^{-2\omega})} \frac{(\frac{w}{z} q^{-k-4} q^{-2\omega}; q^{-2\omega})}{(\frac{w}{z} q^{-k}; q^{-2\omega})} (q^2 - wz^{-1} q^{-k-2c}).
\]

\begin{lemma} \label{lem:w-int-vals-new}
As formal series in $q^{-2\omega}$, the differences of integrals $I_{w, 1}(t) := I_{w, 1, 1}(t) - I_{w, 1, -1}(t)$ and $I_{w, -1}(t) := I_{w, -1, 1}(t) - I_{w, -1, -1}(t)$ are given by 
\[
I_{w, 1}(t) = I^1_{w, 1}(t) + I^2_{w, 1}(t) \qquad \text{ and } \qquad I_{w, -1}(t) = I^1_{w, -1}(t),
\]
where 
\begin{multline*}
I^1_{w, 1}(t) = (q^{-2} - q^{- 2\lambda})\frac{\theta_0(q^{-2-2\lambda} q^{-2\omega}; q^{-2\omega})}{(q^{-2\omega}; q^{-2\omega})^2} \frac{(\frac{z}{t} q^{2k + 2}; q^{-2\omega})}{(\frac{z}{t}q^{2k + 6}; q^{-2\omega})}\\\hgf(\frac{t}{z} q^{-2k-2} q^{-2\omega}, q^{-2\omega}; \frac{t}{z} q^{-2k-6} q^{-2\omega}; q^{-2\omega}, q^{2\lambda - 2})\\
+ q^{- 2\lambda} \frac{\theta_0(q^{-2-2\lambda} q^{-2\omega}; q^{-2\omega})}{(q^{-2\omega}; q^{-2\omega})^2} \frac{(\frac{z}{t} q^{2k + 2}; q^{-2\omega})}{(\frac{z}{t}q^{2k + 6}; q^{-2\omega})}
\end{multline*}
and
\[
I^2_{w, 1}(t) = (q^{2\lambda} - q^2) \frac{\theta_0(\frac{z}{t} q^{2k + 4 - 2\lambda}; q^{-2\omega})}{\theta_0(\frac{z}{t} q^{2k + 6}; q^{-2\omega})} \frac{(q^{-4}; q^{-2\omega})}{(q^{-2\omega}; q^{-2\omega})} \frac{(q^{2 \lambda + 2} q^{-2\omega}; q^{-2\omega})}{(q^{2\lambda - 2}; q^{-2\omega})}
\]
and 
\begin{multline*}
I^1_{w, -1}(t) = (q^2 - q^{4 - 2\lambda}) \frac{\theta_0(q^{2 - 2\lambda}q^{-2\omega}; q^{-2\omega})}{(q^{-2\omega}; q^{-2\omega})^2} \frac{(\frac{t}{z}q^{-2} q^{-2\omega}; q^{-2\omega})}{(\frac{t}{z} q^2 q^{-2\omega}; q^{-2\omega})} \\\hgf(\frac{t}{z} q^2 q^{-2\omega}, q^{-2\omega}; \frac{t}{z} q^{-2} q^{-2\omega}; q^{-2\omega}, q^{2\lambda - 2}) \\
+ q^{4 - 2\lambda} \frac{\theta_0(q^{2 - 2\lambda}q^{-2\omega}; q^{-2\omega})}{(q^{-2\omega}; q^{-2\omega})^2} \frac{(\frac{t}{z}q^{-2} q^{-2\omega}; q^{-2\omega})}{(\frac{t}{z} q^2 q^{-2\omega}; q^{-2\omega})}
\end{multline*}
and $\hgf(a_1, a_2; b_1; q, z)$ denotes the $q$-hypergeometric function
\[
\hgf(a_1, a_2; b_1; q, z) := \sum_{n \geq 0} \frac{(a_1; q)_n (a_2; q)_n}{(b_1; q)_n(q;q)_n} z^n.
\]
\end{lemma}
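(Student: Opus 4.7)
The lemma is a residue calculation that exploits how $I_{w,b,1}(t)$ and $I_{w,b,-1}(t)$ share a common $c$-independent factor $F_b(w)$ and differ only in a rational factor $f_c(w)$: namely $f_c(w) = q^{-2c} - zw^{-1}q^{k+2}$ for $b = 1$ and $f_c(w) = q^2 - wz^{-1}q^{-k-2c}$ for $b = -1$. The first step is to split each difference as
\[
I_{w,b,1}(t) - I_{w,b,-1}(t) = -\oint_{\cC_w} F_b(w)\,f_1(w)\,\frac{dw}{2\pi iw} + \oint_{\cC_{w,-1}} F_b(w)\,\bigl(f_1(w) - f_{-1}(w)\bigr)\,\frac{dw}{2\pi iw},
\]
where $\cC_w = \cC_{w,-1} - \cC_{w,1}$ is the annular cycle enclosing only $w = zq^k$ and $w = zq^{k+4}$. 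The difference $f_1 - f_{-1}$ is a constant for $b = 1$ and linear in $w$ for $b = -1$, so the second integral becomes much simpler than the originals.

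For the second (global) integral, the $q$-Pochhammer ratio in $F_b$ admits a convergent Heine expansion $(ax;q)_\infty/(x;q)_\infty = \sum_{n\geq 0}\tfrac{(a;q)_n}{(q;q)_n} x^n$ on $\cC_{w,-1}$, where $|w| > |zq^{k+4}|$ (for $b = 1$) or $|w| > |zq^k|$ (for $b = -1$). After term-by-term integration, each summand reduces to a moment $\oint_{\cC_{w,-1}} \tfrac{\theta_0(\frac{w}{t}q^{\pm k - 2\lambda}; q^{-2\omega})}{\theta_0(\frac{w}{t}q^{\pm(k+2)}; q^{-2\omega})}\,w^{-n-1}\,dw$, evaluated by deforming to pick up the residue at the pole of the theta denominator at $w = tq^{\mp(k+2)}$ and yielding a ratio of $q$-Pochhammer symbols in base $q^{-2\omega}$. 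The argument $q^{2\lambda - 2}$ appearing in $\hgf$ emerges when the Heine prefactor $(zq^{k+4})^n/t^n$ is combined with the theta residue formula; summing the resulting geometric series produces the $q$-hypergeometric piece of $I^1_{w,\pm 1}$, while the $n = 0$ term contributes the non-hypergeometric summand.

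For the first (local) integral, I would compute residues at the two candidate poles in $\cC_w$. For $b = 1$ the zero of $f_1$ at $w = zq^{k+4}$ exactly cancels the simple pole of $F_1$ there (coming from $1/(\tfrac{z}{w}q^{k+4}; q^{-2\omega})$), and $F_1$ is regular at $w = zq^k$, so the first integral vanishes; the extra piece $I^2_{w,1}(t)$ then arises as a separate residue at $w = zq^{k+4}$ that is picked up in the second integral beyond the convergent Heine series (i.e., from the boundary of the Heine domain). For $b = -1$ the pole of $F_{-1}$ at $w = zq^k$ (from $1/(\tfrac{w}{z}q^{-k}; q^{-2\omega})$) is not cancelled and contributes a simple residue, which combines with the $n = 0$ moment of the second integral to yield the non-hypergeometric piece of $I^1_{w,-1}(t)$; this cancellation/combination explains the absence of a separate $I^2_{w,-1}$.

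The main technical obstacle is matching the resulting residue series with the explicit $\hgf$ appearing in the statement. This requires the theta-quasiperiodicity $\theta_0(uq^{-2\omega}; q^{-2\omega}) = -u^{-1}\theta_0(u; q^{-2\omega})$ to shift theta ratios at successive residue points, the Pochhammer identity $(aq^{-2\omega}; q^{-2\omega})_n\,(1-a) = (a; q^{-2\omega})_n\,(1 - aq^{-2\omega n})$ to convert between Pochhammer conventions, and careful tracking of theta prefactors such as $\theta_0(q^{\pm 2 - 2\lambda}q^{-2\omega}; q^{-2\omega})$ and $(\tfrac{z}{t}q^{2k+2}; q^{-2\omega})/(\tfrac{z}{t}q^{2k+6}; q^{-2\omega})$ obtained by evaluating $F_b$ at the relevant residue points.
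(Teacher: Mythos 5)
Your opening move is algebraically fine, and your analysis of the annular cycle $\cC_w = \cC_{w,-1} - \cC_{w,1}$ is correct: for $b=1$ the zero of $q^{-2} - zw^{-1}q^{k+2}$ at $w = zq^{k+4}$ cancels the simple pole of $1/(\frac{z}{w}q^{k+4}; q^{-2\omega})$, the integrand is regular at $w=zq^k$, and the annular integral vanishes; for $b=-1$ the pole at $w = zq^k$ survives. The gap is in how you evaluate the remaining integral over $\cC_{w,-1}$. The moments $\oint_{\cC_{w,-1}} \frac{\theta_0(\frac{w}{t}q^{\pm k - 2\lambda}; q^{-2\omega})}{\theta_0(\frac{w}{t}q^{\pm(k+2)}; q^{-2\omega})}\, w^{-n-1}\, dw$ are \emph{not} given by a single residue: the theta function in the denominator vanishes along the entire tower $w = tq^{\mp(k+2)}q^{-2\omega(m+1)}$, $m \geq 0$, which accumulates at $0$, while the point $w = tq^{\mp(k+2)}$ you name lies \emph{outside} $\cC_{w,-1}$ (on these contours $|w| \ll |tq^{\pm k}|$) and is never crossed. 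Each moment is an infinite residue sum, and it is the quasi-periodicity factor $q^{(2\lambda-2)m}$ picked up along that tower --- not the Heine index --- that produces the argument $q^{2\lambda-2}$ of $\hgf$ in the statement. Your scheme therefore generates a double sum (Heine index times tower index) whose resummation into the stated ${}_2\phi_1$ is a genuine rearrangement you have not supplied. In addition, for $n \geq 1$ the boundary term at $w = 0$ of the moment integral does not obviously vanish (the estimate of Corollary \ref{corr:theta-ratio} only beats a single inverse power of $w$), so "deforming to $0$" is not justified for the higher moments without a separate quasi-periodicity functional-equation argument.

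The attribution of $I^2_{w,1}(t)$ to "a residue beyond the convergent Heine series" is not a valid mechanism: the Heine series converges uniformly on $\cC_{w,-1}$, so term-by-term integration already yields the exact value of the integral and leaves nothing over. Once your annular integral vanishes you are committed to $I_{w,1}(t) = (q^{-2}-q^2)\oint_{\cC_{w,-1}} F_1(w)\,\frac{dw}{2\pi i w}$, and both $I^1_{w,1}$ and $I^2_{w,1}$ --- with their prefactors $q^{-2}-q^{-2\lambda}$ and $q^{2\lambda}-q^2$, neither of which is a multiple of $q^{-2}-q^2$ --- must emerge from the two residue towers $w = zq^{k+4}q^{-2\omega j}$ and $w = tq^{-k-2}q^{-2\omega(m+1)}$ of this single integral; nothing in your write-up produces that splitting. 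The paper sidesteps all of this by deforming each of the four contours $\cC_{w,c}$ to $0$ separately, summing both residue towers for each, and taking differences: the $t$-tower sums for $c=\pm 1$ differ by a unit shift of the Pochhammer subscripts, which yields $I^1_{w,\pm 1}$, and the $zq^{k+4}$-tower sums are proportional with constants $q^{2\lambda}$ and $q^2$, whose difference yields $I^2_{w,1}$. I would recommend either adopting that route or, if you want to keep your decomposition, replacing the single-residue claim by an honest evaluation of the moments via the functional equation of the theta ratio under $w \mapsto q^{-2\omega}w$ and then carrying out the double-sum exchange explicitly.
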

\begin{proof}
We compute the integrals by deforming contours to $0$.  By Corollary \ref{corr:theta-ratio}, we have the estimates
\begin{align*}
\left|\frac{\theta_0(\frac{w}{t} q^{k - 2 \lambda}; q^{-2\omega})}{\theta_0(\frac{w}{t} q^{k+2}; q^{-2\omega})}\right| &\leq D_1(q^{-2\omega}, \eps) |q|^{-\frac{2\lambda - 2}{2}}|\frac{w^2}{t^2}q^{2k - 2\lambda + 2}|^{-\frac{\lambda + 1}{2\omega}}\\
\left|\frac{\theta_0(\frac{w}{t} q^{-k - 2 \lambda}; q^{-2\omega})}{\theta_0(\frac{w}{t} q^{-k - 2}; q^{-2\omega})}\right| &\leq D_1(q^{-2\omega}, \eps) |q|^{- \frac{2\lambda - 2}{2}}|\frac{w^2}{t^2} q^{-2k - 2\lambda - 2}|^{- \frac{\lambda - 1}{2 \omega}}.
\end{align*}
For $q^{-2\omega}$ sufficiently small, we see that $\frac{\lambda \pm 1}{2 \omega} < \frac{1}{2}$, meaning that we may compute $I_{w, 1, c}(t)$ and $I_{w, -1, c}(t)$ by deforming $\cC_{w, c}$ to $0$.  We now perform the deformations one by one.

\noindent \textit{Computing $I_{w, 1, 1}(t)$:} For $I_{w, 1, 1}(t)$, we have $|w| < |zq^k|, |z q^{k+4}|, |tq^{-k-2}|$, so we wish to sum residues at $w = t q^{-k - 2} q^{-2\omega(n+1)}$ and $w = zq^{k+4}q^{-2\omega(n + 1)}$.  The first set has residues
\begin{multline*}
q^{-2} \frac{\theta_0(q^{-2 - 2\lambda} q^{-2\omega(n + 1)}; q^{-2\omega})}{(q^{-2\omega(n + 1)}; q^{-2\omega}) (q^{-2\omega}; q^{-2\omega}) (q^{2\omega n}; q^{-2\omega})_n} \frac{(\frac{z}{t} q^{2k + 2} q^{2\omega n}; q^{-2\omega})}{(\frac{z}{t} q^{2k + 6} q^{2\omega n}; q^{-2\omega})}\\
= q^{-2} q^{(2\lambda - 2)n} \frac{\theta_0(q^{-2 - 2\lambda} q^{-2\omega}; q^{-2\omega})}{(q^{-2\omega}; q^{-2\omega})^2} \frac{(\frac{z}{t}q^{2k + 2}; q^{-2\omega})}{(\frac{z}{t} q^{2k + 6}; q^{-2\omega})} \frac{(\frac{t}{z} q^{-2k-2} q^{-2\omega}; q^{-2\omega})_n}{(\frac{t}{z} q^{-2k-6} q^{-2\omega}; q^{-2\omega})_n}
\end{multline*}
The second set has residues
\begin{multline*}
q^{-2} \frac{\theta_0(\frac{z}{t} q^{2k + 4 - 2\lambda} q^{-2\omega (n + 1)}; q^{-2\omega})}{\theta_0(\frac{z}{t} q^{2k + 6} q^{-2\omega(n + 1)}; q^{-2\omega})} \frac{(q^{-4}q^{2\omega n}; q^{-2\omega})}{(q^{2\omega n}; q^{-2\omega})_n (q^{-2\omega}; q^{-2\omega})}\\
= q^{2\lambda} q^{(2\lambda - 2)n} \frac{\theta_0(\frac{z}{t} q^{2k + 4 - 2\lambda}; q^{-2\omega})}{\theta_0(\frac{z}{t} q^{2k + 6}; q^{-2\omega})} \frac{(q^4q^{-2\omega}; q^{-2\omega})_n (q^{-4}; q^{-2\omega})}{(q^{-2\omega}; q^{-2\omega})_n (q^{-2\omega}; q^{-2\omega})}.
\end{multline*}
Summing over these residues yields
\begin{multline*}
I_{w, 1, 1}(t) = q^{-2}\frac{\theta_0(q^{-2 - 2\lambda} q^{-2\omega}; q^{-2\omega})}{(q^{-2\omega}; q^{-2\omega})^2} \frac{(\frac{z}{t}q^{2k + 2}; q^{-2\omega})}{(\frac{z}{t} q^{2k + 6}; q^{-2\omega})} \hgf(\frac{t}{z} q^{-2k-2} q^{-2\omega}, q^{-2\omega}; \frac{t}{z} q^{-2k-6} q^{-2\omega}; q^{-2\omega}, q^{(2\lambda - 2)}) \\
+ q^{2\lambda} \frac{\theta_0(\frac{z}{t} q^{2k + 4 - 2\lambda}; q^{-2\omega})}{\theta_0(\frac{z}{t} q^{2k + 6}; q^{-2\omega})} \frac{(q^{-4}; q^{-2\omega})}{(q^{-2\omega}; q^{-2\omega})} \frac{(q^{2\lambda + 2} q^{-2\omega}; q^{-2\omega})}{(q^{2\lambda - 2}; q^{-2\omega})}. 
\end{multline*}

\noindent \textit{Computing $I_{w, 1, -1}(t)$:} We have $|tq^{-k-2}| > |w| > |zq^k|, |zq^{k+4}|$, so we wish to sum residues at $w = t q^{-k - 2} q^{-2\omega(n + 1)}$ and $w = z q^{k + 4} q^{-2n\omega}$.  The first set has residues
\begin{multline*}
q^2 \frac{\theta_0(q^{-2 - 2\lambda} q^{-2\omega (n + 1)}; q^{-2\omega})}{(q^{-2\omega}; q^{-2\omega}) (q^{2\omega n}; q^{-2\omega})_{n} (q^{-2\omega(n + 1)}; q^{-2\omega})} \frac{(\frac{z}{t} q^{2k + 2} q^{2\omega(n + 1)}; q^{-2\omega})}{(\frac{z}{t} q^{2k + 6} q^{2\omega (n + 1)}; q^{-2\omega})} \\
= q^{-2} q^{(2\lambda - 2)n} \frac{\theta_0(q^{-2-2\lambda} q^{-2\omega}; q^{-2\omega})}{(q^{-2\omega}; q^{-2\omega})^2} \frac{(\frac{z}{t} q^{2k + 2}; q^{-2\omega})}{(\frac{z}{t}q^{2k + 6}; q^{-2\omega})}\frac{(\frac{t}{z} q^{-2k-2} q^{-2\omega}; q^{-2\omega})_{n + 1}}{(\frac{t}{z} q^{-2k-6} q^{-2\omega}; q^{-2\omega})_{n + 1}}.
\end{multline*}
The second set has residues
\begin{multline*}
q^2 \frac{\theta_0(\frac{z}{t} q^{2k + 4 - 2\lambda} q^{-2\omega n}; q^{-2\omega})}{\theta_0(\frac{z}{t} q^{2k + 6} q^{-2\omega n}; q^{-2\omega})} \frac{(q^{-4}q^{2\omega n}; q^{-2\omega})}{(q^{2\omega n}; q^{-2\omega})_n(q^{-2\omega}; q^{-2\omega})}\\
= q^2 q^{(2\lambda - 2)n}\frac{\theta_0(\frac{z}{t} q^{2k + 4 - 2\lambda}; q^{-2\omega})}{\theta_0(\frac{z}{t} q^{2k + 6}; q^{-2\omega})} \frac{(q^4 q^{-2\omega}; q^{-2\omega})_n (q^{-4}; q^{-2\omega})}{(q^{-2\omega}; q^{-2\omega})_n(q^{-2\omega}; q^{-2\omega})}.
\end{multline*}
Summing over these residues yields
\begin{multline*}
I_{w, 1, -1}(t) = q^{- 2\lambda}\frac{\theta_0(q^{-2-2\lambda} q^{-2\omega}; q^{-2\omega})}{(q^{-2\omega}; q^{-2\omega})^2} \frac{(\frac{z}{t} q^{2k + 2}; q^{-2\omega})}{(\frac{z}{t}q^{2k + 6}; q^{-2\omega})}\\
(\hgf(\frac{t}{z} q^{-2k-2} q^{-2\omega}, q^{-2\omega}; \frac{t}{z} q^{-2k-6} q^{-2\omega}; q^{-2\omega}, q^{2\lambda - 2}) - 1) \\
+ q^2 \frac{\theta_0(\frac{z}{t} q^{2k + 4 - 2\lambda}; q^{-2\omega})}{\theta_0(\frac{z}{t} q^{2k + 6}; q^{-2\omega})} \frac{(q^{-4}; q^{-2\omega})}{(q^{-2\omega}; q^{-2\omega})} \frac{(q^{2 \lambda + 2} q^{-2\omega}; q^{-2\omega})}{(q^{2\lambda - 2}; q^{-2\omega})}.
\end{multline*}

\noindent \textit{Computing $I_{w, 1}(t)$:} Combining these terms, we find that 
\begin{multline*}
I_{w, 1}(t) = (q^{-2} - q^{- 2\lambda})\frac{\theta_0(q^{-2-2\lambda} q^{-2\omega}; q^{-2\omega})}{(q^{-2\omega}; q^{-2\omega})^2} \frac{(\frac{z}{t} q^{2k + 2}; q^{-2\omega})}{(\frac{z}{t}q^{2k + 6}; q^{-2\omega})}\\\hgf(\frac{t}{z} q^{-2k-2} q^{-2\omega}, q^{-2\omega}; \frac{t}{z} q^{-2k-6} q^{-2\omega}; q^{-2\omega}, q^{2\lambda - 2})\\
+ q^{- 2\lambda} \frac{\theta_0(q^{-2-2\lambda} q^{-2\omega}; q^{-2\omega})}{(q^{-2\omega}; q^{-2\omega})^2} \frac{(\frac{z}{t} q^{2k + 2}; q^{-2\omega})}{(\frac{z}{t}q^{2k + 6}; q^{-2\omega})}\\
+ (q^{2\lambda} - q^2) \frac{\theta_0(\frac{z}{t} q^{2k + 4 - 2\lambda}; q^{-2\omega})}{\theta_0(\frac{z}{t} q^{2k + 6}; q^{-2\omega})} \frac{(q^{-4}; q^{-2\omega})}{(q^{-2\omega}; q^{-2\omega})} \frac{(q^{2 \lambda + 2} q^{-2\omega}; q^{-2\omega})}{(q^{2\lambda - 2}; q^{-2\omega})}.
\end{multline*}

\noindent \textit{Computing $I_{w, -1, 1}(t)$:} In this case, we have $|w| < |zq^k|, |z q^{k + 4}|, |tq^{k + 2}|$ on $\cC_{w, 1}$, so it suffices to consider the poles at $w = tq^{k + 2} q^{-2\omega (n + 1)}$ for $n \geq 0$.  They have residues
\begin{multline*}
q^2\frac{\theta_0(q^{2 - 2\lambda}q^{-2\omega(n + 1)}; q^{-2\omega})}{(q^{-2\omega}; q^{-2\omega})(q^{2\omega n}; q^{-2\omega})_{n} (q^{-2\omega(n + 1)}; q^{-2\omega})} \frac{(\frac{t}{z} q^{-2} q^{-2\omega(n + 1)}; q^{-2\omega})}{(\frac{t}{z}q^2 q^{-2\omega(n + 1)}; q^{-2\omega})}\\
= q^{2} q^{(2\lambda - 2)n} \frac{\theta_0(q^{2 - 2\lambda}q^{-2\omega}; q^{-2\omega})}{(q^{-2\omega}; q^{-2\omega})^2} \frac{(\frac{t}{z}q^{-2} q^{-2\omega}; q^{-2\omega})}{(\frac{t}{z} q^2 q^{-2\omega}; q^{-2\omega})} \frac{(\frac{t}{z} q^2 q^{-2\omega}; q^{-2\omega})_n}{(\frac{t}{z} q^{-2} q^{-2\omega}; q^{-2\omega})_n}.
\end{multline*}
Summing over these yields
\[
I_{w, -1, 1}(t) = q^{2} \frac{\theta_0(q^{2 - 2\lambda}q^{-2\omega}; q^{-2\omega})}{(q^{-2\omega}; q^{-2\omega})^2} \frac{(\frac{t}{z}q^{-2} q^{-2\omega}; q^{-2\omega})}{(\frac{t}{z} q^2 q^{-2\omega}; q^{-2\omega})} \hgf(\frac{t}{z} q^2 q^{-2\omega}, q^{-2\omega}; \frac{t}{z} q^{-2} q^{-2\omega}; q^{-2\omega}, q^{2\lambda - 2}).
\]

\noindent \textit{Computing $I_{w, -1, -1}(t)$:} In this case, we have $|w| > |zq^k|, |z q^{k + 4}|$ and $w < |tq^{k + 2}|$ on $\cC_{w, -1}$, so it suffices to consider the poles at $w = tq^{k + 2} q^{-2\omega (n + 1)}$ for $n \geq 0$. They have residues
\begin{multline*}
q^2 \frac{\theta_0(q^{2 - 2\lambda}q^{-2\omega(n + 1)}; q^{-2\omega})}{(q^{-2\omega}; q^{-2\omega})(q^{2\omega n}; q^{-2\omega})_{n} (q^{-2\omega(n + 1)}; q^{-2\omega})} \frac{(\frac{t}{z} q^{-2} q^{-2\omega(n + 2)}; q^{-2\omega})}{(\frac{t}{z}q^2 q^{-2\omega(n + 2)}; q^{-2\omega})}\\
=  q^{2} q^{(2\lambda - 2)n} \frac{\theta_0(q^{2 - 2\lambda}q^{-2\omega}; q^{-2\omega})}{(q^{-2\omega}; q^{-2\omega})^2} \frac{(\frac{t}{z}q^{-2} q^{-2\omega}; q^{-2\omega})}{(\frac{t}{z} q^2 q^{-2\omega}; q^{-2\omega})} \frac{(\frac{t}{z} q^2 q^{-2\omega}; q^{-2\omega})_{n+1}}{(\frac{t}{z} q^{-2} q^{-2\omega}; q^{-2\omega})_{n + 1}}.
\end{multline*}
Summing over these residues yields
\[
I_{w, -1, -1}(t) = q^{4 - 2\lambda} \frac{\theta_0(q^{2 - 2\lambda}q^{-2\omega}; q^{-2\omega})}{(q^{-2\omega}; q^{-2\omega})^2} \frac{(\frac{t}{z}q^{-2} q^{-2\omega}; q^{-2\omega})}{(\frac{t}{z} q^2 q^{-2\omega}; q^{-2\omega})} (\hgf(\frac{t}{z}q^2 q^{-2\omega}, q^{-2\omega}; \frac{t}{z} q^{-2} q^{-2\omega}; q^{-2\omega}, q^{2\lambda - 2}) - 1).
\]

\noindent \textit{Computing $I_{w, -1}(t)$:} Combining these terms, we find that
\begin{multline*}
I_{w, -1}(t) = (q^2 - q^{4 - 2\lambda}) \frac{\theta_0(q^{2 - 2\lambda}q^{-2\omega}; q^{-2\omega})}{(q^{-2\omega}; q^{-2\omega})^2} \frac{(\frac{t}{z}q^{-2} q^{-2\omega}; q^{-2\omega})}{(\frac{t}{z} q^2 q^{-2\omega}; q^{-2\omega})} \hgf(\frac{t}{z} q^2 q^{-2\omega}, q^{-2\omega}; \frac{t}{z} q^{-2} q^{-2\omega}; q^{-2\omega}, q^{2\lambda - 2}) \\
+ q^{4 - 2\lambda} \frac{\theta_0(q^{2 - 2\lambda}q^{-2\omega}; q^{-2\omega})}{(q^{-2\omega}; q^{-2\omega})^2} \frac{(\frac{t}{z}q^{-2} q^{-2\omega}; q^{-2\omega})}{(\frac{t}{z} q^2 q^{-2\omega}; q^{-2\omega})}. \qedhere
\end{multline*}
\end{proof}

\subsection{Completing the proof of Proposition \ref{prop:ff-comp}} \label{sec:sub-int}

We now rearrange the results of our integrals to obtain the desired result.  All expressions are now formal series in $q^{-2\omega}$ and then $q^{-2\mu}$ and meromorphically continued in $t$ to the full Jackson cycle from $\cC_t$.  For $i \in \{1, 2\}$, define $K^i_{b}(t) := L(t) J^i_{w, b}(t)$ with
\begin{align*}
J^i_{w, b}(t) &= q^{2b \mu + 3b} \frac{I^i_{w, b}}{\theta_0(q^{-b + 1 - 2\lambda}; q^{-2\omega}) \theta_0(q^{b + 1 + 2\lambda}; q^{-2\omega})}
\end{align*}
and
\[
L(t) = t^{- \frac{2(\mu + 1)}{k + 2}} \Omega_{q^2}(t/z; q^{-2\omega}, q^{-2k-4}) \frac{\theta_0(t/z q^{2}; q^{-2k-4})\theta_0(z/t q^{-2};q^{-2\omega})}{\theta_0(t/zq^{-2};q^{-2k-4}) \theta_0(z/tq^2; q^{-2\omega})}.
\]
In this notation, we have
\begin{multline} \label{eq:int-k}
\Xi(q, \lambda, \omega, \mu, k)= -\frac{1}{q - q^{-1}} \frac{(q^{-2 \omega + 2}; q^{-2\omega}, q^{-2 k - 4})^2}{(q^{-2 \omega - 2}; q^{-2 \omega}, q^{-2k - 4})^2} (q^{-2\omega}; q^{-2\omega}) q^{2\lambda(\mu + 1) + 1}z^{\frac{2\mu + 2}{k + 2}}\\ \int_0^{z q^{-2} \cdot \infty} \frac{d_pt}{t} (K^1_{1}(t) + K^2_{1}(t) - K^1_{-1}(t)).
\end{multline}

\begin{lemma} \label{eq:k-cancel}
We have $K^1_1(pt) = K^1_{-1}(t)$.
\end{lemma}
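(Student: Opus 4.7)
The plan is to prove $K^1_1(pt) = K^1_{-1}(t)$ by direct algebraic verification, exploiting quasi-periodicity identities for theta functions and single, double, and triple Pochhammer symbols. The crucial observation is that the shift $t \mapsto pt = tq^{2\kappa}$ sends $\frac{t}{z}q^{-2\kappa\pm 2}$ to $\frac{t}{z}q^{\pm 2}$, so that the hypergeometric factor $\hgf\bigl(\frac{t}{z}q^{-2\kappa+2}q^{-2\omega}, q^{-2\omega}; \frac{t}{z}q^{-2\kappa-2}q^{-2\omega}; q^{-2\omega}, q^{2\lambda-2}\bigr)$ appearing in $I^1_{w,1}(t)$ transforms into the hypergeometric appearing in $I^1_{w,-1}(t)$. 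Similarly, the rational prefactors $(q^{-2}-q^{-2\lambda})$ and $(q^2 - q^{4-2\lambda})$ both factor as $q^{\mp 2}(1-q^{2-2\lambda})$, so their ratio is the clean $q^{-4}$.

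First I would compute $L(pt)/L(t)$ by applying three quasi-periodicity identities: the power shift $(pt)^{-2(\mu+1)/\kappa} = q^{-4(\mu+1)} t^{-2(\mu+1)/\kappa}$; the triple Pochhammer identities $(uq^{2\kappa}; q^{-2\omega}, q^{-2\kappa}) = (u; q^{-2\omega}, q^{-2\kappa})(uq^{2\kappa};q^{-2\omega})$ and $(uq^{-2\kappa}; q^{-2\omega}, q^{-2\kappa}) = (u; q^{-2\omega}, q^{-2\kappa})/(u;q^{-2\omega})$ applied to the four triple Pochhammers comprising $\Omega_{q^2}(t/z; q^{-2\omega}, q^{-2\kappa})$; and the theta quasi-periodicity $\theta_0(uq^{-2\kappa}; q^{-2\kappa}) = -u^{-1}\theta_0(u;q^{-2\kappa})$ applied to the $\theta_0(\cdot;q^{-2\kappa})$ factors. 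This produces a net shift of $L$ consisting of some $q$-power together with a product of single Pochhammers in base $q^{-2\omega}$ and theta functions in bases $q^{-2\omega}$ and $q^{-2\kappa}$.

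Next, I would compute the ratio between $I^1_{w,1}(pt)$ and $I^1_{w,-1}(t)$. The Pochhammer ratio in $I^1_{w,1}$, after substitution, becomes $(z/t\,q^{-2}; q^{-2\omega})/(z/t\,q^2; q^{-2\omega})$, which I would convert to the ratio $(t/z\,q^{-2}q^{-2\omega}; q^{-2\omega})/(t/z\,q^2q^{-2\omega}; q^{-2\omega})$ in $I^1_{w,-1}$ via the factorization $\theta_0(u;q^{-2\omega}) = (u;q^{-2\omega})(u^{-1}q^{-2\omega};q^{-2\omega})$. This conversion produces an explicit ratio of $\theta_0$'s. Similarly, the quasi-periodicity $\theta_0(uq^{-2\omega};q^{-2\omega}) = -u^{-1}\theta_0(u;q^{-2\omega})$ relates $\theta_0(q^{-2-2\lambda}q^{-2\omega};q^{-2\omega})$ and $\theta_0(q^{2-2\lambda}q^{-2\omega};q^{-2\omega})$, and likewise relates the denominator factors $\theta_0(q^{-b+1-2\lambda};q^{-2\omega})\theta_0(q^{b+1+2\lambda};q^{-2\omega})$ at $b=1$ and $b=-1$, producing more $q$-powers.

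Finally, I would combine all the shifts and verify that every $\theta_0$ and Pochhammer factor produced by shifting $L$ cancels exactly with those produced by converting $I^1_{w,1}(pt)$ to $I^1_{w,-1}(t)$, and that the remaining $q$-power factors from $q^{-4(\mu+1)}$, the $\mu$-dependent ratio $q^{2\mu+3}/q^{-2\mu-3} = q^{4\mu+6}$, and the various $q^{\pm 4}$'s combine to $1$. The main obstacle is purely bookkeeping: tracking the correspondence between the single Pochhammers with shift by $q^{-2\omega}$ appearing in $I^1_{w,-1}$ and the single Pochhammers with shift by $q^{2\kappa}$ produced by applying the triple Pochhammer factorization identity to $\Omega_{q^2}$ in $L$. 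A careful bookkeeping reveals that these factors indeed pair up, once each single Pochhammer is rewritten as $\theta_0$ divided by another Pochhammer so that all factors become expressible as products of theta functions with matching $q$-power exponents.
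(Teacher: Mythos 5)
Your proposal follows the paper's proof essentially verbatim: the paper likewise establishes the identity by computing $L(pt)/L(t)$ via Lemma \ref{lem:phase-trans} (the phase-function quasi-periodicity, i.e.\ your triple Pochhammer identities) together with theta quasi-periodicity, and then checking that $J^1_{w,1}(pt)$ and $J^1_{w,-1}(t)$ differ by exactly the reciprocal factor, the hypergeometric series matching because the shift by $p = q^{2\kappa}$ sends $\frac{t}{z}q^{-2\kappa\pm 2}$ to $\frac{t}{z}q^{\pm 2}$. The cancellation bookkeeping you outline is precisely what the paper carries out.
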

\begin{proof}
By Lemma \ref{lem:phase-trans}, we have
\[
\Omega_{q^2}(p\frac{t}{z}; q^{-2\omega}, q^{-2\kappa}) = \frac{\theta_0(\frac{t}{z} q^{2\kappa - 2};q^{-2\omega})}{\theta_0(\frac{t}{z} q^{2\kappa + 2};q^{-2\omega})} \Omega_{q^2}(\frac{t}{z}; q^{-2\omega}, q^{-2\kappa}),
\]
and we also have that 
\begin{align*}
\frac{\theta_0(p\frac{t}{z} q^{2}; q^{-2\kappa})\theta_0(\frac{z}{t} p^{-1}q^{-2};q^{-2\omega})}{\theta_0(p\frac{t}{z}q^{-2};q^{-2\kappa}) \theta_0(\frac{z}{t} p^{-1}q^2; q^{-2\omega})} &= \frac{\theta_0(p\frac{t}{z} q^{2}; q^{-2\kappa})\theta_0(\frac{t}{z} q^{2\kappa + 2} q^{-2\omega};q^{-2\omega})}{\theta_0(\frac{t}{z}p q^{-2};q^{-2\kappa}) \theta_0(\frac{t}{z} q^{2\kappa - 2} q^{-2\omega}; q^{-2\omega})}\\& = \frac{\theta_0(\frac{t}{z} q^{2}; q^{-2\kappa})\theta_0(\frac{t}{z} q^{2\kappa + 2};q^{-2\omega})}{\theta_0(\frac{t}{z}q^{-2};q^{-2\kappa}) \theta_0(\frac{t}{z} q^{2\kappa - 2}; q^{-2\omega})}.
\end{align*}
Together, these imply that
\[
L(pt) = q^{-4 \mu - 4}\frac{\theta_0(\frac{z}{t}q^2; q^{-2\omega})}{\theta_0(\frac{z}{t} q^{-2};q^{-2\omega})} L(t)
\]
and therefore by Lemma \ref{lem:w-int-vals-new} and the definition of $J^1_{w, \pm}(t)$ we have
\begin{multline*}
J^1_{w, 1}(pt) = -\frac{q^{2\mu + 2\lambda + 3} (q^{-2} - q^{- 2\lambda})}{\theta_0(q^{2\lambda};q^{-2\omega})(q^{-2\omega}; q^{-2\omega})^2} \frac{(\frac{z}{t} q^{-2}; q^{-2\omega})}{(\frac{z}{t}q^{2}; q^{-2\omega})}\hgf(\frac{t}{z} q^{2} q^{-2\omega}, q^{-2\omega}; \frac{t}{z} q^{-2} q^{-2\omega}; q^{-2\omega}, q^{2\lambda - 2})\\
-  \frac{q^{2\mu + 3}}{\theta_0(q^{2\lambda};q^{-2\omega})(q^{-2\omega}; q^{-2\omega})^2} \frac{(\frac{z}{t} q^{-2} q^{2\omega}; q^{-2\omega})}{(\frac{z}{t}q^{2} q^{2\omega}; q^{-2\omega})}
\end{multline*}
and
\begin{multline*}
J^1_{w, -1}(t) = - \frac{q^{-2\mu + 2\lambda - 1}(q^{-2} - q^{- 2\lambda})}{\theta_0(q^{2\lambda}; q^{-2\omega})(q^{-2\omega}; q^{-2\omega})^2} \frac{(\frac{t}{z}q^{-2} q^{-2\omega}; q^{-2\omega})}{(\frac{t}{z} q^2 q^{-2\omega}; q^{-2\omega})} \hgf(\frac{t}{z} q^2 q^{-2\omega}, q^{-2\omega}; \frac{t}{z} q^{-2} q^{-2\omega}; q^{-2\omega}, q^{2\lambda - 2}) \\
-\frac{q^{-2\mu - 1}}{\theta_0(q^{2\lambda};q^{-2\omega})(q^{-2\omega}; q^{-2\omega})^2} \frac{(\frac{t}{z}q^{-2} q^{-2\omega}; q^{-2\omega})}{(\frac{t}{z} q^2 q^{-2\omega}; q^{-2\omega})}
\end{multline*}
We conclude that
\begin{align*}
K^1_{1}(p t) &= L(t) q^{-4 \mu - 4}\frac{\theta_0(\frac{z}{t}q^2; q^{-2\omega})}{\theta_0(\frac{z}{t} q^{-2};q^{-2\omega})} J^1_{w, 1}(pt) = L(t) J^1_{w, -1}(t) = K^1_{-1}(t). \qedhere
\end{align*}
\end{proof}

We are now ready to deduce Proposition \ref{prop:ff-comp}.  By (\ref{eq:int-k}), invariance of the formal Jackson integral under $p$-shifts, and Lemma \ref{eq:k-cancel}, we obtain
\begin{align*}
\Xi(q, \lambda, \omega, \mu, k) &= -\frac{1}{q - q^{-1}} \frac{(q^{-2 \omega + 2}; q^{-2\omega}, q^{-2 \kappa})^2}{(q^{-2 \omega - 2}; q^{-2 \omega}, q^{-2\kappa})^2} (q^{-2\omega}; q^{-2\omega}) q^{2\lambda(\mu + 1) + 1}z^{\frac{2\mu + 2}{\kappa}} \\
&\phantom{=========================} \int_0^{z q^{-2} \cdot \infty} \frac{d_pt}{t} (K^1_{1}(pt) + K^2_{1}(pt) - K^1_{-1}(t))\\
&= - \frac{1}{q - q^{-1}} \frac{(q^{-2 \omega + 2}; q^{-2\omega}, q^{-2 \kappa})^2}{(q^{-2 \omega - 2}; q^{-2 \omega}, q^{-2\kappa})^2} (q^{-2\omega}; q^{-2\omega}) q^{2\lambda(\mu + 1) + 1}z^{\frac{2\mu + 2}{\kappa}}  \int_0^{z q^{-2} \cdot \infty} \frac{d_pt}{t} K^2_{1}(pt).
\end{align*}
Now, by Lemma \ref{lem:w-int-vals-new}, we find that 
\begin{align*}
J^2_{w, 1}(pt)&= - \frac{q^{2\mu - 2\lambda + 1}(q^{2\lambda} - q^2)}{\theta_0(q^{-2\lambda}; q^{-2\omega})(q^{2\lambda - 2}; q^{-2\omega}) (q^{-2\lambda - 2}; q^{-2\omega})} \frac{(q^{-4}; q^{-2\omega})}{(q^{-2\omega}; q^{-2\omega})}\frac{\theta_0(\frac{z}{t} q^{- 2\lambda}; q^{-2\omega})}{\theta_0(\frac{z}{t} q^{2}; q^{-2\omega})} 
\end{align*}
and therefore that 
\begin{align*}
K^2_{1}(t) = -C_1(\lambda, \mu)t^{- \frac{2(\mu + 1)}{\kappa}} \Omega_{q^2}(\frac{t}{z}; q^{-2\omega}, q^{-2\kappa}) \frac{\theta_0(\frac{t}{z} q^{2}; q^{-2\kappa})}{\theta_0(\frac{t}{z}q^{-2};q^{-2\kappa})}\frac{\theta_0(\frac{z}{t} q^{-2\lambda}; q^{-2\omega})}{\theta_0(\frac{z}{t} q^{2}; q^{-2\omega})}
\end{align*}
for the constant
\[
C_1(\lambda, \mu) = \frac{q^{-2\mu - 1}}{\theta_0(q^{2\lambda}; q^{-2\omega})(q^{2\lambda - 2} q^{-2\omega}; q^{-2\omega})(q^{-2\lambda - 2}; q^{-2\omega})} \frac{(q^{-4}; q^{-2\omega})}{(q^{-2\omega}; q^{-2\omega})}.
\]
Substituting back into our computation yields
\begin{align*}
\Xi(q, \lambda, \omega, \mu, k) &= \frac{1}{q - q^{-1}} \frac{(q^{-2 \omega + 2}; q^{-2\omega}, q^{-2\kappa})^2}{(q^{-2 \omega - 2}; q^{-2 \omega}, q^{-2\kappa})^2} (q^{-2\omega}; q^{-2\omega}) q^{2\lambda(\mu + 1) + 1}z^{\frac{2\mu + 2}{\kappa}} C_1(\lambda, \mu)\\
&\phantom{===} \int_0^{z q^{-2} \cdot \infty} \frac{d_pt}{t} t^{- \frac{2(\mu + 1)}{k + 2}} \Omega_{q^2}(\frac{t}{z}; q^{-2\omega}, q^{-2\kappa}) \frac{\theta_0(\frac{t}{z} q^{2}; q^{-2\kappa})}{\theta_0(\frac{t}{z}q^{-2};q^{-2\kappa})}\frac{\theta_0(\frac{z}{t} q^{-2\lambda}; q^{-2\omega})}{\theta_0(\frac{z}{t} q^{2}; q^{-2\omega})},
\end{align*}
which upon noting that
\begin{align*}
C(\lambda, \mu) = \frac{1}{q - q^{-1}} \frac{(q^{-2 \omega + 2}; q^{-2\omega}, q^{-2\kappa})^2}{(q^{-2 \omega - 2}; q^{-2 \omega}, q^{-2\kappa})^2} (q^{-2\omega}; q^{-2\omega}) q^{2\lambda \mu - 1} C_1(\lambda, \mu)
\end{align*}
simplifies to
\begin{align*}
\Xi(q, \lambda, \omega, \mu, k) &=q^{2\lambda + 2} C(\lambda, \mu) z^{\frac{2\mu + 2}{\kappa}} \int_0^{z q^{-2} \cdot \infty} \frac{d_pt}{t} t^{- \frac{2(\mu + 1)}{\kappa}}  \Omega_{q^2}(\frac{t}{z}; q^{-2\omega}, q^{-2\kappa}) \frac{\theta_0(\frac{t}{z} q^{2}; q^{-2\kappa})\theta_0(\frac{z}{t} q^{- 2\lambda}; q^{-2\omega})}{\theta_0(\frac{t}{z}q^{-2};q^{-2\kappa}) \theta_0(z/tq^2; q^{-2\omega})} \\
%&= C(\lambda, \mu) z^{\frac{2\mu + 2}{\kappa}} \int_0^{z q^{-2} \cdot \infty} \frac{d_pt}{t} t^{- \frac{2(\mu + 1)}{\kappa}}  \Omega_{q^2}(\frac{t}{z}; q^{-2\omega}, q^{-2\kappa}) \frac{\theta_0(\frac{t}{z}q^{2}; q^{-2\kappa})\theta_0(\frac{t}{z} q^{2\lambda - 2\omega}; q^{-2\omega})}{\theta_0(\frac{t}{z}q^{-2};q^{-2\kappa}) \theta_0(\frac{t}{z}q^{-2 - 2\omega}; q^{-2\omega})}\\
&= C(\lambda, \mu) z^{\frac{2\mu + 2}{\kappa}}\int_0^{z q^{-2} \cdot \infty} \frac{d_pt}{t} t^{- \frac{2(\mu + 1)}{\kappa}}  \Omega_{q^2}(\frac{t}{z}; q^{-2\omega}, q^{-2\kappa}) \frac{\theta_0(\frac{t}{z}q^{2}; q^{-2\kappa})\theta_0(\frac{t}{z} q^{2\lambda}; q^{-2\omega})}{\theta_0(\frac{t}{z}q^{-2};q^{-2\kappa}) \theta_0(\frac{t}{z}q^{-2}; q^{-2\omega})}\\
&= C(\lambda, \mu) \int_0^{q^{-2} \cdot \infty} \frac{d_pt}{t} t^{- \frac{2(\mu + 1)}{\kappa}} \Omega_{q^2}(t; q^{-2\omega}, q^{-2\kappa}) \frac{\theta_0(tq^{2}; q^{-2\kappa})\theta_0(t q^{2\lambda}; q^{-2\omega})}{\theta_0(t q^{-2};q^{-2\kappa}) \theta_0(tq^{-2}; q^{-2\omega})},
\end{align*}
where in the last step we change variables to eliminate $z$. This completes the proof of Proposition \ref{prop:ff-comp}.

\subsection{Jackson integral expression for the trace function}

We now apply Lemma \ref{lem:trace-relation} to obtain a Jackson integral expression for $T^{w_0}(q, \lambda, \omega, \mu, k)$.  

\begin{prop} \label{prop:jack-int-t}
In the doubly formal good region of parameters (\ref{eq:dformal-good-region}), as a formal Jackson integral and a formal series in $q^{-2\omega}$ we have 
\[
T^{w_0}(q, \lambda, \omega, \mu, k) = D(\lambda, \mu) \int_0^{q^{-2} \cdot \infty} \frac{d_{q^{-2k}}t}{t}\Omega_{q^2}(t; q^{-2\omega}, q^{-2k})  t^{-\frac{\mu + 1}{k}} \frac{\theta_0(tq^2; q^{-2k})}{\theta_0(tq^{-2}; q^{-2k})} \frac{\theta_0(tq^{2\lambda}; q^{-2\omega})}{\theta_0(tq^{-2}; q^{-2\omega})},
\]
where the constant $D(\lambda, \mu)$ is defined by
\begin{multline*}
D(\lambda, \mu) =
%&= \frac{1}{1 + q^2}q^{-\frac{2\mu + 2}{k} + \mu + 2}  \frac{(q^{-2\mu-2}; q^{-2k})}{(q^{-2k};q^{-2k})} \frac{(q^4; q^{-2k})}{(q^{-2\mu + 2};q^{-2k})}\\
%&\phantom{==} \frac{1 - q^2}{(q - q^{-1})^2} \frac{q^{\lambda\mu - \lambda - \mu - 2}(q^{-4}; q^{-2\omega}) }{\theta_0(q^{2-2\lambda}; q^{-2\omega})\theta_0(q^{2\lambda}; q^{-2\omega})}\frac{(q^{-2\lambda + 2}; q^{-2\omega})}{(q^{-2\lambda - 2}; q^{-2\omega})}\frac{(q^{-2 \omega + 2}; q^{-2\omega}, q^{-2 k})^2}{(q^{-2 \omega - 2}; q^{-2 \omega}, q^{-2k})^2}\\
 \frac{(q^4 q^{-2k}; q^{-2k})(q^{-4}; q^{-2\omega})}{(q^{-2k};q^{-2k})} \frac{(q^{-2 \omega + 2}; q^{-2\omega}, q^{-2 k})^2}{(q^{-2 \omega - 2}; q^{-2 \omega}, q^{-2k})^2}\\
 \frac{q^{\lambda\mu - \lambda + 2 - \frac{2\mu+2}{k}} }{\theta_0(q^{2\lambda}; q^{-2\omega}) (q^{2\lambda - 2} q^{-2\omega}; q^{-2\omega})(q^{-2\lambda - 2}; q^{-2\omega})} \frac{(q^{-2\mu-2}; q^{-2k})}{(q^{-2\mu + 2};q^{-2k})}.
\end{multline*}
\end{prop}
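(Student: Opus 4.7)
The plan is to apply Lemma \ref{lem:trace-relation} to write $T^{w_0}(q, \lambda, \omega, \mu, k) = C_{\frac{\mu-1}{2}, 1}^{-1}\, \Xi(q, \lambda, \omega, \frac{\mu-1}{2}, k-2)$, substitute the Jackson integral formula of Proposition \ref{prop:ff-comp} with the shifted parameters $(\mu', k') = (\frac{\mu-1}{2}, k-2)$ so that $\kappa' = k'+2 = k$, and then verify that the product $C_{\frac{\mu-1}{2},1}^{-1}\cdot C(\lambda, \frac{\mu-1}{2})$ of constants equals $D(\lambda, \mu)$.

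First I would substitute into the integrand. The exponent becomes $-\frac{2(\mu'+1)}{\kappa'} = -\frac{\mu+1}{k}$, the Jackson period becomes $p = q^{2\kappa'} = q^{2k}$ (which is equivalent to $q^{-2k}$ via reindexing $n \leftrightarrow -n$ in the defining summation $\sum_n f(sp^n)$), and the theta ratio becomes exactly $\theta_0(tq^2;q^{-2k})/\theta_0(tq^{-2};q^{-2k})$. Thus the integral part of Proposition \ref{prop:ff-comp} immediately matches the integral appearing in the statement.

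Next I would collect the constant. From Proposition \ref{prop:inter-mat-elt} the specialization gives
\[
C_{\frac{\mu-1}{2}, 1} \;=\; -(1+q^2)\,q^{-\mu-2}\,q^{\frac{2\mu+2}{k}}\,\frac{(q^{-2k};q^{-2k})(q^{-2\mu+2};q^{-2k})}{(q^{-2\mu-2};q^{-2k})(q^4;q^{-2k})},
\]
and from the definition of $C(\lambda,\mu')$ in Proposition \ref{prop:ff-comp}, the specialization yields $q^{2\lambda\mu' - 2\mu' - 2} = q^{\lambda\mu-\lambda-\mu-1}$ together with the same elliptic and double-Pochhammer factors as in $D(\lambda,\mu)$. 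Multiplying, the scalar prefactor simplifies via
\[
\frac{1}{(q-q^{-1})}\cdot\frac{-1}{1+q^2} \;=\; \frac{q}{1-q^4},
\]
so $C_{\frac{\mu-1}{2},1}^{-1}\,C(\lambda,\frac{\mu-1}{2})$ carries the monomial factor $\frac{q^{\lambda\mu-\lambda+2}}{1-q^4}$, while the Pochhammer ratio differs from the one in $D(\lambda,\mu)$ by $(q^4;q^{-2k})/(q^4 q^{-2k};q^{-2k}) = 1-q^4$. These two factors cancel exactly, yielding $C_{\frac{\mu-1}{2},1}^{-1}\,C(\lambda,\frac{\mu-1}{2}) = D(\lambda,\mu)$, which completes the proof.

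The only obstacle is bookkeeping: tracking the substitutions $\mu' = (\mu-1)/2$, $\kappa' = k$ through each of the $q$-powers $q^{-2\mu\pm 2}$, $q^{\pm(2\mu+2)/k}$, and $q^{\lambda\mu-\lambda\pm\text{const}}$, and carefully verifying the $(1-q^4)$ cancellation between the numerical prefactor and the $q$-Pochhammer difference. There is no new conceptual input required beyond Lemma \ref{lem:trace-relation}, Proposition \ref{prop:ff-comp}, and Proposition \ref{prop:inter-mat-elt}.
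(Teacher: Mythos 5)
Your proposal is correct and follows exactly the paper's own proof: apply Lemma \ref{lem:trace-relation}, substitute Proposition \ref{prop:ff-comp} at $(\mu', \kappa') = (\frac{\mu-1}{2}, k)$, and combine with $C_{\frac{\mu-1}{2},1}$ from Proposition \ref{prop:inter-mat-elt}. Your explicit verification of the $(1-q^4)$ cancellation via $(q^4;q^{-2k}) = (1-q^4)(q^4q^{-2k};q^{-2k})$ is the bookkeeping the paper leaves implicit, and it checks out.
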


\begin{remark}
In the statement of Proposition \ref{prop:jack-int-t}, the functions 
\[
f(t) = t^{-\frac{\mu + 1}{k}} \frac{\theta_0(tq^2; q^{-2k})}{\theta_0(tq^{-2}; q^{-2k})} \qquad \text{ and } \qquad g(t) = \frac{\theta_0(tq^{-2\mu}; q^{-2k})}{\theta_0(t q^{-2}; q^{-2k})}
\]
have the same transformation properties under $q^{-2k}$-shifts, meaning that $\frac{f(q^{-2k}t)}{f(t)} = \frac{g(q^{-2k}t)}{g(t)} = q^{2\mu - 2}$.
\end{remark}

\begin{proof}[Proof of Proposition \ref{prop:jack-int-t}]
By Lemma \ref{lem:trace-relation}, we have 
\begin{equation} \label{eq:trace-const-norm}
T^{w_0}(q, \lambda, \omega, \mu, k) = C_{\frac{\mu - 1}{2}, 1}^{-1} \Xi\Big(q, \lambda, \omega, \frac{\mu - 1}{2}, k - 2\Big).
\end{equation}
By Proposition \ref{prop:ff-comp}, we find that 
\[
\Xi\Big(q, \lambda, \omega, \frac{\mu - 1}{2}, k - 2\Big) = D_1(\lambda, \mu) \int_0^{q^{-2} \cdot \infty} \frac{d_{q^{-2k}}t}{t}\Omega_{q^2}(t; q^{-2\omega}, q^{-2k})  t^{-\frac{\mu + 1}{k}} \frac{\theta_0(tq^2; q^{-2k})}{\theta_0(tq^{-2}; q^{-2k})} \frac{\theta_0(tq^{2\lambda}; q^{-2\omega})}{\theta_0(tq^{-2}; q^{-2\omega})},
\]
where the constant $D_1(\lambda, \mu)$ is defined by 
\begin{align*}
D_1(\lambda, \mu) &:= C(\lambda, \frac{\mu - 1}{2}) = \frac{1}{q - q^{-1}} \frac{q^{\lambda\mu - \lambda - \mu - 1}(q^{-4}; q^{-2\omega}) }{\theta_0(q^{2\lambda}; q^{-2\omega})(q^{2\lambda - 2}q^{-2\omega}; q^{-2\omega})(q^{-2\lambda - 2}; q^{-2\omega})}\frac{(q^{-2 \omega + 2}; q^{-2\omega}, q^{-2 k})^2}{(q^{-2 \omega - 2}; q^{-2 \omega}, q^{-2k})^2}.
\end{align*}
Further, by Proposition \ref{prop:inter-mat-elt}, we have that 
\[
C_{\frac{\mu - 1}{2}, 1} = -(1 + q^2)q^{\frac{2\mu + 2}{k} - 2 - \mu}  \frac{(q^{-2k};q^{-2k})}{(q^{-2\mu-2}; q^{-2k})} \frac{(q^{-2\mu + 2};q^{-2k})}{(q^4; q^{-2k})}.
\]
Substituting these into (\ref{eq:trace-const-norm}) yields the desired.
\end{proof}

\subsection{Relating the Jackson integral to the Felder-Varchenko function}

In this section, we convert the Jackson integral expression of Proposition \ref{prop:jack-int-t} to a contour integral expression to give a proof of Theorem \ref{thm:int-trace}.  For this, we show in Proposition \ref{prop:int-formal} that when formally expanded in $q^{-2\omega}$, the coefficients of both the Jackson and contour integrals converge in a formal neighborhood of $0$ as functions of $q^{-2\mu}$.

\begin{prop} \label{prop:int-formal}
In the doubly formal good region of parameters (\ref{eq:dformal-good-region}), as a formal power series in $q^{-2\omega}$, the Jackson integral for $T^{w_0}(q, \lambda, \omega, \mu, k)$ converges in a formal neighborhood of $0$ in the variable $q^{-2\mu}$ and equals
\begin{align*}
T^{w_0}(q, \lambda, \omega, \mu, k)% = D(\lambda, \mu) I\\
%&= \frac{(1 - q^2)(q^4; q^{-2k})(q^{-4}; q^{-2\omega})}{(1 + q^2)(q - q^{-1})^2(q^{-2k};q^{-2k})} \frac{(q^{-2 \omega + 2}; q^{-2\omega}, q^{-2 k})^2}{(q^{-2 \omega - 2}; q^{-2 \omega}, q^{-2k})^2} \frac{q^{\lambda\mu - \lambda - \frac{2\mu+2}{k}} }{\theta_0(q^{2-2\lambda}; q^{-2\omega})\theta_0(q^{2\lambda}; q^{-2\omega})}\frac{(q^{-2\lambda + 2}; q^{-2\omega})}{(q^{-2\lambda - 2}; q^{-2\omega})} \frac{(q^{-2\mu-2}; q^{-2k})}{(q^{-2\mu + 2};q^{-2k})}\\
%&\phantom{==}q^{\frac{2\mu + 2}{k}}\frac{(q^{-4}; q^{-2\omega}, q^{-2k})}{(q^{-2\omega}; q^{-2\omega})(q^4 q^{-2\omega} q^{-2k}; q^{-2k}, q^{-2\omega})} \frac{(q^{-2k}; q^{-2k})}{\theta_0(q^{-4}; q^{-2k})} \sum_{n \geq 0} q^{(-2\mu + 2)n} \frac{\theta_0(q^{2\lambda - 2} q^{2kn}; q^{-2\omega})}{\theta_0(q^{-4}q^{2kn}; q^{-2\omega})}\prod_{l = 1}^n \frac{\theta_0(q^{-4} q^{2kl}; q^{-2\omega})}{\theta_0(q^{2kl}; q^{-2\omega})}\\
&= \frac{(q^{-4}; q^{-2\omega})}{(q^{-2\omega}; q^{-2\omega})} \frac{(q^{-2 \omega + 2}; q^{-2\omega}, q^{-2 k})^2}{(q^{-2 \omega - 2}; q^{-2 \omega}, q^{-2k})^2}\frac{(q^{-4}q^{-2\omega}; q^{-2\omega}, q^{-2k})}{(q^4 q^{-2\omega} q^{-2k}; q^{-2k}, q^{-2\omega})}\\
&\phantom{==} \frac{q^{\lambda\mu - \lambda + 2}}{\theta_0(q^{2\lambda}; q^{-2\omega})(q^{2\lambda - 2} q^{-2\omega}; q^{-2\omega})(q^{-2\lambda - 2}; q^{-2\omega})} \frac{(q^{-2\mu-2}; q^{-2k})}{(q^{-2\mu + 2};q^{-2k})}  \\
&\phantom{==}
\sum_{n \geq 0} q^{(-2\mu + 2)n} \frac{\theta_0(q^{2\lambda - 2} q^{2kn}; q^{-2\omega})}{\theta_0(q^{-4}q^{2kn}; q^{-2\omega})}\prod_{l = 1}^n \frac{\theta_0(q^{-4} q^{2kl}; q^{-2\omega})}{\theta_0(q^{2kl}; q^{-2\omega})}.
\end{align*}
\end{prop}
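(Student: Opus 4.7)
The plan is to evaluate the Jackson integral of Proposition \ref{prop:jack-int-t} by direct summation.  Writing $p = q^{-2k}$, the Jackson integral equals $\sum_{n \in \ZZ} g(q^{-2}p^n)$ where $g(t)$ denotes the integrand.  A zero/pole analysis at the Jackson sample points $t_n := q^{-2}p^n$ shows that $\theta_0(tq^2; q^{-2k})$ vanishes simply at every $t_n$, while the factor $(tq^2; q^{-2\omega}, q^{-2k})$ in the denominator of $\Omega_{q^2}$ contributes a simple pole only when $n \leq 0$; the other factors are generically nonvanishing there.  Consequently the Jackson sum collapses to $n \leq 0$, which I reindex with $m := -n \geq 0$, so that $t_{-m} = q^{-2+2km}$ matches the summation index appearing in the target series.

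The second step is to evaluate $g(t_{-m})$.  Using the factorization $(tq^2; q^{-2\omega}, q^{-2k}) = (tq^2; q^{-2k})\cdot(tq^{2-2\omega}; q^{-2\omega}, q^{-2k})$, the common linear factor $(1 - tq^{2-2km})$ cancels cleanly between $\theta_0(tq^2; q^{-2k})$ and the singular factor in $\Omega_{q^2}$, leaving a finite limit at $t_{-m}$.  I then apply the telescoping identities
\[
(uq^{-2k}; q^{-2\omega}, q^{-2k}) = \frac{(u; q^{-2\omega}, q^{-2k})}{(u; q^{-2\omega})}, \qquad (uq^{2k}; q^{-2\omega}, q^{-2k}) = (uq^{2k}; q^{-2\omega})\cdot(u; q^{-2\omega}, q^{-2k})
\]
to write each double $q$-Pochhammer symbol evaluated at $t_{-m}$ as its value at $m=0$ times a finite product $\prod_{l=1}^m(\,\cdots\,; q^{-2\omega})$.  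Combining these finite products via $\theta_0(u; q^{-2\omega}) = (u; q^{-2\omega})(u^{-1}q^{-2\omega}; q^{-2\omega})$ and $(q^{2kl}; q^{-2\omega}) = (1-q^{2kl})(q^{-2\omega+2kl}; q^{-2\omega})$ produces the desired product $\prod_{l=1}^m \theta_0(q^{-4}q^{2kl}; q^{-2\omega})/\theta_0(q^{2kl}; q^{-2\omega})$ up to a residual factor $(q^{2k};q^{2k})_m/(q^{-2k};q^{-2k})_m = (-1)^m q^{km(m+1)}$.

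The remaining $m$-dependence lives in $t_{-m}^{-(\mu+1)/k}$ and in $\theta_0(t_{-m}q^{-2}; q^{-2k})^{-1}$.  Evaluating the latter via the quasi-periodicity $\theta_0(up^{-m}; p) = (-1)^m u^m p^{-m(m+1)/2}\theta_0(u; p)$ and collecting all three sources of $m$-dependence produces an overall sign $(-1)^{2m}=1$ together with an exponent of $q$ equal to
\[
km(m+1) - 2m(\mu+1) + 4m - km(m+1) = 2m(1-\mu),
\]
matching exactly the prefactor $q^{(-2\mu+2)m}$ in the target formula.  Gathering the $m$-independent constants from steps two and three and combining them with the prefactor $D(\lambda, \mu)$ of Proposition \ref{prop:jack-int-t} yields the stated expression after routine simplification of ratios of double $q$-Pochhammer symbols (in particular converting the combination $(q^{-4}; q^{-2\omega}, q^{-2k})(q^{-2\omega-2k}; q^{-2\omega}, q^{-2k})/(q^{-2\omega}; q^{-2\omega}, q^{-2k})(q^{4-2\omega-2k}; q^{-2\omega}, q^{-2k})$ produced by the limit into the form $(q^{-4}q^{-2\omega}; q^{-2\omega}, q^{-2k})/(q^4 q^{-2\omega}q^{-2k}; q^{-2k}, q^{-2\omega})$ of the claim).

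Convergence as a formal power series in $q^{-2\omega}$ in a formal neighborhood of $0$ in $q^{-2\mu}$ follows because, at each fixed order of $q^{-2\omega}$, the $m$-th summand is $q^{(-2\mu+2)m} = q^{2m}(q^{-2\mu})^m$ times a coefficient with at most polynomial growth in $m$, giving a convergent power series in $q^{-2\mu}$ near $0$.  The main technical obstacle is the combinatorial bookkeeping in steps two and three: extracting the cancelling linear factor from the triple $q$-Pochhammer product, organizing the telescoping into the correct finite products, and tracking the two competing $q^{km(m+1)}$ contributions coming respectively from the residual Pochhammer ratio and from the quasi-periodicity of $\theta_0(\cdot; q^{-2k})$ so that they cancel exactly, leaving the clean growth factor $q^{(-2\mu+2)m}$.
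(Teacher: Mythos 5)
Your argument follows essentially the same route as the paper's proof: the Jackson sum collapses to the half-lattice $t = q^{-2}q^{2km}$, $m \geq 0$, because $\theta_0(tq^2;q^{-2k})$ vanishes at the remaining sample points with no compensating pole from $(tq^2;q^{-2\omega},q^{-2k})$, and the surviving terms are evaluated by peeling off and telescoping the double $q$-Pochhammer symbols. Your bookkeeping is correct: the residual factor is indeed $(q^{2k};q^{2k})_m/(q^{-2k};q^{-2k})_m = (-1)^m q^{km(m+1)}$, and it cancels against the quasi-periodicity factor of $\theta_0(t_{-m}q^{-2};q^{-2k})^{-1}$ to leave exactly $q^{(-2\mu+2)m}$, matching what the paper leaves implicit in its displayed computation. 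The one inaccuracy is in your convergence argument: the coefficient multiplying $q^{(-2\mu+2)m}$ does \emph{not} have polynomial growth in $m$. Already at order zero in $q^{-2\omega}$, each factor of $\prod_{l=1}^m \theta_0(q^{-4}q^{2kl};q^{-2\omega})/\theta_0(q^{2kl};q^{-2\omega})$ tends to $q^{-4}$ as $l \to \infty$ (since $|q^{-2k}| < 1$ forces $q^{2kl} \to \infty$), so the product grows like $q^{-4m}$; at order $N$ in $q^{-2\omega}$ the coefficients pick up a further factor of size roughly $q^{2kNm}$. This exponential, order-dependent growth is precisely why the radius of convergence in $q^{-2\mu}$ shrinks with the order in $q^{-2\omega}$ and why the proposition asserts convergence only on a \emph{formal} neighborhood of $0$ in $q^{-2\mu}$ — it parallels the paper's condition $\frac{\mu+1}{k} > n$ for convergence of the $t^n$-weighted Jackson integral. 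Your conclusion is unaffected, but the growth estimate should be corrected.
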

\begin{proof}
Define the integral expression
\[
I = \int_0^{q^{-2} \cdot \infty} \frac{d_{q^{-2k}}t}{t}\Omega_{q^2}(t; q^{-2\omega}, q^{-2k})  t^{-\frac{\mu + 1}{k}} \frac{\theta_0(tq^2; q^{-2k})}{\theta_0(tq^{-2}; q^{-2k})} \frac{\theta_0(tq^{2\lambda}; q^{-2\omega})}{\theta_0(tq^{-2}; q^{-2\omega})}
\]
so that $T^{w_0}(q, \lambda, \omega, \mu, k) = D(\lambda, \mu) I$ by Proposition \ref{prop:jack-int-t}.  Denote the integrand by 
\[
J(t) = \Omega_{q^2}(t; q^{-2\omega}, q^{-2k})  t^{-\frac{\mu + 1}{k}} \frac{\theta_0(tq^2; q^{-2k})}{\theta_0(tq^{-2}; q^{-2k})} \frac{\theta_0(tq^{2\lambda}; q^{-2\omega})}{\theta_0(tq^{-2}; q^{-2\omega})}
\]
and let its formal power series expansion in $q^{-2\omega}$ be $J(t) = \sum_{n \geq 0} J_n(t) q^{-2\omega n}$.  We have that 
\begin{align*}
J_0(t) &= \frac{(t q^{-2}; q^{-2k})}{(tq^2; q^{-2k})}  t^{-\frac{\mu + 1}{k}} \frac{\theta_0(tq^2; q^{-2k})}{\theta_0(tq^{-2}; q^{-2k})} \frac{(1 - tq^{2\lambda})}{(1 - tq^{-2})} = t^{- \frac{\mu + 1}{k}} \frac{(t^{-1}q^{-2} q^{-2k}; q^{-2k})}{(t^{-1}q^2 q^{-2k}; q^{-2k})} \frac{(1 - tq^{2\lambda})}{(1 - tq^{-2})}.
\end{align*}
We conclude that $\int_0^{q^{-2} \cdot \infty} \frac{d_{q^{-2k}} t}{t} J_0(t) t^n$ converges for $\frac{\mu + 1}{k} > n$, which holds in the doubly formal good region of parameters (\ref{eq:dformal-good-region}) on a formal neighborhood of $0$ in $q^{-2\mu}$.  Defining the formal power series $\wJ(t)$ so that $J(t) = J_0(t) \wJ(t)$, we see that $\wJ(t)$ has expansion of the form
\[
\wJ(t) = 1 + \sum_{n > 0} \wJ_n(t) q^{-2\omega n},
\]
where $\wJ_n(t)$ is a Laurent series in $t$ of degree at most $n$.  This implies that $\int_0^{q^{-2} \cdot \infty} \frac{d_{q^{-2k}} t}{t} J_n(t)$ converges for each $n$.  We conclude that each coefficient in the formal power series expansion of $I = \int_0^{q^{-2} \cdot \infty} \frac{d_{q^{-2k}} t}{t} J(t)$ converges in a formal neighborhood of $0$, so as a formal power series in $q^{-2\omega}$ it equals
\begin{align*}
I &= \sum_{n \geq 0} J(q^{-2} q^{2kn})\\
&= \sum_{n \geq 0} (q^{-2} q^{2kn})^{-\frac{\mu +1}{k}} \frac{(q^{-4} q^{2kn}; q^{-2\omega}, q^{-2k})(q^{-2kn}q^{-2\omega} q^{-2k}; q^{-2\omega}, q^{-2k})}{(q^{2kn}q^{-2\omega}; q^{-2\omega}, q^{-2k})(q^4q^{-2kn} q^{-2\omega}q^{-2k}; q^{-2\omega}, q^{-2k})}\\
&\phantom{=================================} \frac{(q^{-2kn} q^{-2k}; q^{-2k})}{\theta_0(q^{-4}q^{2kn}; q^{-2k})} \frac{\theta_0(q^{2\lambda - 2} q^{2kn}; q^{-2\omega})}{\theta_0(q^{-4}q^{2kn}; q^{-2\omega})}\\
%&= q^{\frac{2\mu + 2}{k}}\sum_{n \geq 0}(-1)^n q^{-2(\mu + 1)n + 4n} q^{kn(n+1)}\frac{(q^{-4} q^{2kn}; q^{-2\omega}, q^{-2k})(q^{-2kn}q^{-2\omega} q^{-2k}; q^{-2\omega}, q^{-2k})}{(q^{2kn}q^{-2\omega}; q^{-2\omega}, q^{-2k})(q^4q^{-2kn} q^{-2\omega}q^{-2k}; q^{-2\omega}, q^{-2k})}\\
%&\phantom{=====} \frac{(q^{-2k}; q^{-2k})}{(q^{-2k}; q^{-2k})_n \theta_0(q^{-4}; q^{-2k})} \frac{\theta_0(q^{2\lambda - 2} q^{2kn}; q^{-2\omega})}{\theta_0(q^{-4}q^{2kn}; q^{-2\omega})}\\
%&= q^{\frac{2\mu + 2}{k}}\frac{(q^{-4}; q^{-2\omega}, q^{-2k})(q^{-2\omega} q^{-2k}; q^{-2\omega}, q^{-2k})}{(q^{-2\omega}; q^{-2k}, q^{-2\omega})(q^4 q^{-2\omega} q^{-2k}; q^{-2k}, q^{-2\omega})} \frac{(q^{-2k}; q^{-2k})}{\theta_0(q^{-4}; q^{-2k})} \sum_{n \geq 0} q^{(-2\mu + 2)n} \frac{\theta_0(q^{2\lambda - 2} q^{2kn}; q^{-2\omega})}{\theta_0(q^{-4}q^{2kn}; q^{-2\omega})}\prod_{l = 1}^n \frac{\theta_0(q^{-4} q^{2kl}; q^{-2\omega})}{\theta_0(q^{2kl}; q^{-2\omega})}\\
&= q^{\frac{2\mu + 2}{k}}\frac{(q^{-4}; q^{-2\omega}, q^{-2k})}{(q^{-2\omega}; q^{-2\omega})(q^4 q^{-2\omega} q^{-2k}; q^{-2k}, q^{-2\omega})} \frac{(q^{-2k}; q^{-2k})}{\theta_0(q^{-4}; q^{-2k})} \sum_{n \geq 0} q^{(-2\mu + 2)n}\\
&\phantom{=================================} \frac{\theta_0(q^{2\lambda - 2} q^{2kn}; q^{-2\omega})}{\theta_0(q^{-4}q^{2kn}; q^{-2\omega})}\prod_{l = 1}^n \frac{\theta_0(q^{-4} q^{2kl}; q^{-2\omega})}{\theta_0(q^{2kl}; q^{-2\omega})},
\end{align*} 
which yields the desired upon substitution of $D(\lambda, \mu)$. 
\end{proof}

We now prove Theorem \ref{thm:int-trace} and thus connect the trace function and the Felder-Varchenko function.

\begin{proof}[Proof of Theorem \ref{thm:int-trace}]

Comparing Proposition \ref{prop:fv-series} in the formal neighborhood of $\infty$ for $q^{-2\mu}$ and Proposition \ref{prop:int-formal} in the formal neighborhood of $0$ for $q^{-2\mu}$, we find as formal power series in $q^{-2\omega}$ and then $q^{-2\mu}$, in the doubly formal good region of parameters (\ref{eq:dformal-good-region}) we have
\begin{multline} \label{eq:fv-iden}
T^{w_0}(q, \lambda, \omega, \mu, k) = \frac{q^{-\mu + 4} (q^{-4}; q^{-2\omega})}{\theta_0(q^{2\lambda}; q^{-2\omega})(q^{2\lambda - 2} q^{-2\omega}; q^{-2\omega})(q^{-2\lambda - 2}; q^{-2\omega})} \\
 \frac{(q^{-2 \omega + 2}; q^{-2\omega}, q^{-2 k})^2}{(q^{-2 \omega - 2}; q^{-2 \omega}, q^{-2k})^2} \frac{ (q^{-2k};q^{-2k})(q^4 q^{-2k}; q^{-2k})}{(q^{-2\mu + 2};q^{-2k})(q^{2\mu + 2} q^{-2k}; q^{-2k})}u(q, \lambda, \omega, -\mu, k). 
\end{multline}
Recalling the definition (\ref{eq:fv-def2}) of the Felder-Varchenko function, we obtain 
\begin{multline*}
T^{w_0}(q, \lambda, \omega, \mu, k) = (q^{-4}; q^{-2\omega}) \frac{(q^{-2 \omega + 2}; q^{-2\omega}, q^{-2 k})^2}{(q^{-2 \omega - 2}; q^{-2 \omega}, q^{-2k})^2} \frac{q^{\lambda\mu - \lambda + 2}}{\theta_0(q^{2\lambda}; q^{-2\omega})(q^{2\lambda - 2} q^{-2\omega}; q^{-2\omega})(q^{-2\lambda - 2}; q^{-2\omega})} \\
 \frac{(q^{-2k};q^{-2k})(q^4 q^{-2k}; q^{-2k})}{(q^{-2\mu + 2};q^{-2k})(q^{2\mu + 2} q^{-2k}; q^{-2k})} \oint_{\cC_t} \frac{dt}{2\pi it} \Omega_{q^2}(t; q^{-2\omega}, q^{-2k}) \frac{\theta_0(tq^{-2\mu}; q^{-2k})}{\theta_0(tq^{-2}; q^{-2k})} \frac{\theta_0(tq^{2\lambda}; q^{-2\omega})}{\theta_0(tq^{-2}; q^{-2\omega})}.
\end{multline*}
By Proposition \ref{prop:rat-fn-trace}, each coefficient of $q^{-\lambda \mu} T^{w_0}(q, \lambda, \omega, \mu, k)$ as a formal power series in $q^{-2\omega}$ is a rational function in $q^{-2\mu}$ and $q^{-2k}$.\footnote{Note that this does not follow from the Jackson integral expression for $T^{w_0}(q, \lambda, \omega, \mu, k)$, since as a formal series in $q^{-2\omega}$, that expression holds only on a formal neighborhood of $0$ in $q^{-2\mu}$.  To obtain simultaneous rationality of coefficients of the formal series in $q^{-2\omega}$ on an open neighborhood of $0$ in $q^{-2\mu}$, we require the representation-theoretic input of Proposition \ref{prop:rat-fn-trace}.}  Therefore, the coefficients of the formal power series in $q^{-2\omega}$ in (\ref{eq:fv-iden}) are equal as rational functions because they are equal on a formal neighborhood of $0$ in $q^{-2\mu}$ by (\ref{eq:fv-iden}).  Define now the \textit{formal good region} of parameters
\begin{equation} \label{eq:formal-good-region}
0 < |q^{-2\mu}| \ll |q^{-2\lambda}| \ll |q^{-2k}| \ll |q|, |q|^{-1}.
\end{equation}
We conclude that (\ref{eq:fv-iden}) holds in the formal good region of parameters at the level of formal power series in $q^{-2\omega}$ for numerical $q^{-2\mu}$ sufficiently close to $0$.

It remains only to check that this formal series in $q^{-2\omega}$ converges.  Because the poles of the integrand in the integral expression are bounded uniformly away from $\cC_t$ as $q^{-2\omega} \to 0$, the formal power series expansion of the integrand converges uniformly on the compact cycle $\cC_t$ to the integrand.  We conclude that the integral may be integrated term-wise and therefore that its formal expansion in $q^{-2\omega}$ converges.  Because $T^{w_0}(q, \lambda, \omega, \mu, k)$ shares a formal expansion with the integral, it also converges for numerical $q^{-2\omega}$ sufficiently close to $0$.  We conclude that the desired equality holds for numerical parameters in the good region (\ref{eq:good-region}) with $q^{-2\mu}$ and then $q^{-2\omega}$ sufficiently close to $0$, completing the proof.
\end{proof}

\begin{corr} \label{corr:trace-fv}
For $q^{-2\mu}$ and then $q^{-2\omega}$ sufficiently close to $0$ in the good region of parameters (\ref{eq:good-region}), the trace $T^{w_0}(q, \lambda, \omega, \mu, k)$ is related to the Felder-Varchenko function by 
\begin{multline*}
T^{w_0}(q, \lambda, \omega, \mu, k) =  \frac{q^{-\mu + 4} (q^{-4}; q^{-2\omega})}{\theta_0(q^{2\lambda}; q^{-2\omega})(q^{2\lambda - 2} q^{-2\omega}; q^{-2\omega})(q^{-2\lambda - 2}; q^{-2\omega})} \\
 \frac{(q^{-2 \omega + 2}; q^{-2\omega}, q^{-2 k})^2}{(q^{-2 \omega - 2}; q^{-2 \omega}, q^{-2k})^2} \frac{ (q^{-2k};q^{-2k})(q^4 q^{-2k}; q^{-2k})}{(q^{-2\mu + 2};q^{-2k})(q^{2\mu + 2} q^{-2k}; q^{-2k})}u(q, \lambda, \omega, -\mu, k). 
\end{multline*}
\end{corr}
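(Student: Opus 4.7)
The plan is that Corollary \ref{corr:trace-fv} is essentially a cosmetic rewriting of Theorem \ref{thm:int-trace}: the contour integral appearing in the integral formula for the trace function is, up to an explicit power of $q$, exactly the Felder-Varchenko function evaluated at $-\mu$ in place of $\mu$. So the proof amounts to matching the two integrals term-for-term and bookkeeping a prefactor.

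Concretely, I would first recall the definition (\ref{eq:fv-def2}) of $u(q,\lambda,\omega,\mu,k)$ and substitute $\mu \mapsto -\mu$ to obtain
\[
u(q,\lambda,\omega,-\mu,k) \;=\; q^{\lambda\mu - \lambda + \mu - 2}\oint_{\cC_t}\frac{dt}{2\pi i t}\,\Omega_{q^2}(t;q^{-2\omega},q^{-2k})\,\frac{\theta_0(tq^{-2\mu};q^{-2k})}{\theta_0(tq^{-2};q^{-2k})}\,\frac{\theta_0(tq^{2\lambda};q^{-2\omega})}{\theta_0(tq^{-2};q^{-2\omega})}.
\]
Comparing with the integrand appearing on the right-hand side of Theorem \ref{thm:int-trace}, I would then observe that the two integrals are identical, so the integral in Theorem \ref{thm:int-trace} equals $q^{-\lambda\mu + \lambda - \mu + 2}\, u(q,\lambda,\omega,-\mu,k)$.

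Substituting this identification into the formula of Theorem \ref{thm:int-trace}, I would collect the overall power of $q$: the prefactor $q^{\lambda\mu - \lambda + 2}$ coming from Theorem \ref{thm:int-trace} combines with $q^{-\lambda\mu + \lambda - \mu + 2}$ to give $q^{-\mu + 4}$, which matches the exponent in the statement of Corollary \ref{corr:trace-fv}. All other multiplicative factors (the two double Pochhammer ratios involving $q^{-2\omega}$, the theta factors in $\lambda$, the Pochhammers in $\mu$ and $k$) are identical to those in Corollary \ref{corr:trace-fv}, so the desired identity follows.

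There is no real obstacle here; the only technical point is domain compatibility. Both Theorem \ref{thm:int-trace} and the integral representation (\ref{eq:fv-def2}) of $u(q,\lambda,\omega,-\mu,k)$ are valid on the unit circle contour $\cC_t$ in the good region of parameters (\ref{eq:good-region}) with $q^{-2\mu}$ and then $q^{-2\omega}$ sufficiently close to $0$, which is exactly the region specified in Corollary \ref{corr:trace-fv}, so the identification of integrals is legitimate there.
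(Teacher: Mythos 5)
Your proposal is correct and matches the paper's own route: the paper proves the corollary by combining Theorem \ref{thm:int-trace} with the identity (\ref{eq:fv-iden}), which is obtained from the integral formula by exactly the substitution of the definition (\ref{eq:fv-def2}) of $u$ at $-\mu$ that you carry out, and your exponent bookkeeping ($q^{\lambda\mu-\lambda+2}\cdot q^{-\lambda\mu+\lambda-\mu+2}=q^{-\mu+4}$) is right. The only point worth being slightly more careful about is that the unit-circle formula for $u$ remains the correct analytic continuation at $-\mu$ because none of the poles of the integrand depend on $\mu$, but this is immediate and the paper treats it the same way.
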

\begin{proof}
This follows by combining Theorem \ref{thm:int-trace} and the formal equality (\ref{eq:fv-iden}).
\end{proof}

\section{The classical limit} \label{sec:class}

In this section, we take the classical limit of our expression for the trace of a $U_q(\asl_2)$-intertwiner and recover the expression for the trace of a $U(\asl_2)$-intertwiner given in \cite{EK2}.

\subsection{Verma modules, evaluation modules, and intertwiners}

Let $M_{\mu, k}$ denote the Verma module for $U(\asl_2)$ with highest weight $\mu\rho + k\Lambda_0$, and let $L_{\mu}(z)$ denote the finite-dimensional evaluation module with highest weight $\mu$.  The module $L_\mu(z)$ has a basis $w_\mu, w_{\mu - 2}, \ldots, w_{-\mu}$ which coincides with the basis for $L_\mu(z)$ as a $U_q(\asl_2)$-module.  In particular, it satisfies
\[
e_1 w_m \otimes z^n = \frac{\mu - m}{2} w_{m + 2} \otimes z^n.
\]
For $v \in L_{\mu}(z)[0]$, denote by 
\[
\Phi^{v, \cl}_{\mu, k}(z): M_{\mu, k} \to M_{\mu, k} \otimes L_{\mu}(z)
\]
the corresponding $U(\asl_2)$-intertwiner, normalized so that 
\[
\Phi^{v, \cl}_{\mu, k}(z) \cdot v_{\mu, k} = v_{\mu, k} \otimes v + (\text{l.o.t.}).
\]

\subsection{Wakimoto realization of intertwiners}

In \cite[Section 5]{EK2}, a different normalization of this intertwiner is considered via the Wakimoto realization.  Let $A$ be the algebra generated by $\alpha_n, \beta_n, \gamma_n$ for $n \in \ZZ$ subject to the relations
\[
[\alpha_n, \alpha_m] = 2n \delta_{n + m, 0} \qquad \text{ and } \qquad [\beta_n, \gamma_m] = \delta_{n + m, 0}
\]
with all other commutators zero.  Define also the bosonic fields
\[
\alpha(z) = \sum_{n \in \ZZ} \alpha_n z^{-n-1}, \qquad \beta(z) = \sum_{n \in \ZZ} \beta_n z^{-n-1}, \qquad \gamma(z) = \sum_{n \in \ZZ} \gamma_n z^{-n}.
\]
Define the Fock module $H_{\lambda, k}$ to be generated by the vacuum vector $v_{\lambda, k}$ with properties 
\[
\alpha_n v_{\lambda, k} = 0 \text{ for $n > 0$} \qquad \beta_n v_{\lambda, k}= \gamma_{n + 1} v_{\lambda, k} = 0 \text{ for $n \geq 0$} \qquad \alpha_0 v_{\lambda, k} = \frac{\lambda}{\sqrt{k + 2}} v_{\lambda, k}.
\]
A free field realization of the Verma module was given in terms of these Fock modules by Wakimoto in \cite{Wak}.  In our notation, this realization is given as follows.

\begin{prop}[{\cite[Equation 5.7]{EK2}}] \label{prop:class-wakimoto}
The assignments 
\begin{align*}
J_e(z) &= \beta(z)\\
J_h(z) &= - 2 : \beta(z) \gamma(z): + \sqrt{\kappa}\alpha(z)\\
J_f(z) &= - :\gamma(z)^2 \beta(z): + \sqrt{\kappa}\alpha(z)\gamma(z) + (\kappa - 2) \gamma'(z)
\end{align*}
define an action of $U(\asl_2)$ on $H_{\lambda, k}$.  For generic $\lambda$ and $k$, the resulting module is isomorphic to the Verma module $M_{\lambda, k}$ with $v_{\lambda, k}$ the highest weight vector.
\end{prop}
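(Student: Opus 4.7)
The plan is that this proposition is essentially quoted verbatim from \cite{EK2}, so the proof amounts to a verification of the asserted commutation relations followed by an identification argument at generic parameters. No new ideas are required beyond the original Wakimoto construction.

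First, I would verify that $J_e(z), J_h(z), J_f(z)$ generate an $\asl_2$-action at level $k$ by computing the operator product expansions using Wick's theorem and the basic contractions $\alpha(z)\alpha(w) \sim \tfrac{2}{(z-w)^2}$ and $\beta(z)\gamma(w) \sim \tfrac{1}{z-w}$. The expected OPE's are
\[
J_h(z) J_e(w) \sim \frac{2 J_e(w)}{z - w}, \quad J_h(z) J_f(w) \sim \frac{-2 J_f(w)}{z - w}, \quad J_e(z) J_f(w) \sim \frac{k}{(z-w)^2} + \frac{J_h(w)}{z - w},
\]
together with $J_h(z) J_h(w) \sim \tfrac{2k}{(z-w)^2}$. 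The central terms proportional to $k = \kappa - 2$ arise from the double contractions in $J_e(z) J_f(w)$ and $J_h(z) J_h(w)$, where the $\sqrt{\kappa}$ factors combine with the derivative correction $(\kappa - 2)\gamma'(z)$ in $J_f$ to produce the right coefficient; this is the one step requiring care. Expanding these OPE's into mode commutation relations recovers the defining relations of $U(\asl_2)$ at level $k$.

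Next, I would check that $v_{\lambda, k}$ is a highest weight vector of the correct weight. By construction, $J_{e, n} v_{\lambda, k} = 0$ and $J_{f, n} v_{\lambda, k} = 0$ for $n > 0$, and $J_{h, 0} v_{\lambda, k} = \sqrt{\kappa} \cdot \tfrac{\lambda}{\sqrt{\kappa}} v_{\lambda, k} = \lambda v_{\lambda, k}$, so $v_{\lambda, k}$ has weight $\lambda \rho + k \Lambda_0$ as required.

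Finally, for generic $\lambda$ and $k$, the identification with the Verma module follows by a character comparison: universality of $M_{\lambda, k}$ gives a surjection $M_{\lambda, k} \to H_{\lambda, k}$, and both modules have the same graded character (both are, as vector spaces, polynomial algebras in the same number of variables in each degree), so this surjection is an isomorphism. The main subtlety is that this character equality requires genericity, since at resonant values the Fock module $H_{\lambda, k}$ can differ from $M_{\lambda, k}$ and one must pass to a BGG-type resolution; we avoid this by the genericity assumption. The full details appear in \cite[Section 5]{EK2}, from which we quote the result.
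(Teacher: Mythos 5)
The paper offers no proof of this proposition: it is imported verbatim from \cite{EK2} (ultimately from Wakimoto's construction), so your plan --- verify the current OPE's by Wick's theorem, check the highest weight conditions on $v_{\lambda,k}$, and identify the module at generic parameters by a character comparison --- is exactly the standard argument and matches the paper's implicit approach of deferring to \cite[Section 5]{EK2}. One small repair to your last step: universality of the Verma module gives a homomorphism $M_{\lambda,k}\to H_{\lambda,k}$, not a priori a surjection; the standard completion is that for generic $(\lambda,k)$ the Verma module is irreducible (Kac--Kazhdan), so the nonzero map is injective, and equality of graded characters then forces it to be an isomorphism. With that adjustment your sketch is complete.
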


Define now the classical vertex operator
\[
X(c \alpha, z) = \exp\left(c \sum_{n < 0} \frac{\alpha_n}{-n} z^{-n}\right)\exp\left(c \sum_{n > 0} \frac{\alpha_n}{-n} z^{-n}\right) e^{c\alpha} z^{c\alpha_0}
\]
and the screening operator
\[
U(t) = \beta(t) X(-\alpha/\sqrt{\kappa}, t).
\]
These two operators define the intertwining operator which appears in \cite[Section 5]{EK2}.

\begin{prop}[{\cite[Equation 5.8]{EK2}}] \label{prop:class-inter}
For any cycle $\Delta$ on which $\log(t_i)$, $\log(t_i - t_j)$, and $\log(t_i - z)$ are well-defined, the operator $\hPhi_\mu(z) : M_{\mu, k} \to M_{\mu, k} \hotimes L_{2m}(z)$ given by 
\[
\hPhi_\mu(z) v = z^{\frac{m(m+1)}{k+2}}\sum_{n = 0}^{2m + 1} \left(\int_\Delta X(\frac{m}{\sqrt{k + 2}} \alpha, z) (-\gamma(z))^n U(t_1) \cdots U(t_m) dt_1 \cdots dt_m\right) v \otimes \frac{e_1^n}{n!} w_{-2m},
\]
is an intertwining operator of $U(\asl_2)$-representations.
\end{prop}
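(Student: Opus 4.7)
The plan is to verify the intertwining property of $\hPhi_\mu(z)$ directly at the level of free fields, following the strategy originally used by Wakimoto and adapted in \cite{EK2}. Since the Verma module $M_{\mu, k}$ is identified with the Fock module $H_{\mu, k}$ via Proposition \ref{prop:class-wakimoto}, each current $J_a(w)$ for $a \in \{e, h, f\}$ acts through the Heisenberg modes $\alpha_n, \beta_n, \gamma_n$. The intertwining property then takes the concrete form
\[
J_a(w) \hPhi_\mu(z) - \hPhi_\mu(z) J_a(w) = (\pi_{L_{2m}(z)}(J_a(w))) \hPhi_\mu(z),
\]
where the right-hand side uses the evaluation action on $L_{2m}(z)$, and this is what I would check term by term in the expansion in $n$.

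The first key step is to establish the screening property for $U(t)$, namely that
\[
[J_a(w), U(t)] = \frac{\partial}{\partial t} R_a(w, t)
\]
for some operator $R_a(w, t)$ which is single-valued on $\Delta$. This is a standard OPE computation using the basic OPEs $\alpha(z) X(c\alpha, w) \sim \frac{2c}{z - w} X(c\alpha, w)$ and $\beta(z) \gamma(w) \sim \frac{1}{z - w}$, together with the normal-ordered form of $J_e, J_h, J_f$ from Proposition \ref{prop:class-wakimoto}. Integrating over the cycle $\Delta$, on which $\log t_i$, $\log(t_i - t_j)$, and $\log(t_i - z)$ are single-valued, then kills all total-derivative contributions, leaving only the action coming from the prefactor.

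The second step is to compute the OPE of $J_a(w)$ with the prefactor $X(\tfrac{m}{\sqrt{\kappa}} \alpha, z) (-\gamma(z))^n$ and match the result against the evaluation module action on $L_{2m}(z)$. The shift factor $z^{m(m+1)/(k+2)}$ absorbs the scaling coming from the OPE of $X(\tfrac{m}{\sqrt{\kappa}}\alpha, z)$ with itself and with the Virasoro current, and the powers of $-\gamma(z)$ generate precisely the descendants $e_1^n w_{-2m}/n!$ in $L_{2m}(z)$, reflecting the fact that $\gamma(z)$ plays the role of the $\sl_2$ raising operator at the point $z$.

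The main obstacle will be bookkeeping: one has to verify the correct numerical coefficients so that the Wakimoto-produced vectors align with the basis $e_1^n w_{-2m}/n!$ of $L_{2m}(z)$, and one must check that the cycle $\Delta$ can simultaneously be chosen so that the relevant logarithms are single-valued (ensuring the screening boundary terms vanish) and so that all the integrals defining the matrix elements actually converge. Once these combinatorial and analytic checks are carried out, the intertwining statement follows from the screening property combined with the OPE computation, recovering \cite[Equation 5.8]{EK2}.
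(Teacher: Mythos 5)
The paper does not prove this statement: it is quoted directly from \cite[Equation 5.8]{EK2}, and the only original content nearby is the remark that the normalization differs because $d$ is taken to act by $0$ on $v_{\mu,k}$ rather than by $-\mu^2/2(k+2)$. So there is no in-paper proof to compare against; what you have written is an outline of the argument that the cited reference itself carries out.

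As an outline it is the right one. The two pillars you identify --- (i) the screening property $[J_a(w), U(t)] = \partial_t R_a(w,t)$ with $R_a$ single-valued on $\Delta$, so that the cycle integral annihilates the commutator, and (ii) the OPE of the currents with the prefactor $X(\tfrac{m}{\sqrt{\kappa}}\alpha, z)(-\gamma(z))^n$ reproducing the coproduct action on $L_{2m}(z)$ --- are exactly the standard Wakimoto/screening-current mechanism, and the generating-function pairing of $(-\gamma(z))^n$ against $e_1^n w_{-2m}/n!$ is the correct way to package the second tensor factor. However, be aware that what you have is a proof strategy rather than a proof: you have not exhibited $R_a(w,t)$ for $a = f$ (the only nontrivial case, since $J_e = \beta$ commutes with $U(t)$ up to a trivial term and $J_h$ is easy), you have not verified that the anomalous term $(\kappa-2)\gamma'(z)$ in $J_f$ is compatible with the claimed coefficients, and you have not checked the exponent $z^{m(m+1)/(k+2)}$ against the conformal weight of $X(\tfrac{m}{\sqrt{\kappa}}\alpha,z)$. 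These are precisely the computations that constitute the proof in \cite{EK2}, and in the context of this paper the honest course is to do what the author does: cite the result, and separately justify the change of normalization coming from the convention on $d$.
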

\begin{remark}
Proposition \ref{prop:class-inter} has a different normalization from \cite[Section 5]{EK2} because we take $d$ to act by $0$ on $M_{\lambda, k}$.
\end{remark}

\subsection{The Etingof-Kirillov~Jr. expression for the classical trace function}

We now apply the computation of \cite[Theorem 5.1]{EK2} to compute in Corollary \ref{corr:norm-tr2} the classical trace function
\[
\Tr|_{M_{\mu - 1, k - 2}}\Big(\Phi^{w_0, \cl}_{\mu - 1, k - 2} q^{2d} e^{\pi i \lambda h}\Big).
\]
We begin by defining the normalized traces which appear in \cite[Theorem 5.1]{EK2}.  For $q = e^{\pi i \tau}$, define the function 
\[
F(\lambda, \tau, \mu, k) = \frac{\Tr|_{M_{\mu, k}}\Big(\hPhi_\mu q^{-2d - \frac{h}{2}} e^{-\pi i \lambda h}\Big)}{\Tr|_{M_{\frac{k}{2}, k}}\Big(q^{-2d - \frac{h}{2}} e^{-\pi i \lambda h}\Big)}
\]
and its normalized version given by 
\[
\wF(\lambda, \tau, \mu, k) := F(-\lambda + \tau/2, -\tau, \mu - 1, k - 2) = \frac{\Tr|_{M_{\mu - 1, k - 2}}\Big(\hPhi_{\mu - 1} q^{2d} e^{\pi i \lambda h}\Big)}{\Tr|_{M_{\frac{k-2}{2}, k - 2}}\Big(q^{2d} e^{\pi i \lambda h}\Big)}.
\]
An integral expression was given in \cite{EK2} for $F(\lambda, \tau, \mu, k)$ using the additive notation
\begin{equation} \label{eq:jac-add}
\theta_1(\zeta \mid \tau) = 2 e^{\pi i \tau/4} \sin(\zeta) \prod_{n \geq 1} (1 - e^{2\pi i n \tau} e^{2i \zeta}) (1 - e^{2\pi i n \tau } e^{-2i \zeta})(1 - e^{2\pi i n\tau}) = \theta(e^{2i \zeta}; e^{2\pi i \tau})
\end{equation}
for the first Jacobi theta function.

\begin{thm}[{\cite[Theorem 5.1]{EK2}}] \label{thm:ek-class}
For $\mu \in \CC$ and $\kappa = k + 2$ with $\kappa \neq 0$, $\Imm(\tau) > 0$, $\Imm(\lambda) > 0$, and $m \geq 0$, we have 
\begin{align*}
F(\lambda, \tau, \mu, k) &= e^{-\pi i \lambda (\mu - \frac{k}{2})} z^{\frac{m(\mu -m)}{\kappa}} \int_\Delta \prod_{i = 1}^m t_i^{(-1 - \mu)/\kappa} \prod_{i = 1}^m \left(\frac{\theta_1(\pi (\zeta_0 - \zeta_i) \mid \tau)}{\theta_1'(0 \mid \tau)}\right)^{-2m/\kappa}\\
& \prod_{i < j} \left(\frac{\theta_1(\pi(\zeta_i - \zeta_j) \mid \tau)}{\theta_1'(0 \mid \tau)}\right)^{2/\kappa} m! \prod_{i = 1}^m G(z, t_i \mid \tau, \lambda + \tau/2) dt_1 \cdots dt_m,
\end{align*}
where $\Delta$ is a cycle chosen so that $\log t_i$, $\log (t_i - t_j)$, and $\log(t_i - z)$ have single-valued branches on $\Delta$, the function $G$ is defined by 
\[ 
G(1, e^{2\pi i \zeta} \mid \tau, \lambda) = \frac{i}{2} e^{-2\pi i \zeta} \frac{\theta_1(\pi(\lambda - \zeta) \mid \tau) \,\theta_1'(0 \mid \tau)}{\theta_1(\pi \lambda \mid \tau) \, \theta_1(\pi \zeta \mid \tau)},
\]
and we take the coordinates $z = e^{2\pi i \zeta_0}$, $t_i = e^{2\pi i \zeta_i}$, and $q = e^{\pi i \tau}$. 
\end{thm}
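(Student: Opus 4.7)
The plan is to prove Theorem \ref{thm:ek-class} by running the classical analogue of the strategy developed in Sections \ref{sec:ff}--\ref{sec:ci} of this paper. First I would use the Wakimoto realization of Proposition \ref{prop:class-wakimoto} to identify the Verma module $M_{\mu, k}$ with the bosonic Fock module $H_{\mu, k}$, so that the trace over $M_{\mu, k}$ becomes a trace over $H_{\mu, k}$. Since Proposition \ref{prop:class-inter} already expresses $\hPhi_\mu(z)$ as an iterated contour integral of screening operators $U(t_i)$ multiplied by the vertex operator $X(\frac{m}{\sqrt{\kappa}}\alpha, z)$ and insertions of $(-\gamma(z))^n$, the computation reduces to a trace of a product of bosonic vertex operators acting on the Heisenberg Fock space tensored with the free $\beta\gamma$-system.

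Next I would apply the classical method of coherent states. Each $X(c\alpha, z)$ factorizes into exponentials of creation and annihilation modes together with a zero-mode factor $e^{c\alpha} z^{c\alpha_0}$, and similarly for the vertex operator parts of $U(t_i)$. The Heisenberg trace factors through the pairwise one-loop correlation functions, which for a pair $X(c_1\alpha, z_1), X(c_2\alpha, z_2)$ yield
\[
\prod_{n \geq 1}(1 - q^{2n} z_1/z_2)^{-2 c_1 c_2}(1 - q^{2n} z_2/z_1)^{-2c_1 c_2}.
\]
Combining these with the universal factor $\prod_{n\geq 1}(1-q^{2n})^{-1}$ from each Heisenberg mode and using the product expansion (\ref{eq:jac-add}) packages them into ratios of $\theta_1$'s raised to the powers $-2m/\kappa$ (between $z$ and each $t_i$) and $+2/\kappa$ (between pairs of $t_i$'s). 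The $\beta\gamma$-trace needed because of the $(-\gamma(z))^n$ insertions is computed by classical Wick contractions with $U(t_i) = \beta(t_i) X(-\alpha/\sqrt{\kappa}, t_i)$ and, after summing over $n$ and pairing with $e_1^n w_{-2m}/n!$ to pick up the vector in $L_{2m}[0]$, produces the elliptic kernel $G(z, t_i \mid \tau, \lambda + \tau/2)$.

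The remaining zero-mode trace yields the prefactor $e^{-\pi i \lambda (\mu - k/2)}$ together with the powers $z^{m(\mu - m)/\kappa}$ and $t_i^{(-1-\mu)/\kappa}$ through the commutators of $e^{c\alpha} z^{c\alpha_0}$ with $e^{-\pi i \lambda h}$ and with one another. Dividing by the denominator trace $\Tr|_{M_{k/2, k}}(q^{-2d-h/2}e^{-\pi i\lambda h})$ normalizes away the universal $\prod_{n\geq 1}(1-q^{2n})^{-3}$ arising from the three Heisenberg systems and the $\beta\gamma$-pairing, leaving precisely the integrand in the stated formula.

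The main obstacle will be the careful treatment of the fractional powers $t_i^{(-1-\mu)/\kappa}$ and of the theta function ratios raised to the non-integer exponents $\pm 2/\kappa$: these force a choice of branches of $\log t_i$, $\log(t_i - t_j)$, and $\log(t_i - z)$, which is exactly why the cycle $\Delta$ must be constrained as in the theorem statement. Matching the resulting multivalued integrand to the trace on the nose, rather than up to a global root of unity, requires tracking the zero-mode monodromy against the highest-weight matrix element obtained by specializing to the vacuum intertwiner, which pins down the overall normalization.
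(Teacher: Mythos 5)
The paper does not actually prove Theorem \ref{thm:ek-class}: it is imported verbatim (up to a renormalization of the action of $d$, explained in the remark following the statement) from \cite[Theorem 5.1]{EK2}, so there is no in-paper argument to compare yours against. Your sketch is, in outline, the standard free-field proof that the cited source uses and that Section \ref{sec:ci} of this paper carries out in the quantum case: Wakimoto realization, coherent-states computation of the Heisenberg trace, Wick contractions for the $\beta\gamma$-system, and zero-mode bookkeeping. As a reconstruction of that argument it is plausible, but it is an outline rather than a proof, and the parts you compress are exactly the delicate ones.

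Concretely: (i) your displayed one-loop correlation function carries the exponent $-2c_1c_2$, whereas with the normalization $[\alpha_m,\alpha_n]=2m\delta_{m+n,0}$ the exponent should be $+2c_1c_2$ (check: for the $t_i$--$t_j$ pair $c_1=c_2=-1/\sqrt{\kappa}$ gives $+2/\kappa$, and for the $z$--$t_i$ pair $c_1=m/\sqrt{\kappa}$, $c_2=-1/\sqrt{\kappa}$ gives $-2m/\kappa$, matching the theorem); (ii) the classical Wakimoto module has one Heisenberg boson plus a $\beta\gamma$-system, not ``three Heisenberg systems'' --- that is the quantum realization with $\alpha,\ba,\beta$; the three infinite products in the denominator character come from the three families of positive roots $m\delta$, $\pm\alpha+m\delta$; (iii) the heart of the computation is the trace of $\gamma(z)^m\beta(t_1)\cdots\beta(t_m)$ over the $\beta\gamma$-Fock space twisted by $e^{-\pi i\lambda h}q^{-2d}$, whose propagator is the twisted elliptic Green's function $G(z,t\mid\tau,\lambda+\tau/2)$ and whose geometric-series expansion is where the hypothesis $\Imm(\lambda)>0$ enters; you assert this step in one sentence without justifying why only the term $n=m$ survives (weight-zero projection onto $L_{2m}[0]$) or why the sum over the $m!$ pairings produces the stated product form rather than a permanent. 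None of these is a fatal conceptual error, but as written the proposal would not compile into a proof without substantial additional computation.
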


\begin{remark}
In \cite[Theorem 5.1]{EK2}, $d$ is normalized to act on $v_{\mu, k}$ by $-\frac{\mu^2}{2(k + 2)}$. In this work we normalize $d$ to act on $v_{\mu, k}$ by $0$, hence the statement of Theorem \ref{thm:ek-class} differs in normalization from the statement in \cite[Theorem 5.1]{EK2}.
\end{remark}

We now specialize to $m = 1$ and set $z = 1$ (since $F(\lambda, \tau, \mu, k)$ is independent of $z$).  After computing all relevant normalizations, we obtain in Corollary \ref{corr:norm-tr2}, the classical limit of $T^{w_0}(q, \lambda, \omega, \mu, k)$ from Theorem \ref{thm:int-trace}. 

\begin{corr} \label{corr:ek-class1}
For $\mu, \kappa \in \CC$, $\kappa \neq 0$, $|q| > 1$, $\Imm(\lambda) < 0$, and $m = 1$, we have 
\[
\wF(\lambda, \tau, \mu, k) = 2^{\frac{2}{k}} e^{-\frac{3\pi i}{k}} e^{\pi i \lambda (\mu - \frac{k}{2})}  \frac{(q^{-2}; q^{-2})^{\frac{2k + 4}{k}}}{\theta_0(e^{2\pi i \lambda}; q^{-2})} \int_\Delta t^{-\frac{\mu - 1}{k} - 1} \frac{\theta_0(t e^{2\pi i \lambda}; q^{-2})}{\theta_0(t; q^{-2})^{\frac{k + 2}{k}}}dt,
\]
where $\Delta$ is a Pochhammer loop around $0$ and $1$.
\end{corr}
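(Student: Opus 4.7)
The strategy is to mechanically specialize Theorem \ref{thm:ek-class} at $m = 1$ and $z = 1$, substitute the explicit form of $G$, convert all additive Jacobi theta factors into the multiplicative $\theta_0$ notation used throughout this paper, and then apply the definition $\wF(\lambda, \tau, \mu, k) = F(-\lambda + \tau/2, -\tau, \mu - 1, k - 2)$.

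First, setting $z = 1$ (so $\zeta_0 = 0$) and $m = 1$ in Theorem \ref{thm:ek-class} reduces the expression to
\[
F(\lambda, \tau, \mu, k) = e^{-\pi i\lambda(\mu - k/2)} \int_\Delta t^{-(1+\mu)/\kappa} \left(\frac{\theta_1(-\pi\zeta \mid \tau)}{\theta_1'(0\mid\tau)}\right)^{-2/\kappa} G(1, t\mid\tau, \lambda + \tau/2)\,dt,
\]
where $t = e^{2\pi i \zeta}$. Substituting the explicit formula for $G$ inserts the elementary factor $\tfrac{i}{2}\,t^{-1}$ together with the theta factors $\theta_1(\pi(\lambda + \tau/2 - \zeta)\mid\tau)$, $\theta_1'(0\mid\tau)$, $\theta_1(\pi(\lambda + \tau/2)\mid\tau)^{-1}$, and $\theta_1(\pi\zeta\mid\tau)^{-1}$ into the integrand.

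The key conversion is the product identity
\[
\theta_1(\pi\zeta\mid\tau) = i\, e^{\pi i \tau/4} e^{-\pi i \zeta}(e^{2\pi i \tau}; e^{2\pi i \tau})\, \theta_0(e^{2\pi i \zeta}; e^{2\pi i\tau}),
\]
derived directly from (\ref{eq:jac-add}), together with $\theta_1'(0\mid\tau) = 2\, e^{\pi i \tau/4} (e^{2\pi i \tau}; e^{2\pi i \tau})^3$ and the reflection $\theta_0(u^{-1}; q) = -u^{-1}\theta_0(u; q)$. Applying these to each of the five $\theta_1$ factors in the integrand and collecting, the $(e^{2\pi i \tau}; e^{2\pi i \tau})$ contributions combine to produce the power $(e^{2\pi i \tau}; e^{2\pi i \tau})^{(2k + 4)/k}$ in the desired prefactor; the $i$, $2$, and $e^{\pi i \tau/4}$ constants combine with the $\zeta$-dependent exponentials $e^{\pm \pi i \zeta}$ to produce the overall constant $2^{2/k}\, e^{-3\pi i/k}$ and shift the monomial exponent of $t$ to $-(\mu-1)/k - 1$; and the remaining $\theta_0$ factors become $\theta_0(t; e^{2\pi i \tau})^{-(k+2)/k}$ in the denominator and $\theta_0(t\, e^{2\pi i \lambda}; e^{2\pi i \tau})$ in the numerator once the $\lambda + \tau/2$ shifts are absorbed.

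Finally, applying $\wF(\lambda, \tau, \mu, k) = F(-\lambda + \tau/2, -\tau, \mu - 1, k - 2)$ sends $\kappa \to k$, inverts the nome so that $e^{2\pi i \tau} \to q^{-2}$ (matching the $|q|>1$ regime), turns $e^{2\pi i \lambda}\, e^{\pi i \tau}$ back into $e^{2\pi i \lambda}$, and converts the prefactor $e^{-\pi i \lambda(\mu - k/2)}$ into $e^{\pi i \lambda(\mu - k/2)}$. The main obstacle throughout is the careful bookkeeping of elementary constants and branch choices, particularly the powers of $i$, $2$, and $e^{\pi i \tau/4}$, which must combine to give exactly $2^{2/k}\, e^{-3\pi i/k}$; the other steps are purely substitutional. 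The cycle $\Delta$, defined in Theorem \ref{thm:ek-class} as an abstract cycle on which the required logarithms are single-valued, deforms in the regime $\Imm(\lambda) < 0$, $|q| > 1$ to a Pochhammer loop around the branch points $0$ and $1$ of the resulting $\theta_0$-integrand, since in this regime the ratio of $\theta_0$ factors acquires no further zeros or poles inside such a loop.
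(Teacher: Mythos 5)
Your proposal is correct and follows essentially the same route as the paper: specialize Theorem \ref{thm:ek-class} at $m=1$, $z=1$, substitute $G$, use $\theta_1(\pi\zeta\mid\tau)=ie^{\pi i\tau/4}e^{-\pi i\zeta}(e^{2\pi i\tau};e^{2\pi i\tau})\theta_0(e^{2\pi i\zeta};e^{2\pi i\tau})$ together with $\theta_1'(0\mid\tau)=2e^{\pi i\tau/4}(e^{2\pi i\tau};e^{2\pi i\tau})^3$, and finish by the shift $(\lambda,\tau,\mu,k)\mapsto(-\lambda+\tau/2,-\tau,\mu-1,k-2)$ defining $\wF$. The paper's proof is the same bookkeeping computation, with $\Delta$ likewise taken to be a Pochhammer loop around $0$ and $1$ once $z=1$.
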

\begin{proof}
Direct substitution and setting $z = 1$ in Theorem \ref{thm:ek-class} allows us to choose $\Delta$ to be a Pochhammer loop around $0$ and $1$ and yields
\begin{align*}
F(\lambda, \tau, \mu, k) &= e^{-\pi i \lambda (\mu - \frac{k}{2})} \int_\Delta t^{- \frac{\mu + 1}{\kappa}}\left(\frac{\theta_1(-\pi\zeta_1 \mid \tau))}{\theta_1'(0 \mid \tau)}\right)^{-\frac{2}{\kappa}}\frac{i}{2} t^{-1} \frac{\theta_1(\pi(\lambda + \tau/2 - \zeta_1) \mid \tau) \theta_1'(0 \mid \tau)}{\theta_1(\pi (\lambda + \tau/2) \mid \tau) \theta_1(\pi \zeta_1 \mid \tau)}dt\\
&=  e^{-\pi i \frac{2}{\kappa}} e^{-\pi i \lambda (\mu - \frac{k}{2})} \frac{i}{2}\int_\Delta t^{-\frac{\mu + 1}{\kappa} -1} \frac{\theta_1(\pi(\lambda + \tau/2 - \zeta_1) \mid \tau) \, \theta_1'(0 \mid \tau)^{\frac{\kappa + 2}{\kappa}}}{\theta_1(\pi (\lambda + \tau/2) \mid \tau) \, \theta_1(\pi \zeta_1 \mid \tau)^{\frac{\kappa + 2}{\kappa}}}dt\\
&=  e^{-\frac{2\pi i}{\kappa}} e^{-\pi i \lambda (\mu - \frac{k}{2})} \frac{i}{2} \frac{\theta_1'(0 \mid \tau)^{\frac{\kappa + 2}{\kappa}}}{\theta_1(\pi (\lambda + \tau/2) \mid \tau)}\int_\Delta t^{-\frac{\mu + 1}{\kappa} -1} \frac{\theta_1(\pi(\lambda + \tau/2 - \zeta_1) \mid \tau)}{\theta_1(\pi \zeta_1 \mid \tau)^{\frac{\kappa + 2}{\kappa}}}dt\\
&= e^{-\frac{2\pi i}{k + 2}} e^{-\pi i \lambda (\mu - \frac{k}{2})} \frac{i}{2} \frac{\theta_1'(0 \mid \tau)^{\frac{k + 4}{k + 2}}}{\theta_1(\pi (\lambda + \tau/2) \mid \tau)}\int_\Delta t^{-\frac{\mu + 1}{k + 2} -1} \frac{\theta_1(\pi(\lambda + \tau/2 - \zeta_1) \mid \tau)}{\theta_1(\pi \zeta_1 \mid \tau)^{\frac{k + 4}{k + 2}}}dt.
\end{align*}
We now compute the derivative
\[
\theta_1'(0 \mid \tau) = -2 e^{\pi i \tau/4} (q^2; q^2) \theta_0'(1; q^2) = 2 e^{\pi i \tau/4} (q^2; q^2)^3.
\]
Substituting and applying (\ref{eq:jac-add}) and (\ref{eq:theta-def}), we conclude
\begin{align*}
\wF(\lambda, \tau, \mu, k) &= e^{-\frac{2\pi i}{k}} e^{\pi i \lambda (\mu - 1 - \frac{k - 2}{2})} \frac{i}{2} \frac{\theta_1'(0 \mid \tau)^{\frac{k + 2}{k}}}{\theta_1(-\pi \lambda \mid \tau)}\int_\Delta t^{-\frac{\mu}{k} -1} \frac{\theta_1(\pi(-\lambda - \zeta_1) \mid \tau)}{\theta_1(\pi \zeta_1 \mid \tau)^{\frac{k + 2}{k}}}dt\\
&= e^{-\frac{2\pi i}{k}} e^{\pi i \lambda (\mu - \frac{k}{2})} \frac{i}{2} 2^{\frac{k + 2}{k}} e^{-\pi i \frac{k + 2}{2k}} \frac{(q^{-2}; q^{-2})^{\frac{2k + 4}{k}}}{\theta_0(e^{2\pi i \lambda}; q^{-2})} \int_\Delta t^{-\frac{\mu - 1}{k} - 1} \frac{\theta_0(t e^{2\pi i \lambda}; q^{-2})}{\theta_0(t; q^{-2})^{\frac{k + 2}{k}}}dt\\
&= 2^{\frac{2}{k}} e^{-\frac{3\pi i}{k}} e^{\pi i \lambda (\mu - \frac{k}{2})}  \frac{(q^{-2}; q^{-2})^{\frac{2k + 4}{k}}}{\theta_0(e^{2\pi i \lambda}; q^{-2})} \int_\Delta t^{-\frac{\mu - 1}{k} - 1} \frac{\theta_0(t e^{2\pi i \lambda}; q^{-2})}{\theta_0(t; q^{-2})^{\frac{k + 2}{k}}}dt. \qedhere
\end{align*}
\end{proof}

\begin{lemma} \label{lem:tr-norm}
For $|q| > 1$ and $\Imm(\lambda) < 0$, we have that 
\[
\Tr|_{M_{\frac{k}{2}, k}}\Big(q^{2d} e^{\pi i \lambda h}\Big) = e^{\frac{\pi i \lambda k}{2}}(q^2; q^2)^{-1} \theta_0(e^{-2\pi i \lambda}; q^2)^{-1}.
\]
\end{lemma}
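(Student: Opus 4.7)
The proof is a direct character computation. Since the Verma module $M_\Lambda$ with $\Lambda = \tfrac{k}{2}\rho + k\Lambda_0$ has, by PBW, a vector-space basis indexed by monomials in $U(\ann_-)$, its formal character factors as
\[
\ch M_\Lambda = e^\Lambda \prod_{\beta \in \Delta^+}(1 - e^{-\beta})^{-\mult(\beta)},
\]
where the positive root system of $\asl_2$ consists of the real roots $\alpha$ and $\pm\alpha + n\delta$ for $n \geq 1$ (each of multiplicity one) together with the imaginary roots $n\delta$ for $n \geq 1$ (also of multiplicity one, since $\sl_2$ has rank one). My plan is to specialize this formal character identity to the trace of $q^{2d}e^{\pi i \lambda h}$ on $M_\Lambda$, then recognize the resulting infinite product in the stated form.

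The specialization sends $e^\mu$ to the eigenvalue $q^{2\langle d, \mu\rangle} e^{\pi i \lambda (\alpha, \mu)}$ of $q^{2d}e^{\pi i \lambda h}$ on $M_\Lambda[\mu]$, where I use $h = \alpha$ via the invariant form. Using $(\alpha, \alpha) = 2$, $(\alpha, \rho) = 1$, $(\alpha, \delta) = (\alpha, \Lambda_0) = 0$ and $\langle d, \delta\rangle = 1$, $\langle d, \alpha\rangle = \langle d, \Lambda_0\rangle = 0$, I immediately read off the substitutions $e^\Lambda \mapsto e^{\pi i \lambda k/2}$, $e^{-\alpha}\mapsto e^{-2\pi i \lambda}$, and $e^{-\delta}\mapsto q^{-2}$. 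Inserting these into the product formula, the denominator collapses to
\[
(1 - e^{-2\pi i \lambda})\prod_{n \geq 1}(1 - e^{-2\pi i \lambda}q^{-2n})(1 - e^{2\pi i \lambda}q^{-2n})(1 - q^{-2n}) = \theta_0(e^{-2\pi i \lambda}; q^{-2})\,(q^{-2}; q^{-2})
\]
by the defining product $\theta_0(u; q) = (u;q)(u^{-1}q;q)$, and the numerator is $e^{\pi i \lambda k/2}$.

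Under the hypotheses $|q| > 1$ and $\Imm \lambda < 0$, I have $|q^{-2}| < 1$ and $|e^{-2\pi i \lambda}| = e^{2\pi \Imm \lambda} < 1$, so all infinite products converge absolutely; this legitimizes the termwise identification of the trace with the specialized character. The only subtlety is purely notational: the right-hand side of the lemma is written with $(q^2;q^2)^{-1}$ and $\theta_0(e^{-2\pi i \lambda}; q^2)^{-1}$, which at $|q| > 1$ are to be interpreted via the evident rewriting in terms of the convergent products $(q^{-2};q^{-2})^{-1}$ and $\theta_0(e^{-2\pi i \lambda}; q^{-2})^{-1}$ obtained above, consistent with the convention already used in Corollary \ref{corr:ek-class1}. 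There is thus no real obstacle beyond careful bookkeeping of multiplicities and specializations.
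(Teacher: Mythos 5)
Your proof is correct and follows essentially the same route as the paper: both compute the trace as the specialization of the Verma character, i.e.\ the highest-weight eigenvalue times the product over the affine positive roots $\{\alpha,\ \pm\alpha+n\delta,\ n\delta \mid n\geq 1\}$ (each of multiplicity one), and then recognize the product as $(q^{-2};q^{-2})\,\theta_0(e^{-2\pi i\lambda};q^{-2})$. Your remark about the convergent interpretation of $(q^{2};q^{2})$ and $\theta_0(e^{-2\pi i\lambda};q^{2})$ at $|q|>1$ is apt, since the paper's own proof likewise arrives at the expression in terms of $q^{-2}$.
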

\begin{proof}
Since negative roots take the form $-\alpha, \pm \alpha + m \delta, m \delta$ for $m < 0$, we obtain
\[
\Tr|_{M_{\frac{k}{2}, k}}\Big(q^{2d} e^{\pi i \lambda h}\Big) = e^{\pi i (\lambda h + 2 \tau d, \frac{k}{2}\rho + k \Lambda_0)} \prod_{\beta < 0} (1 - e^{\pi i (\lambda h + 2 \tau d, \beta)})^{-1}= e^{\frac{\pi i \lambda k}{2}}(q^{-2}; q^{-2})^{-1} \theta_0(e^{-2\pi i \lambda}; q^{-2})^{-1}. \qedhere
\]
\end{proof}

\begin{corr} \label{corr:norm-tr}
For $|q| > 1$ and $\Imm(\lambda) < 0$, we have that 
\[
\Tr|_{M_{\mu - 1, k - 2}}\Big(\hPhi_{\mu - 1} q^{2d} e^{\pi i \lambda h}\Big) = - 2^{\frac{2}{k}} e^{-\frac{3\pi i}{k}} e^{\pi i \lambda(\mu + 1)} \frac{(q^{-2}; q^{-2})^{\frac{k + 4}{k}}}{\theta_0(e^{2\pi i \lambda}; q^{-2})^2} \int_\Delta t^{-\frac{\mu - 1}{k} - 1} \frac{\theta_0(t e^{2\pi i \lambda}; q^{-2})}{\theta_0(t; q^{-2})^{\frac{k + 2}{k}}}dt,
\]
where $\Delta$ is a Pochhammer loop around $0$ and $1$.
\end{corr}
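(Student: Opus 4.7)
The strategy is a direct assembly: rearrange the definition of $\wF$ to express the target trace as $\wF(\lambda,\tau,\mu,k)$ times the ``denominator'' trace over $M_{\frac{k-2}{2},k-2}$, and then substitute the formula for $\wF$ from Corollary~\ref{corr:ek-class1} together with the evaluation of the denominator from Lemma~\ref{lem:tr-norm} (applied with $k$ replaced by $k-2$). Concretely, from the definition
\[
\wF(\lambda,\tau,\mu,k) := F(-\lambda+\tau/2,-\tau,\mu-1,k-2) = \frac{\Tr|_{M_{\mu-1,k-2}}\bigl(\hPhi_{\mu-1} q^{2d} e^{\pi i \lambda h}\bigr)}{\Tr|_{M_{\frac{k-2}{2},k-2}}\bigl(q^{2d} e^{\pi i \lambda h}\bigr)},
\]
so the target trace equals the product of these two ingredients, both of which are available in closed form.

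The only nontrivial step is to convert the theta function that appears in the denominator. Lemma~\ref{lem:tr-norm} produces $\theta_0(e^{-2\pi i \lambda}; q^{-2})^{-1}$, whereas the desired formula contains $\theta_0(e^{2\pi i \lambda}; q^{-2})^{-2}$. I would dispose of this using the reflection identity
\[
\theta_0(u^{-1}; p) \;=\; -u^{-1}\,\theta_0(u; p),
\]
which is an immediate consequence of $\theta_0(u;p) = (u;p)(u^{-1}p;p)$ and the elementary rewriting $(u^{-1};p) = -u^{-1}(1-u)(u^{-1}p;p)$ together with $(up;p) = (u;p)/(1-u)$. Applied with $u=e^{-2\pi i \lambda}$ and $p=q^{-2}$, it gives $\theta_0(e^{-2\pi i \lambda};q^{-2})^{-1} = -e^{2\pi i \lambda}\,\theta_0(e^{2\pi i \lambda};q^{-2})^{-1}$, which produces both the missing power of $\theta_0(e^{2\pi i \lambda};q^{-2})^{-1}$ and the sign $-1$ in front.

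What remains is pure bookkeeping. Collecting powers of $(q^{-2};q^{-2})$ gives $(2k+4)/k$ from $\wF$ plus $-1$ from the denominator trace, for a total of $(k+4)/k$. The power of $\theta_0(e^{2\pi i \lambda};q^{-2})$ is $-1$ from $\wF$ and $-1$ from the denominator (after reflection), for a total of $-2$. The factor $2^{2/k} e^{-3\pi i/k}$ passes through untouched, and the reflection contributes the overall sign $-1$. The one place to be careful is the $\lambda$-exponent: $\wF$ contributes $\pi i \lambda(\mu - k/2)$, Lemma~\ref{lem:tr-norm} contributes $\pi i \lambda(k-2)/2$, and the reflection contributes an additional $2\pi i \lambda$, summing to $\pi i \lambda\bigl(\mu - \tfrac{k}{2} + \tfrac{k-2}{2} + 2\bigr) = \pi i \lambda(\mu+1)$, as required. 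There is no serious obstacle here — the only pitfall is tracking the sign and the shift $e^{-2\pi i \lambda} \rightsquigarrow e^{2\pi i \lambda}$ consistently through the reflection identity.
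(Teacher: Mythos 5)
Your proposal is correct and is essentially identical to the paper's proof: it writes the trace as $\wF(\lambda,\tau,\mu,k)$ times the denominator trace, substitutes Corollary \ref{corr:ek-class1} and Lemma \ref{lem:tr-norm}, and finishes with the reflection identity $\theta_0(u^{-1};p)=-u^{-1}\theta_0(u;p)$ (which the paper records in Appendix \ref{sec:ell}) to convert $\theta_0(e^{-2\pi i\lambda};q^{-2})^{-1}$ into $-e^{2\pi i\lambda}\theta_0(e^{2\pi i\lambda};q^{-2})^{-1}$. The bookkeeping of exponents and signs matches the paper's computation exactly.
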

\begin{proof}
By Corollary \ref{corr:ek-class1} and Lemma \ref{lem:tr-norm}, we may compute 
\begin{align*}
\Tr|_{M_{\mu -1, k - 2}}\Big(\hPhi_{\mu - 1} q^{2d} e^{\pi i \lambda h}\Big) &= \wF(\lambda, \tau, \mu, k)\Tr|_{M_{\frac{k-2}{2}, k - 2}}\Big(q^{2d} e^{\pi i \lambda h}\Big)\\
&= 2^{\frac{2}{k}} e^{-\frac{3\pi i}{k}} e^{\pi i \lambda(\mu - 1)} \frac{(q^{-2}; q^{-2})^{\frac{k + 4}{k}}}{\theta_0(e^{2\pi i \lambda}; q^{-2})\theta_0(e^{-2\pi i \lambda}; q^{-2})} \int_\Delta t^{-\frac{\mu - 1}{k} - 1} \frac{\theta_0(t e^{2\pi i \lambda}; q^{-2})}{\theta_0(t; q^{-2})^{\frac{k + 2}{k}}}dt\\
&= -2^{\frac{2}{k}} e^{-\frac{3\pi i}{k}} e^{\pi i \lambda(\mu + 1)} \frac{(q^{-2}; q^{-2})^{\frac{k + 4}{k}}}{\theta_0(e^{2\pi i \lambda}; q^{-2})^2} \int_\Delta t^{-\frac{\mu - 1}{k} - 1} \frac{\theta_0(t e^{2\pi i \lambda}; q^{-2})}{\theta_0(t; q^{-2})^{\frac{k + 2}{k}}}dt. \qedhere
\end{align*}
\end{proof}

\begin{lemma}[Beta integral] \label{lem:beta-int}
If $\Delta$ is a Pochhammer loop around $0$ and $1$, then
\[
\int_\Delta t^{\alpha - 1}(1 - t)^{\beta - 1} dt = (1 - e^{2\pi i \alpha})(1 - e^{2\pi i \beta}) \frac{\Gamma(\alpha)\Gamma(\beta)}{\Gamma(\alpha + \beta)}.
\]
\end{lemma}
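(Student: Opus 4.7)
The plan is to reduce to the classical Euler beta integral
\[
B(\alpha, \beta) = \int_0^1 t^{\alpha-1}(1-t)^{\beta-1}\,dt = \frac{\Gamma(\alpha)\Gamma(\beta)}{\Gamma(\alpha+\beta)}, \qquad \text{Re}(\alpha), \text{Re}(\beta) > 0,
\]
and then extend the identity to all $(\alpha, \beta) \in \CC^2$ by analytic continuation. Both sides of the claimed equality are entire in $\alpha$ and $\beta$: on the right, the simple poles of $\Gamma(\alpha)$ and $\Gamma(\beta)$ at the non-positive integers are cancelled by the simple zeros of $1 - e^{2\pi i \alpha}$ and $1 - e^{2\pi i \beta}$ there, while $\Gamma(\alpha + \beta)^{-1}$ is entire; on the left, $\Delta$ is a compact cycle along which the integrand is single-valued by construction, so the integral depends holomorphically on $(\alpha, \beta)$. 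Consequently it suffices to verify the identity in the region $\text{Re}(\alpha), \text{Re}(\beta) > 0$.

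First I would fix a standard representative for $\Delta$ as the commutator loop $\gamma_0 \gamma_1 \gamma_0^{-1} \gamma_1^{-1}$ in $\pi_1(\CC \setminus \{0, 1\}, P)$ based at some $P \in (0, 1)$, where $\gamma_0$ (resp.\ $\gamma_1$) is a simple loop encircling $0$ (resp.\ $1$) counterclockwise. Because this commutator is null-homologous, the product of monodromies $M_0 M_1 M_0^{-1} M_1^{-1}$ acting on $t^{\alpha-1}(1-t)^{\beta-1}$ is trivial, so the integrand returns to its initial branch and the integral is well-defined. In the convergence region I would then shrink the radii of the small circular arcs about $0$ and $1$ to zero; since $\text{Re}(\alpha), \text{Re}(\beta) > 0$ these arc contributions vanish, and each of the four legs of $\Delta$ collapses to a traversal of $(0, 1)$ on a specific branch of the integrand determined by the preceding monodromy.

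The main step is the phase bookkeeping. Using that $M_0$ multiplies $t^{\alpha-1}$ by $e^{2\pi i \alpha}$ and $M_1$ multiplies $(1-t)^{\beta-1}$ by $e^{2\pi i \beta}$ with a consistent choice of $\arg(1-t)$ across $P$, a direct count shows that the four segment traversals of $(0, 1)$ contribute, in order,
\[
B(\alpha, \beta), \quad -e^{2\pi i \alpha} B(\alpha, \beta), \quad e^{2\pi i(\alpha + \beta)} B(\alpha, \beta), \quad -e^{2\pi i \beta} B(\alpha, \beta),
\]
where the minus signs arise from orientation reversal on the return legs. Summing and factoring yields
\[
\int_\Delta t^{\alpha-1}(1-t)^{\beta-1}\,dt = (1 - e^{2\pi i \alpha})(1 - e^{2\pi i \beta}) B(\alpha, \beta),
\]
which combined with the Euler formula for $B(\alpha, \beta)$ gives the claim on the half-plane region, and hence everywhere by analytic continuation. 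The main obstacle is keeping the branch and orientation conventions for the Pochhammer loop straight so that the accumulated phases come out exactly as $(1 - e^{2\pi i \alpha})(1 - e^{2\pi i \beta})$; once these conventions are pinned down in agreement with the statement of the lemma, the rest of the computation is mechanical.
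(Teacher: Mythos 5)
Your argument is correct: the paper states this lemma without proof, as it is the classical Pochhammer-contour representation of the Euler beta function, and your proof is the standard one — reduce to $\mathrm{Re}(\alpha), \mathrm{Re}(\beta) > 0$ by noting both sides are entire (the zeros of $1 - e^{2\pi i \alpha}$ and $1 - e^{2\pi i \beta}$ cancelling the poles of $\Gamma(\alpha)$ and $\Gamma(\beta)$), collapse the four legs onto $(0,1)$, and track monodromy phases. Your four phase contributions sum to $1 - e^{2\pi i\alpha} - e^{2\pi i\beta} + e^{2\pi i(\alpha+\beta)} = (1 - e^{2\pi i \alpha})(1 - e^{2\pi i \beta})$, exactly as required.
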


\begin{corr} \label{corr:norm-tr2}
For $|q| > 1$ and $\Imm(\lambda) < 0$, we have that 
\begin{multline*}
\Tr|_{M_{\mu -1, k-2}}\Big(\Phi_{\mu-1, k - 2}^{w_0, \cl} q^{2d} e^{\pi i \lambda h}\Big)\\
 =-\frac{e^{\pi i \lambda(\mu + 1)} e^{\pi i \frac{\mu - 1}{k}}}{1 - e^{-\frac{4\pi i}{k}}} \frac{\Gamma(\frac{\mu - 1}{k})\Gamma(\frac{k - \mu - 1}{k})}{\Gamma(-\frac{2}{k})} \frac{(q^{-2}; q^{-2})^{\frac{k + 4}{k}}}{\theta_0(e^{2\pi i \lambda}; q^{-2})^2} \int_\Delta \frac{1}{2\pi it} t^{-\frac{\mu - 1}{k}} \frac{\theta_0(t e^{2\pi i \lambda}; q^{-2})}{\theta_0(t; q^{-2})^{\frac{k + 2}{k}}}dt,
\end{multline*}
where $\Delta$ is a Pochhammer loop around $0$ and $1$.
\end{corr}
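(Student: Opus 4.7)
Since the space of $U(\asl_2)$-intertwiners $M_{\mu - 1, k - 2} \to M_{\mu - 1, k - 2} \hotimes L_2(z)$ is one-dimensional for generic parameters, the Wakimoto intertwiner $\hPhi_{\mu - 1}(z)$ of Proposition \ref{prop:class-inter} and the canonically normalized classical intertwiner $\Phi^{w_0, \cl}_{\mu - 1, k - 2}(z)$ must be proportional, say $\hPhi_{\mu - 1}(z) = C \cdot \Phi^{w_0, \cl}_{\mu - 1, k - 2}(z)$ for some constant $C = C(\mu, k)$. Because $\Phi^{w_0, \cl}$ has highest weight matrix element equal to $1$ by definition, $C$ equals the coefficient of $v_{\mu - 1, k - 2} \otimes w_0$ in $\hPhi_{\mu - 1}(z) v_{\mu - 1, k - 2}$. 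The plan is to compute $C$ in closed form using the Wakimoto realization and the beta integral of Lemma \ref{lem:beta-int}, and then to deduce Corollary \ref{corr:norm-tr2} by dividing the identity of Corollary \ref{corr:norm-tr} by $C$.

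To extract $C$, apply Proposition \ref{prop:class-inter} with $m = 1$ and $\kappa = k$. Since $\tfrac{e_1^n}{n!} w_{-2}$ has weight $-2 + 2n$, only the $n = 1$ term contributes to the $w_0$-component, and the classical formula $e_1 w_{-2} = 2 w_0$ contributes a factor of $2$, giving
\[
C = 2 z^{2/k} \int_\Delta \langle v^*, X(\alpha/\sqrt{k}, z) (-\gamma(z))\, \beta(t)\, X(-\alpha/\sqrt{k}, t) v\rangle\, dt.
\]
The matrix element factors into a bosonic and a ghost contribution. For the bosonic factor, the standard exponential OPE
\[
X(\alpha/\sqrt{k}, z) X(-\alpha/\sqrt{k}, t) = (z - t)^{-2/k}\, :\!X(\alpha/\sqrt{k}, z) X(-\alpha/\sqrt{k}, t)\!:
\]
contributes $(z-t)^{-2/k}$ together with the zero-mode factor $z^{(\mu-1)/k} t^{-(\mu-1)/k}$ on the highest weight vector; the normally ordered part has no component along $v$. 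For the ghost factor, $\gamma^{\mathrm{cr}}(z)\beta^{\mathrm{cr}}(t) v$ contains only creation modes applied to $v$ and has vanishing $v$-component, so only the singular part $-1/(z-t)$ of $\gamma(z)\beta(t)$ survives, contributing $1/(z-t)$ after the minus sign. Combining, the integrand equals $(z-t)^{-2/k - 1} z^{(\mu-1)/k} t^{-(\mu-1)/k}$, and the substitution $u = t/z$ shows that $C$ is independent of $z$ and reduces to
\[
C = 2 \int_\Delta u^{-(\mu-1)/k} (1 - u)^{-2/k - 1}\, du.
\]

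Lemma \ref{lem:beta-int} evaluates this with $\alpha = (k - \mu + 1)/k$ and $\beta = -2/k$, yielding $C$ as a product of Gamma functions and exponential factors $(1 - e^{-2\pi i (\mu-1)/k})(1 - e^{-4\pi i/k})$. Applying the reflection formula $\Gamma((k-\mu+1)/k)\,\Gamma((\mu-1)/k) = \pi/\sin(\pi(\mu-1)/k)$ together with the identity $1 - e^{-2\pi i(\mu-1)/k} = 2i e^{-\pi i(\mu-1)/k}\sin(\pi(\mu-1)/k)$ converts $C$ into a form whose reciprocal is precisely the ratio of the constants in Corollaries \ref{corr:norm-tr2} and \ref{corr:norm-tr}, up to the cosmetic rewriting $\int_\Delta t^{-(\mu-1)/k - 1}(\cdots)\, dt = 2\pi i \int_\Delta \tfrac{1}{2\pi i t} t^{-(\mu-1)/k}(\cdots)\, dt$. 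Substituting Corollary \ref{corr:norm-tr} into $\Tr(\Phi^{w_0, \cl} \cdot) = C^{-1}\, \Tr(\hPhi \cdot)$ then gives the claim.

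The main obstacle is the bookkeeping: tracking the precise numerical prefactors in the bosonic and ghost OPEs, choosing a consistent branch of $(z - t)^{-2/k - 1}$ along the Pochhammer cycle $\Delta$, and converting the trigonometric form of the beta integral into the exponential/Gamma form appearing in the statement. Each of these is routine but must be executed carefully for the prefactors to match cleanly with those already computed in Corollary \ref{corr:norm-tr}.
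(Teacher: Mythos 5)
Your global strategy coincides with the paper's: use one-dimensionality of the space of intertwiners to write $\hPhi_{\mu-1}(z) = C\,\Phi^{w_0,\cl}_{\mu-1,k-2}(z)$, evaluate $C$ via the Beta integral of Lemma \ref{lem:beta-int} and the reflection formula, and divide Corollary \ref{corr:norm-tr} by $C$. The genuine difference is in how $C$ is obtained. The paper does no new free-field computation at this stage: it extracts the highest weight matrix element as the double limit
\[
C \;=\; \lim_{q\to\infty}\lim_{\lambda\to -i\infty} e^{-\pi i\lambda(\mu-1)}\,\Tr|_{M_{\mu-1,k-2}}\Big(\hPhi_{\mu-1}\,q^{2d}e^{\pi i\lambda h}\Big) \;=\; 2^{\frac{2}{k}}e^{-\frac{3\pi i}{k}}\int_\Delta t^{-\frac{\mu-1}{k}}(1-t)^{-\frac{k+2}{k}}\,dt,
\]
so that the prefactor $2^{2/k}e^{-3\pi i/k}$ and the branch of the integrand on $\Delta$ are, by construction, identical to those appearing in Corollary \ref{corr:norm-tr} and cancel exactly in the quotient. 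You instead recompute $C$ ab initio from Proposition \ref{prop:class-inter} through the OPEs of $X(\pm\alpha/\sqrt{k},\cdot)$ and the $\beta\gamma$ system.

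That route is legitimate in principle, but as executed it has a concrete gap: your value $C = 2\int_\Delta u^{-\frac{\mu-1}{k}}(1-u)^{-\frac{2}{k}-1}\,du$ disagrees with the value above by the factor $2^{1-\frac{2}{k}}e^{\frac{3\pi i}{k}}$. Since both expressions denote the same matrix element, and since a change of branch on the Pochhammer cycle only alters the integral by a unit-modulus constant (which cannot account for $2$ versus $2^{2/k}$), your computation is missing normalizations, and dividing Corollary \ref{corr:norm-tr} by your $C$ would not reproduce the constant in the statement. The missing factors live precisely in the steps you defer as routine: your kernel $(z-t)^{-2/k}$ is not the trigonometric limit of the normalized quotient $\bigl(\theta_1(\pi(\zeta_0-\zeta_1)\mid\tau)/\theta_1'(0\mid\tau)\bigr)^{-2/k}$ used in Theorem \ref{thm:ek-class} — they differ by $(2\pi i)^{2/k}$ and half-integer powers of $t/z$, which is exactly where $2^{2/k}$ and the phase originate — and the ghost propagator must be normalized against Etingof--Kirillov's $G$, which carries an additional $\frac{i}{2}e^{-2\pi i\zeta}$. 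Until these are tracked and shown to convert your prefactor $2$ into $2^{2/k}e^{-3\pi i/k}$, the proof does not close; the limit argument the paper uses sidesteps this bookkeeping entirely, which is why it is the safer route here.
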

\begin{proof}
Because both $\hPhi_{\mu - 1}(z)$ and $\hPhi_{\mu - 1}^{w_0}(z)$ are intertwiners $M_{\mu - 1, k - 2} \to M_{\mu - 1, k - 2} \hotimes L_{2m}(z)$, $\hPhi_{\mu - 1}(z)$ is a scalar multiple of $\hPhi_{\mu - 1}^{w_0}(z)$.  Applying Lemma \ref{lem:beta-int}, the constant of proportionality is 
\begin{align*}
\langle v_{\mu - 1, k - 2}^*, \hPhi_{\mu - 1}(z) v_{\mu - 1, k - 2}\rangle &= \lim_{q \to \infty} \lim_{\lambda \to - i \infty} e^{-\pi i \lambda (\mu - 1)} \Tr|_{M_{\mu - 1, k - 2}}\Big(\hPhi_{\mu - 1} q^{2d} e^{\pi i \lambda h}\Big)\\
&= 2^{\frac{2}{k}} e^{-\frac{3\pi i}{k}} \int_\Delta t^{-\frac{\mu - 1}{k}} (1 - t)^{-\frac{k + 2}{k}}dt\\
&= 2^{\frac{2}{k}} e^{-\frac{3\pi i}{k}} (1 - e^{- 2\pi i \frac{\mu - 1}{k}})(1 - e^{-\frac{4\pi i}{k}}) \frac{\Gamma(\frac{k - \mu + 1}{k})\Gamma(-\frac{2}{k})}{\Gamma(\frac{k - \mu - 1}{k})}\\
&= 2^{\frac{2}{k}} e^{-\frac{3\pi i}{k}} e^{-\pi i \frac{\mu - 1}{k}} 2\pi i (1 - e^{-\frac{4\pi i}{k}}) \frac{\Gamma(-\frac{2}{k})}{\Gamma(\frac{\mu - 1}{k})\Gamma(\frac{k - \mu - 1}{k})}.
\end{align*}
Normalizing the result of Corollary \ref{corr:norm-tr} we conclude
\begin{multline*}
\Tr|_{M_{\mu -1, k-2}}\Big(\Phi_{\mu-1, k - 2}^{w_0, \cl} q^{2d} e^{\pi i \lambda h}\Big) = \frac{\Tr|_{M_{\mu -1, k-2}}\Big(\hPhi_{\mu-1} q^{2d} e^{\pi i \lambda h}\Big)}{2^{\frac{2}{k}} e^{-\frac{3\pi i}{k}} e^{-\pi i \frac{\mu - 1}{k}} 2\pi i (1 - e^{-\frac{4\pi i}{k}}) \frac{\Gamma(-\frac{2}{k})}{\Gamma(\frac{\mu - 1}{k})\Gamma(\frac{k - \mu - 1}{k})}}\\
= -\frac{e^{\pi i \lambda(\mu + 1)} e^{\pi i \frac{\mu - 1}{k}}}{1 - e^{-\frac{4\pi i}{k}}} \frac{\Gamma(\frac{\mu - 1}{k})\Gamma(\frac{k - \mu - 1}{k})}{\Gamma(-\frac{2}{k})} \frac{(q^{-2}; q^{-2})^{\frac{k + 4}{k}}}{\theta_0(e^{2\pi i \lambda}; q^{-2})^2} \int_\Delta \frac{1}{2\pi it} t^{-\frac{\mu - 1}{k}} \frac{\theta_0(t e^{2\pi i \lambda}; q^{-2})}{\theta_0(t; q^{-2})^{\frac{k + 2}{k}}}dt. \qedhere
\end{multline*}
\end{proof}

\subsection{The classical limit of the trace function}

We now obtain the result of Corollary \ref{corr:norm-tr2} as a consequence of Theorem \ref{thm:int-trace}.  For this, we take the classical limit of $T^{w_0}(q, \lambda, \omega, \mu, k)$, given by $\mu$ and $k$ fixed and 
\[
q = e^\eps \qquad \omega = \eps^{-1} \Omega \qquad \lambda = \eps^{-1} \Lambda
\]
as $\eps \to 0$.  Define the resulting limit of $T^{w_0}(q, \lambda, \omega, \mu, k)$ by 
\[
t(\Lambda, \Omega, \mu, k) := \lim_{\eps \to 0} T^{w_0}(e^\eps, \eps^{-1}\Lambda, \eps^{-1}\Omega, \mu, k).
\]

\begin{thm} \label{thm:class-lim}
If $\Lambda, \Omega, \mu, k$ are real with $-1 < \mu < 1$, the classical limit of $T^{w_0}(q, \lambda, \omega, \mu, k)$ is given by 
\[
t(\Lambda, \Omega, \mu, k)  = \frac{e^{\mu \Lambda + \Lambda} e^{\pi i \frac{\mu - 1}{k}}}{1 - e^{-\frac{4\pi i}{k}}} \frac{(e^{-2\Omega}; e^{-2\Omega})^{\frac{k + 4}{k}}}{\theta_0(e^{2\Lambda}; q^{-2\Omega})^2} \frac{\Gamma(\frac{\mu - 1}{k}) \Gamma(\frac{k - \mu - 1}{k})}{\Gamma(-\frac{2}{k})}\oint_{\Delta} \frac{dt}{2\pi it}\frac{\theta_0(t e^{2\Lambda}; e^{-2\Omega})}{\theta_0(t; e^{-2\Omega})^{\frac{k + 2}{k}}} t^{- \frac{\mu - 1}{k}},
\]
where $\Delta$ is a Pochhammer loop around $0$ and $1$.
\end{thm}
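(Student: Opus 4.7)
The plan is to pass $\varepsilon \to 0$ directly in the integral formula of Theorem \ref{thm:int-trace} with the classical scaling $q = e^\varepsilon$, $\lambda = \Lambda/\varepsilon$, $\omega = \Omega/\varepsilon$ (and $\mu, k$ fixed), then match the resulting integral term-by-term against Theorem \ref{thm:class-lim}. The factors split naturally into two groups: those depending only on $q^{-2\omega} = e^{-2\Omega}$ and $q^{\pm 2\lambda} = e^{\pm 2\Lambda}$ — including $\theta_0(q^{2\lambda}; q^{-2\omega})$, $(q^{-4}; q^{-2\omega})$, and the $q^{-2\omega}$-theta ratio in the integrand — pass directly to the analogous classical expressions appearing in the target formula. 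The real work is in the asymptotics of factors involving $q^{-2k} = e^{-2k\varepsilon}$, whose nome tends to $1$.

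For the single $q^{-2k}$-Pochhammer block
\[
\frac{(q^{-2k};q^{-2k})(q^4 q^{-2k}; q^{-2k})}{(q^{-2\mu + 2}; q^{-2k})(q^{2\mu + 2} q^{-2k}; q^{-2k})},
\]
I apply the standard $q \to 1^-$ limit $(e^{-2k\varepsilon a}; e^{-2k\varepsilon})_\infty \sim \Gamma(a)^{-1}(2k\varepsilon)^{a-1}(e^{-2k\varepsilon}; e^{-2k\varepsilon})_\infty$ to each factor, producing the Gamma ratio $\Gamma((\mu-1)/k)\Gamma((k-\mu-1)/k)/\Gamma(-2/k)$ together with a tracked power of $2k\varepsilon$. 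The small-nome theta ratio $\theta_0(tq^{-2\mu}; q^{-2k})/\theta_0(tq^{-2}; q^{-2k})$ in the integrand is analyzed by Jacobi's imaginary transformation, equivalently via the triple product identity
\[
(Q;Q)\,\theta_0(u; Q) = \sum_{n \in \ZZ} (-u)^n Q^{n(n-1)/2}
\]
followed by Poisson summation with $Q = e^{-2k\varepsilon}$; this rewrites each small-nome theta as an exponentially-decaying Gaussian prefactor times a large-nome (hence trivially convergent) theta. The Gaussians from numerator and denominator cancel against each other and against the analogous transformed contribution from $\Omega_{q^2}(t; q^{-2\omega}, q^{-2k})$ (whose double Pochhammers are first expanded via $(u; r, p) = \prod_{n \geq 0}(u p^n; r)$), leaving as residue the multi-valued power $t^{-(\mu-1)/k}$ and the fractional power $\theta_0(t; e^{-2\Omega})^{-(k+2)/k}$ in the integrand.

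The prefactor $(q^{-2\omega + 2}; q^{-2\omega}, q^{-2k})^2/(q^{-2\omega - 2}; q^{-2\omega}, q^{-2k})^2$ is handled similarly by expanding into single Pochhammers and combining with surplus $(q^{-2k}; q^{-2k})$-contributions from the previous step; after normalization, this yields the power $(e^{-2\Omega}; e^{-2\Omega})^{(k+4)/k}$ in the limit. Because the limiting integrand develops branch points at $t = 0$ and $t = 1$, the unit-circle cycle $\mathcal{C}_t$ of Theorem \ref{thm:int-trace} must be deformed to the Pochhammer loop $\Delta$ around $\{0, 1\}$; the residual difference between these two cycles produces exactly the trigonometric constant $e^{\pi i(\mu-1)/k}/(1 - e^{-4\pi i /k})$ of Theorem \ref{thm:class-lim} via the standard Pochhammer-to-circle relation. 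Collecting all prefactors and comparing against the target completes the proof. The main obstacle is the delicate bookkeeping in the second paragraph above: the Gaussian prefactors coming from Jacobi transformation of the small-nome thetas, the power-of-$(2k\varepsilon)$ corrections from the $q$-Gamma limit, and the infinite-product renormalizations from the double Pochhammers must all conspire to produce finite, matching factors — in particular, the precise emergence of the fractional exponent $(k+2)/k$ on $\theta_0(t; e^{-2\Omega})$ in the integrand is the central non-trivial check.
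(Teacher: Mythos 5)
Your overall strategy is the same as the paper's: substitute the classical scaling into the integral formula of Theorem \ref{thm:int-trace}, take factor-by-factor limits, and convert the unit circle to the Pochhammer loop. Two issues, one of which is a genuine gap.

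First, the gap: you never justify interchanging the limit $\eps \to 0$ with the contour integral. The limits of the $q^{-2k}$-dependent factors in the integrand (the theta ratio $\theta_0(tq^{-2\mu};q^{-2k})/\theta_0(tq^{-2};q^{-2k})$ and the $q^{-2k}$-Pochhammers inside $\Omega_{q^2}$) are uniform only on compact subsets of the unit circle \emph{avoiding} $t=1$, and the limiting integrand has a branch-point singularity of order $(1-t)^{-\frac{2}{k}-1}$ at $t=1$, which lies on the integration cycle $\cC_t$. ``Matching term-by-term'' therefore does not by itself prove that the limit of the integrals equals the integral of the limits; one needs a dominating bound, uniform in $\eps$, that is integrable across $t=1$. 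This is exactly what the paper's Lemma \ref{lem:class-bound} supplies (the integrand is bounded by a constant times $(1-t)^{-\frac{2}{k}-1}$, integrable because $k<0$ in the relevant region), and it is also where the hypothesis $-1<\mu<1$ is actually used --- your proposal never invokes it. Without this step the argument is incomplete.

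Second, your treatment of the small-nome thetas via Jacobi's imaginary transformation and Poisson summation is both heavier and less accurate than what is needed. After the modular transformation with nome $e^{-2k\eps}$, the transformed thetas involve quantities of the form $t^{1/(k\eps)}$, which oscillate on $|t|=1$ rather than converging; extracting the correct limit $(1-t)^{-\frac{\mu+1}{k}}(1-t^{-1})^{\frac{\mu-1}{k}}$ from that representation is delicate, and your claim that the ``residue'' of the cancellation is directly the power $t^{-\frac{\mu-1}{k}}$ skips the actual mechanism: that power only appears after the elementary identity $(1-t)^{-\frac{\mu+1}{k}}(1-t^{-1})^{\frac{\mu-1}{k}} = e^{\pi i \frac{\mu-1}{k}}\, t^{-\frac{\mu-1}{k}}(1-t)^{-\frac{2}{k}}$. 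The paper instead applies the $q$-binomial limit $\lim_{p\to 1^-}(p^a u;p)/(p^b u;p) = (1-u)^{b-a}$ directly to each ratio of Pochhammers; this is both shorter and gives the fractional powers of $(1-t)$, $(1-t^{-1})$, and $(1-te^{-2\Omega(n+1)})$ immediately. Your handling of the remaining prefactors (the $q$-Gamma limit for the $q^{-2k}$-block, the $(e^{-2\Omega};e^{-2\Omega})^{(k+4)/k}$ bookkeeping, and the circle-to-Pochhammer monodromy constant) is consistent with the paper's.
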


\begin{remark}
Theorem \ref{thm:class-lim} computes the classical trace function
\[
\Tr|_{M_{\mu - 1, k - 2}}\Big(\Phi^{w_0, \cl}_{\mu - 1, k - 2}(z) e^{2\Omega d} e^{2\Lambda \rho}\Big)
\]
and agrees with Corollary \ref{corr:norm-tr2} under the variable substitutions $q = e^\Omega$ and $\lambda = \frac{\Lambda}{\pi i}$ as expected.
\end{remark}

We require some elementary asymptotic lemmas for the proof of Theorem \ref{thm:class-lim}.

\begin{lemma}[{\cite[Corollary 10.3.4]{AAR}}] \label{lem:qgamma}
For $x \in \CC - \{0, -1, \ldots\}$, we have 
\[
\lim_{p \to 1^-} (1 - p)^{1 - x} \frac{(p; p)}{(p^x; p)} = \Gamma(x).
\]
\end{lemma}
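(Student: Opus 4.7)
The expression $\Gamma_p(x) := (1-p)^{1-x}(p;p)/(p^x;p)$ is the $p$-Gamma function, so this is the classical identification $\lim_{p\to 1^-}\Gamma_p(x) = \Gamma(x)$. My plan is to proceed via a Bohr-Mollerup argument. First, using the elementary factorization $(p^x;p) = (1-p^x)(p^{x+1};p)$, I would verify that
\[
\Gamma_p(1) = 1 \qquad\text{and}\qquad \Gamma_p(x+1) = \tfrac{1-p^x}{1-p}\,\Gamma_p(x).
\]
Since $(1-p^x)/(1-p) \to x$ as $p\to 1^-$, any subsequential limit $G(x)$ of $\Gamma_p(x)$ at a real $x > 0$ satisfies $G(1) = 1$ and the Gamma recursion $G(x+1) = x\,G(x)$.

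Second, I would show that $\log \Gamma_p(x)$ is convex in $x$ for real $x > 0$: differentiating
\[
\log \Gamma_p(x) = (1-x)\log(1-p) + \log(p;p) - \sum_{n\geq 0}\log(1-p^{n+x})
\]
twice in $x$ produces $\sum_{n\geq 0}p^{n+x}(\log p)^2/(1-p^{n+x})^2 > 0$. Since log-convexity is preserved under pointwise limits, any subsequential limit $G$ is log-convex on $(0,\infty)$, and the Bohr-Mollerup theorem then identifies $G$ with $\Gamma$ on $(0,\infty)$. A compactness step is needed to extract the subsequence: substituting $p = e^{-\eps}$ and comparing $\log\Gamma_p(x)$ term by term with a Riemann sum for the convergent improper integral $\int_0^\infty \log\!\bigl((1-e^{-(u+1)})/(1-e^{-(u+x)})\bigr)\,du$ (after absorbing the $(1-p)^{1-x}$ prefactor into the $n=0$ term) yields the required uniform bounds in $p$.

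Finally, I would extend to complex $x$ away from the non-positive integers by iterating the functional equation $\Gamma_p(x) = \Gamma_p(x+N)/([x]_p[x+1]_p\cdots[x+N-1]_p)$, choosing $N$ so that $\text{Re}(x+N)$ is large, and then passing to the limit factor by factor using the established convergence on $(0,\infty)$ together with Vitali's theorem on the meromorphic family $\{\Gamma_p\}$. The main obstacle is that the infinite product $\prod_n (1-p^{n+1})/(1-p^{n+x})$ has infinitely many factors each contributing a non-negligible amount and the prefactor $(1-p)^{1-x}$ is singular as $p\to 1^-$, so one cannot interchange limit and product naively; the Bohr-Mollerup characterization (or equivalently a careful Riemann-sum comparison) serves precisely to bypass this difficulty.
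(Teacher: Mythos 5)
The paper gives no proof of this lemma at all: it is quoted directly from \cite[Corollary 10.3.4]{AAR}, so the only meaningful comparison is with that source, and your argument essentially reconstructs it. The computations you rely on are correct: $\Gamma_p(1)=1$ and $\Gamma_p(x+1)=\frac{1-p^x}{1-p}\Gamma_p(x)$ follow from $(p^x;p)=(1-p^x)(p^{x+1};p)$, the second derivative of $\log\Gamma_p$ in $x$ is indeed $\sum_{n\geq 0}p^{n+x}(\log p)^2/(1-p^{n+x})^2>0$, convexity and the recursion pass to pointwise subsequential limits, and Bohr--Mollerup then forces every such limit to be $\Gamma$ on $(0,\infty)$, so uniform local bounds close the real case. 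Your complex extension is also sound: for $\mathrm{Re}\,x>0$ one has $|1-p^{x+n}|\geq 1-p^{\mathrm{Re}\,x+n}$, whence $|\Gamma_p(x)|\leq \Gamma_p(\mathrm{Re}\,x)$, giving the local boundedness that Vitali's theorem needs, and the functional equation $\Gamma_p(x)=\Gamma_p(x+N)/\prod_{j=0}^{N-1}[x+j]_p$ handles $\mathrm{Re}\,x\leq 0$ away from $\{0,-1,\ldots\}$.

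One detail of your compactness step is misstated, though it does not damage the architecture. Absorbing $(1-p)^{1-x}$ into the $n=0$ term does not tame the singularity: with $p=e^{-\eps}$, the combination $(1-x)\log(1-p)+\log\frac{1-p}{1-p^x}$ still diverges like $(1-x)\log\eps$. The cancellation is instead against the aggregate of the roughly $\eps^{-1}$ terms with $\eps n\lesssim 1$; the sum-versus-integral comparison gives $\sum_{n\geq 0}\log\frac{1-e^{-\eps(n+1)}}{1-e^{-\eps(n+x)}}=(x-1)\log\eps+O(1)$ uniformly on compacts of $(0,\infty)$, and only after adding the prefactor is everything $O(1)$. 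Relatedly, be aware that the Riemann-sum heuristic can only deliver the $O(1)$ bound, never the limiting value: the Euler--Maclaurin correction between sum and integral is itself of size $O(1)$ here (the integral alone would produce the Stirling-type constant $x\log x-x+1$, which differs from $\log\Gamma(x)$ already at $x=2$). Since you invoke the comparison only for uniform bounds and let Bohr--Mollerup pin down the value, your proof is correct once this bookkeeping is repaired.
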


\begin{lemma} \label{lem:class-asymp2}
We have 
\[
\lim_{p \to 1^-} \frac{(p^a; p)}{(p^b; p)} (1 - p)^{a - b} = \frac{\Gamma(b)}{\Gamma(a)}
\]
\end{lemma}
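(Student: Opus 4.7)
The statement is an immediate consequence of Lemma \ref{lem:qgamma}, so the plan is essentially to apply that lemma twice and divide. Specifically, Lemma \ref{lem:qgamma} can be rewritten as
\[
\lim_{p \to 1^-} (1-p)^{x-1} \frac{(p^x;p)}{(p;p)} = \frac{1}{\Gamma(x)},
\]
for any $x \in \CC \setminus \{0,-1,-2,\dots\}$.

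The plan is then to insert a factor of $(p;p)/(p;p)$ in the ratio and split the expression as
\[
\frac{(p^a;p)}{(p^b;p)} (1-p)^{a-b} = \left[(1-p)^{a-1} \frac{(p^a;p)}{(p;p)}\right] \bigg/ \left[(1-p)^{b-1} \frac{(p^b;p)}{(p;p)}\right].
\]
By the reformulation of Lemma \ref{lem:qgamma} above, the numerator tends to $1/\Gamma(a)$ and the denominator tends to $1/\Gamma(b)$ as $p \to 1^-$, both being nonzero (under the implicit assumption that $a,b \notin \{0,-1,-2,\dots\}$, which is needed for the right-hand side to make sense anyway). Taking the quotient gives $\Gamma(b)/\Gamma(a)$, as desired.

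There is essentially no obstacle here; the only thing to check is that the quotient of limits is the limit of quotients, which is valid precisely because the limit in the denominator, $1/\Gamma(b)$, is nonzero for $b \notin \{0,-1,-2,\dots\}$. No further estimates or asymptotic analysis are needed.
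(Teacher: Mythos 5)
Your argument is exactly the paper's: the paper proves this lemma by the one-line remark that it follows from applying Lemma \ref{lem:qgamma} twice, which is precisely the quotient decomposition you spell out. Your additional remark about the denominator limit being nonzero is a correct (and harmless) elaboration of the same approach.
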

\begin{proof}
This follows by applying Lemma \ref{lem:qgamma} twice.
\end{proof}

\begin{lemma} \label{lem:sc-asymp}
If $u$ and $v$ are complex with $|u|, |v| < 1$, we have
\[
\lim_{p \to 1^-} (p^a u; v) = (u; v)
\]
uniformly on compact subsets of $|u| < 1$.
\end{lemma}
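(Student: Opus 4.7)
The plan is a standard uniform convergence argument for infinite products. Fix a compact subset $K \subset \{u : |u| < 1\}$ and set $r := \sup_{u \in K}|u| < 1$. Since $|v| < 1$ and $p^a \to 1$ as $p \to 1^-$, for all $p$ sufficiently close to $1$ we have $|p^a| \leq 2$, so $|p^a u v^n| \leq 2r|v|^n$ uniformly for $u \in K$.

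First, I would split the product as
\[
(p^a u; v) = \prod_{n < N}(1 - p^a u v^n)\, \cdot \prod_{n \geq N}(1 - p^a u v^n)
\]
for an integer $N$ chosen large enough that $2r|v|^N < 1/2$. The finite product is a polynomial in $p^a$ and $u$, so it converges uniformly on $K$ to $\prod_{n<N}(1 - uv^n)$ as $p \to 1^-$ by continuity. For the tail, I would use the elementary bound $|\log(1 - z)| \leq 2|z|$ valid for $|z| \leq 1/2$ to obtain the uniform estimate
\[
\Big|\log \prod_{n \geq N}(1 - p^a u v^n)\Big| \leq \sum_{n \geq N} 2|p^a u v^n| \leq 4r \sum_{n \geq N}|v|^n,
\]
valid for all $u \in K$ and all $p$ close enough to $1$. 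The right side is a convergent geometric series, so dominated convergence applied to the series $\sum_{n \geq N}\log(1 - p^a u v^n)$ (together with continuity of each term in $p$) shows that the tail converges uniformly on $K$ to $\prod_{n \geq N}(1 - u v^n)$ as $p \to 1^-$.

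Multiplying the two uniformly convergent pieces yields the claimed uniform convergence. There is no real obstacle here; the only care required is to fix the split $N$ before letting $p \to 1^-$ so that the tail estimate is uniform in $p$, and to ensure that the bound $|p^a| \leq 2$ holds eventually, which is immediate since $p^a \to 1$.
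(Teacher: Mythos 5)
Your proof is correct. It differs from the paper's in mechanics, though both hinge on taking logarithms: the paper writes the entire product at once as $\exp\bigl(-\sum_{m>0}\sum_{n\geq 0}p^{am}u^mv^{nm}/m\bigr)$ and passes the limit $p\to 1^-$ through the double series termwise, whereas you split the product into a finite head (handled by plain continuity) and a tail controlled by the uniform bound $|\log(1-z)|\leq 2|z|$. Your route is the more elementary one and, importantly, it makes the uniformity on compact subsets of $\{|u|<1\}$ explicit --- the key point being that the tail bound $4r\sum_{n\geq N}|v|^n$ is independent of both $u\in K$ and $p$ near $1$, so tails are uniformly small and the remaining finite sum converges uniformly; this is really a Weierstrass $M$-test argument rather than ``dominated convergence,'' but the estimate you wrote down is exactly what is needed. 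The paper's one-line version is terser and leaves the justification of the termwise limit and of uniformity implicit. One cosmetic remark: fixing $N$ before letting $p\to 1^-$, as you emphasize, is indeed the only place where care is required, and you handle it correctly.
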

\begin{proof}
We evaluate the limit as
\[
\lim_{p \to 1^-} (p^a u; v) = \lim_{p \to 1} \exp\Big(-\sum_{m > 0} \sum_{n \geq 0} \frac{p^{am} u^m v^m}{m}\Big) = \exp\Big(-\sum_{m > 0} \sum_{n \geq 0} \frac{u^m v^{nm}}{m}\Big) = (u; v). \qedhere
\]
\end{proof}

\begin{lemma}[{\cite[Theorem 10.2.4]{AAR}}] \label{lem:class-asymp}
For complex $u$ with $|u| \leq 1$, if $b \geq a$ and $a + b \geq 1$, we have 
\[
\lim_{p \to 1^-} \frac{(p^a u; p)}{(p^b u; p)} = (1 - u)^{b - a}
\]
uniformly on compact subsets of $|u| \leq 1$. If $a + b \geq 1$ does not hold, the convergence is uniform on compact subsets of $\{|u| \leq 1\}$ avoiding $1$.
\end{lemma}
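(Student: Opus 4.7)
The result is classical and my plan is to either invoke \cite[Theorem 10.2.4]{AAR} directly or reprove it via Heine's $q$-binomial summation. For the latter, I would write
\[
\frac{(p^a u; p)}{(p^b u; p)} = \frac{(p^a u; p)/(u;p)}{(p^b u; p)/(u;p)}
\]
and apply Heine's identity $\frac{(\alpha u; p)}{(u;p)} = \sum_{n \geq 0} \frac{(\alpha; p)_n}{(p;p)_n} u^n$ (valid for $|u| < 1$) to each factor, yielding
\[
\frac{(p^a u; p)}{(p^b u; p)} = \frac{\sum_{n \geq 0} c_n(a,p)\, u^n}{\sum_{n \geq 0} c_n(b,p)\, u^n}, \qquad c_n(x,p) := \frac{(p^x; p)_n}{(p;p)_n} = \frac{\Gamma_p(x+n)}{\Gamma_p(x)\,\Gamma_p(n+1)},
\]
where $\Gamma_p(x) = (1-p)^{1-x}(p;p)/(p^x;p)$ is the $q$-gamma function appearing in Lemma \ref{lem:qgamma}.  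By that lemma, each coefficient converges pointwise as $c_n(x,p) \to (x)_n/n!$ as $p \to 1^-$, where $(x)_n = x(x+1)\cdots(x+n-1)$ is the rising Pochhammer symbol.  Passing the limit through the summation and recognizing $\sum_n \frac{(x)_n}{n!} u^n = (1-u)^{-x}$ as the classical binomial series then yields
\[
\lim_{p \to 1^-} \frac{(p^a u; p)}{(p^b u;p)} = \frac{(1-u)^{-a}}{(1-u)^{-b}} = (1-u)^{b-a},
\]
as claimed.

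The key technical step is justifying the termwise passage to the limit with uniformity in $u$.  I would factor $1 - p^y = (1-p)\, y\, \phi_p(y)$ with $\phi_p(y) := (1-p^y)/((1-p)y)$ jointly continuous in $(p,y) \in [p_0,1] \times (0,\infty)$ and uniformly bounded away from $0$ and $\infty$ on compact sets.  Substituting into the product expression $c_n(x,p) = \prod_{k=0}^{n-1} \frac{(x+k)\phi_p(x+k)}{(k+1)\phi_p(k+1)}$ produces a uniform majorization $|c_n(x,p)| \leq C \cdot (x')_n/n!$ valid for all $p \in [p_0,1)$, $n \geq 0$, and some $x' > 0$, which reduces the interchange of limit and sum to a dominated convergence argument.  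This gives the claimed convergence uniformly on compact subsets of the open disk $|u| < 1$.

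The main obstacle will be extending the uniform convergence up to the boundary $|u|=1$ under the hypothesis $a+b \geq 1$.  The inequality $a + b \geq 1$ is precisely the threshold under which the two $q$-Pochhammer symbols have balanced singular behavior at $u=1$: combining Lemma \ref{lem:class-asymp2} with the Heine expansion shows that the boundary behavior of $(p^x u;p)^{-1}$ as $(u,p) \to (1,1)$ scales like $(1-p)^{1-x}$, and the ratio is well-controlled precisely when $(1-a)+(1-b) \leq 1$.  Once this balance is made quantitative through a careful estimate of the Heine tail near $u = 1$, an Abel-type argument bridges the interior uniform convergence to the continuous boundary value $(1-u)^{b-a}$.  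When $a+b < 1$ this cancellation fails, and uniform convergence must be restricted to compacta of $|u| \leq 1$ separated from $u=1$, matching the stated weaker conclusion.
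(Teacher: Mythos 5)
Your proposal matches the paper exactly: the paper gives no proof of this lemma and simply cites \cite[Theorem 10.2.4]{AAR}, which is your first stated option. Your backup sketch via Heine's $q$-binomial summation, coefficientwise convergence through the $q$-gamma function, and the boundary analysis distinguishing $a + b \geq 1$ is essentially a reconstruction of the proof in \cite{AAR} itself, so nothing further is needed.
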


\begin{lemma} \label{lem:class-bound}
For complex $u$, if $b \geq a$ and $b \notin \{0, -1, \ldots\}$, uniformly in $p < 1$ near $1$ and $|u| = 1$ near $1$, we have
\[
\frac{(p^a u; p)}{(p^b u; p)} \leq C (1 - u)^{b - a}
\]
for some constant $C$ uniform in $u$ and $p$.
\end{lemma}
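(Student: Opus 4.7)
The plan is to refine the pointwise asymptotic of Lemma~\ref{lem:class-asymp} to a uniform bound by isolating the near-singular contribution at $u = 1$ in a finite sub-product. Choose $N$ large enough that $\min(a + N, b + N) > 1$, and factor
\[
\frac{(p^a u; p)}{(p^b u; p)} = \left(\prod_{n = 0}^{N - 1} \frac{1 - p^{a + n} u}{1 - p^{b + n} u}\right) \cdot \frac{(p^{a + N} u; p)}{(p^{b + N} u; p)}.
\]
For the prefactor I would use the elementary identity $|1 - p^c u|^2 = (1 - p^c)^2 + 4 p^c \sin^2(\theta/2)$ for $u = e^{i\theta}$. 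The assumption $b \notin \{0, -1, \dots\}$ ensures all exponents $a+n$ and $b+n$ stay away from nonpositive integers, and since the product is finite one deduces
\[
|1 - p^{a+n} u| \leq C(|1 - p| + |1 - u|), \qquad |1 - p^{b+n} u| \geq c\max(|1 - p|, |1 - u|)
\]
uniformly. Their ratio is therefore bounded by a constant, so multiplying over $0 \leq n < N$ yields a uniform upper bound $C_0$ on the prefactor in the region $p < 1$ near $1$, $|u| = 1$ near $1$.

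For the tail, the shifted exponents satisfy $(a + N) + (b + N) \geq 1$, so Lemma~\ref{lem:class-asymp} gives $(p^{a + N} u; p)/(p^{b + N} u; p) \to (1 - u)^{b - a}$ uniformly on $|u| \leq 1$ as $p \to 1^-$. To upgrade this to a multiplicative bound I would work with the explicit logarithmic series
\[
\log \frac{(p^{a + N} u; p)}{(p^{b + N} u; p)} = -\sum_{k \geq 1} \frac{u^k}{k} \cdot \frac{p^{(a + N)k} - p^{(b + N)k}}{1 - p^k},
\]
and compare term by term with $(b-a) \log(1 - u)$. The coefficient $(p^{(a+N)k} - p^{(b+N)k})/(1 - p^k)$ tends to $(b - a)$ as $p \to 1$, uniformly in $k$ because of the extra decay $p^{Nk}$ with $N \geq 1$, and an Abel summation on $|u| = 1$ then yields the desired bound $|\mathrm{tail}| \leq 2|1 - u|^{b - a}$ for $p$ sufficiently close to $1$.

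The main obstacle is precisely this last step: upgrading Lemma~\ref{lem:class-asymp}'s additive uniform convergence to a bound that is multiplicatively controlled by $|1 - u|^{b - a}$ in a joint neighborhood of $(p, u) = (1, 1)$. An alternative to the Abel-summation route is to rewrite $F_p(u) = (1 - p)^{b - a} \Gamma_p(b + c)/\Gamma_p(a + c)$ with $u = p^c$, where $\Gamma_p$ is the quantum Gamma function appearing in Lemma~\ref{lem:qgamma}, and leverage Lemma~\ref{lem:class-asymp2} for the limit of the Gamma ratio, handling the joint $(p, u) \to (1, 1)$ behavior through the prefactor analysis above. Combining the uniform bound $C_0$ on the prefactor with the multiplicative tail bound then yields $|F_p(u)| \leq C|1 - u|^{b - a}$ as claimed.
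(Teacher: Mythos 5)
Your skeleton is exactly the paper's proof: split off a finite prefactor so that the shifted exponents of the tail land in the range where Lemma \ref{lem:class-asymp} is uniform up to $u=1$ (the paper takes $m$ with $a+b+2m\geq 1$, you take $N$ with $\min(a+N,b+N)>1$ --- same idea), bound each prefactor ratio $\bigl|\tfrac{1-p^{a+n}u}{1-p^{b+n}u}\bigr|$ by an absolute constant using $|1-p^{c}u|^2=(1-p^{c})^2+4p^{c}\sin^2(\theta/2)$ and the hypothesis $b\notin\{0,-1,\ldots\}$ to keep $b+n$ away from $0$ (the paper writes the ratio as $1+\tfrac{up^{l}(p^{a}-p^{b})}{1-p^{b+l}u}$ and bounds the correction by $1+C_1|b-a|/|b+l|$, which is the same estimate in different clothing), and then invoke Lemma \ref{lem:class-asymp} for the tail. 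The paper stops there; it does not attempt the multiplicative upgrade you describe.

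The one place your write-up goes beyond the paper is also where it breaks: the claim that $\frac{p^{(a+N)k}-p^{(b+N)k}}{1-p^{k}}\to b-a$ \emph{uniformly in $k$} as $p\to 1^-$ is false. For $k\gtrsim 1/(1-p)$ both numerator terms decay and the coefficient tends to $0$, so the bracketed difference from $b-a$ is of size $|b-a|$ on an infinite range of $k$; the term-by-term comparison with $(b-a)\log(1-u)$ and the asserted conclusion $|\mathrm{tail}|\leq 2|1-u|^{b-a}$ therefore do not follow from the argument as written (an Abel summation could conceivably be pushed through, but it requires controlling $\sum_k u^{k}c_k/k$ with coefficients $c_k$ that are not small, precisely in the joint regime $|1-u|\sim 1-p$). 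Your instinct that the uniform \emph{additive} convergence of Lemma \ref{lem:class-asymp} does not by itself yield a bound of the form $C|1-u|^{b-a}$ is a legitimate concern --- but since the paper's own proof simply cites Lemma \ref{lem:class-asymp} at this step, the honest comparison is that you reproduce the paper's argument and then append an embellishment whose key uniformity claim is wrong.
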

\begin{proof}
Choose $m \geq 0$ so that $a + b + 2m \geq 1$.  For this $m$, we have 
\[
\frac{(p^a u; p)}{(p^b u; p)} = \frac{(p^au; p)_m}{(p^bu; p)_m}\frac{(p^{a + m}u; p)}{(p^{b + m}u; p)},
\]
so by Lemma \ref{lem:class-asymp} it suffices to show that $\frac{1 - p^{a + l}u}{1 - p^{b + l}u}$ is uniformly bounded for $0 \leq l \leq m - 1$.  Let $p = e^\eps$ and $u = e^{2\pi i \delta}$ and notice that for some constant $C_1$ and $p, u$ sufficiently close to $1$ that
\[
\left|\frac{1 - p^{a + l}u}{1 - p^{b + l}u}\right| = \left|1 + \frac{u p^l (p^a - p^b)}{1 - p^{b + l}u}\right| \leq 1 + \frac{|p^a - p^b|}{|1 - p^{b + l}u|} \leq 1 + C_1 \frac{|b - a| \eps}{|(b + l)\eps + 2\pi i \delta|} \leq 1 + C_1 \frac{|b - a|}{|b + l|},
\]
which is uniformly bounded, as needed.
\end{proof}

\begin{proof}[Proof of Theorem \ref{thm:class-lim}]
Recall from Theorem \ref{thm:int-trace} that 
\begin{align*}
T^{w_0}(q, \lambda, \omega, \mu, k) &=  \frac{q^{\lambda\mu - \lambda + 2}(q^{-4}; q^{-2\omega})}{\theta_0(q^{2\lambda}; q^{-2\omega})(q^{2\lambda - 2} q^{-2\omega}; q^{-2\omega})(q^{-2\lambda - 2}; q^{-2\omega})} \frac{ (q^{-2k};q^{-2k})(q^4 q^{-2k}; q^{-2k})}{(q^{-2\mu + 2};q^{-2k})(q^{2\mu + 2} q^{-2k}; q^{-2k})}\\
&\phantom{==}\frac{(q^{-2 \omega + 2}; q^{-2\omega}, q^{-2 k})^2}{(q^{-2 \omega - 2}; q^{-2 \omega}, q^{-2k})^2} \oint_{\cC_t} \frac{dt}{2\pi it} \Omega_{q^2}(t; q^{-2\omega}, q^{-2k}) \frac{\theta_0(tq^{-2\mu}; q^{-2k})}{\theta_0(tq^{-2}; q^{-2k})} \frac{\theta_0(tq^{2\lambda}; q^{-2\omega})}{\theta_0(tq^{-2}; q^{-2\omega})},
\end{align*}
where $\cC_t$ is the unit circle. Applying Lemmas \ref{lem:class-asymp2}, \ref{lem:sc-asymp}, and \ref{lem:class-asymp}, we obtain that 
\begin{multline*}
\lim_{\eps \to 0} (q^{-4}; q^{-2\omega})  \frac{q^{\lambda\mu - \lambda + 2}}{\theta_0(q^{2\lambda}; q^{-2\omega})(q^{2\lambda - 2} q^{-2\omega}; q^{-2\omega})(q^{-2\lambda - 2}; q^{-2\omega})} \frac{ (q^{-2k};q^{-2k})(q^4 q^{-2k}; q^{-2k})}{(q^{-2\mu + 2};q^{-2k})(q^{2\mu + 2} q^{-2k}; q^{-2k})}\\
= -\frac{e^{\mu \Lambda - \Lambda} (e^{-2\Omega}; e^{-2\Omega})}{\theta_0(e^{2\Lambda}; e^{-2\Omega}) (e^{2\Lambda - 2 \Omega}; e^{-2\Omega})(e^{-2\Lambda};e^{-2\Omega})} \frac{\Gamma(\frac{\mu - 1}{k}) \Gamma(\frac{k - \mu - 1}{k})}{\Gamma(1)\Gamma(-\frac{2}{k})}
\end{multline*}
and
\[
\lim_{\eps \to 0} \frac{(q^{-2 \omega + 2}; q^{-2\omega}, q^{-2 k})^2}{(q^{-2 \omega - 2}; q^{-2 \omega}, q^{-2k})^2}= \lim_{\eps \to 0} \prod_{n \geq 0} \frac{(q^{-2 \omega (n+1) + 2}; q^{-2 k})^2}{(q^{-2 \omega(n + 1) - 2}; q^{-2k})^2}= \prod_{n \geq 0} (1 - e^{-2\Omega(n + 1)})^{\frac{4}{k}}= (e^{-2\Omega}; e^{-2\Omega})^{\frac{4}{k}}.
\]
Uniformly on compact subsets of $\cC_t$, we have by Lemma \ref{lem:sc-asymp} that
\[
\lim_{\eps \to 0} \frac{(tq^{2\lambda}q^{-2\omega}; q^{-2\omega})(t^{-1} q^{-2\lambda} q^{-2\omega}; q^{-2\omega})}{(tq^{-2} q^{-2\omega}; q^{-2\omega})(t^{-1} q^2 q^{-2\omega};q^{-2\omega})} = \frac{(t e^{2\Lambda}e^{-2\Omega}; e^{-2\Omega})(t^{-1} e^{-2\Lambda} e^{-2 \Omega}; e^{-2\Omega})}{(te^{-2\Omega}; e^{-2\Omega})(t^{-1} e^{-2\Omega}; e^{-2\Omega})}.
\]
and for sufficiently small $q^{-2\omega}$ by Lemma \ref{lem:class-asymp} that
\[
\lim_{\eps \to 0} \prod_{n \geq 0} \frac{(tq^{-2} q^{-2\omega (n + 1)}; q^{-2k})(t^{-1} q^{-2} q^{-2\omega (n + 1)} q^{-2k}; q^{-2k})}{(tq^2 q^{-2\omega (n+1)}; q^{-2k})(t^{-1} q^{2} q^{-2\omega (n + 1)} q^{-2k}; q^{-2k})} = \prod_{n \geq 0} (1 - te^{-2\Omega (n+1)})^{-\frac{2}{k}}(1 - t^{-1}e^{-2\Omega (n + 1)})^{-\frac{2}{k}}
\]
Finally, by Lemma \ref{lem:class-asymp}, uniformly on compact subsets of $\cC_t$ avoiding $1$ we have 
\[
\lim_{\eps \to 0} \frac{1 - tq^{2\lambda}}{1 - t q^{-2}}\frac{(tq^{-2\mu}; q^{-2k})(t^{-1}q^{2\mu} q^{-2k}; q^{-2k})}{(t q^{2}; q^{-2k})(t^{-1} q^2 q^{-2k}; q^{-2k})} = (1 - t q^{2\Lambda}) \frac{(1 - t)^{-\frac{\mu + 1}{k}}(1 - t^{-1})^{\frac{\mu - 1}{k}}}{1 - t},
\]
where we note that for $-1 < \mu < 1$ both $-\frac{\mu + 1}{k}$ and $\frac{\mu - 1}{k}$ are positive because $k < 0$ for our choice of parameters.  Combining the previous three limits and applying Lemma \ref{lem:class-bound}, on a small enough compact neighborhood of $1$ in $\cC_t$, the integrand has asymptotics bounded uniformly in $\eps$ by a constant multiple of $(1 - t)^{-\frac{2}{k} - 1}$.  Therefore, the limit of the integral may be computed by omitting a shrinking compact neighborhood of $1$ in $\cC_t$, so we conclude by uniformity of the limits on compact subsets of $\cC_t$ avoiding $1$ that 
\begin{align*}
t(\Lambda, \Omega, \mu, k) &= - \frac{e^{\mu \Lambda - \Lambda} (e^{-2\Omega}; e^{-2\Omega})}{\theta_0(e^{2\Lambda}; e^{-2\Omega}) (e^{2\Lambda - 2 \Omega}; e^{-2\Omega})(e^{-2\Lambda};e^{-2\Omega})} \frac{\Gamma(\frac{\mu - 1}{k}) \Gamma(\frac{k - \mu - 1}{k})}{\Gamma(1)\Gamma(-\frac{2}{k})}(e^{-2\Omega}; e^{-2\Omega})^{\frac{4}{k}}\\
&\phantom{==========} \oint_{\cC_t} \frac{dt}{2\pi it} (1 - t)^{-\frac{\mu + 1}{k}}(1 - t^{-1})^{\frac{\mu - 1}{k}} \frac{\theta_0(t e^{2\Lambda}; e^{-2\Omega})}{\theta_0(t; e^{-2\Omega}) (t e^{-2\Omega}; e^{-2\Omega})^{\frac{2}{k}} (t^{-1} e^{-2\Omega}; e^{-2\Omega})^{\frac{2}{k}}}\\
&= \frac{e^{\mu \Lambda + \Lambda} e^{\pi i \frac{\mu - 1}{k}}}{1 - e^{-\frac{4\pi i}{k}}} \frac{(e^{-2\Omega}; e^{-2\Omega})^{\frac{k + 4}{k}}}{\theta_0(e^{2\Lambda}; e^{-2\Omega})^2} \frac{\Gamma(\frac{\mu - 1}{k}) \Gamma(\frac{k - \mu - 1}{k})}{\Gamma(-\frac{2}{k})}\oint_{\Delta} \frac{dt}{2\pi it}\frac{\theta_0(t e^{2\Lambda}; e^{-2\Omega})}{\theta_0(t; e^{-2\Omega})^{\frac{k + 2}{k}}} t^{- \frac{\mu - 1}{k}},
\end{align*}
where in the second equality we recall that $\Delta$ is a Pochhammer loop around $0$ and $1$, note that 
\[
(1 - t)^{-\frac{\mu + 1}{k}}(1 - t^{-1})^{\frac{\mu - 1}{k}}  = e^{\pi i \frac{\mu - 1}{k}} t^{-\frac{\mu - 1}{k}} (1 - t)^{-\frac{2}{k}}
\]
uniformly in $t$ on the complement of the positive real axis, and note that the integral over $\Delta$ is related to the integral over $\cC_t$ by the monodromy around the branch point at $t = 1$.
\end{proof}

\section{The trigonometric limit} \label{sec:trig}

We show that the trigonometric limit of our computation recovers the trace function for $U_q(\sl_2)$.

\subsection{Conventions for $U_q(\sl_2)$}

Recall that $U_q(\sl_2)$ is the Hopf algebra generated by $e, f, q^h$ with relations
\[
q^h e q^{-h} = q^2 e, \qquad q^h f q^{-h} = q^{-2} f, \qquad [e, f] = \frac{q^h - q^{-h}}{q - q^{-1}}
\]
and coproduct and antipode
\begin{align*}
\Delta(e) &= e \otimes 1 + q^h \otimes e, \qquad \Delta(f) = f \otimes q^{-h} + 1 \otimes f, \qquad \Delta(q^h) = q^h \otimes q^h;\\
S(e) &= - q^{-h} e, \qquad S(f) = -f q^h, \qquad S(q^h) = q^{-h}.
\end{align*}
As defined, $U_q(\sl_2)$ is a Hopf subalgebra of $U_q(\asl_2)$.  Let $M_\mu$ denote the Verma module for $U_q(\sl_2)$ with highest weight $\mu$, and let $v_\mu$ be its highest weight vector.  It has a basis $\{f^j v_\mu\}_{j \geq 0}$, on which $U_q(\sl_2)$ acts by
\[
e \cdot f^j v_\mu = [\mu - j + 1][j] f^{j-1} v_\mu, \qquad f \cdot f^j v_\mu = f^{j+1} v_\mu, \qquad q^h\cdot f^j v_\mu = q^{\mu - 2j} f^j v_\mu.
\]
If $\mu$ is a non-negative integer, let $L_\mu$ denote the finite-dimensional irreducible module with highest weight $\mu$.  We pick a basis $\{w_{2j}\}$ for $L_{2\mu}$ so that $U_q(\sl_2)$ acts by
\[
e \cdot w_{2j} = [\mu - j] w_{2j + 2}, \qquad f \cdot w_{2j} = [\mu + j] w_{2j - 2}, \qquad q^h \cdot w_{2j} = q^{2j} w_{2j}.
\]
Note that this basis is compatible with the basis for the evaluation module $L_{2\mu}(z)$ for $U_q(\asl_2)$.  

\subsection{Computing the trace function for $U_q(\sl_2)$}

For $w_0 \in L_{2m}$, let $\Phi^{w_0, \trig}_{\mu}$ be the unique intertwiner 
\[
\Phi^{w_0, \trig}_{\mu}: M_\mu \to M_\mu \hotimes L_{2m}
\]
for which $\Phi^{w_0, \trig}_{\mu}(v_\mu) = v_\mu \otimes w_0 + (\text{l.o.t.})$.  We compute the trace function associated to $\Phi^{w_0, \trig}_{\mu}$ in a manner similar to that used in the opposite coproduct in \cite[Section 7]{EV}.

\begin{lemma} \label{lem:trig-inter-val}
For $w_0 \in L_{2m}$, the intertwiner $\Phi^{w_0, \trig}_\mu$ acts on the highest weight vector $v_\mu \in M_\mu$ by
\[
\Phi^{w_0, \trig}_\mu (v_\mu) = \sum_{j = 0}^m (-1)^{j} q^{\mu j - j (j - 1)} \frac{[m]_j}{[\mu]_j [j]_j} f^j v_\mu \otimes w_{2j}.
\]
\end{lemma}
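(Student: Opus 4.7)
The plan is to determine $\Phi^{w_0, \trig}_\mu(v_\mu)$ by writing down the most general ansatz consistent with weight considerations and then using the intertwining property to pin down the coefficients. Since $\Phi^{w_0, \trig}_\mu$ is a $U_q(\sl_2)$-intertwiner, it preserves the $q^h$-weight. The vectors $f^j v_\mu$ have weight $\mu - 2j$ and the basis vectors $w_{2i} \in L_{2m}$ have weight $2i$, so the image of $v_\mu$ (which has weight $\mu$) must take the form
\[
\Phi^{w_0, \trig}_\mu(v_\mu) = \sum_{j \geq 0} c_j \, f^j v_\mu \otimes w_{2j}
\]
with $c_0 = 1$ fixed by the normalization $\Phi^{w_0, \trig}_\mu(v_\mu) = v_\mu \otimes w_0 + (\text{l.o.t.})$.

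Next, I would apply the intertwining relation $\Delta(e) \circ \Phi^{w_0, \trig}_\mu = \Phi^{w_0, \trig}_\mu \circ e$ to $v_\mu$. Since $e v_\mu = 0$ and $\Delta(e) = e \otimes 1 + q^h \otimes e$, the ansatz must be killed by $\Delta(e)$. Using the standard actions $e \cdot f^j v_\mu = [\mu - j + 1][j] f^{j-1} v_\mu$, $q^h \cdot f^j v_\mu = q^{\mu - 2j} f^j v_\mu$, together with $e \cdot w_{2j} = [m - j] w_{2j + 2}$ from the evaluation-module basis conventions of Section~\ref{sec:tr-fn}, and then collecting the coefficient of $f^{j-1} v_\mu \otimes w_{2j}$ yields the two-term recursion
\[
c_j \, [\mu - j + 1][j] + c_{j-1} \, q^{\mu - 2j + 2}[m - j + 1] = 0 \qquad (j \geq 1).
\]

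Finally, I would solve this recursion explicitly by telescoping. The product of the ratios gives $c_j = (-1)^j q^{E_j} \frac{[m][m-1]\cdots[m-j+1]}{[\mu][\mu-1]\cdots[\mu-j+1] \cdot [j]!}$ where $E_j = \sum_{l=1}^j (\mu - 2l + 2) = j\mu - j(j-1)$. Recognizing $[m][m-1]\cdots[m-j+1] = [m]_j$, $[\mu]\cdots[\mu-j+1] = [\mu]_j$, and $[j]! = [j]_j$ produces the claimed formula. Note that $c_j = 0$ automatically for $j > m$, because the numerator picks up the factor $[m - m] = 0$, so the sum truncates at $j = m$ as required by $w_{2j} \in L_{2m}$.

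There is no serious obstacle: the computation is routine once the intertwining relation with $e$ is exploited, and the only care needed is in matching the $q$-number conventions $[n]$, $[n]!$, and the falling $q$-factorial $[n]_l$ fixed earlier in the paper. Uniqueness of the intertwiner (for generic $\mu$) guarantees that the coefficients produced by the $e$-recursion give the correct answer without any need to check compatibility with $f$.
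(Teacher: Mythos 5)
Your proposal is correct and follows essentially the same route as the paper: write the weight-graded ansatz with $c_0=1$, impose $\Delta(e)\Phi^{w_0,\trig}_\mu(v_\mu)=0$ to obtain the two-term recursion (yours is the paper's recursion re-indexed by $j\mapsto j-1$), and telescope to the closed form. The only minor addition is your remark that $[m-m]=0$ forces truncation at $j=m$, which the paper leaves implicit.
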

\begin{proof}
For some constants $c_j(\mu, m)$ with $c_0(\mu, m) = 1$, we have
\[
\Phi^{w_0, \trig}_{\mu}(v_\mu) = \sum_{j = 0}^m c_j(\mu, m) f^j v_\mu \otimes w_{2j}.
\]
Because $\Phi^{w_0, \trig}_{\mu}(v_\mu)$ is killed by $\Delta(e)$, we find that 
\begin{align*}
0 &= \sum_{j = 0}^m c_j(\mu, m) e f^j v_\mu \otimes w_{2j} + \sum_{j = 0}^m c_j(\mu, m) q^h f^j v_\mu \otimes e w_{2j}\\
&= \sum_{j = 0}^m c_j(\mu, m) [\mu - j + 1][j] f^{j-1} v_\mu \otimes w_{2j} + \sum_{j = 0}^m c_j(\mu, m) q^{\mu - 2j} [m - j] f^j v_\mu \otimes w_{2j + 2}.
\end{align*}
Taking the coefficient of $f^j v_\mu \otimes w_{2j + 2}$ yields
\[
c_j(\mu, m) q^{\mu - 2j} [m - j] + c_{j + 1}(\mu, m) [\mu - j][j + 1] = 0
\]
and therefore by induction we find that 
\[
c_j(\mu, m) = (-1)^{j} q^{\mu j - j (j - 1)} \frac{[m]_j}{[\mu]_j [j]_j}. \qedhere
\]
\end{proof}

\begin{prop} \label{prop:trig-trace}
If $|q^{-2\lambda}| \ll 1$, the trace function for $\Phi^{w_0, \trig}_{\mu}$ converges and has value
\[
\Tr|_{M_\mu}(\Phi^{w_0, \trig}_{\mu} q^{2\lambda\rho}) = q^{\lambda \mu}\sum_{l = 0}^m (-1)^{l} q^{-2\lambda l}q^{- l (l - 1)/2} \frac{[m]_l [m + l]_l}{[l]_l} \frac{(q - q^{-1})^l}{\prod_{i = 0}^{l-1} (1 - q^{-2\mu + 2i}) \prod_{i = 0}^l (1 - q^{-2\lambda - 2i})}.
\]
\end{prop}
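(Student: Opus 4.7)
The plan is to compute the trace directly by extracting diagonal matrix elements of the intertwiner on the PBW basis $\{f^n v_\mu\}$ of $M_\mu$ and then summing the geometric-type series that results. I would start from the explicit formula for $\Phi^{w_0,\trig}_\mu(v_\mu)$ furnished by Lemma \ref{lem:trig-inter-val}, and use the intertwining property to propagate this to arbitrary weight vectors: $\Phi^{w_0,\trig}_\mu(f^n v_\mu) = \Delta(f)^n \Phi^{w_0,\trig}_\mu(v_\mu)$.

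Since $A := f \otimes q^{-h}$ and $B := 1 \otimes f$ satisfy $AB = q^2 BA$, the $q$-binomial theorem gives
\[
\Delta(f)^n = \sum_{k=0}^n q^{k(n-k)} \binom{n}{k}_q f^{n-k} \otimes f^k q^{-(n-k)h}.
\]
Applying this to each summand $c_j f^j v_\mu \otimes w_{2j}$ coming from Lemma \ref{lem:trig-inter-val} and imposing the weight condition for the output to lie in $M_\mu \otimes L_{2m}[0]$ forces $j=k$, so the coefficient of $f^n v_\mu \otimes w_0$ collapses to a single sum. Using $f^k w_{2k} = \frac{[m+k]!}{[m]!} w_0$ and multiplying by the eigenvalue $q^{\lambda(\mu - 2n)}$ of $q^{2\lambda\rho}$ on $f^n v_\mu$, the trace becomes
\[
q^{\lambda\mu}\sum_{k=0}^m (-1)^k c_k' \sum_{n \ge k} q^{-2n\lambda} q^{-k(n-k)} \binom{n}{k}_q,
\]
with $c_k'$ collecting the prefactors $q^{\mu k - k(k-1)}\frac{[m]_k [m+k]_k}{[\mu]_k [k]_k}$.

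Next I would evaluate the inner sum. Shifting $j = n-k$ reduces it to $q^{-2k\lambda}\sum_{j \ge 0} (q^{-2\lambda - k})^j \binom{j+k}{k}_q$, which by the $q$-binomial series identity equals
\[
q^{-2k\lambda}\prod_{l=0}^k \frac{1}{1 - q^{-2\lambda - 2l}};
\]
convergence in the region $|q^{-2\lambda}| \ll 1$ is automatic and justifies exchanging the sums over $n$ and $k$. Finally, rewriting the quantum Pochhammer factor via
\[
[\mu]_k = \frac{q^{k\mu - k(k-1)/2}}{(q - q^{-1})^k}\prod_{i=0}^{k-1}(1 - q^{-2\mu + 2i})
\]
produces the factor $(q-q^{-1})^k/\prod_{i=0}^{k-1}(1 - q^{-2\mu+2i})$ together with a compensating $q^{-\mu k + k(k-1)/2}$; combined with the pre-existing $q^{\mu k - k(k-1) - 2k\lambda}$, the powers of $q$ collapse to $q^{-2k\lambda} q^{-k(k-1)/2}$, matching the claim.

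The manipulations are all elementary and the main obstacle is purely bookkeeping: tracking conventions for the quantum binomial coefficient (the paper's symmetric $\binom{n}{k}_q$ versus the Gauss binomial $\binom{n}{k}_{q^2}^{\mathrm{Gauss}} = q^{k(n-k)}\binom{n}{k}_q$) so that the standard $q$-binomial series is applied correctly, and carefully collecting the several powers of $q$ coming from $\Delta(f)^n$, the action on $L_{2m}$, and the rewriting of $[\mu]_k$.
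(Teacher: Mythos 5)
Your proposal is correct and follows essentially the same route as the paper: expand $\Delta(f)^n$ applied to the explicit singular vector of Lemma \ref{lem:trig-inter-val}, extract the diagonal ($j=k$) terms, interchange the sums, evaluate the inner geometric/$q$-binomial series to get $\prod_{l=0}^{k}(1-q^{-2\lambda-2l})^{-1}$, and rewrite $[\mu]_k$ to produce the stated form. The only differences are cosmetic (you derive the coproduct-power expansion from the $q$-binomial theorem for $q$-commuting factors rather than quoting it, and use a different normal ordering of the tensor factors), and your bookkeeping of the powers of $q$ checks out.
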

\begin{proof}
Applying Lemma \ref{lem:trig-inter-val} and the expansion
\[
\Delta(f^k) = \sum_{l = 0}^k \frac{\prod_{i = k - l + 1}^k (1 - q^{-2i})}{\prod_{i = 1}^l (1 - q^{-2i})} f^{k - l} \otimes q^{-(k - l)h} f^l,
\]
we obtain
\begin{align*}
\Phi^{w_0, \trig}_{\mu}(&q^{2\lambda\rho}f^k v_\mu) = q^{\lambda \mu - 2 \lambda k} \Delta(f^k) \Phi^{w_0, \trig}_{\mu}(v_\mu)\\
&= q^{\lambda \mu - 2 \lambda k}  \sum_{l = 0}^k \frac{\prod_{i = k - l + 1}^k (1 - q^{-2i})}{\prod_{i = 1}^l (1 - q^{-2i})} (f^{k - l} \otimes q^{-(k - l)h} f^l) \sum_{j = 0}^m (-1)^{j} q^{\mu j - j (j - 1)} \frac{[m]_j}{[\mu]_j [j]_j} f^j v_\mu \otimes w_{2j}\\
&= q^{\lambda \mu - 2 \lambda k}  \sum_{l = 0}^k \frac{\prod_{i = k - l + 1}^k (1 - q^{-2i})}{\prod_{i = 1}^l (1 - q^{-2i})}\sum_{j = 0}^m (-1)^{j} q^{\mu j - j (j - 1)} \frac{[m]_j}{[\mu]_j [j]_j} f^{k + j - l} v_\mu \otimes q^{-(k-l)h} f^lw_{2j}.
\end{align*}
The diagonal terms correspond to the $j = l$ term of the above summation. Notice that 
\[
q^{-(k-l)h} f^l w_{2l} = [m + l]_l w_0.
\]
We conclude that 
\begin{align*}
\Tr|_{M_\mu}(&\Phi^{w_0,\trig}_{\mu} q^{2\lambda\rho}) = \sum_{k \geq 0} q^{\lambda \mu - 2\lambda k} \sum_{l = 0}^k \frac{\prod_{i = k - l + 1}^k (1 - q^{-2i})}{\prod_{i = 1}^l (1 - q^{-2i})} (-1)^{l} q^{\mu l - l (l - 1)} \frac{[m]_l [m + l]_l}{[\mu]_l [l]_l}\\
&= q^{\lambda \mu}\sum_{l \geq 0} (-1)^{l} q^{-2\lambda l}q^{\mu l - l (l - 1)} \frac{[m]_l [m + l]_l}{[\mu]_l [l]_l}\sum_{k \geq l}  q^{- 2\lambda (k - l)}  \frac{\prod_{i = k - l + 1}^k (1 - q^{-2i})}{\prod_{i = 1}^l (1 - q^{-2i})} \\
&= q^{\lambda \mu}\sum_{l \geq 0} (-1)^{l} q^{-2\lambda l}q^{\mu l - l (l - 1)} \frac{[m]_l [m + l]_l}{[\mu]_l [l]_l} \frac{1}{\prod_{i = 0}^l (1 - q^{-2\lambda - 2i})}\\
&= q^{\lambda \mu}\sum_{l = 0}^m (-1)^{l} q^{-2\lambda l}q^{- l (l - 1)/2} (q - q^{-1})^l \frac{[m]_l [m + l]_l}{[l]_l} \frac{1}{\prod_{i = 0}^{l-1} (1 - q^{-2\mu + 2i}) \prod_{i = 0}^l (1 - q^{-2\lambda - 2i})}. \qedhere
\end{align*}
\end{proof}

We make our previous computation explicit in the special case $m = 1$ corresponding to the three-dimensional representation.  For $w_0 \in L_2$, define 
\[
T^{w_0, \trig}(q, \lambda, \mu) = \Tr|_{M_{\mu - 1}}(\Phi^{w_0, \trig}_{\mu - 1}\,q^{2\lambda\rho}).
\]

\begin{lemma} \label{lem:trig-tr1-val}
We have
\[
T^{w_0, \trig}(q, \lambda, \mu) = \frac{q^{\lambda \mu - \lambda}}{1 - q^{-2\lambda}}\Big(\frac{1 - q^{-2\mu + 2} - q^{-2\lambda + 2} + q^{-2\mu - 2\lambda}}{(1 - q^{-2\mu + 2})(1 - q^{-2\lambda - 2})}\Big).
\]
\end{lemma}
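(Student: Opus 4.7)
The plan is to specialize Proposition \ref{prop:trig-trace} to the case $m = 1$ with the variable shift $\mu \mapsto \mu - 1$ and then perform elementary algebraic simplification of the resulting two-term sum. Since $L_2$ is three-dimensional, the trace function for $w_0 \in L_2$ involves only $l \in \{0, 1\}$ in the summation of Proposition \ref{prop:trig-trace}, so no convergence or infinite-sum issues arise at this stage; all that is required is careful bookkeeping.

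First, I will write out the $l = 0$ and $l = 1$ contributions separately. The $l = 0$ term is immediately $\frac{q^{\lambda\mu - \lambda}}{1 - q^{-2\lambda}}$. For $l = 1$, I use $[1] = 1$ and $[2] = q + q^{-1}$, so that $\frac{[1]_1 [2]_1}{[1]_1} = q + q^{-1}$, and then combine this with the $(q - q^{-1})$ factor to obtain $(q - q^{-1})(q + q^{-1}) = q^2 - q^{-2}$ in the numerator. The $l = 1$ contribution then simplifies to $-\frac{q^{\lambda\mu - 3\lambda}(q^2 - q^{-2})}{(1 - q^{-2\mu + 2})(1 - q^{-2\lambda})(1 - q^{-2\lambda - 2})}$.

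Second, I will add the two contributions and extract the common prefactor $\frac{q^{\lambda\mu - \lambda}}{1 - q^{-2\lambda}}$. After this extraction, the bracketed quantity becomes $1 - \frac{q^{-2\lambda}(q^2 - q^{-2})}{(1 - q^{-2\mu + 2})(1 - q^{-2\lambda - 2})}$. Placing this over the common denominator $(1 - q^{-2\mu + 2})(1 - q^{-2\lambda - 2})$, the numerator equals
\begin{equation*}
(1 - q^{-2\mu + 2})(1 - q^{-2\lambda - 2}) - (q^{-2\lambda + 2} - q^{-2\lambda - 2}),
\end{equation*}
which after expanding gives $1 - q^{-2\mu + 2} - q^{-2\lambda + 2} + q^{-2\mu - 2\lambda}$, precisely matching the stated formula.

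There is essentially no obstacle beyond routine algebraic manipulation; the only point demanding care is correctly applying the variable shift $\mu \mapsto \mu - 1$ from Proposition \ref{prop:trig-trace} to the definition of $T^{w_0, \trig}(q, \lambda, \mu)$, and keeping track of signs and $q$-integer identifications when $l = 1$. The convergence hypothesis $|q^{-2\lambda}| \ll 1$ from Proposition \ref{prop:trig-trace} is inherited automatically.
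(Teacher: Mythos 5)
Your proposal is correct and is exactly the paper's approach: the paper's proof of this lemma is simply the one-line remark that it follows by specializing Proposition \ref{prop:trig-trace} to $m = 1$ (with the shift $\mu \mapsto \mu - 1$ built into the definition (\ref{eq:sym-tr-def}) of $T^{w_0,\trig}$), and your computation supplies precisely the routine algebra that the paper leaves implicit. The $l=0$ and $l=1$ terms, the identification $(q-q^{-1})(q+q^{-1}) = q^2 - q^{-2}$, and the final numerator expansion all check out.
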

\begin{proof}
This follows specializing Proposition \ref{prop:trig-trace} to the case $m = 1$.
\end{proof}

\subsection{The trigonometric limit of the trace function}

We check that the trigonometric limit $q^{-2\omega} \to 0$ of $T^{w_0}(q, \lambda, \omega, \mu, k)$ corresponds with the trace function for $U_q(\sl_2)$.  

\begin{thm} \label{thm:trig-consist}
We have that 
\[
\lim_{q^{-2\omega} \to 0} T^{w_0}(q, \lambda, \omega, \mu, k) = T^{w_0, \trig}(q, \lambda, \mu).
\]
\end{thm}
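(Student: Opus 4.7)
The plan is to combine Corollary \ref{corr:trace-fv} with a direct $q^{-2\omega} \to 0$ limit of the Felder-Varchenko integral, identify the resulting contour integral with the one evaluated in Lemma \ref{lem:fv-null}, and then collapse the prefactors to match Lemma \ref{lem:trig-tr1-val}.

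First, I would use Corollary \ref{corr:trace-fv} to express
\[
T^{w_0}(q, \lambda, \omega, \mu, k) = P(q, \lambda, \omega, \mu, k) \cdot u(q, \lambda, \omega, -\mu, k),
\]
where $P$ is the explicit prefactor. Taking $q^{-2\omega} \to 0$ termwise in each $q$-Pochhammer symbol, one finds
$(q^{-4}; q^{-2\omega}) \to 1 - q^{-4}$, $\theta_0(q^{2\lambda}; q^{-2\omega}) \to 1 - q^{2\lambda}$, $(q^{2\lambda - 2} q^{-2\omega}; q^{-2\omega}) \to 1$, $(q^{-2\lambda - 2}; q^{-2\omega}) \to 1 - q^{-2\lambda - 2}$, and $(q^{-2\omega \pm 2}; q^{-2\omega}, q^{-2k}) \to 1$ (every factor carries at least one power of $q^{-2\omega}$). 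This reduces the prefactor to an elementary expression.

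For the integral defining $u(q, \lambda, \omega, -\mu, k)$, observe that in the good region the poles in $t$ of the integrand lie away from the unit circle uniformly as $q^{-2\omega} \to 0$, so the integrand converges uniformly on the compact cycle $\cC_t$; this is precisely the uniform-convergence argument already invoked at the end of the proof of Theorem \ref{thm:int-trace}. Hence the limit commutes with the integral, yielding
\[
\lim_{q^{-2\omega} \to 0} u(q, \lambda, \omega, -\mu, k) = q^{\lambda\mu - \lambda + \mu - 2} \oint_{|t|=1} \frac{dt}{2\pi i t}\, \frac{\theta_0(tq^{-2\mu}; q^{-2k})}{(tq^2; q^{-2k})(t^{-1}q^2q^{-2k}; q^{-2k})}\, \frac{1 - tq^{2\lambda}}{1 - tq^{-2}},
\]
after simplifying $\Omega_{q^2}(t; 0, q^{-2k}) \cdot \theta_0(tq^{-2}; q^{-2k})^{-1}$ using $\theta_0(u; q^{-2k}) = (u; q^{-2k})(u^{-1}q^{-2k}; q^{-2k})$.

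Next, I recognize this contour integral as precisely the integral $I_0$ of Lemma \ref{lem:fv-null} under the substitution $\mu \mapsto -\mu$ and $m = 0$. Applying that lemma (its value, being a rational function, holds throughout the region where the integral converges by analytic continuation) gives
\[
\lim_{q^{-2\omega} \to 0} u(q, \lambda, \omega, -\mu, k) = -\,q^{\lambda\mu - \lambda + \mu - 2}\, \frac{(q^{-2\mu + 2}; q^{-2k})(q^{2\mu + 2 - 2k}; q^{-2k})}{(q^{-2k}; q^{-2k})(q^4; q^{-2k})}\, \frac{1 - q^{2\lambda + 2} - q^{2\mu + 2} + q^{2\lambda + 2\mu}}{1 - q^{2\mu - 2}}.
\]

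Finally, substituting into $P \cdot u$ and simplifying, the factor $(q^{-2\mu + 2}; q^{-2k})$ cancels with the prefactor denominator, $(q^{2\mu + 2} q^{-2k}; q^{-2k}) = (q^{2\mu + 2 - 2k}; q^{-2k})$ cancels, and $(q^4 q^{-2k}; q^{-2k})/(q^4; q^{-2k}) = (1 - q^4)^{-1}$ leaves a purely rational remainder. Factoring $q^{2\lambda + 2\mu}$ out of the quartinomial $1 - q^{2\lambda + 2} - q^{2\mu + 2} + q^{2\lambda + 2\mu}$ gives exactly $1 - q^{-2\mu + 2} - q^{-2\lambda + 2} + q^{-2\mu - 2\lambda}$ (up to the explicit power of $q$), and tracking the $q$-power through the denominators $(1 - q^{2\lambda}) = -q^{2\lambda}(1 - q^{-2\lambda})$ and $(1 - q^{2\mu - 2}) = -q^{2\mu - 2}(1 - q^{-2\mu + 2})$ reproduces the formula of Lemma \ref{lem:trig-tr1-val}.

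The main obstacle is not really an obstacle so much as bookkeeping: the interchange of limit and integral is free thanks to the uniform separation of poles from $\cC_t$ in the good region, Lemma \ref{lem:fv-null} supplies the integral evaluation, and the only substantive work is verifying the several cancellations between the prefactor and the output of the lemma to land on exactly the rational function in Lemma \ref{lem:trig-tr1-val}.
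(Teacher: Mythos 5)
Your proposal is correct, and the bookkeeping checks out: the limiting integrand is exactly $I_0$ of Lemma \ref{lem:fv-null} with $\mu \mapsto -\mu$, the Pochhammer cancellations go through, and after factoring $q^{2\lambda+2\mu}$ out of the quartinomial and using $1 - q^{2\lambda} = -q^{2\lambda}(1-q^{-2\lambda})$, $1 - q^{2\mu-2} = -q^{2\mu-2}(1-q^{-2\mu+2})$, $\frac{1-q^{-4}}{1-q^4} = -q^{-4}$, one lands precisely on the expression of Lemma \ref{lem:trig-tr1-val}. The route differs from the paper's: the paper takes the constant coefficient in $q^{-2\omega}$ of the \emph{series} expansion in Proposition \ref{prop:int-formal} and then re-sums the resulting $q$-hypergeometric series directly via the $q$-binomial theorem, whereas you pass the limit through the \emph{contour integral} of Theorem \ref{thm:int-trace}/Corollary \ref{corr:trace-fv} and outsource the evaluation to Lemma \ref{lem:fv-null}. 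The two are parallel — Lemma \ref{lem:fv-null} is itself proved by summing the same residues at $t = q^2 q^{-2kn}$ — but yours buys economy by reusing an already-proven evaluation, at the cost of having to justify the limit--integral interchange (which you do correctly by the uniform separation of poles from $\cC_t$, the same argument as at the end of the proof of Theorem \ref{thm:int-trace}), while the paper's version works purely with the constant Fourier coefficient of the formal series and so never needs that interchange. One sentence of care is warranted where you invoke Lemma \ref{lem:fv-null} outside its stated hypothesis $\frac{2(\mu+1)}{k} > m$: the cleanest justification is that the poles of the integrand in $t$ do not depend on $\mu$, so $I_0$ is analytic in $q^{-2\mu} \in \CC^\times$, and since it agrees with the stated rational function on the open set where the lemma applies, the identity persists on the whole connected domain — which is what your parenthetical asserts, and it is fine.
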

\begin{proof}
The limiting expression is the constant coefficient in $q^{-2\omega}$ of $T^{w_0}(q, \lambda, \omega, \mu, k)$, which by Proposition \ref{prop:int-formal} and Lemma \ref{lem:trig-tr1-val} has value
\begin{align*}
\lim_{q^{-2\omega} \to 0}& T^{w_0}(q, \lambda, \omega, \mu, k) = \frac{q^{\lambda\mu - \lambda + 2}(1 - q^{-4})}{(1 - q^{2\lambda})(1 - q^{-2\lambda - 2})} \frac{(q^{-2\mu-2}; q^{-2k})}{(q^{-2\mu + 2};q^{-2k})}  \sum_{n \geq 0} \frac{q^{(-2\mu + 2)n} (1 - q^{2\lambda - 2} q^{2kn})}{(1 - q^{-4}q^{2kn})}\frac{(q^{-4}q^{2kn}; q^{-2k})_n}{(q^{2kn}; q^{-2k})_n}\\
%&=-(1 - q^{-4}) \frac{q^{\lambda\mu - \lambda + 2}}{(1 - q^{2\lambda})(1 - q^{-2\lambda - 2})} \frac{(q^{-2\mu-2}; q^{-2k})}{(q^{-2\mu + 2};q^{-2k})} q^4 \sum_{n \geq 0} q^{(-2\mu - 2 - 2k)n} \frac{(1 - q^{2\lambda - 2} q^{2kn})}{(1 - q^{4}q^{-2kn})}\frac{(q^{4}q^{-2k}; q^{-2k})_n}{(q^{-2k}; q^{-2k})_n}\\
&=-\frac{q^{\lambda\mu - \lambda + 2}}{(1 - q^{2\lambda})(1 - q^{-2\lambda - 2})} \frac{(q^{-2\mu-2}; q^{-2k})}{(q^{-2\mu + 2};q^{-2k})} \frac{q^4 (1 - q^{-4})}{(1 - q^{4})} \sum_{n \geq 0} q^{(-2\mu - 2 - 2k)n} (1 - q^{2\lambda - 2} q^{2kn})\frac{(q^{4}; q^{-2k})_n}{(q^{-2k}; q^{-2k})_n}\\
&=\frac{q^{\lambda\mu - \lambda + 2}}{(1 - q^{2\lambda})(1 - q^{-2\lambda - 2})} \frac{(q^{-2\mu-2}; q^{-2k})}{(q^{-2\mu + 2};q^{-2k})}  \Big(\frac{(q^{-2\mu + 2 - 2k}; q^{-2k})}{(q^{-2\mu - 2 - 2k}; q^{-2k})} - q^{2\lambda - 2}\frac{(q^{-2\mu + 2}; q^{-2k})}{(q^{-2\mu - 2}; q^{-2k})}\Big)\\
%&=-(1 - q^{-4}) \frac{q^{\lambda\mu - \lambda + 2}}{(1 - q^{2\lambda})(1 - q^{-2\lambda - 2})}\frac{q^4}{1 - q^4} \Big(\frac{1 - q^{-2\mu - 2}}{1 - q^{-2\mu + 2}} - q^{2\lambda - 2}\Big)\\
%&=-(1 - q^{-4}) \frac{q^{\lambda\mu - \lambda + 2}}{(1 - q^{2\lambda})(1 - q^{-2\lambda - 2})}\frac{q^4}{1 - q^4} \frac{1 - q^{-2\mu - 2} - q^{2\lambda - 2} + q^{2\lambda - 2\mu}}{1 - q^{-2\mu + 2}}\\
%&=-(1 - q^{-4}) \frac{q^4}{1 - q^4} \frac{q^{\lambda\mu - \lambda}}{(1 - q^{-2\lambda})(1 - q^{-2\lambda - 2})}\frac{1 + q^{-2\mu - 2\lambda} - q^{2-2\lambda} - q^{2-2\mu}}{1 - q^{-2\mu + 2}}\\
%&=\frac{q^{\lambda\mu - \lambda}}{(1 - q^{-2\lambda})(1 - q^{-2\lambda - 2})}\frac{1 + q^{-2\mu - 2\lambda} - q^{2-2\lambda} - q^{2-2\mu}}{1 - q^{-2\mu + 2}}\\
&=T^{w_0, \trig}(q, \lambda, \mu). \qedhere
\end{align*}
\end{proof}

\section{Symmetry of the trace function} \label{sec:sym}

In this section, we show that a certain normalization of the trace function is symmetric under interchanging $(\lambda, \omega)$ and $(\mu, k)$.  To state the symmetry which we will show, define the Weyl denominator $\delta_q(\lambda, \omega)$ by 
\[
\delta_q(\lambda, \omega) := \Tr|_{M_{-\wrho}}(q^{2\lambda \rho + 2\omega d})^{-1}
\]
and the normalized trace
\[
\wT^{w_0}(q, \lambda, \omega, \mu, k) := \delta_q(\lambda, \omega) T^{w_0}(q^{-1}, -\lambda, -\omega, \mu, k),
\]
where on the right we consider the quasi-analytic continuation of $T^{w_0}(q, \lambda, \omega, \mu, k)$ to the region of parameters $|q| > 1$, $|q^{-2\omega}| < 1$, and $|q^{-2k}| > 1$.

\begin{thm} \label{thm:tnorm-sym}
The function $\wT^{w_0}(q, \lambda, \omega, \mu, k)$ is symmetric under interchange of $(\lambda, \omega)$ and $(\mu, k)$.
\end{thm}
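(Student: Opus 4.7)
The approach is to use Corollary~\ref{corr:trace-fv} to reduce the trace function to the Felder--Varchenko function $u$, then substitute $(q,\lambda,\omega)\to(q^{-1},-\lambda,-\omega)$ in the resulting expression. After this substitution, the arguments of $u$ lie in its original convergence region, and the Weyl denominator $\delta_q(\lambda,\omega)$ combines with the substituted prefactor from Corollary~\ref{corr:trace-fv} to yield an expression whose symmetry under $(\lambda,\omega)\leftrightarrow(\mu,k)$ is a consequence of the manifest Felder--Varchenko symmetry $u(q,a,b,c,d)=u(q,c,d,a,b)$ of the integrand.

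The first step is to apply Corollary~\ref{corr:trace-fv}, which yields
\[
T^{w_0}(q,\lambda,\omega,\mu,k)=A(q,\lambda,\omega,\mu,k)\cdot u(q,\lambda,\omega,-\mu,k)
\]
for the explicit Pochhammer prefactor $A$. By Proposition~\ref{prop:rat-fn-trace}, this identity extends coefficient-by-coefficient as a formal series in $q^{-2\omega}$ to the region $|q^{-2k}|>1$, with the Pochhammer factors in $A$ interpreted by the quasi-analytic continuation of Lemma~\ref{lem:qa-comps}. The substitution $(q,\lambda,\omega)\to(q^{-1},-\lambda,-\omega)$ then places the arguments of $u$ back into its convergence region (since $|q^{-1}|<1$ and $|q^{2k}|<1$ when originally $|q|>1$, $|q^{-2k}|>1$), so that $u(q^{-1},-\lambda,-\omega,-\mu,k)$ is given directly by its defining Felder--Varchenko contour integral; similarly, $A(q^{-1},-\lambda,-\omega,\mu,k)$ is interpreted convergently via Lemma~\ref{lem:qa-comps}.

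The next step is to multiply by the Weyl denominator, which the Weyl--Kac character formula for $M_{-\wrho}$ gives as
\[
\delta_q(\lambda,\omega)=q^{\lambda}\,(q^{-2\omega};q^{-2\omega})\,\theta_0(q^{-2\lambda};q^{-2\omega}).
\]
The $(\lambda,\omega)$-dependent Pochhammer factors in $A(q^{-1},-\lambda,-\omega,\mu,k)$, combined with $\delta_q(\lambda,\omega)$, should rearrange via the identity $(u;p)(u^{-1}p;p)=\theta_0(u;p)$ and the double Pochhammer symmetry $(u;r,p)=(u;p,r)$ into a prefactor that is manifestly symmetric under $(\lambda,\omega)\leftrightarrow(\mu,k)$; the double Pochhammer factor $(q^{-2\omega+2};q^{-2\omega},q^{-2k})^2/(q^{-2\omega-2};q^{-2\omega},q^{-2k})^2$ from Corollary~\ref{corr:trace-fv} is already symmetric in its two bases. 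The symmetry of the remaining integral factor then follows from the Felder--Varchenko symmetry applied to $u(q^{-1},-\lambda,-\omega,-\mu,k)$.

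The main computational obstacle is the intricate bookkeeping of Pochhammers under the substitution and quasi-analytic continuation. In particular, under $(q,\lambda,\omega)\to(q^{-1},-\lambda,-\omega)$ the base $q^{-2k}$ transforms into $q^{2k}$ and the sign-flipping identities of Lemma~\ref{lem:qa-comps} introduce asymmetric Pochhammer factors; verifying that these recombine with $\delta_q(\lambda,\omega)$ and their $(\mu,k)$-counterparts into a genuinely $(\lambda,\omega)\leftrightarrow(\mu,k)$-symmetric expression is the key step, and requires the careful use of theta triple product and quasi-periodicity identities to match terms across the two sides of the claimed symmetry.
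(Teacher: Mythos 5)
Your overall architecture matches the paper's: quasi-analytically continue $T^{w_0}$ to $|q|>1$, $|q^{-2k}|>1$ via Corollary \ref{corr:trace-fv} and Proposition \ref{prop:qa-fv-2} (this is Lemma \ref{lem:tr-qa} in the paper), substitute $(q,\lambda,\omega)\to(q^{-1},-\lambda,-\omega)$, absorb $\delta_q(\lambda,\omega)$ into the prefactor, and reduce to a symmetry of the residual contour integral. However, there is a genuine gap in your final step. You attribute the symmetry of the integral to the ``manifest Felder--Varchenko symmetry $u(q,a,b,c,d)=u(q,c,d,a,b)$ of the integrand,'' i.e.\ the symmetry of $\Omega_{q^2}(t;r,p)$ in $r\leftrightarrow p$. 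That symmetry does not close the argument here, because Corollary \ref{corr:trace-fv} relates $T^{w_0}(q,\lambda,\omega,\mu,k)$ to $u(q,\lambda,\omega,-\mu,k)$ --- note the sign flip on $\mu$ only. After the continuation and substitution, the integrand contains $\theta_0(tq^{-2\mu};q^{-2k})$ alongside $\theta_0(tq^{+2\lambda};q^{-2\omega})$, so interchanging $(\lambda,\omega)\leftrightarrow(\mu,k)$ produces a genuinely different integrand (with $\theta_0(tq^{2\mu};q^{-2k})$ and $\theta_0(tq^{-2\lambda};q^{-2\omega})$), and moreover the prefactors that survive the cancellation against $\delta_q(\lambda,\omega)$ differ by $q^{-2\lambda}$ versus $q^{-2\mu}$, which is also not symmetric. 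The double-Pochhammer symmetry only reduces the claim to the identity
\[
q^{-2\lambda}\oint_{|t|=1}\frac{dt}{2\pi i t}\,\Omega_{q^{2}}(t;q^{-2\omega},q^{-2k})\frac{\theta_0(tq^{-2\mu};q^{-2k})}{\theta_0(tq^{-2};q^{-2k})}\frac{\theta_0(tq^{2\lambda};q^{-2\omega})}{\theta_0(tq^{-2};q^{-2\omega})}
=q^{-2\mu}\oint_{|t|=1}\frac{dt}{2\pi i t}\,\Omega_{q^{2}}(t;q^{-2\omega},q^{-2k})\frac{\theta_0(tq^{2\mu};q^{-2k})}{\theta_0(tq^{-2};q^{-2k})}\frac{\theta_0(tq^{-2\lambda};q^{-2\omega})}{\theta_0(tq^{-2};q^{-2\omega})};
\]
it does not prove it.

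The missing ingredient is the change of variables $t\mapsto t^{-1}$ combined with the inversion property of the phase function from Lemma \ref{lem:phase-trans},
\[
\Omega_{a}(t;r,p)=\Omega_{a}(t^{-1};r,p)\,\frac{\theta_0(t^{-1}a;p)\,\theta_0(ta^{-1};r)}{\theta_0(t^{-1}a^{-1};p)\,\theta_0(ta;r)},
\]
together with $\theta_0(u^{-1};q)=-u^{-1}\theta_0(u;q)$. The inversion of $\Omega$ supplies exactly the factor $q^4\,\theta_0(tq^{-2};q^{-2k})\theta_0(tq^{-2};q^{-2\omega})/\bigl(\theta_0(tq^{2};q^{-2k})\theta_0(tq^{2};q^{-2\omega})\bigr)$ needed to convert the denominators, and the $\theta_0$-inversion converts $\theta_0(t^{-1}q^{-2\mu};q^{-2k})$ into $\theta_0(tq^{2\mu};q^{-2k})$ while emitting the monomials that account for the $q^{-2\lambda}$ versus $q^{-2\mu}$ discrepancy. (Equivalently, the non-manifest identity you need is $u(q,\lambda,\omega,\mu,k)=u(q,-\lambda,\omega,-\mu,k)$, not the $r\leftrightarrow p$ symmetry.) A secondary point worth making explicit: for the substitution $t\mapsto t^{-1}$ to preserve the integration cycle you need the quasi-analytic continuation on the unit circle, which is why Proposition \ref{prop:qa-fv-2} (rather than \ref{prop:qa-fv}, whose cycle $\cC_t$ is not inversion-invariant) is the version you must invoke.
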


For Theorem \ref{thm:tnorm-sym}, we first compute the quasi-analytic continuation and the normalization factor $\delta_q(\lambda, \omega)$.

\begin{lemma} \label{lem:tr-qa}
The quasi-analytic continuation of $T^{w_0}(q, \lambda, \omega, \mu, k)$ to $|q| > 1$, $|q^{-2\omega}| < 1$, and $|q^{-2k}| > 1$ is
\begin{multline*}
T^{w_0}(q, \lambda, \omega, \mu, k) \equiv \frac{q^{\lambda\mu - \lambda + 2} (q^{-4}; q^{-2\omega})}{\theta_0(q^{2\lambda}; q^{-2\omega})(q^{2\lambda - 2} q^{-2\omega}; q^{-2\omega})(q^{-2\lambda - 2}; q^{-2\omega})} \frac{1}{1 - q^4}\frac{(q^{-4}; q^{2k})(q^{2k}; q^{2k})}{(q^{2\mu - 2}; q^{2k})(q^{-2\mu - 2 + 2k}; q^{2k})} \\
 \frac{(q^{-2 \omega - 2 + 2k}; q^{-2\omega}, q^{2 k})^2}{(q^{-2 \omega + 2 + 2k}; q^{-2 \omega}, q^{2k})^2} \oint_{|t| = 1} \frac{dt}{2 \pi i t} \Omega_{q^{-2}}(t; q^{-2\omega}, q^{2k}) \frac{\theta_0(tq^{2\mu}; q^{2k})}{\theta_0(tq^2; q^{2k})} \frac{\theta_0(t q^{2\lambda}; q^{-2\omega})}{\theta_0(tq^{2}; q^{-2\omega})}.
\end{multline*}
\end{lemma}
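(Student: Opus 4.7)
The strategy is to start from Corollary~\ref{corr:trace-fv}, which represents $T^{w_0}(q,\lambda,\omega,\mu,k)$ in the good region as an explicit product of $q^{-2\omega}$- and $q^{-2k}$-Pochhammer factors times the Felder--Varchenko function $u(q,\lambda,\omega,-\mu,k)$, and to continue each piece separately in the formal variable $q^{-2\omega}$. The factors that depend only on $q^{-2\omega}$ (namely $(q^{-4};q^{-2\omega})$ and the three $\theta_0(\cdot;q^{-2\omega})$ terms in the denominator) carry no $q^{-2k}$ dependence and so pass through the quasi-analytic continuation unchanged. The real work is continuing the Felder--Varchenko function together with its $q^{-2k}$-prefactor, and separately the double-Pochhammer ratio $(q^{-2\omega+2};q^{-2\omega},q^{-2k})^2/(q^{-2\omega-2};q^{-2\omega},q^{-2k})^2$.

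For the first of these, the elementary Pochhammer identities $(q^4q^{-2k};q^{-2k})=(q^4;q^{-2k})/(1-q^4)$ and $(q^{2\mu+2}q^{-2k};q^{-2k})=(q^{2\mu+2-2k};q^{-2k})$ let me rewrite the prefactor of $u$ in Corollary~\ref{corr:trace-fv} as
\[
\frac{(q^{-2k};q^{-2k})(q^4q^{-2k};q^{-2k})}{(q^{-2\mu+2};q^{-2k})(q^{2\mu+2}q^{-2k};q^{-2k})}=\frac{1}{1-q^4}\cdot\frac{(q^{-2k};q^{-2k})(q^4;q^{-2k})}{(q^{-2\mu+2};q^{-2k})(q^{2\mu+2-2k};q^{-2k})}.
\]
The right-hand factor times $u(q,\lambda,\omega,-\mu,k)$ is exactly the normalized Felder--Varchenko expression whose quasi-analytic continuation is computed by Proposition~\ref{prop:qa-fv-2} (after substituting $\mu\mapsto-\mu$). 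Applying that proposition produces a monomial prefactor $q^{\lambda\mu-\lambda+\mu+2}$, the single $q^{2k}$-Pochhammer factor $(q^{-4};q^{2k})(q^{2k};q^{2k})/[(q^{2\mu-2};q^{2k})(q^{-2\mu-2+2k};q^{2k})]$, and the unit-circle integral over $\Omega_{q^{-2}}(t;q^{-2\omega},q^{2k})$ appearing in the target.

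For the double-Pochhammer ratio, two applications of the second identity in Lemma~\ref{lem:qa-comps}, $(ap;p,r)\equiv(ar^{-1}p;p,r^{-1})^{-1}$, with $p=q^{-2\omega}$ and $r=q^{-2k}$ yield the single continuation $(q^{-2\omega\pm2};q^{-2\omega},q^{-2k})\equiv(q^{-2\omega\pm2+2k};q^{-2\omega},q^{2k})^{-1}$, so the ratio flips to $(q^{-2\omega-2+2k};q^{-2\omega},q^{2k})^2/(q^{-2\omega+2+2k};q^{-2\omega},q^{2k})^2$, matching the target.

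Assembling these three continuations recovers every factor in the asserted formula; the only residual task is to collect the monomial powers of $q$ coming from Corollary~\ref{corr:trace-fv} (namely $q^{-\mu+4}$), from the rewriting via $(1-q^4)^{-1}$, and from Proposition~\ref{prop:qa-fv-2} (namely $q^{\lambda\mu-\lambda+\mu+2}$), and then to verify that their product simplifies to the claimed overall factor in the lemma. The main obstacle is purely this bookkeeping: by Proposition~\ref{prop:rat-fn-trace} the coefficients of the $q^{-2\omega}$-expansion of $T^{w_0}$ are rational in $q^{-2\mu}$ and $q^{-2k}$, so once each piece has been continued via Proposition~\ref{prop:qa-fv-2} and Lemma~\ref{lem:qa-comps}, the equality of quasi-analytic continuations reduces to a finite list of elementary $q$-Pochhammer identities between explicit rational functions.
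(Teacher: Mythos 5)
Your proposal is exactly the paper's argument: the paper's proof of this lemma is the one-line citation ``This follows from Corollary \ref{corr:trace-fv} and Proposition \ref{prop:qa-fv-2},'' and you have simply made explicit the three continuations (the $\omega$-only factors pass through, Proposition \ref{prop:qa-fv-2} with $\mu\mapsto-\mu$ handles the normalized Felder--Varchenko piece, and Lemma \ref{lem:qa-comps} flips the double-Pochhammer ratio). One caution on the ``residual bookkeeping'' you defer: if you actually carry it out, the monomials do not close as stated. Corollary \ref{corr:trace-fv} contributes $q^{-\mu+4}$ and Proposition \ref{prop:qa-fv-2} (after $\mu\mapsto-\mu$) contributes $q^{\lambda\mu-\lambda+\mu+2}$, whose product is $q^{\lambda\mu-\lambda+6}$, whereas the lemma asserts $q^{\lambda\mu-\lambda+2}$; the continuation of the double-Pochhammer ratio via Lemma \ref{lem:qa-comps} produces no compensating monomial. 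The discrepancy of $q^{4}$ traces back to the explicit $q^{4}$ appearing in the proof of Proposition \ref{prop:qa-fv}, so one of the three displayed formulas in the paper carries a typo; this is harmless for the subsequent use in Theorem \ref{thm:tnorm-sym}, where the overall constant is symmetric in $(\lambda,\omega)\leftrightarrow(\mu,k)$ and cancels, but you should be aware that your final verification step will surface this mismatch rather than confirm the stated prefactor.
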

\begin{proof}
This follows from Corollary \ref{corr:trace-fv} and Proposition \ref{prop:qa-fv-2}.
\end{proof}

\begin{prop} \label{prop:tr-delta}
We have
\[
\delta_q(\lambda, \omega) = q^{\lambda} (q^{-2\omega};q^{-2\omega}) \theta_0(q^{-2\lambda}; q^{-2\omega}).
\]
\end{prop}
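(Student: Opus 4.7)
The plan is to compute $\Tr|_{M_{-\wrho}}(q^{2\lambda\rho + 2\omega d})$ directly from the PBW decomposition of the Verma module $M_{-\wrho} = M_{-\rho - 2\Lambda_0}$ and then invert. By the PBW theorem for $U_q(\wsl_2)$,
\begin{equation*}
\Tr|_{M_{-\wrho}}(q^{2\lambda\rho + 2\omega d}) = q^{-\wrho(2\lambda\rho + 2\omega d)} \prod_{\beta > 0}\bigl(1 - q^{-\beta(2\lambda\rho + 2\omega d)}\bigr)^{-\mult(\beta)},
\end{equation*}
where, as in Proposition \ref{prop:kk-det}, $\beta$ ranges over the positive roots of $\asl_2$, namely $\alpha$, $\pm\alpha + n\delta$, and $n\delta$ for $n \geq 1$, each of multiplicity one.

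First I would compute the prefactor by pairing $2\lambda\rho + 2\omega d \in \whh$ with the weight $-\wrho \in \whh^*$ via the invariant form, using $(\rho,\rho) = \tfrac{1}{2}$ and the fact that $\rho$ and $\Lambda_0$ annihilate $d$. This gives $\rho(2\lambda\rho + 2\omega d) = \lambda$ and $\Lambda_0(2\lambda\rho + 2\omega d) = 0$, so $-\wrho(2\lambda\rho + 2\omega d) = -\lambda$. Next I would evaluate the positive roots on $H := 2\lambda\rho + 2\omega d$, getting $\alpha(H) = 2\lambda$, $(n\delta)(H) = 2n\omega$, and $(\pm\alpha + n\delta)(H) = \pm 2\lambda + 2n\omega$. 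Splitting the infinite product into the four families above collapses into
\begin{equation*}
\prod_{\beta > 0}\bigl(1 - q^{-\beta(H)}\bigr)^{-1} = \frac{1}{(1 - q^{-2\lambda})\,(q^{-2\omega}; q^{-2\omega})\,(q^{-2\lambda}q^{-2\omega}; q^{-2\omega})\,(q^{2\lambda}q^{-2\omega}; q^{-2\omega})}.
\end{equation*}

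Finally, the definition $\theta_0(u;p) = (u;p)(u^{-1}p;p)$ immediately gives
\begin{equation*}
\theta_0(q^{-2\lambda}; q^{-2\omega}) = (1 - q^{-2\lambda})(q^{-2\lambda}q^{-2\omega}; q^{-2\omega})(q^{2\lambda}q^{-2\omega}; q^{-2\omega}),
\end{equation*}
which consolidates three of the above factors and yields
\begin{equation*}
\Tr|_{M_{-\wrho}}(q^{2\lambda\rho + 2\omega d}) = \frac{q^{-\lambda}}{(q^{-2\omega}; q^{-2\omega})\,\theta_0(q^{-2\lambda}; q^{-2\omega})}.
\end{equation*}
Inverting yields the claim. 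Since this is a routine character computation, there is no substantive obstacle; the only point requiring care is the bookkeeping of the identification $\whh \simeq \whh^*$, so that $H \in \whh$ pairs with each weight and each positive root through the invariant form to give the correct scalars.
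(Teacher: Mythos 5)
Your proposal is correct and follows essentially the same route as the paper: both compute the Verma character $\Tr|_{M_{-\wrho}}(q^{2\lambda\rho+2\omega d})$ via the PBW/Weyl-denominator formula, enumerate the positive roots $\{\alpha,\pm\alpha+m\delta,m\delta \mid m>0\}$ with multiplicity one, and repackage the resulting product as $(q^{-2\omega};q^{-2\omega})\,\theta_0(q^{-2\lambda};q^{-2\omega})$. The only cosmetic difference is that the paper writes the inverse of the trace directly, whereas you compute the trace first and then invert.
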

\begin{proof}
Notice that 
\begin{align} \nonumber
\delta_q(\lambda, \omega) &= \Tr|_{M_{-\rho - 2 \Lambda_0}}\Big(q^{2\lambda\rho + 2\omega d}\Big)^{-1} = q^{(\rho + 2 \Lambda_0, 2 \lambda \rho + 2 \omega d)} \prod_{\beta > 0} (1 - q^{-(\beta, 2\lambda + 2 \omega d)})^{\mult(\beta)} \\ \label{eq:verma-tr}
&= q^{\lambda} (1 - q^{-2\lambda}) \prod_{m > 0} (1 - q^{-2\omega m})(1 - q^{2\lambda - 2\omega m})(1 - q^{-2\lambda - 2 \omega m})= q^{\lambda} (q^{-2\omega};q^{-2\omega}) \theta_0(q^{-2\lambda}; q^{-2\omega}),
\end{align}
where we recall that the positive roots for $U_q(\asl_2)$ have multiplicity $1$ and are given by
\[
\{\alpha, \pm \alpha + m\delta, m\delta\mid m > 0\}. \qedhere
\]
\end{proof}

\begin{proof}[Proof of Theorem \ref{thm:tnorm-sym}]
By Lemma \ref{lem:tr-qa} and Proposition \ref{prop:tr-delta}, after some cancellation it suffices to check that 
\begin{multline*}
q^{-2\lambda} \oint_{|t| = 1} \frac{dt}{2 \pi i t} \Omega_{q^{2}}(t; q^{-2\omega}, q^{-2k}) \frac{\theta_0(tq^{-2\mu}; q^{-2k})}{\theta_0(tq^{-2}; q^{-2k})} \frac{\theta_0(t q^{2\lambda}; q^{-2\omega})}{\theta_0(tq^{-2}; q^{-2\omega})}\\
 = q^{-2\mu} \oint_{|t| = 1} \frac{dt}{2 \pi i t} \Omega_{q^{2}}(t; q^{-2\omega}, q^{-2k}) \frac{\theta_0(tq^{2\mu}; q^{-2k})}{\theta_0(tq^{-2}; q^{-2k})} \frac{\theta_0(t q^{-2\lambda}; q^{-2\omega})}{\theta_0(tq^{-2}; q^{-2\omega})}.
\end{multline*}
By Lemma \ref{lem:phase-trans}, we have that 
\begin{align*}
\Omega_{q^2}(t; q^{-2\omega}, q^{-2k}) &= \Omega_{q^2}(t^{-1}; q^{-2\omega}, q^{-2k}) \frac{\theta_0(t^{-1}q^2; q^{-2k}) \theta_0(tq^{-2}; q^{-2\omega})}{\theta_0(t^{-1} q^{-2}; q^{-2k}) \theta_0(tq^2; q^{-2\omega})}\\
&= q^{4} \Omega_{q^2}(t^{-1}; q^{-2\omega}, q^{-2k}) \frac{\theta_0(tq^{-2}; q^{-2k}) \theta_0(tq^{-2}; q^{-2\omega})}{\theta_0(t q^{2}; q^{-2k}) \theta_0(tq^2; q^{-2\omega})}.
\end{align*}
Upon substitution, this implies that 
\begin{align*}
q^{-2\lambda} \oint_{|t| = 1} &\frac{dt}{2 \pi i t} \Omega_{q^{2}}(t; q^{-2\omega}, q^{-2k}) \frac{\theta_0(tq^{-2\mu}; q^{-2k})}{\theta_0(tq^{-2}; q^{-2k})} \frac{\theta_0(t q^{2\lambda}; q^{-2\omega})}{\theta_0(tq^{-2}; q^{-2\omega})}\\
&= q^{-2\lambda + 4} \oint_{|t| = 1} \frac{dt}{2 \pi i t} \Omega_{q^{2}}(t^{-1}; q^{-2\omega}, q^{-2k}) \frac{\theta_0(tq^{-2\mu}; q^{-2k})}{\theta_0(tq^{2}; q^{-2k})} \frac{\theta_0(t q^{2\lambda}; q^{-2\omega})}{\theta_0(tq^{2}; q^{-2\omega})}\\
&= q^{-2\mu} \oint_{|t| = 1} \frac{dt}{2 \pi i t} \Omega_{q^{2}}(t^{-1}; q^{-2\omega}, q^{-2k}) \frac{\theta_0(t^{-1}q^{2\mu}; q^{-2k})}{\theta_0(t^{-1}q^{-2}; q^{-2k})} \frac{\theta_0(t^{-1} q^{-2\lambda}; q^{-2\omega})}{\theta_0(t^{-1}q^{-2}; q^{-2\omega})}\\
&= q^{-2\mu} \oint_{|t| = 1} \frac{dt}{2 \pi i t} \Omega_{q^{2}}(t; q^{-2\omega}, q^{-2k}) \frac{\theta_0(tq^{2\mu}; q^{-2k})}{\theta_0(tq^{-2}; q^{-2k})} \frac{\theta_0(t q^{-2\lambda}; q^{-2\omega})}{\theta_0(tq^{-2}; q^{-2\omega})},
\end{align*}
where in the last step we make the change of variables $t \mapsto t^{-1}$.
\end{proof}

\section{Application to affine Macdonald polynomials} \label{sec:aff-mac}

In this section we explain how our trace functions relate to the affine Macdonald polynomials for $\asl_2$ defined by Etingof-Kirillov~Jr. in \cite{EK3}.  We use them to prove Felder-Varchenko's conjecture that their definition of affine Macdonald polynomials via hypergeometric theta functions in \cite{FV4} coincides with that of \cite{EK3}.  The classical degeneration of this section is related to the study of conformal blocks in \cite{FSV1}.

\subsection{Affine Macdonald polynomials as traces of intertwiners} \label{sec:aff-mac-def}

Fix an integer $\mk \geq 0$.  For integers $k \geq \mu \geq 0$, let $L_{\mu, k}$ denote the irreducible integrable module for $U_q(\asl_2)$ with highest weight $\mu \rho + k\Lambda_0$ and highest weight vector $v_{\mu, k}$.  For $v \in \Sym^{2(\mk - 1)}L_1[0]$, we have an intertwiner 
\[
\Upsilon_{\mu, k, \mk}^v(z): L_{\mu\rho + k \Lambda_0+ (\mk - 1)\wrho} \to L_{\mu\rho + k \Lambda_0 + (\mk - 1)\wrho} \hotimes \Sym^{2(\mk - 1)}L_1(z)
\]
such that $\Upsilon^v_{\mu, k, \mk}(z) v_{\mu\rho + k \Lambda_0 + (\mk - 1)\wrho} = v_{\mu\rho + k \Lambda_0 + (\mk - 1)\wrho} \otimes v + (\text{l.o.t.})$. Define the trace function
\[
\chi_{\mu, k, \mk}(q, \lambda, \omega) = \Tr|_{L_{\mu\rho + k \Lambda_0 + (\mk - 1)\wrho}}\Big(\Upsilon^{w_0}_{\mu, k, \mk}(z) q^{2\lambda} q^{2\omega d}\Big),
\]
where the trace is independent of $z$.  In \cite{EK3}, the affine Macdonald polynomial for $\asl_2$ was defined to be
\[
J_{\mu, k, \mk}(q, \lambda, \omega) := \frac{\chi_{\mu, k, \mk}(q, \lambda, \omega)}{\chi_{0, 0, \mk}(q, \lambda, \omega)}.
\]
It is a symmetric Laurent polynomial with highest term $e^{(\mu \rho + k \Lambda_0, 2\lambda \rho + 2\omega d)}$. 

\begin{remark}
To avoid conflict with the use of $k$ for level, we use $\mk$ to denote the parameter of the Macdonald polynomial.  This corresponds to the variable $k$ in \cite{EK3} and $m$ in \cite{FV4}.
\end{remark}

\subsection{Elliptic Macdonald polynomials as hypergeometric theta functions}

In \cite{FV4}, the elliptic Macdonald polynomial was defined by Felder-Varchenko in terms of hypergeometric theta functions.  Fix parameters satisfying $|q| > 1$, $|q^{-2k}| < 1$, and $|q^{-2\omega}| < 1$. Define
\[
Q(q, \mu, k) := -\frac{2iq^{2\mu - 2}(q^{-2k}; q^{-2k})^2\theta_0(q^4; q^{-2k})}{\theta_0(q^{2\mu - 2}; q^{-2k})\theta_0(q^{2\mu + 2}; q^{-2k})}.
\]
In terms of $Q$, define the $l^\text{th}$ non-symmetric hypergeometric theta function of level $\kappa + 2$ by the convergent series 
\[
\wDelta_{\mu, \kappa}(q, \lambda, \omega) := q^{\frac{2\mu^2}{\kappa}}Q(q, \mu, \kappa)\sum_{j \in 2\kappa \ZZ + \mu} u(q, \lambda, \omega, j, \kappa) q^{-\frac{\omega + 2}{2\kappa}j^2}
\]
and the $\mu^\text{th}$ hypergeometric theta function of level $\kappa + 2$ by 
\[
\Delta_{\mu, \kappa}(q, \lambda, \omega) := \wDelta_{\mu, \kappa}(q, \lambda, \omega) - \wDelta_{\mu, \kappa}(q, -\lambda, \omega).
\]
Then the elliptic Macdonald polynomial is given by
\[
\wJ_{\mu, \kappa}(q, \lambda, \omega) := \frac{i q^{-\frac{(\mu + 2)^2}{\kappa} + \frac{\omega}{2\kappa}(\mu + 2)^2 + 3\lambda}}{(q^{-2\omega}; q^{-2\omega})^3} \frac{\Delta_{\mu + 2, \kappa}(q, \lambda, \omega)}{\theta_0(q^{2\lambda - 2}; q^{-2\omega}) \theta_0(q^{2\lambda}; q^{-2\omega})\theta_0(q^{2\lambda + 2}; q^{-2\omega})}.
\]
In \cite{FV4}, Felder-Varchenko conjectured that $\wJ_{\mu, \kappa}(q, \lambda, \omega)$ is related to the affine Macdonald polynomial of \cite{EK3}. Define the quantity
\[
\wJ^0_{\mu, \kappa}(q, \lambda, \omega) := \frac{2 q^{2\mu + 3\lambda + 2} (q^{-2\kappa}; q^{-2\kappa})^2 \theta_0(q^4; q^{-2\kappa})}{\theta_0(q^{2\mu + 2}; q^{-2\kappa})\theta_0(q^{2\mu + 6}; q^{-2\kappa})(q^{-2\omega}; q^{-2\omega})^3\theta_0(q^{2\lambda - 2}; q^{-2\omega}) \theta_0(q^{2\lambda}; q^{-2\omega})\theta_0(q^{2\lambda + 2}; q^{-2\omega})}.
\]

\begin{lemma} \label{lem:ell-mac-trans}
For $q^{-2\mu}$ and then $q^{-2\omega}$ sufficiently close to $0$ in the good region of parameters (\ref{eq:good-region}), the elliptic Macdonald polynomial may be expressed in the form
\[
\wJ_{\mu, \kappa}(q, \lambda, \omega) = \wJ^0_{\mu, \kappa}(q, \lambda, \omega) \sum_{j \in \ZZ} q^{-2j(\kappa j + \mu + 2)(\omega + 2)} \Big(u(q, \lambda, \omega, \mu + 2 + 2\kappa j, \kappa) - u(q, -\lambda, \omega, \mu + 2 + 2\kappa j, \kappa)\Big).
\]
\end{lemma}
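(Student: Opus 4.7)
The plan is to reduce the claim to direct substitution into the definitions of $\wDelta_{\mu+2,\kappa}$, $\Delta_{\mu+2,\kappa}$, and $\wJ_{\mu,\kappa}$, followed by careful collection of prefactors. First I would parametrize the summation index via $j = \mu+2 + 2\kappa n$ with $n \in \ZZ$ in the definition
\[
\wDelta_{\mu+2,\kappa}(q,\lambda,\omega) = q^{2(\mu+2)^2/\kappa}\, Q(q,\mu+2,\kappa)\sum_{j \in 2\kappa\ZZ + (\mu+2)} u(q,\lambda,\omega,j,\kappa)\, q^{-(\omega+2)j^2/(2\kappa)},
\]
converting the sum to one over $n\in\ZZ$. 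Expanding
\[
-\frac{\omega+2}{2\kappa}(\mu+2+2\kappa n)^2 = -\frac{(\omega+2)(\mu+2)^2}{2\kappa} - 2n(\omega+2)(\kappa n + \mu+2)
\]
then lets me pull out the $n$-independent factor $q^{-(\omega+2)(\mu+2)^2/(2\kappa)}$, leaving exactly the exponent $-2j(\kappa j + \mu+2)(\omega+2)$ that appears in the lemma.

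Next I would antisymmetrize in $\lambda$ to form $\Delta_{\mu+2,\kappa}$ and substitute into the definition of $\wJ_{\mu,\kappa}(q,\lambda,\omega)$. The combined exponent of $q$ outside the sum is
\[
-\frac{(\mu+2)^2}{\kappa} + \frac{\omega(\mu+2)^2}{2\kappa} + \frac{2(\mu+2)^2}{\kappa} - \frac{(\omega+2)(\mu+2)^2}{2\kappa} + 3\lambda,
\]
which collapses to $3\lambda$ since the four quadratic terms sum to zero. The remaining prefactor $i q^{3\lambda} Q(q,\mu+2,\kappa)$ divided by the six theta factors in the denominator of $\wJ_{\mu,\kappa}$ reduces, upon substitution of the explicit form of $Q$, to $\wJ^0_{\mu,\kappa}(q,\lambda,\omega)$: the factor $i \cdot (-2i) = 2$ produces the correct sign, and $q^{3\lambda} \cdot q^{2(\mu+2)-2} = q^{3\lambda + 2\mu + 2}$ matches the numerator of $\wJ^0_{\mu,\kappa}$ exactly.

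There is no substantial obstacle beyond bookkeeping of the quadratic exponent and prefactor cancellation; the steps involve no representation theory, only manipulation of theta-function identities. The mild subtlety I expect to spend the most care on is that as $|j|$ grows, the parameter $\mu + 2 + 2\kappa j$ wanders out of the good region (\ref{eq:good-region}), so each summand $u(q,\pm\lambda,\omega,\mu+2+2\kappa j,\kappa)$ must be interpreted by the analytic continuation described in Section \ref{sec:fv-fn}. The hypothesis that $q^{-2\mu}$ and then $q^{-2\omega}$ lie sufficiently close to $0$ in the good region of parameters ensures that each individual term is well-defined and that the quasi-Gaussian factor $q^{-2\kappa j^2(\omega+2)}$ dominates the polynomial-in-$j$ growth of the remaining $q$-Pochhammer and theta ratios in $u$, so that the bilateral series in $j$ converges absolutely.
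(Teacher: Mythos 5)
Your proposal is correct and follows essentially the same route as the paper's proof: substitute the definitions of $Q$, $\wDelta_{\mu+2,\kappa}$, and $\wJ_{\mu,\kappa}$, reindex the sum over $2\kappa\ZZ + \mu + 2$, and check that the quadratic exponents of $q$ cancel so that the prefactor collapses to $\wJ^0_{\mu,\kappa}$. Your added remark on interpreting $u(q,\pm\lambda,\omega,\mu+2+2\kappa j,\kappa)$ for large $|j|$ by continuation and on convergence of the bilateral series is a reasonable supplement to what the paper leaves implicit.
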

\begin{proof}
Applying the definitions, we find that 
\begin{align*}
\wJ_{\mu, \kappa}(q, \lambda, \omega) &= \frac{i q^{-\frac{(\mu + 2)^2}{\kappa} + \frac{\omega}{2\kappa}(\mu + 2)^2 + 3\lambda}}{(q^{-2\omega}; q^{-2\omega})^3} \frac{q^{\frac{2(\mu + 2)^2}{\kappa}}Q(q, \mu + 2, \kappa)}{\theta_0(q^{2\lambda - 2}; q^{-2\omega}) \theta_0(q^{2\lambda}; q^{-2\omega})\theta_0(q^{2\lambda + 2}; q^{-2\omega})}\\
&\phantom{===} \sum_{j \in 2\kappa \ZZ + \mu + 2} (u(q, \lambda, \omega, j, \kappa) - u(q, -\lambda, \omega, j, \kappa))q^{-\frac{\omega + 2}{2\kappa}j^2}\\
&= \frac{2 q^{2\mu + 2} q^{\frac{(\mu + 2)^2}{\kappa} + \frac{\omega}{2\kappa}(\mu + 2)^2 + 3\lambda} (q^{-2\kappa}; q^{-2\kappa})^2 \theta_0(q^4; q^{-2\kappa})}{\theta_0(q^{2\mu + 2}; q^{-2\kappa})\theta_0(q^{2\mu + 6}; q^{-2\kappa})(q^{-2\omega}; q^{-2\omega})^3\theta_0(q^{2\lambda - 2}; q^{-2\omega}) \theta_0(q^{2\lambda}; q^{-2\omega})\theta_0(q^{2\lambda + 2}; q^{-2\omega})}\\
&\phantom{===} \sum_{j \in 2\kappa \ZZ + \mu + 2} (u(q, \lambda, \omega, j, \kappa) - u(q, -\lambda, \omega, j, \kappa)) q^{-\frac{\omega + 2}{2\kappa} j^2}\\
%&= \frac{2 q^{2\mu + 2} q^{\frac{(\mu + 2)^2}{\kappa} + \frac{\omega}{2\kappa}(\mu + 2)^2 + 3\lambda} (q^{-2\kappa}; q^{-2\kappa})^2 \theta_0(q^4; q^{-2\kappa})}{\theta_0(q^{2\mu + 2}; q^{-2\kappa})\theta_0(q^{2\mu + 6}; q^{-2\kappa})(q^{-2\omega}; q^{-2\omega})^3\theta_0(q^{2\lambda - 2}; q^{-2\omega}) \theta_0(q^{2\lambda}; q^{-2\omega})\theta_0(q^{2\lambda + 2}; q^{-2\omega})}\\
%&\phantom{===} \sum_{j \in \ZZ} (u(q, \lambda, \omega, \mu + 2 + 2\kappa j, \kappa) - u(q, -\lambda, \omega, \mu + 2 + 2\kappa j, \kappa)) q^{-\frac{\omega + 2}{2\kappa} (2\kappa j + \mu + 2)^2}\\
&= \wJ^0_{\mu, \kappa}(q, \lambda, \omega) \sum_{j \in \ZZ} (u(q, \lambda, \omega, \mu + 2 + 2\kappa j, \kappa) - u(q, -\lambda, \omega, \mu + 2 + 2\kappa j, \kappa)) q^{-2j(\kappa j + \mu + 2)(\omega + 2)}. \qedhere
\end{align*}
\end{proof}

\subsection{BGG resolution for $U_q(\asl_2)$-modules with integral highest weight}

We introduce now the BGG resolution of $L_{\mu, k}$, which will allow us to compute the affine Macdonald polynomial in terms of our trace functions.  Denote the affine Weyl group of $\asl_2$ by 
\[
W^a = \langle s_0, s_1 \mid s_0^2 = s_1^2 = 1\rangle.
\]
It acts on $\whh$ via 
\[
s_1(\alpha) = - \alpha \qquad s_1(c) = c \qquad s_1(d) = d \qquad s_0(\alpha) = 2c -\alpha \qquad s_0(c) = c \qquad s_0(d) = d + \alpha - c
\]
and on $\whh^*$ via 
\begin{equation} \label{eq:waff-action}
s_1(\alpha) = - \alpha \qquad s_1(\Lambda_0) = \Lambda_0 \qquad s_1(\delta) = \delta \qquad s_0(\alpha) = -\alpha + 2 \delta \qquad s_0(\Lambda_0) = \Lambda_0 + \alpha - \delta \qquad s_0(\delta) = \delta.
\end{equation}
Define the dotted action of $W^a$ on $\whh^*$ by $w \cdot \wmu = w \cdot (\wmu + \wrho) - \wrho$.  For $l > 0$, denote the length $l$ elements of $W^a$ by $w^l_0 := s_0s_1 \cdots$ and $w^l_1 := s_1s_0 \cdots$, where $w^l_0$ and $w^l_1$ contain $l$ reflections.  We compute the actions of $w^l_0$ and $w^l_1$ on $\mu \rho + k \Lambda_0$.

\begin{lemma} \label{lem:w-action}
The dotted action of $w^l_i$ on $\mu\rho + k \Lambda_0$ is given by 
\begin{align*}
w^{2l}_0 \cdot (\mu \rho + k \Lambda_0) &= (\mu + 2l(k + 2))\rho + k \Lambda_0 + (- l(\mu + 1) - l^2(k+2)) \delta \\
w^{2l}_1 \cdot (\mu \rho + k \Lambda_0) &= (\mu - 2l(k + 2))\rho + k \Lambda_0 + (l(\mu + 1) - l^2(k + 2)) \delta \\
w^{2l + 1}_0 \cdot (\mu \rho + k \Lambda_0) &= (-\mu - 2 + 2(l + 1)(k + 2))\rho + k \Lambda_0 + ((l + 1)(\mu + 1) - (l + 1)^2(k + 2)) \delta \\
w^{2l + 1}_1 \cdot (\mu \rho + k \Lambda_0) &= (-\mu - 2 - 2l(k+2))\rho + k \Lambda_0 + (-l(\mu + 1) - l^2(k+2))\delta.
\end{align*}
\end{lemma}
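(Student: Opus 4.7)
The plan is to proceed by direct induction on $l$ after first recording the effect of each simple reflection on a general element of the form $\nu\rho + \kappa\Lambda_0 + a\delta$ where $\kappa = k+2$. Using (\ref{eq:waff-action}) and the fact that $\rho = \alpha/2$, a quick computation gives $s_1(\rho) = -\rho$ and $s_0(\rho) = -\rho + \delta$, so that
\begin{align*}
s_1(\nu\rho + \kappa\Lambda_0 + a\delta) &= -\nu\rho + \kappa\Lambda_0 + a\delta, \\
s_0(\nu\rho + \kappa\Lambda_0 + a\delta) &= (2\kappa - \nu)\rho + \kappa\Lambda_0 + (a + \nu - \kappa)\delta.
\end{align*}
In particular the level coefficient (the coefficient of $\Lambda_0$) is preserved, which is the key structural fact.

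Next I would write $\wmu' = \mu + 1$ and work with $\mu\rho + k\Lambda_0 + \wrho = \wmu'\rho + \kappa\Lambda_0$. Composing the two formulas, $s_0 s_1$ sends $\nu \mapsto \nu + 2\kappa$ and $a \mapsto a - \nu - \kappa$, while $s_1 s_0$ sends $\nu \mapsto \nu - 2\kappa$ and $a \mapsto a + \nu - \kappa$. A straightforward induction (the $\nu$-coefficient is linear in $l$, and summing the arithmetic progressions in the $\delta$-coefficient gives a quadratic expression) yields
\begin{align*}
(s_0 s_1)^l(\wmu'\rho + \kappa\Lambda_0) &= (\wmu' + 2l\kappa)\rho + \kappa\Lambda_0 + (-l\wmu' - l^2\kappa)\delta, \\
(s_1 s_0)^l(\wmu'\rho + \kappa\Lambda_0) &= (\wmu' - 2l\kappa)\rho + \kappa\Lambda_0 + (l\wmu' - l^2\kappa)\delta.
\end{align*}
Subtracting $\wrho = \rho + 2\Lambda_0$ and substituting $\wmu' = \mu+1$, $\kappa = k+2$ recovers the two even-length formulas in the lemma.

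For the odd-length cases I would use $w_0^{2l+1} = s_0 \cdot w_1^{2l}$ and $w_1^{2l+1} = s_1 \cdot w_0^{2l}$ (action on a vector, so that the rightmost element acts first). Applying the single-reflection formulas above to the even-case results and simplifying — in particular using the identity $l^2 + 2l + 1 = (l+1)^2$ which collects the coefficient of $\delta$ into the expected form — yields the two odd-length formulas. This is again pure bookkeeping.

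The lemma is essentially computational, so there is no real obstacle; the only thing to be careful about is the action convention for compositions (I will use $(gh)(v) = g(h(v))$, which is compatible with the dotted action in the sense that $(w_1 w_2) \cdot v = w_1 \cdot (w_2 \cdot v)$) and the ordering of factors in $w_0^l$ and $w_1^l$ so that the parity of the number of reflections matches the appropriate even/odd case. Once these are fixed, the induction is routine.
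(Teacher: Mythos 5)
Your proposal is correct and follows exactly the route the paper takes: the paper's proof is the one-line statement that the lemma ``follows from an induction using (\ref{eq:waff-action}),'' and your write-up is a correct, fully detailed version of precisely that induction (I verified the single-reflection formulas, the recursions for the $\rho$- and $\delta$-coefficients, and the reduction of the odd cases to the even ones). No gaps.
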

\begin{proof}
This follows from an induction using (\ref{eq:waff-action}).
\end{proof}

For $w \in W^a$ and a reduced decomposition $w = s_{i_1} \cdot s_{i_l}$, define $\alpha^l = \alpha_{i_l}$ and $\alpha^j = (s_{i_l} \cdots s_{i_{j + 1}})(\alpha_{i_j})$.  For a fixed weight $\mu\rho + k\Lambda_0$, define $n_{\mu, k, j} = \frac{2(\mu \rho + k \Lambda_0 + \wrho, \alpha^j)}{(\alpha^j, \alpha^j)}$.  Recalling that $v_{\rho + k \Lambda_0}$ is the highest weight vector in $M_{\mu\rho + k\Lambda_0}$, define the vector $v^{\mu \rho + k \Lambda_0}_{w \cdot (\mu\rho + k \Lambda_0)}$ by 
\[
v^{\mu\rho + k \Lambda_0}_{w \cdot (\mu\rho + k \Lambda_0)} := \frac{f^{n_{\mu, k, 1}}_{i_1}}{[n_{\mu, k, 1}]!} \cdots \frac{f^{n_{\mu, k, l}}_{i_l}}{[n_{\mu, k, l}]!} v_{\mu \rho + k \Lambda_0}.
\]
By \cite[Section 2.7]{EV2}, the vectors $v^{\mu\rho + k \Lambda_0}_{w \cdot (\mu\rho + k \Lambda_0)}$ are independent of the choice of reduced decomposition and span the space of singular vectors in $M_{\mu \rho + k \Lambda_0}$.  They yield a BGG-type resolution for $L_{\mu, k}$ given by Verma modules indexed by $W^a$.

\begin{prop}[{\cite[Theorem 3.2]{HK}}] \label{prop:bgg-res}
For $k \geq \mu \geq 0$, there is an exact complex of $U_q(\asl_2)$-modules
\[
\cdots \to C_2 \to C_1 \to M_{\mu \rho + k \Lambda_0} \to L_{\mu \rho + k \Lambda_0} \to 0
\]
with each term in the resolution given by $C_0 = M_{\mu \rho + k \Lambda_0}$ and
\[
C_l = M_{(w^l_0) \cdot (\mu \rho + k \Lambda_0)} \oplus M_{(w^l_1) \cdot (\mu \rho + k \Lambda_0)} \text{ for $l > 0$}.
\]
\end{prop}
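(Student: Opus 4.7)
The plan is to reproduce the classical Bernstein--Gelfand--Gelfand--Heckenberger--Kolb construction in the quantum affine setting. The main inputs are three: (1) the Kac--Kazhdan determinant formula for $M_{\mu \rho + k \Lambda_0}$ (Proposition~\ref{prop:kk-det}), which tells us exactly where embeddings between Verma modules can exist; (2) the Weyl--Kac character formula for the integrable irreducible $L_{\mu, k}$; and (3) the explicit singular vectors $v^{\mu\rho + k\Lambda_0}_{w \cdot (\mu\rho + k\Lambda_0)}$ recalled just before the statement, whose independence of reduced decomposition is the quantum version of the Verma relations.

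First I would construct the differentials. For each pair $w \lessdot w'$ in the affine Bruhat order, the Kac--Kazhdan formula, together with the dominant integral hypothesis $k \ge \mu \ge 0$ (which ensures the relevant factors actually vanish), produces an injective $U_q(\asl_2)$-homomorphism $M_{w' \cdot (\mu\rho + k\Lambda_0)} \into M_{w \cdot (\mu\rho + k\Lambda_0)}$ realized by multiplication by a quantum product of $f_0, f_1$ encoded in $v^{\mu\rho + k\Lambda_0}_{w \cdot(\mu\rho + k\Lambda_0)}$. Assembling these into $d_l \colon C_l \to C_{l-1}$ with appropriate signs, and using the computations of Lemma~\ref{lem:w-action} to see that the two length-$l$ elements $w^l_0, w^l_1$ cover exactly the same set of length-$(l{-}1)$ elements, gives a candidate complex. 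Verification that $d_{l-1} \circ d_l = 0$ reduces (by the uniqueness up to scalar of the singular vector in each weight space of $M_{w \cdot (\mu\rho + k\Lambda_0)}$) to a purely numerical matching of signs and scalars on each length-two square $w \lessdot w' \lessdot w''$; this is the same argument as in the classical case, powered by the $q$-analogue of the Serre relations and by the reduced-decomposition-independence of the singular vectors.

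Next I would establish exactness. The cleanest route is the two-step argument. First take Euler characteristics: the Weyl--Kac formula, which for $U_q(\asl_2)$ at generic $q$ with integrable highest weight follows from Frenkel--Reshetikhin-type arguments or from a straightforward quantization of Kac's proof, reads
\begin{equation*}
\mathrm{ch}(L_{\mu\rho + k\Lambda_0}) = \sum_{w \in W^a} (-1)^{\ell(w)}\, \mathrm{ch}\bigl(M_{w \cdot (\mu\rho + k\Lambda_0)}\bigr),
\end{equation*}
so the alternating sum of characters of $C_\bullet$ equals $\mathrm{ch}(L_{\mu\rho + k\Lambda_0})$. Given this, it suffices to show that the total complex has vanishing reduced cohomology in positive degrees. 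I would do this by a weight-by-weight induction on the depth below $\mu\rho + k\Lambda_0$: on each finite-dimensional weight space the complex is a finite-dimensional complex of vector spaces whose Euler characteristic matches the dimension of the corresponding weight space of $L_{\mu\rho+k\Lambda_0}$, and injectivity of $d_l$ in the highest nontrivial degree (from the Kac--Kazhdan analysis) forces acyclicity above degree $0$.

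The main obstacle will be the vanishing $d^2 = 0$. In the classical setting this is handled by the fact that if two singular vectors of the same weight live in a Verma module then they are proportional; in the quantum case the analogue is true but the proportionality constants involve $q$-integers and must be tracked carefully through the two reduced decompositions passing through each length-two interval. An alternative, if the direct sign computation becomes unwieldy, is to deform: view the quantum BGG complex as a $q$-deformation of the classical affine BGG complex of Garland--Lepowsky, observe that $d^2$ is a polynomial in $q, q^{-1}$ with values in a finite-dimensional weight space that vanishes identically at $q = 1$, and then invoke a rigidity/density argument to conclude vanishing for generic $q$ and hence, by integrability and the explicit form of the differentials, for all $q$ where the singular vectors are defined.
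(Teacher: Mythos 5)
The paper offers no internal proof of this statement: it is imported wholesale from \cite[Theorem 3.2]{HK}, the quantum analogue of the Garland--Lepowsky affine BGG resolution, so you are in effect reproving a cited theorem. On its own merits, your construction of the complex and the verification of $d^2=0$ are essentially sound: in the infinite dihedral affine Weyl group every length-$l$ element covers both length-$(l-1)$ elements, each length-two Bruhat interval is a diamond, and the two composites through a diamond are proportional because the space of singular vectors of a fixed weight in a Verma module is at most one-dimensional, so a sign assignment making every square anticommute finishes the job. Your fallback argument for $d^2=0$ by deformation is not valid as stated, however: a Laurent polynomial in $q$ that vanishes at the single point $q=1$ need not vanish identically, so specializing to the classical case proves nothing about generic $q$ without a much stronger rigidity input.

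The genuine gap is in the exactness step. Matching Euler characteristics weight space by weight space, even combined with injectivity of the top differential, does not force acyclicity in intermediate degrees: on a fixed weight space you obtain a finite complex of vector spaces whose reduced Euler characteristic vanishes, and that is perfectly consistent with large cancelling cohomology in degrees $1,\dots,N-1$. Moreover, injectivity of $d_l$ on $C_l = M_{w^l_0\cdot\lambda}\oplus M_{w^l_1\cdot\lambda}$ does not follow from injectivity of each component embedding $M_{w'\cdot\lambda}\into M_{w\cdot\lambda}$, since the images of the two summands may interact. The actual proofs (Garland--Lepowsky classically, Rocha-Caridi--Wallach, and Heckenberger--Kolb in the quantized setting) do not argue this way: they first build a weak BGG resolution from a quantum analogue of the Koszul complex $U_q(\asl_2)\otimes_{U_q(\abb_+)}\bigl(\Lambda^\bullet(\ann_-)\otimes\CC_{\mu\rho+k\Lambda_0}\bigr)$, which is exact by a filtration argument, show that each term admits a Verma flag, and then use the linkage principle together with a Kostant-type computation of $\ann_-$-homology to split off the direct summand indexed by $\{w\cdot(\mu\rho+k\Lambda_0) : \ell(w)=l\}$ in homological degree $l$. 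Without importing that machinery, or a complete description of the submodule lattice of the $M_{w\cdot(\mu\rho+k\Lambda_0)}$ (which at positive level is Kazhdan--Lusztig-theoretic), the character identity alone cannot deliver exactness.
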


\subsection{Intertwiners for $U_q(\asl_2)$-modules with integral highest weight}

We now give a criterion for the existence of intertwiners involving Verma modules with dominant integral highest weight.

\begin{prop} \label{prop:int-exist}
For integers $k \geq \mu \geq 0$, let $n_i = (\mu \rho + k \Lambda_0, \alpha_i) + 1$.  If $V[n_i \alpha_i] = 0$ for $i = 0, 1$, then for any $v \in V[0]$, there exists a unique intertwiner
\[
\Phi_{\mu, k}^v(z): M_{\mu, k} \to M_{\mu, k} \hotimes V(z)
\]
which satisfies
\[
\Phi_{\mu, k}^v(z)v_{\mu, k} = v_{\mu, k} \otimes v + (\text{l.o.t.}),
\]
where $(\text{l.o.t.})$ denotes terms of lower weight in the first tensor factor.
\end{prop}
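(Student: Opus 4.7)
The plan is to reduce the statement to the existence and uniqueness of a singular vector $\phi \in M_{\mu, k} \hotimes V(z)$ of weight $\mu\rho + k\Lambda_0$ whose projection onto $v_{\mu, k} \otimes V$ equals $v_{\mu, k} \otimes v$. Given such a $\phi$, the intertwiner is defined by $\Phi^v_{\mu, k}(z)(u v_{\mu, k}) := u \cdot \phi$, which is well-defined because the left ideal in $U_q(\wsl_2)$ annihilating $v_{\mu, k}$ also annihilates $\phi$ by singularity together with the weight constraint.

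For existence, I would specialize the explicit formula $\phi_{\mu', k', v}$ from Lemma \ref{lemma:sing-vect}, valid at generic $(\mu', k')$, down to our integer point $(\mu, k)$. By Lemmas \ref{lem:rat-fn-me} and \ref{lem:phi-denom}, matrix elements of $\phi_{\mu', k', v}$ in a PBW basis are rational in $q^{-2\mu'}$ and $q^{-2k'}$ with denominators drawn from the list in Lemma \ref{lem:sing-denom}, and a factor with integer parameter $n$ can occur only when $V[n\alpha] \neq 0$ or $V[-n\alpha] \neq 0$. Since $q$ is transcendental, third-type factors are nonzero at $(\mu, k)$; first- and second-type factors vanish only when $n \in \{n_1 + m(k+2) : m \geq 0\}$ or $n \in \{n_0 + m(k+2) : m \geq 0\}$, respectively. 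Since $V[0] \neq 0$ forces $V$ to decompose as $V = \bigoplus_j L_{2m_j}$ as a $U_q(\sl_2)$-module, and the hypothesis $V[n_1 \alpha_1] = V[n_0 \alpha_0] = 0$ translates on $\hh$-weights to $V[n_1 \alpha] = V[-n_0 \alpha] = 0$, we deduce $m_j < \min(n_0, n_1)$ for all $j$. Hence $V[\pm n \alpha] = 0$ for every $|n| \geq \min(n_0, n_1)$, no vanishing denominator factor appears, and $\phi_{\mu, k, v}$ is a well-defined singular vector with the required leading term.

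For uniqueness, suppose $\phi$ is singular of weight $\mu\rho + k\Lambda_0$ with zero $v_{\mu, k} \otimes V$ component; writing $\phi = \sum_\nu a_\nu \otimes b_\nu$ with $a_\nu \in M_{\mu, k}[\mu\rho + k\Lambda_0 - \nu]$ and $b_\nu \in V(z)[\nu]$, pick $\nu^*$ minimal with $b_{\nu^*} \neq 0$. Extracting the lowest-first-factor-weight piece of $\Delta(e_j) \phi = 0$ forces $e_j a_{\nu^*} = 0$ for $j = 0, 1$, so $a_{\nu^*}$ is a nonzero singular vector of nontrivial weight in $M_{\mu, k}$. The BGG resolution of Proposition \ref{prop:bgg-res} then forces $\nu^* = (\mu\rho + k\Lambda_0) - w \cdot (\mu\rho + k\Lambda_0)$ for some $w \in W^a \setminus \{1\}$, and Lemma \ref{lem:w-action} shows that the $\alpha$-component of every such $\nu^*$ has absolute value at least $\min(n_0, n_1)$: the minimum is achieved at $w \in \{s_0, s_1\}$, while longer $w$ only shift the magnitude upward by multiples of $k + 2 = n_0 + n_1$. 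The hypothesis thus gives $V(z)[\nu^*] = 0$, contradicting $b_{\nu^*} \neq 0$, so $\phi = 0$. The main obstacle is the careful weight bookkeeping between the affine simple roots $\alpha_0, \alpha_1$ of the statement and the $\hh$-weights of $V$, since the condition at $\alpha_0$ imposes a constraint on $V[-n_0 \alpha]$; once this translation is set up, both steps are direct consequences of Lemmas \ref{lem:rat-fn-me}, \ref{lem:phi-denom}, and \ref{lem:w-action}.
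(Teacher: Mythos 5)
Your argument is correct, but it takes a genuinely different route from the paper. The paper's proof is a three-line adjunction computation: by Frobenius reciprocity, $\Hom_{U_q(\asl_2)}(M_{\mu,k}, M_{\mu,k}\hotimes V(z)) \simeq \Hom_{U_q(\abb_+)}(\CC_{\mu,k}\otimes M_{\mu,k}^\vee, V(z)) \simeq \{v \in V[0] \mid I_{\mu,k} v = 0\}$, where $I_{\mu,k}$ is the annihilator of $v_{\mu,k}^*$; the singular-vector structure identifies $I_{\mu,k}$ as generated by $e_i^{n_i}$, and $e_i^{n_i} v \in V[n_i\alpha_i] = 0$ settles existence and uniqueness simultaneously. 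You instead reduce to singular vectors in $M_{\mu,k}\hotimes V(z)$ and split the two halves: existence by specializing the generic formula of Lemma \ref{lemma:sing-vect} and using Lemmas \ref{lem:sing-denom}--\ref{lem:rat-fn-me} to rule out vanishing denominators, and uniqueness via the classification of singular vectors in $M_{\mu,k}$ together with the weight bookkeeping of Lemma \ref{lem:w-action}. Both are complete; the paper's is shorter, while your existence argument has the advantage of simultaneously proving the Remark following the proposition (that at these integral weights the intertwiner is the specialization of the generic one), which the paper invokes separately in the proof of Theorem \ref{thm:am-tr-value}, and your uniqueness step makes explicit the role of the affine Weyl orbit that is only implicit in the paper's identification of $I_{\mu,k}$. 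Two small points of hygiene: in the uniqueness step, with $\nu^*$ minimal you are extracting the \emph{highest} first-tensor-factor weight component of $\Delta(e_j)\phi$ (the one in $M_{\mu,k}[\mu\rho+k\Lambda_0-\nu^*+\alpha_j]$), not the lowest --- the conclusion $e_j a_{\nu^*}=0$ you draw is the right one, so this is only a slip of wording; and $V[0]\neq 0$ forces only that $V$ \emph{contains} even components $L_{2m_j}$, but since odd components have no weight spaces of the form $V[n\alpha]$, this does not affect the argument.
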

\begin{proof}
The space of intertwiners $M_{\mu, k} \to M_{\mu, k} \hotimes V(z)$ is given by
\begin{align*}
\Hom_{U_q(\asl_2)}(M_{\mu, k}, M_{\mu, k} \hotimes V(z)) &\simeq \Hom_{U_q(\asl_2)}(\Ind^{U_q(\asl_2)}_{U_q(\abb_+)} \CC_{\mu, k}, (M_{\mu, k}^\vee)^* \otimes V(z))\\
&\simeq \Hom_{U_q(\abb_+)}(\CC_{\mu, k}, V(z) \otimes (M_{\mu, k}^\vee)^*)\\
&\simeq \Hom_{U_q(\abb_+)}(\CC_{\mu, k} \otimes_k M_{\mu, k}^\vee, V(z))\\
&\simeq \{v \in V[0] \mid I_{\mu, k}v = 0\},
\end{align*}
where $I_{\mu, k} := \{u \in U_q(\asl_2) \mid u \cdot v_{\mu, k}^* = 0\}$ is the annihilator ideal of the lowest weight vector of $M_{\mu, k}^\vee$.  By the identification of singular vectors in Proposition \ref{prop:bgg-res}, the $U_q(\abb_+)$-submodule of $M_{\mu, k}^\vee$ generated by $v_{\mu, k}^*$ has relations $e_i^{n_i} v_{\mu, k}^* = 0$ so that $I_{\mu, k}$ is generated by $e_i^{n_i}$, meaning that $I_{\mu, k}v = 0$ for any $v \in V[0]$ and completing the proof.
\end{proof}

\begin{remark}
By Lemma \ref{lem:rat-fn-me}, for $(\mu, k)$ satisfying the conditions of Proposition \ref{prop:int-exist}, the matrix elements of $\Phi_{\mu, k}^v(z)$ are given by analytic continuation from the generic case.  Therefore, trace functions for such integrable modules are analytic continuations of trace functions for generic highest weight and continue to be given by the expression of Theorem \ref{thm:int-trace}.
\end{remark}

\subsection{The affine dynamical Weyl group action}

Let $(\mu, k)$ be chosen to satisfy the conditions of Proposition \ref{prop:int-exist} with respect to $L_2$.  In \cite{EV2}, for $w$ in the affine Weyl group $W^a$ and $w_0 \in L_2[0]$, it was shown that there exist dynamical Weyl group operators $A_{w, L_2}(\mu \rho + k \Lambda_0)$ so that
\[
\Phi^{w_0}_{\mu, k}(z) v^{\mu \rho + k \Lambda_0}_{w \cdot (\mu \rho + k \Lambda_0)} = v^{\mu \rho + k\Lambda_0}_{w \cdot (\mu\rho + k\Lambda_0)} \otimes A_{w, L_2}(\mu \rho + k\Lambda_0)w_0 + (\text{l.o.t.}).
\]
In particular, $\Phi^{w_0}_{\mu, k}(z)$ restricts to a multiple of an intertwiner
\[
M_{w \cdot (\mu \rho + k \Lambda_0)} \to M_{w \cdot (\mu \rho + k \Lambda_0)} \hotimes L_2(z).
\]
For any reduced decomposition $w = s_{i_1} \cdots s_{i_l}$, we have
\begin{equation} \label{eq:dyn-weyl-recurse}
A_{w, L_2}(\mu \rho + k \Lambda_0) = A_{s_{i_1}}\Big((s_{i_2} \cdots s_{i_l}) \cdot (\mu \rho + k \Lambda_0)\Big) \cdots A_{s_{i_{l-1}}}\Big(s_{i_l} \cdot (\mu \rho + k \Lambda_0)\Big) A_{s_{i_l}}(\mu \rho + k \Lambda_0).
\end{equation}
Because $L_2[0]$ is spanned by $w_0$, the dynamical Weyl group acts by a scalar on it, so we will treat it as a number.  We now compute the action explicitly in Proposition \ref{prop:dyn-weyl-action} by reducing to the trigonometric limit. 

\begin{lemma} \label{lem:trig-sing-mat}
Let $\mu$ be a positive integer.  For $w_0 \in L_2$, the diagonal matrix element of the singular vector $f^{\mu + 1} v_\mu \in M_\mu$ in $\Phi^{w_0, \trig}_\mu$ is $-\frac{[\mu + 2]}{[\mu]}$. 
\end{lemma}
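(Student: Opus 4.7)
The plan is to apply Lemma \ref{lem:trig-inter-val} at $m=1$ to obtain the explicit formula
\[
\Phi^{w_0,\trig}_\mu(v_\mu) = v_\mu \otimes w_0 - \frac{q^\mu}{[\mu]} f v_\mu \otimes w_2,
\]
and then use the intertwining property $\Phi^{w_0,\trig}_\mu(f^{\mu+1} v_\mu) = \Delta(f^{\mu+1})\,\Phi^{w_0,\trig}_\mu(v_\mu)$ to extract the coefficient of $f^{\mu+1}v_\mu \otimes w_0$.

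For this, I would expand the coproduct using the $q$-commutation $(f\otimes q^{-h})(1\otimes f) = q^2(1\otimes f)(f\otimes q^{-h})$, which yields the expansion
\[
\Delta(f^{\mu+1}) = \sum_{l=0}^{\mu+1} \frac{\prod_{i=\mu+2-l}^{\mu+1}(1-q^{-2i})}{\prod_{i=1}^{l}(1-q^{-2i})}\, f^{\mu+1-l} \otimes q^{-(\mu+1-l)h} f^l
\]
already used in the proof of Proposition \ref{prop:trig-trace}. Only two terms can contribute to the coefficient of $f^{\mu+1}v_\mu \otimes w_0$: the $l=0$ term acting on $v_\mu \otimes w_0$ (giving coefficient $1$, since $h\cdot w_0 = 0$) and the $l=1$ term acting on $fv_\mu \otimes w_2$ (using $f\cdot w_2 = [2] w_0$ from the basis conventions of Section \ref{sec:tr-fn}). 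All other $(l,\text{summand})$ pairs either land in a nonzero weight of $L_2$ or produce the wrong power of $f$ on the Verma factor.

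The remainder is routine simplification: the prefactor for $l=1$ reduces to $q^{-\mu}[\mu+1]$ via the elementary identity $(1-q^{-2n})/(1-q^{-2}) = q^{1-n}[n]$, and combining the two contributions gives
\[
1 \;-\; \frac{q^\mu}{[\mu]} \cdot q^{-\mu}[\mu+1]\cdot[2] \;=\; \frac{[\mu] - [2][\mu+1]}{[\mu]}.
\]
Applying the standard identity $[2][\mu+1] = [\mu+2] + [\mu]$ (immediate from expanding the $q$-numbers) collapses the numerator to $-[\mu+2]$, yielding the claimed value $-[\mu+2]/[\mu]$. There is no substantive obstacle here beyond careful bookkeeping; the computation is a direct consequence of the explicit highest-weight formula from Lemma \ref{lem:trig-inter-val} together with the $q$-binomial expansion of $\Delta(f^{\mu+1})$.
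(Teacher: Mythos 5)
Your proposal is correct and follows essentially the same route as the paper: specialize Lemma \ref{lem:trig-inter-val} to $m=1$, expand $\Delta(f^{\mu+1})$, isolate the $l=0$ and $l=1$ contributions, and simplify $1 - [2][\mu+1]/[\mu]$ to $-[\mu+2]/[\mu]$. The paper's proof is just a terser version of the same computation.
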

\begin{proof}
By Lemma \ref{lem:trig-inter-val} and the expansion 
\[
\Delta(f^{\mu + 1}) = \sum_{l = 0}^{\mu + 1} \frac{\prod_{i = \mu - l + 2}^{\mu + 1} (1 - q^{-2i})}{\prod_{i = 1}^l (1 - q^{-2i})} f^{\mu + 1 - l} \otimes q^{-(\mu - l + 1)h} f^l,
\]
we find that the $f^{\mu + 1}v_\mu$ coefficient in $\Phi^{w_0, \trig}_\mu(f^{\mu + 1}v_\mu)$ is given by 
\[
1 - \frac{[\mu + 1][2]}{[\mu]} = - \frac{[\mu + 2]}{[\mu]}. \qedhere
\]
\end{proof}

\begin{prop} \label{prop:dyn-weyl-action}
The dynamical Weyl group action on $w_0 \in L_2[0]$ is given by 
\begin{align*}
A_{w^{2l + 1}_1, L_2}(\mu \rho + k \Lambda_0) w_0 &= - q^{-4l - 2} \frac{(q^{2\mu + 4} q^{2l(2k + 4)}; q^{-2k - 4})_{2l + 1}}{(q^{2\mu} q^{2l(2k + 4)}; q^{-2k - 4})_{2l + 1}}\\
A_{w^{2l + 1}_0, L_2}(\mu \rho + k \Lambda_0) w_0 &= - q^{-4l - 2}\frac{(q^{-2\mu} q^{(2l + 1)(2k + 4)}; q^{-2k-4})_{2l + 1}}{(q^{-2\mu - 4} q^{(2l + 1)(2k+4)}; q^{-2k-4})_{2l + 1}}\\
A_{w^{2l}_1, L_2}(\mu \rho + k \Lambda_0) w_0 &= q^{-4l}\frac{(q^{-2\mu} q^{2l(2k + 4)}; q^{-2k-4})_{2l}}{(q^{-2\mu - 4} q^{2l(2k+4)}; q^{-2k-4})_{2l}}\\
A_{w^{2l}_0, L_2}(\mu \rho + k \Lambda_0) w_0 &= q^{-4l} \frac{(q^{2\mu + 4} q^{(2l - 1)(2k + 4)}; q^{-2k - 4})_{2l}}{(q^{2\mu} q^{(2l - 1)(2k + 4)}; q^{-2k - 4})_{2l}}.
\end{align*}
\end{prop}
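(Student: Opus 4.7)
The plan is to compute the dynamical Weyl group scalars $A_{s_0,L_2}(\nu\rho+k\Lambda_0)w_0$ and $A_{s_1,L_2}(\nu\rho+k\Lambda_0)w_0$ for the two simple affine reflections separately, and then to apply the cocycle formula (\ref{eq:dyn-weyl-recurse}) together with Lemma \ref{lem:w-action} to reduce the four longer-word formulas to iterated products that telescope into the stated Pochhammer expressions.

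For $s_1$, the singular vector $\frac{f_1^{\mu+1}}{[\mu+1]!}v_{\mu,k}$ involves only $f_1$, which lies in the $U_q(\sl_2)$-subalgebra generated by $e_1, f_1, q^{\pm h_1}$. Since $\Delta(f_1)$ reduces to the $U_q(\sl_2)$-coproduct and $L_2$ is the three-dimensional irreducible $U_q(\sl_2)$-module, the coefficient of $f_1^{\mu+1}v_{\mu,k}\otimes w_0$ in $\Phi^{w_0}_{\mu,k}(f_1^{\mu+1}v_{\mu,k})$ agrees with the trigonometric matrix element of Lemma \ref{lem:trig-sing-mat}, giving $A_{s_1,L_2}(\mu\rho+k\Lambda_0)w_0 = -[\mu+2]/[\mu] = -q^{-2}(1-q^{2\mu+4})/(1-q^{2\mu})$, which is the $l=0$ case of the formula for $w^{2l+1}_1$.

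For $s_0$, a parallel computation applies using the subalgebra $\langle e_0, f_0, q^{\pm h_0}\rangle \simeq U_q(\sl_2)$, with singular vector $\frac{f_0^{k-\mu+1}}{[k-\mu+1]!}v_{\mu,k}$. Under this dual $\sl_2$, $v_{\mu,k}$ is a highest-weight vector of $h_0$-weight $k-\mu$ and $w_0 \in L_2(z)[0]$ lies in the zero weight space; the only difference from the $s_1$ case is that $f_0$ carries $z$-shifts when acting on $L_2(z)$. However, since the target component $w_0$ carries no $z$-factor, these shifts cancel in the extracted coefficient, and the computation mirrors Lemma \ref{lem:trig-sing-mat} with $\mu$ replaced by $k-\mu$, yielding $A_{s_0,L_2}(\mu\rho+k\Lambda_0)w_0 = -q^{-2}(1-q^{2(k-\mu)+4})/(1-q^{2(k-\mu)})$, the $l=0$ case of the $w^{2l+1}_0$ formula.

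With both simple reflection formulas in hand, the longer words follow inductively. For $w^l_i$, applying (\ref{eq:dyn-weyl-recurse}) gives a product of $l$ simple-reflection scalars evaluated at the intermediate weights $s_{i_j}s_{i_{j+1}}\cdots \cdot(\mu\rho+k\Lambda_0)$ from Lemma \ref{lem:w-action}. Consecutive applications of $s_0$ and $s_1$ shift the $\rho$-coordinate by $\pm 2(k+2)$, so successive factors are translates by $q^{\pm(2k+4)}$; their numerators and denominators assemble into rising factorials in base $q^{-2k-4}$ and telescope into the Pochhammer symbols $(q^{\pm 2\mu+c}q^{j(2k+4)};q^{-2k-4})_{l}$ appearing in the statement, while accumulated powers of $q^{-2}$ and signs produce the $q^{-4l}$ or $-q^{-4l-2}$ prefactors. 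The main obstacle is the $s_0$ case, where one cannot invoke Lemma \ref{lem:trig-sing-mat} directly because $f_0$ acts nontrivially on $L_2(z)$ with $z$-shifts; verifying that these shifts cancel in the diagonal matrix element requires careful tracking through the terms of $\Delta(f_0)^{k-\mu+1}$ acting on the full $\Phi^{w_0}(v_{\mu,k})$ rather than just its leading term. Once the two simple-reflection cases are established, the remaining induction is a bookkeeping exercise.
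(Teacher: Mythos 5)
Your proposal is correct and follows essentially the same route as the paper: the $s_1$ case via Lemma \ref{lem:trig-sing-mat} applied to the singular vector lying in the $U_q(\sl_2)$-submodule, the $s_0$ case by the symmetry exchanging the two affine simple roots (which is exactly your direct computation in the $\langle e_0, f_0, q^{\pm h_0}\rangle$ copy of $U_q(\sl_2)$ with $\mu$ replaced by $k-\mu$, the $z$-shifts cancelling as you note), and the longer words by combining the cocycle relation (\ref{eq:dyn-weyl-recurse}) with the intermediate weights from Lemma \ref{lem:w-action}. The paper likewise treats the telescoping of the resulting products into the stated $q$-Pochhammer symbols as routine bookkeeping.
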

\begin{proof}
The first value for $l = 0$ follows from Lemma \ref{lem:trig-sing-mat} and the fact that $v^{\mu \rho + k \Lambda_0}_{s_1 \cdot (\mu \rho + k \Lambda_0)}$ is a singular vector for the $U_q(\sl_2)$-submodule $M_\mu \subset M_{\mu \rho + k \Lambda_0}$, so we may compute using the dynamical Weyl group $U_q(\sl_2)$.  The second for $l = 0$ follows by applying the symmetry between $s_0$ and $s_1$ and noting that $\mu \rho + k \Lambda_0 = \mu \Lambda_1 + (k - \mu)\Lambda_0$.  The other values follow by applying Lemma \ref{lem:w-action} and (\ref{eq:dyn-weyl-recurse}).
\end{proof}

\subsection{Proof of Felder-Varchenko's conjecture}

We specialize to $\mk = 2$, where \cite{FV4} conjectured that elliptic and affine Macdonald polynomials are related.  We have then that $\Sym^{2(\mk - 1)}L_1(z) = L_2(z)$ and 
\[
\chi_{\mu, k, 2}(q, \lambda, \omega) = \Tr|_{L_{\mu + 1, k + 2}}\Big(\Phi^{w_0}_{\mu + 1, k + 2}(z) q^{2\lambda} q^{2\omega d}\Big).
\]
In this case, we relate the trace function to the affine Macdonald polynomial via the BGG resolution and apply Theorem \ref{thm:int-trace} to express it as a hypergeometric theta function in the sense of \cite{FV4}.  The proof of Felder-Varchenko's conjecture in Theorem \ref{thm:fv-conj} then follows.  Define the quantity
\begin{multline*}
\chi^0_{\mu, k}(q, \lambda, \omega) := \frac{q^{-\mu + 2} (q^{-4}; q^{-2\omega})}{\theta_0(q^{2\lambda}; q^{-2\omega})(q^{2\lambda - 2} q^{-2\omega}; q^{-2\omega})(q^{-2\lambda - 2}; q^{-2\omega})}\\ \frac{(q^{-2\omega + 2}; q^{-2\omega}, q^{-2\wk})^2}{(q^{-2\omega - 2}; q^{-2\omega}, q^{-2\wk})^2} \frac{(q^{-2\wk}; q^{-2\wk})(q^4 q^{-2\wk}; q^{-2\wk})}{(q^{2\mu + 6} q^{-2\wk}; q^{-2\wk})(q^{-2\mu - 2}; q^{-2\wk})}.
\end{multline*}

\begin{thm} \label{thm:am-tr-value}
For $q^{-2\mu}$ and then $q^{-2\omega}$ sufficiently close to $0$ in the good region of parameters (\ref{eq:good-region}), the trace function $\chi_{\mu, k, 2}(q, \lambda, \omega)$ is given by 
\begin{multline*}
\chi_{\mu, k, 2}(q, \lambda, \omega) \\= \chi^0_{\mu, k}(q, \lambda, \omega) \sum_{j \in \ZZ} q^{-2j(\omega + 2)(\mu + 2 + j\wk)}\Big(u(q, \lambda, \omega, -\mu - 2 - 2j\wk, \wk) - u(q, \lambda, \omega, \mu + 2 + 2j\wk, \wk)\Big),
\end{multline*}
where $\wk = k + 4$.
\end{thm}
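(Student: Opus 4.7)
The strategy is to apply the BGG resolution of Proposition \ref{prop:bgg-res} to $L_{\mu+1,k+2}$ to write $\chi_{\mu,k,2}(q,\lambda,\omega)$ as an alternating sum of Verma module traces, and then express each such Verma trace in terms of a Felder-Varchenko function via Theorem \ref{thm:int-trace} and Corollary \ref{corr:trace-fv}.

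First I would use Proposition \ref{prop:bgg-res} together with the fact that $(\mu+1,k+2)$ satisfies the hypotheses of Proposition \ref{prop:int-exist} (since $L_2[n\alpha]=0$ for $|n|\geq 2$) to deduce
\[
\chi_{\mu,k,2}(q,\lambda,\omega) = \sum_{l\geq 0}(-1)^l \sum_{i\in\{0,1\}'} \Tr|_{M_{w_i^l\cdot((\mu+1)\rho+(k+2)\Lambda_0)}}\Big(\Phi^{w_0}_{\mu+1,k+2}(z)q^{2\lambda\rho+2\omega d}\Big),
\]
where $\{0,1\}'=\{0\}$ for $l=0$ and $\{0,1\}$ otherwise. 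On each summand the intertwiner restricts to the unique intertwiner for the Verma module of the corresponding shifted weight scaled by the dynamical Weyl group coefficient $A_{w_i^l,L_2}((\mu+1)\rho+(k+2)\Lambda_0)$ computed in Proposition \ref{prop:dyn-weyl-action}.

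Next I would apply Lemma \ref{lem:w-action} with parameters $(\mu+1,k+2)$ to read off the weights. Writing $\wk=k+4$, the four families of weights have $\alpha$-components $(\mu+1)\pm 2l\wk$ (even length) and $-(\mu+3)\pm(\text{shift})\wk$ (odd length), while the $\delta$-components contribute explicit factors $q^{2\omega a}$ in the trace. Using (\ref{eq:sym-tr-def}) to relate $\Tr|_{M_{\nu\rho+m\Lambda_0}}(\Phi q^{2\lambda\rho+2\omega d})$ to $T^{w_0}(q,\lambda,\omega,\nu+1,m+2)$, Theorem \ref{thm:int-trace} and Corollary \ref{corr:trace-fv} convert every summand to a Felder-Varchenko function with shifted $\mu$-parameter. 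Reindexing the even-length terms by $j\in\ZZ$ (sending $w_0^{2l}\mapsto j=l$, $w_1^{2l}\mapsto j=-l$) produces the contributions $u(q,\lambda,\omega,-\mu-2-2j\wk,\wk)$, and analogously the odd-length terms give $-u(q,\lambda,\omega,\mu+2+2j\wk,\wk)$ after reindexing. The $\delta$-shifts assemble into the $q^{-2j\omega(\mu+2+j\wk)}$ part of the prefactor and the dynamical Weyl group scalars from Proposition \ref{prop:dyn-weyl-action} contribute the missing $q^{-4j(\mu+2+j\wk)}$, combining to give the full $q^{-2j(\omega+2)(\mu+2+j\wk)}$ exponent.

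The hard part will be the Pochhammer bookkeeping: the Pochhammer factors coming from Proposition \ref{prop:dyn-weyl-action} are truncated shifted products, while Corollary \ref{corr:trace-fv} produces ratios $(q^{\pm 2(\nu+1)+2};q^{-2\wk})^{-1}(q^{\pm 2(\nu+1)+2}q^{-2\wk};q^{-2\wk})^{-1}$ for each shifted weight $\nu$. One must verify that, for each $j$, the product of the dynamical Weyl group scalar with these Pochhammer denominators telescopes to a single common factor independent of $j$, namely the piece $(q^{-2\wk};q^{-2\wk})(q^4q^{-2\wk};q^{-2\wk})/\bigl[(q^{2\mu+6}q^{-2\wk};q^{-2\wk})(q^{-2\mu-2};q^{-2\wk})\bigr]$ appearing in $\chi^0_{\mu,k}$. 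The remaining $\lambda$-dependent and $q^{-2\omega}$-dependent prefactors of Corollary \ref{corr:trace-fv} are independent of the Weyl group element, so they factor out of the sum and combine with the leading $q^{-\mu+2}(q^{-4};q^{-2\omega})$ and the double Pochhammer ratio to yield $\chi^0_{\mu,k}(q,\lambda,\omega)$. Since the BGG resolution converges in the sense of formal weight-graded traces, and for $q^{-2\mu}$ and then $q^{-2\omega}$ sufficiently close to $0$ in (\ref{eq:good-region}) each individual trace converges to its integral expression, the infinite sum on the right hand side converges and represents $\chi_{\mu,k,2}$ as stated.
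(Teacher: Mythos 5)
Your proposal follows essentially the same route as the paper: BGG resolution of $L_{(\mu+1)\rho+(k+2)\Lambda_0}$, dynamical Weyl group scalars from Proposition \ref{prop:dyn-weyl-action}, weights from Lemma \ref{lem:w-action}, conversion of each Verma trace to a Felder-Varchenko function via Corollary \ref{corr:trace-fv}, and the same reindexing over $j \in \ZZ$; the Pochhammer telescoping you flag is exactly the computation the paper carries out. The only point you gloss over is that the Verma modules in the resolution have non-generic integral highest weights, so one must invoke Lemma \ref{lem:rat-fn-me} and uniqueness of the intertwiner to justify that the generic-weight formula of Corollary \ref{corr:trace-fv} extends by continuation, as the paper does explicitly.
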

\begin{proof}
Define the function 
\[
\sT(q, \lambda, \omega, \mu \rho +  k \Lambda_0 + \Delta \delta) := \Tr|_{M_{\mu\rho + k \Lambda_0 + \Delta \delta}}\Big(\Phi_{\mu, k}^{w_0}(z) q^{2\lambda} q^{2\omega d}\Big)
\]
so that 
\[
\mathsf{T}(q, \lambda, \omega, \mu \rho +  k \Lambda_0 + \Delta \delta) =  q^{2\omega \Delta}T^{w_0}(q, \lambda, \omega, \mu + 1, k + 2).
\]
Applying the BGG resolution of Proposition \ref{prop:bgg-res}, we find that 
\begin{multline} \label{eq:aff-sum-div}
\chi_{\mu, k, 2}(q, \lambda, \omega) \\
= \sT(q, \lambda, \omega, \mu \rho + k \Lambda_0 + \wrho) + \sum_{a = 1}^\infty \sum_{i = 0}^1 (-1)^a A_{w^a_i, L_2}(\mu \rho + k \Lambda_0 + \wrho) \sT\Big(q, \lambda, \omega, w^a_i \cdot (\mu \rho + k \Lambda_0 + \wrho)\Big).
\end{multline}
We compute the sum in (\ref{eq:aff-sum-div}) by dividing it into four pieces depending on the choice of $i$ and the parity of $a$.  In each case, by Lemma \ref{lem:w-action} and Proposition \ref{prop:dyn-weyl-action}, the summand is given by a multiple of a trace function for integral values of $\mu$ and $k$.  By Lemma \ref{lem:rat-fn-me} and uniqueness of the intertwiner
\[
\Phi^{w_0}_{\mu - 1, k - 2}(z): M_{\mu - 1, k - 2} \to M_{\mu - 1, k - 2} \hotimes L_2(z),
\]
when such an intertwiner exists, its matrix elements are given by continuations of the matrix elements given in Lemma \ref{lem:rat-fn-me}.  Therefore, its trace function is given by continuation of the expression of Proposition \ref{prop:rat-fn-trace} even for non-generic $(\mu, k)$.  In particular, for all intertwiners involving the Verma modules which appear in the affine BGG resolution, the corresponding trace function $T^{w_0}(q, \lambda, \omega, \mu, k)$ is given by the formula of Corollary \ref{corr:trace-fv}.  We now analyze each piece of (\ref{eq:aff-sum-div}) in turn.  Setting $\wk = k + 4$, we have
\begin{align*}
A_{w^{2l}_1, L_2}(\mu \rho + k \Lambda_0 + \wrho) &\sT\Big(q, \lambda, \omega, w^{2l}_1 \cdot (\mu \rho + k \Lambda_0 + \wrho)\Big)\\
&= q^{2l(\mu + 2 - l\wk)\omega - 4l}\frac{(q^{-2\mu - 2} q^{4l\wk}; q^{-2\wk})_{2l}}{(q^{-2\mu - 6} q^{4l\wk}; q^{-2\wk})_{2l}} T(q, \lambda, \omega, \mu + 2 - 2l\wk, \wk)\\
%&= q^{2l(\mu + 2 - l\wk)\omega - 4l + 2l\wk}\frac{(q^{-2\mu - 2} q^{4l\wk}; q^{-2\wk})_{2l}}{(q^{-2\mu - 6} q^{4l\wk}; q^{-2\wk})_{2l}} \frac{q^{-\mu + 2} (q^{-4}; q^{-2\omega})}{\theta_0(q^{2\lambda}; q^{-2\omega})(q^{2\lambda - 2} q^{-2\omega}; q^{-2\omega})(q^{-2\lambda - 2}; q^{-2\omega})}\\
%&\phantom{==} \frac{(q^{-2\omega + 2}; q^{-2\omega}, q^{-2\wk})^2}{(q^{-2\omega - 2}; q^{-2\omega}, q^{-2\wk})^2} \frac{(q^{-2\wk}; q^{-2\wk})(q^4 q^{-2\wk}; q^{-2\wk})}{(q^{-2\mu - 2} q^{4l\wk}; q^{-2\wk})(q^{2\mu + 6} q^{(-4l - 2)\wk}; q^{-2\wk})} u(q, \lambda, \omega, -\mu - 2 + 2l\wk, \wk)\\
&= \chi^0_{\mu, k}(q, \lambda, \omega) q^{2l(\mu + 2 - l\wk)(\omega + 2)} u(q, \lambda, \omega, -\mu - 2 + 2l\wk, \wk)
\end{align*}
and 
\begin{align*}
A_{w^{2l}_0, L_2}(\mu \rho + k \Lambda_0 + \wrho)& \sT\Big(q, \lambda, \omega, w^{2l}_0 \cdot (\mu \rho + k \Lambda_0 + \wrho)\Big)\\
&= q^{-2l(\mu + 2 + l \wk)\omega - 4l} \frac{(q^{2\mu + 6} q^{(4l - 2)\wk}; q^{-2\wk})_{2l}}{(q^{2\mu + 2} q^{(4l - 2)\wk}; q^{-2\wk})_{2l}} T(q, \lambda, \omega, \mu + 2 + 2l\wk, \wk)\\
%&= q^{-2l(\mu + 2 + l \wk)\omega - 4l - 2l\wk} \frac{(q^{2\mu + 6} q^{(4l - 2)\wk}; q^{-2\wk})_{2l}}{(q^{2\mu + 2} q^{(4l - 2)\wk}; q^{-2\wk})_{2l}} \frac{q^{-\mu + 2} (q^{-4}; q^{-2\omega})}{\theta_0(q^{2\lambda}; q^{-2\omega})(q^{2\lambda - 2} q^{-2\omega}; q^{-2\omega})(q^{-2\lambda - 2}; q^{-2\omega})}\\
%&\phantom{==} \frac{(q^{-2\omega + 2}; q^{-2\omega}, q^{-2\wk})^2}{(q^{-2\omega - 2}; q^{-2\omega}, q^{-2\wk})^2} \frac{(q^{-2\wk}; q^{-2\wk})(q^4 q^{-2\wk}; q^{-2\wk})}{(q^{-2\mu - 2} q^{-4l\wk}; q^{-2\wk})(q^{2\mu + 6} q^{(4l - 2)\wk}; q^{-2\wk})} u(q, \lambda, \omega, -\mu - 2 - 2l\wk, \wk)\\
&= \chi^0_{\mu, k}(q, \lambda, \omega) q^{-2l(\mu + 2 + l \wk)(\omega + 2)} u(q, \lambda, \omega, -\mu - 2 - 2l\wk, \wk)
\end{align*}
and 
\begin{align*}
- A_{w^{2l + 1}_0, L_2}&(\mu \rho + k \Lambda_0 + \wrho) \sT\Big(q, \lambda, \omega, w^{2l + 1}_0 \cdot (\mu \rho + k \Lambda_0 + \wrho)\Big)\\
&= q^{2(l + 1)(\mu + 2 - (l + 1)\wk)\omega - 4l - 2} \frac{(q^{-2\mu - 2} q^{(4l + 2) \wk}; q^{-2\wk})_{2l + 1}}{(q^{-2\mu - 6} q^{(4l + 2)\wk}; q^{-2\wk})_{2l + 1}} T(q, \lambda, \omega, - \mu - 2 + 2(l+1)\wk, \wk)\\
%&= q^{2(l + 1)(\mu + 2 - (l + 1)\wk)\omega - 4l - 2 - 2(l + 1)\wk} \frac{(q^{-2\mu - 2} q^{(4l + 2) \wk}; q^{-2\wk})_{2l + 1}}{(q^{-2\mu - 6} q^{(4l + 2)\wk}; q^{-2\wk})_{2l + 1}} \frac{q^{\mu + 6} (q^{-4}; q^{-2\omega})}{\theta_0(q^{2\lambda}; q^{-2\omega})(q^{2\lambda - 2} q^{-2\omega}; q^{-2\omega})(q^{-2\lambda - 2}; q^{-2\omega})}\\
%&\phantom{==} \frac{(q^{-2\omega + 2}; q^{-2\omega}, q^{-2\wk})^2}{(q^{-2\omega - 2}; q^{-2\omega}, q^{-2\wk})^2} \frac{(q^{-2\wk}; q^{-2\wk})(q^4 q^{-2\wk}; q^{-2\wk})}{(q^{2\mu + 6} q^{-(4l + 4)\wk}; q^{-2\wk})(q^{-2\mu - 2} q^{(4l + 2)\wk}; q^{-2\wk})} u(q, \lambda, \omega, \mu + 2 - (2l + 2)\wk, \wk)\\
&= - \chi^0_{\mu, k}(q, \lambda, \omega) q^{2(l + 1)(\mu + 2 - (l + 1)\wk)(\omega + 2)} u(q, \lambda, \omega, \mu + 2 - (2l + 2)\wk, \wk)
\end{align*}
and
\begin{align*}
- A_{w^{2l + 1}_1, L_2}&(\mu \rho + k \Lambda_0 + \wrho) \sT\Big(q, \lambda, \omega, w^{2l + 1}_1 \cdot (\mu \rho + k \Lambda_0 + \wrho)\Big)\\
&= q^{-2l(\mu + 2 + l\wk)\omega - 4l - 2} \frac{(q^{2\mu + 6} q^{4l\wk}; q^{-2\wk})_{2l + 1}}{(q^{2\mu + 2} q^{4l\wk}; q^{-2\wk})_{2l + 1}} T(q, \lambda, \omega, - \mu - 2 - 2l\wk, \wk)\\
%&= q^{-2l(\mu + 2 + l\wk)\omega - 4l - 2 + 2l\wk} \frac{(q^{2\mu + 6} q^{4l\wk}; q^{-2\wk})_{2l + 1}}{(q^{2\mu + 2} q^{4l\wk}; q^{-2\wk})_{2l + 1}} \frac{q^{\mu + 6} (q^{-4}; q^{-2\omega})}{\theta_0(q^{2\lambda}; q^{-2\omega})(q^{2\lambda - 2} q^{-2\omega}; q^{-2\omega})(q^{-2\lambda - 2}; q^{-2\omega})}\\
%&\phantom{==} \frac{(q^{-2\omega + 2}; q^{-2\omega}, q^{-2\wk})^2}{(q^{-2\omega - 2}; q^{-2\omega}, q^{-2\wk})^2} \frac{(q^{-2\wk}; q^{-2\wk})(q^4 q^{-2\wk}; q^{-2\wk})}{(q^{2\mu + 6} q^{4l\wk}; q^{-2\wk})(q^{-2\mu - 2} q^{(-4l - 2)\wk}; q^{-2\wk})} u(q, \lambda, \omega, \mu + 2 + 2l\wk, \wk)\\
&= - \chi^0_{\mu, k}(q, \lambda, \omega) q^{-2l(\mu + 2 + l\wk)(\omega + 2)} u(q, \lambda, \omega, \mu + 2 + 2l\wk, \wk).
\end{align*}
Combining our previous computations, we find that 
\begin{align*}
&\frac{\chi_{\mu, k, 2}(q, \lambda, \omega)}{\chi^0_{\mu, k}(q, \lambda, \omega)} = u(q, \lambda, \omega, -\mu - 2, \wk)\\
&\phantom{=}+ \sum_{l = 1}^\infty \Big(q^{2l(\mu + 2 - l\wk)(\omega + 2)} u(q, \lambda, \omega, -\mu - 2 + 2l\wk, \wk) + q^{-2l(\mu + 2 + l \wk)(\omega + 2)} u(q, \lambda, \omega, -\mu - 2 - 2l\wk, \wk)\Big)\\
&\phantom{=}- \sum_{l = 0}^\infty \Big(q^{2(l + 1)(\mu + 2 - (l + 1)\wk)(\omega + 2)} u(q, \lambda, \omega, \mu + 2 - (2l + 2)\wk, \wk) + q^{-2l(\mu + 2 + l\wk)(\omega + 2)} u(q, \lambda, \omega, \mu + 2 + 2l\wk, \wk)\Big)\\
%&=  \sum_{j \in \ZZ} \Big(q^{-2j(\omega + 2)(-\mu - 2 + j\wk)}u(q, \lambda, \omega, -\mu - 2 + 2j\wk, \wk) - q^{-2j(\omega + 2)(\mu + 2 + j \wk)} u(q, \lambda, \omega, \mu + 2 + 2j\wk, \wk)\Big)\\
&=  \sum_{j \in \ZZ} q^{-2j(\omega + 2)(\mu + 2 + j\wk)}\Big(u(q, \lambda, \omega, -\mu - 2 - 2j\wk, \wk) - u(q, \lambda, \omega, \mu + 2 + 2j\wk, \wk)\Big).\qedhere
\end{align*}
\end{proof}

\begin{remark}
For Verma modules with dominant integral highest weight, the analogue of Proposition \ref{prop:waki-verma} may not hold, and the Verma module differs in general from the Wakimoto module.  In the classical limit, this difficulty is addressed in \cite{BF} by realizing the irreducible integrable module via a BRST-type two-sided resolution via a complex of Fock modules.  Our proof of Theorem \ref{thm:am-tr-value} avoids the use of this resolution, which to our knowledge is available in the literature for the quantum affine setting only in the bosonization of \cite{Kon2}.  Instead, we apply the affine BGG resolution to relate trace functions for irreducible integrable modules to trace functions for Verma modules with non-generic highest weight, which we then compute by analytic continuation from the case of generic highest weight.  Our approach should apply equally well in setting of the undeformed classical affine algebra.
\end{remark}

Combining Theorem \ref{thm:am-tr-value} with Proposition \ref{prop:ek-const} on $\chi_{0, 0, 2}(q, \lambda, \omega)$ from \cite{EK3}, we obtain Theorem \ref{thm:fv-conj} relating the elliptic and affine Macdonald polynomials.

\begin{prop}[{\cite[Theorem 11.1]{EK3}}] \label{prop:ek-const}
There is a function $f(q, q^{-2\omega})$ with unit constant term whose formal power series expansion in $q^{-2\omega}$ has rational function coefficients in $q$ such that 
\[
\chi_{0, 0, 2}(q, \lambda, \omega) = f(q, q^{-2\omega}) q^\lambda (q^{-2\lambda + 2}; q^{-2\omega})(q^{2\lambda + 2} q^{-2\omega}; q^{-2\omega}).
\]
\end{prop}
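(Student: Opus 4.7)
The plan is to start from Theorem \ref{thm:am-tr-value} specialized to $\mu = 0$, $k = 0$, $\wk = 4$, which expresses
\[
\chi_{0,0,2}(q,\lambda,\omega) = \chi^0_{0,0}(q,\lambda,\omega) \sum_{j \in \ZZ} q^{-4j(1+2j)(\omega+2)} \Big(u(q,\lambda,\omega,-2-8j,4) - u(q,\lambda,\omega, 2+8j, 4)\Big)
\]
as the product of the explicit prefactor $\chi^0_{0,0}(q,\lambda,\omega)$ and a $\ZZ$-indexed sum $\Sigma(q,\lambda,\omega)$ of Felder--Varchenko functions with Gaussian-in-$j$ weights.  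All of the $\lambda$-dependence of $\chi_{0,0,2}$ is thus made explicit, and the task reduces to showing that $\chi^0_{0,0} \cdot \Sigma$ factors as a Weyl-denominator-type expression in $\lambda$ times a function depending only on $(q, q^{-2\omega})$.

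First I would substitute the contour integral (\ref{eq:fv-def2}) for each Felder--Varchenko function into $\Sigma$ and interchange sum and integral, justified by uniform Gaussian decay in $j$ on the compact cycle $\cC_t$ in the good region of parameters.  After extracting $j$-independent factors, the inner $j$-sum takes, for each sign, the shape $\sum_{j \in \ZZ} x^{j} y^{j^{2}} \theta_0(tq^{\pm 4};q^{-8})$, where $x$ is a monomial in $q^{2\lambda}$, $t$, and $q^{-2\omega}$ while $y$ depends only on $q, q^{-2\omega}$; the shift $q^{\pm 16 j}$ is pulled out of $\theta_0$ using the quasi-periodicity $\theta_0(up^n;p) = (-1)^n u^{-n} p^{-\binom{n}{2}}\theta_0(u;p)$ applied with $p = q^{-8}$.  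Applying the Jacobi triple product identity to each theta sum then factors $\Sigma$ into a single $t$-contour integral with a product of three infinite products in $q^{2\lambda}$ and $t$ in the integrand, free of the $j$-sum.

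Next, I would combine the $\lambda$-dependent infinite products produced by the triple product with the denominator factors $\theta_0(q^{2\lambda};q^{-2\omega})(q^{2\lambda-2}q^{-2\omega};q^{-2\omega})(q^{-2\lambda-2};q^{-2\omega})$ inside $\chi^0_{0,0}(q,\lambda,\omega)$.  These largely cancel, leaving only the asymmetric factor $q^\lambda(q^{-2\lambda + 2};q^{-2\omega})(q^{2\lambda + 2}q^{-2\omega};q^{-2\omega})$ claimed in Proposition \ref{prop:ek-const}.  The remaining $t$-integral then evaluates to a single residue (at $t = q^2$, where $\theta_0(tq^{-2};q^{-2\omega})^{-1}$ in the integrand contributes), producing a $\lambda$-independent constant which, together with the residual prefactors, defines $f(q, q^{-2\omega})$.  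Rationality of each coefficient of $f$ as a formal series in $q^{-2\omega}$ follows from Proposition \ref{prop:rat-fn-trace} applied to the Verma-module traces underlying the BGG sum of Theorem \ref{thm:am-tr-value}; unit constant term is checked by passing to the trigonometric limit $q^{-2\omega} \to 0$ and comparing with the $U_q(\sl_2)$-trace of Section \ref{sec:trig}, using that the asserted factor tends to $q^\lambda(1 - q^{-2\lambda + 2})$.

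The main obstacle is the precise bookkeeping in the Jacobi triple product step: one must correctly match the $(x,y)$ variables of the two resulting theta sums with the denominator $\theta_0$-factors of $\chi^0_{0,0}$, and track the interplay between the Gaussian weight $q^{-4j(1+2j)(\omega+2)}$ and the $q^{\pm 16j}$ shifts absorbed into $\theta_0$-quasi-periodicity.  A conceptually cleaner alternative is to recognize the alternating BGG sum underlying Theorem \ref{thm:am-tr-value} as a Weyl--Kac numerator $A_{2\wrho}$ for the affine Weyl group $W^a$ of $\asl_2$ acting at level $\wk = 4$ (using the dynamical Weyl group scalars of Proposition \ref{prop:dyn-weyl-action} and the length computations of Lemma \ref{lem:w-action}), and then invoke the Macdonald--Kac denominator identity for $\asl_2$; from this point of view the asserted factorization is the ratio of Weyl denominators at shifted level $\wk = 4$ and original level $\ch = 2$, and $f(q,q^{-2\omega})$ is the corresponding ratio of Macdonald theta products.
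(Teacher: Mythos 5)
Your route has a genuine gap at its center, and it is worth noting first that the paper does not prove this statement at all: Proposition \ref{prop:ek-const} is imported verbatim from \cite[Theorem 11.1]{EK3}, where the argument is of a completely different nature (one shows that $\chi_{0,0,2}$ is, as a function of $\lambda$, a theta function of fixed level with prescribed quasi-periodicity under the shifted affine Weyl group and prescribed zeros, hence divisible by the explicit product, the quotient $f$ being $\lambda$-independent; rationality of its $q^{-2\omega}$-coefficients comes from the finite-dimensional graded pieces of $L_{\rho+2\Lambda_0}$ --- and this is precisely why $f$ is left undetermined there). Your direct computation founders exactly where you say the products ``largely cancel.'' After the triple-product resummation, the quadratic weight in $j$ is essentially $q^{-8(\omega+2)j^2}$, so the resulting theta products have nome on the order of $q^{-16\omega-32}$ and argument a monomial involving $t^2 q^{-8\lambda}$; they live \emph{inside} the $t$-integral and entangle $t$ with $\lambda$, whereas the factors you want them to cancel, $\theta_0(q^{2\lambda};q^{-2\omega})(q^{2\lambda-2}q^{-2\omega};q^{-2\omega})(q^{-2\lambda-2};q^{-2\omega})$, are $t$-independent with nome $q^{-2\omega}$. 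No termwise cancellation is available, and you additionally face a \emph{difference} of two such integrals (the $\pm(2+8j)$ terms); a difference of theta products is not a product, so the asserted emergence of exactly $q^\lambda(q^{-2\lambda+2};q^{-2\omega})(q^{2\lambda+2}q^{-2\omega};q^{-2\omega})$ is the whole theorem restated, not derived. The final step is also wrong as written: $\theta_0(tq^{-2};q^{-2\omega})^{-1}$ has poles at $t=q^2q^{-2\omega n}$ for \emph{all} $n$, so the integral is never ``a single residue at $t=q^2$''; even with decay estimates of the kind used in Proposition \ref{prop:fv-series}, a contour deformation picks up an infinite residue sum. A telling sanity check: if your computation closed as sketched, it would explicitly evaluate $f(q,q^{-2\omega})$, which the paper expressly defers to future work.

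The ``conceptually cleaner alternative'' you offer does not work either: the alternating sum underlying Theorem \ref{thm:am-tr-value} is not a Weyl--Kac numerator. Its summands are traces of the nontrivial intertwiner (i.e., Felder--Varchenko functions of $\lambda$ and $\omega$), weighted by the dynamical Weyl group scalars of Proposition \ref{prop:dyn-weyl-action}, not bare exponentials $e^{w\cdot(\mu\rho+k\Lambda_0+\wrho)}$ with signs, so the Macdonald--Kac denominator identity has no purchase; were it applicable, it would again determine $f$ explicitly, contradicting the point above. Two smaller issues: at $\mu=k=0$ one has $q^{-2\mu}=q^{-2k}=1$, which violates the good-region hypotheses (\ref{eq:good-region}), so invoking Theorem \ref{thm:am-tr-value} there requires the analytic-continuation discussion following Proposition \ref{prop:int-exist} (as the paper itself does in proving Theorem \ref{thm:fv-conj}), which you do not address; and your trigonometric-limit check is correct as far as it goes --- indeed $\Tr|_{L_1}(\Phi\, q^{2\lambda\rho})=q^\lambda(1-q^{-2\lambda+2})$ by the computation of Lemma \ref{lem:trig-inter-val} --- but it verifies only the constant term of $f$, not the $\lambda$-independence of the higher coefficients, which is the actual content of the proposition.
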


\begin{thm} \label{thm:fv-conj}
Let $\wk = k + 4$.  For $q^{-2\mu}$ and then $q^{-2\omega}$ sufficiently close to $0$ in the good region of parameters (\ref{eq:good-region}), the elliptic and affine Macdonald polynomials are related by
\begin{multline*}
J_{\mu, k, 2}(q, \lambda, \omega)\\
 = \frac{\wJ_{\mu, \wk}(q, \lambda, \omega)}{f(q, q^{-2\omega})} \frac{(q^{-4}; q^{-2\omega})(q^{-2\omega}; q^{-2\omega})^3}{(q^{-4}; q^{-2\wk})(q^{-2\wk}; q^{-2\wk})} \frac{(q^{-2\omega + 2}; q^{-2\omega}, q^{-2\wk})^2}{(q^{-2\omega - 2}; q^{-2\omega}, q^{-2\wk})^2} q^{\mu + 4}(q^{-2\mu - 6}; q^{-2\wk})(q^{2\mu + 2} q^{-2\wk}; q^{-2\wk}),
\end{multline*}
where $f(q, q^{-2\omega})$ is the normalizing function of Proposition \ref{prop:ek-const}.
\end{thm}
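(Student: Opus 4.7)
The plan is to compare the series expansion of $\chi_{\mu, k, 2}(q, \lambda, \omega)$ provided by Theorem \ref{thm:am-tr-value} with the series expansion of $\wJ_{\mu, \wk}(q, \lambda, \omega)$ recorded in Lemma \ref{lem:ell-mac-trans} term by term, and then divide by the explicit expression for $\chi_{0, 0, 2}$ supplied by Proposition \ref{prop:ek-const}. The two series have matching exponential weights $q^{-2j(\omega + 2)(\mu + 2 + j\wk)}$; what must be reconciled is that the summand of Theorem \ref{thm:am-tr-value} is a difference antisymmetric in the second $\mu$-variable of $u$, whereas the summand of Lemma \ref{lem:ell-mac-trans} is a difference antisymmetric in the $\lambda$-variable of $u$.

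The crucial input is the symmetry
\[
u(q, \lambda, \omega, -\mu, k) = u(q, -\lambda, \omega, \mu, k),
\]
which is essentially already proved in Theorem \ref{thm:tnorm-sym}: the substitution $t \mapsto t^{-1}$ together with the transformation rule for $\Omega_{q^2}$ from Lemma \ref{lem:phase-trans} gives the identity $q^{-2\lambda}\, I(q, \lambda, \omega, -\mu, k) = q^{-2\mu}\, I(q, -\lambda, \omega, \mu, k)$, where $I$ denotes the contour integral in definition (\ref{eq:fv-def2}); multiplying through by the $q^{-\lambda\nu-\lambda-\nu-2}$ prefactors yields the stated equality. Applying this symmetry with $\mu \mapsto \mu + 2 + 2j\wk$ converts each term $u(q, \lambda, \omega, -\mu-2-2j\wk, \wk)$ in the $\chi_{\mu, k, 2}$ expansion into $u(q, -\lambda, \omega, \mu+2+2j\wk, \wk)$, so that after flipping an overall sign the summation becomes precisely the one appearing in Lemma \ref{lem:ell-mac-trans}. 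This produces
\[
\chi_{\mu, k, 2}(q, \lambda, \omega) \;=\; -\,\frac{\chi^0_{\mu, k}(q, \lambda, \omega)}{\wJ^0_{\mu, \wk}(q, \lambda, \omega)}\,\wJ_{\mu, \wk}(q, \lambda, \omega).
\]

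It then remains to divide by $\chi_{0, 0, 2}(q, \lambda, \omega) = f(q, q^{-2\omega})\,q^\lambda\,(q^{-2\lambda + 2}; q^{-2\omega})(q^{2\lambda + 2} q^{-2\omega}; q^{-2\omega})$ and to simplify the resulting constant $-\chi^0_{\mu, k}/[\chi_{0, 0, 2}\,\wJ^0_{\mu, \wk}]$ into the explicit product stated in the theorem. All $\lambda$-dependence collapses via
\[
\frac{\theta_0(q^{2\lambda - 2}; q^{-2\omega})\theta_0(q^{2\lambda + 2}; q^{-2\omega})}{(q^{2\lambda - 2} q^{-2\omega}; q^{-2\omega})(q^{-2\lambda - 2}; q^{-2\omega})(q^{-2\lambda + 2}; q^{-2\omega})(q^{2\lambda + 2} q^{-2\omega}; q^{-2\omega})} \;=\; q^{4\lambda},
\]
as required since the constant of proportionality is $\lambda$-independent, and the $q^{-2\wk}$-Pochhammer factors involving $\theta_0(q^{2\mu + 2}; q^{-2\wk})$, $\theta_0(q^{2\mu + 6}; q^{-2\wk})$, and $\theta_0(q^4; q^{-2\wk})$ reduce under the identities $(1 - q^a)/(1 - q^{-a}) = -q^a$ and $\theta_0(u; p) = (u; p)(u^{-1} p; p)$ to the advertised combination $q^{\mu + 4}(q^{-2\mu - 6}; q^{-2\wk})(q^{2\mu + 2} q^{-2\wk}; q^{-2\wk})/[(q^{-4}; q^{-2\wk})(q^{-2\wk}; q^{-2\wk})]$. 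The main obstacle is this final constant-matching step, which is entirely mechanical but requires careful bookkeeping of signs and $q$-power shifts across roughly a dozen theta-function manipulations; the conceptual content of the theorem lies entirely in the $\mu \leftrightarrow \lambda$ symmetry of $u$ described above, which itself reflects the dual role played by $(\lambda, \omega)$ and $(\mu, k)$ in both the Felder-Varchenko function and the representation-theoretic trace.
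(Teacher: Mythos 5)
Your proposal is correct and follows essentially the same route as the paper: combine Theorem \ref{thm:am-tr-value} with Lemma \ref{lem:ell-mac-trans} via the symmetry $u(q,\lambda,\omega,-\mu,k)=u(q,-\lambda,\omega,\mu,k)$, flip the sign, and divide by $\chi_{0,0,2}$ using Proposition \ref{prop:ek-const}. The only (harmless) difference is that the paper cites this symmetry of $u$ from the literature rather than rederiving it from the $t\mapsto t^{-1}$ computation in the proof of Theorem \ref{thm:tnorm-sym}, as you do; your constant-matching, including the $q^{4\lambda}$ cancellation, checks out.
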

\begin{proof}
By definition, we have that
\begin{align*}
\frac{\chi^0_{\mu, k}(q, \lambda, \omega)}{\wJ^0_{\mu, \wk}(q, \lambda, \omega)}% &= \frac{q^{-3\mu - 3\lambda} (q^4q^{-2\wk}; q^{-2\wk})(q^{-4}; q^{-2\omega})(q^{-2\omega}; q^{-2\omega})^3}{(q^{-2\wk}; q^{-2\wk})\theta_0(q^4; q^{-2\wk})} \frac{(q^{-2\omega + 2}; q^{-2\omega}, q^{-2\wk})^2}{(q^{-2\omega - 2}; q^{-2\omega}, q^{-2\wk})^2}\\
%&\phantom{==} \frac{\theta_0(q^{2\lambda + 2};q^{-2\omega}) \theta_0(q^{2\lambda - 2}; q^{-2\omega}) \theta_0(q^{2\mu + 2};q^{-2\wk}) \theta_0(q^{2\mu + 6}; q^{-2\wk})}{(q^{2\lambda - 2} q^{-2\omega}; q^{-2\omega})(q^{-2\lambda - 2}; q^{-2\omega})(q^{2\mu + 6} q^{-2\wk}; q^{-2\wk})(q^{-2\mu - 2}; q^{-2\wk})}\\
&= - \frac{q^{\mu + \lambda + 4}(q^{-4}; q^{-2\omega})(q^{-2\omega}; q^{-2\omega})^3}{(q^{-2\wk}; q^{-2\wk})(q^{-4}; q^{-2\wk})} \frac{(q^{-2\omega + 2}; q^{-2\omega}, q^{-2\wk})^2}{(q^{-2\omega - 2}; q^{-2\omega}, q^{-2\wk})^2}\\
&\phantom{==========} (q^{-2\lambda + 2}; q^{-2\omega})(q^{2\lambda + 2} q^{-2\omega}; q^{-2\omega})(q^{-2\mu - 6}; q^{-2\wk})(q^{2\mu + 2} q^{-2\wk}; q^{-2\wk}).
\end{align*}
By Lemma \ref{lem:ell-mac-trans} and Theorem \ref{thm:am-tr-value} and the fact that $u(q, \lambda, \omega, \mu, k) = u(q, -\lambda, \omega, -\mu, k)$ from \cite[Lemma 2.2]{FV4}, we see that 
\[
\frac{\wJ_{\mu, \wk}(q, \lambda, \omega)}{J^0_{\mu, \wk}(q, \lambda, \omega)} = - \frac{\chi_{\mu, k, 2}(q, \lambda, \omega)}{\chi^0_{\mu, k}(q, \lambda, \omega)}.
\]
We conclude by Proposition \ref{prop:ek-const} that 
\begin{align*}
&J_{\mu, k, 2}(q, \lambda, \omega) = \frac{\chi_{\mu, k, 2}(q, \lambda, \omega)}{\chi_{0, 0, 2}(q, \lambda, \omega)}\\
&\phantom{=}= -\wJ_{\mu, \wk}(q, \lambda, \omega) \frac{\chi^0_{\mu, k}(q, \lambda, \omega)}{\wJ^0_{\mu, \wk}(q, \lambda, \omega)\chi_{0, 0, 2}(q, \lambda, \omega)}\\
&\phantom{=}= \wJ_{\mu, \wk}(q, \lambda, \omega) \frac{q^{\mu + 4}(q^{-4}; q^{-2\omega})(q^{-2\omega}; q^{-2\omega})^3}{f(q, q^{-2\omega})(q^{-2\wk}; q^{-2\wk})(q^{-4}; q^{-2\wk})} \frac{(q^{-2\omega + 2}; q^{-2\omega}, q^{-2\wk})^2}{(q^{-2\omega - 2}; q^{-2\omega}, q^{-2\wk})^2}(q^{-2\mu - 6}; q^{-2\wk})(q^{2\mu + 2} q^{-2\wk}; q^{-2\wk}). \qedhere
\end{align*}
\end{proof}

\appendix

\section{Elliptic functions} \label{sec:ell}

In this appendix, we give our conventions on theta functions, elliptic gamma functions, and phase functions and provide some estimates for these functions.  For a more detailed discussion of the elliptic gamma function, we refer the reader to \cite{FV3}.  We use multiplicative notation throughout the text.

\subsection{Theta functions} \label{sec:theta-def}

We often use the single and double $q$-Pochhammer symbols
\[
(u; q) = \prod_{n \geq 0} (1 - uq^n) \qquad \text{ and } \qquad (u; q, r) = \prod_{n, m \geq 0} (1 - u q^n r^m),
\]
which are convergent for $|q|, |r| < 1$.  We also use the terminating $q$-Pochhammer symbol
\[
(u; q)_m = \frac{(u; q)}{(u q^m; q)}.
\]
Define now the theta function
\[
\theta_0(u; q) = (u; q)(qu^{-1}; q).
\] 
The theta function satisfies the transformation properties
\[
\theta_0(qu; q) = - u^{-1} \theta_0(u; q) \qquad \text{ and } \qquad \theta_0(q^{-1}u; q) = - uq^{-1} \theta_0(u; q) \qquad \text{ and } \qquad \theta_0(u^{-1}; q) = - u^{-1} \theta_0(u; q).
\]
We also use Jacobi's first theta function $\theta(u; q)$, given by
\begin{equation} \label{eq:theta-def}
\theta(u; q) = i e^{\pi i (\tau/4 - z)} (u; q)(qu^{-1}; q) (q; q) = i e^{\pi i (\tau/4 - z)} (q; q) \theta_0(u; q)
\end{equation}
for $q = e^{2\pi i \tau}$ and $u = e^{2\pi i z}$.  We have the following asymptotic estimates for $|\theta_0(u; q)|$ which parallel those in \cite[Appendix C]{FV}.

\begin{lemma} \label{lem:theta-asymp}
If $|q| < 1$, we have the following estimates on $|\theta_0(u; q)|$.

\begin{itemize}
\item[(a)] There is a constant $C_1(q) > 0$ so that for any $u \neq 0$ we have
\[
|\theta_0(u; q)| \leq C_1(q) |u|^{1/2} \exp\left(-\frac{(\log|u|)^2}{2 \log|q|}\right).
\]

\item[(b)] For any $\eps > 0$, there is a constant $C_2(q, \eps) > 0$ so that for any $u \neq 0$ with $\min_n \Big|\log|uq^n|\Big| > \eps$, we have 
\[
|\theta_0(u; q)| \geq C_2(q, \eps)|u|^{1/2} \eps\left(- \frac{(\log|u|)^2}{2\log|q|}\right). 
\]
\end{itemize}
\end{lemma}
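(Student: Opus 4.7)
The plan is to reduce everything to a fundamental annular domain in $|u|$ using the quasi-periodicity of $\theta_0$. Recall that $\theta_0(qu;q) = -u^{-1}\theta_0(u;q)$, which iterates to
\[
\theta_0(q^m v; q) = (-1)^m q^{-m(m-1)/2} v^{-m}\theta_0(v; q) \qquad \text{for all } m \in \ZZ.
\]
Given any $u \neq 0$, choose $m \in \ZZ$ so that $|v| := |u|/|q|^m$ lies in $[|q|^{1/2}, |q|^{-1/2}]$; taking absolute values gives
\[
|\theta_0(u;q)| = |q|^{-m(m-1)/2}\, |v|^{-m}\, |\theta_0(v;q)|.
\]

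The key observation is that the target Gaussian quantity transforms in exactly the same way. Setting $\ell := \log|q|$ (which is negative) and $w := \log|v|$, so that $\log|u| = m\ell + w$, a direct computation expanding $\tfrac12\log|u| - (\log|u|)^2/(2\ell)$ yields
\[
|u|^{1/2}\exp\!\left(-\frac{(\log|u|)^2}{2\log|q|}\right) = |q|^{-m(m-1)/2}\,|v|^{-m}\cdot |v|^{1/2}\exp\!\left(-\frac{(\log|v|)^2}{2\log|q|}\right).
\]
Consequently, the ratio
\[
R(u) := \frac{|\theta_0(u;q)|}{|u|^{1/2}\exp(-(\log|u|)^2/(2\log|q|))}
\]
satisfies $R(u) = R(v)$. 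It therefore suffices to bound $R$ on the compact annulus $A := \{v : |q|^{1/2} \leq |v| \leq |q|^{-1/2}\}$.

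For part (a), the numerator and denominator of $R$ are continuous on $A$, and the denominator is strictly positive there, so $R$ is bounded above on $A$ by some constant $C_1(q)$. For part (b), the zeros of $\theta_0(\cdot;q)$ occur precisely at $v = q^n$ for $n\in\ZZ$, and the only such zero lying in $A$ is $v = 1$. The hypothesis $\min_n\bigl|\log|uq^n|\bigr| > \eps$ translates, for the chosen $m$, into $|\log|v|| > \eps$, which keeps $|v|$ (and hence $v$ itself) bounded away from $1$. Thus $R$ is continuous and nonvanishing on the compact set $A \cap \{|\log|v|| \geq \eps\}$, so it is bounded below there by a constant $C_2(q,\eps)>0$, yielding part (b).

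The main (and really only) obstacle is the bookkeeping in the reduction step: one must verify that the quadratic exponent $(\log|u|)^2/(2\log|q|)$ transforms under $u \mapsto qu$ in precisely the same way as $\log|\theta_0(u;q)|$, so that the Gaussian factor is tuned to cancel the $|q|^{-m(m-1)/2}|v|^{-m}$ coming from quasi-periodicity. Once that identity is established, both estimates are immediate consequences of compactness and the identification of the single zero of $\theta_0(\cdot;q)$ in the fundamental annulus.
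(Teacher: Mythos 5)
Your proof is correct and follows essentially the same route as the paper's: reduce to a fundamental annulus via the quasi-periodicity $\theta_0(qu;q)=-u^{-1}\theta_0(u;q)$ and conclude by compactness, with the zero at $v=1$ excluded by the hypothesis in part (b). The only difference is cosmetic — you verify that the Gaussian comparison factor transforms \emph{exactly} like $|\theta_0|$ so the ratio $R$ is invariant, whereas the paper bounds the discrepancy by one-sided inequalities and absorbs a factor of $e$ into the constant.
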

\begin{proof}
For (a), define the constant $C_1'(q)$ by 
\[
C_1'(q) := \max_{|q| \leq |u| \leq 1} |\theta_0(u; q)|.
\]
For any $u \neq 0 \in \CC$, choose 
\[
n = -\lfloor \log_{|q|}|u|\rfloor
\]
so that $|q| \leq |u q^n| < 1$. We then have
\[
\theta_0(uq^n; q) = (-1)^n u^{-n} q^{-\frac{n(n-1)}{2}} \theta_0(u; q).
\]
We then see that 
\begin{align*}
|\theta_0(u; q)| = |u|^n |q|^{\frac{n(n-1)}{2}} |\theta_0(uq^n; q)|\leq C_1'(q)\exp\left( n \log |u| + \frac{n(n - 1)}{2} \log|q|\right).
\end{align*}
Notice that 
\[
n \log|u| \leq \begin{cases} - \frac{(\log|u|)^2}{\log |q|} + \log|u| & |u| \geq 1 \\ - \frac{(\log|u|)^2}{\log|q|} & |u| < 1 \end{cases}
\]
and that 
\[
\frac{n(n - 1)}{2} \log|q| \leq \frac{(\log|u|)^2}{2 \log|q|} + \begin{cases} -\frac{1}{2} \log|u| + 1 & |u| \geq 1 \\ 
\frac{1}{2} \log |u| + 1 & |u| < 1 \end{cases}.
\]
Combining these, for $C_1(q) := e C_1'(q)$ we find that 
\[
|\theta_0(u; q)| \leq C_1(q) |u|^{1/2} \exp\left(- \frac{(\log|u|)^2}{2 \log|q|}\right).
\]

For (b), define the constant $C_2'(q, \eps)$ by 
\[
C_2'(q, \eps) := \min_{\substack{|q| \leq |u| \leq 1 \\ |\log|u|| \geq \eps, |\log|uq^{-1}|| \geq \eps}} |\theta_0(u; q)|.
\]
If $\min_n \Big|\log|uq^n|\Big| > \eps$, then we have that 
\[
|\theta_0(u; q)| = |u|^n |q|^{\frac{n(n-1)}{2}} |\theta_0(uq^n; q)|\geq C_2'(q, \eps)\exp\left( n \log |u| + \frac{n(n - 1)}{2} \log|q|\right).
\]
Notice now that 
\[
n \log|u| \geq \begin{cases} - \frac{(\log|u|)^2}{\log |q|} & |u| \geq 1 \\ - \frac{(\log|u|)^2}{\log|q|} + \log|u| & |u| < 1 \end{cases}
\]
and that 
\[
\frac{n(n - 1)}{2} \log|q| \geq \frac{(\log|u|)^2}{2 \log|q|} + \begin{cases} \frac{1}{2} \log|u| - 1 & |u| \geq 1 \\ 
-\frac{1}{2} \log |u| - 1 & |u| < 1 \end{cases}.
\]
This implies that for $C_2(q, \eps) := e^{-1} C_2'(q, \eps)$, we have the desired bound
\[
|\theta_0(u; q)| \geq C_2(q, \eps) |u|^{1/2} \eps\left(- \frac{(\log|u|)^2}{2\log|q|}\right). \qedhere
\]
\end{proof}

\begin{corr} \label{corr:theta-ratio}
If $|q| < 1$, for any $\eps > 0$ there are constants $D_1(q, \eps), D_2(q, \eps) > 0$ so that for $a, b$ and $z \neq 0$ satisfying
\[
\min_n \left|\log |zq^bq^n|\right| > \eps \text{ and } \min_n \left|\log|zq^aq^n|\right| > \eps,
\]
we have
\[
D_2(q, \eps) q^{\frac{a - b}{2}} |z^2 q^{a + b}|^{- \frac{a - b}{2}} \leq \left|\frac{\theta_0(zq^a; q)}{\theta_0(zq^b;q)}\right| \leq D_1(q, \eps) q^{\frac{a - b}{2}} |z^2q^{a + b}|^{- \frac{a - b}{2}}.
\]
\end{corr}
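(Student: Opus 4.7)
The plan is to derive Corollary \ref{corr:theta-ratio} as a direct consequence of Lemma \ref{lem:theta-asymp} by combining the upper and lower bounds there. For the upper bound on the ratio $|\theta_0(zq^a;q)/\theta_0(zq^b;q)|$, I will apply part (a) of Lemma \ref{lem:theta-asymp} to the numerator (with argument $u = zq^a$) and part (b) to the denominator (with argument $u = zq^b$); the separation hypothesis $\min_n|\log|zq^bq^n||>\eps$ is exactly what is needed to invoke part (b). This yields
\[
\left|\frac{\theta_0(zq^a;q)}{\theta_0(zq^b;q)}\right| \;\leq\; \frac{C_1(q)}{C_2(q,\eps)}\,|q|^{(a-b)/2}\,\exp\!\left(-\frac{(\log|zq^a|)^2-(\log|zq^b|)^2}{2\log|q|}\right).
\]

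The crucial algebraic step is the difference-of-squares identity
\[
(\log|zq^a|)^2 - (\log|zq^b|)^2 = \bigl(\log|zq^a|-\log|zq^b|\bigr)\bigl(\log|zq^a|+\log|zq^b|\bigr) = (a-b)\log|q|\cdot\log|z^2 q^{a+b}|,
\]
so the exponential factor collapses to $|z^2 q^{a+b}|^{-(a-b)/2}$. Setting $D_1(q,\eps):=C_1(q)/C_2(q,\eps)$ then gives the claimed upper bound. The lower bound follows from the symmetric argument: apply part (b) to the numerator (this is where the other hypothesis $\min_n|\log|zq^aq^n||>\eps$ is needed) and part (a) to the denominator, take $D_2(q,\eps):=C_2(q,\eps)/C_1(q)$, and invoke the same quadratic identity.

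There is no real obstacle here; the corollary is an elementary reorganization of Lemma \ref{lem:theta-asymp}. The only points to verify carefully are the simplification of the Gaussian factors, which reduces to the single identity displayed above, and the bookkeeping of which separation hypothesis is needed to produce which side of the two-sided inequality. I note that the statement as written has $q^{(a-b)/2}$ rather than $|q|^{(a-b)/2}$ on the right-hand side, which I interpret as an abbreviation for its absolute value since we are bounding a nonnegative real quantity.
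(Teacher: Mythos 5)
Your proposal is correct and is essentially the paper's own proof: the paper likewise deduces the corollary by applying parts (a) and (b) of Lemma \ref{lem:theta-asymp} to the numerator and denominator and invoking the factorization $(\log|zq^a|)^2 - (\log|zq^b|)^2 = \log|z^2q^{a+b}|\,\log|q^{a-b}|$. Your bookkeeping of which separation hypothesis feeds which side of the inequality, and your reading of $q^{(a-b)/2}$ as shorthand for its modulus, are both consistent with the intended statement.
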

\begin{proof}
Apply Lemma \ref{lem:theta-asymp} and the factorization $(\log|zq^a|)^2 - (\log|zq^b|)^2 = \log|z^2q^{a + b}| \log|q^{a - b}|$.
\end{proof}

\subsection{Elliptic gamma function and phase function}

For $|r|, |p| < 1$, define the elliptic gamma function $\Gamma(z; r, p)$ by 
\[
\Gamma(z; r, p) = \frac{(z^{-1} r p; r, p)}{(z; r, p)}.
\]
Define the phase function $\Omega_{a}(t/z; r, p)$ by 
\begin{equation} \label{eq:phase-func-def}
\Omega_{a}(z; r, p) = \frac{(z a^{-1}; r, p)(z^{-1} a^{-1} rp; r, p)}{(z a; r, p)(z^{-1} a rp; r, p)} = \frac{\Gamma(za;r, p)}{\Gamma(za^{-1}; r, p)}.
\end{equation}
It has the following transformation properties.

\begin{lemma} \label{lem:phase-trans}
The phase function satisfies 
\begin{align*}
\Omega_{a}(z; r, p) &= \Omega_{a}(z^{-1}; r, p) \frac{\theta_0(z^{-1}a; p)\theta_0(za^{-1};r)}{\theta_0(z^{-1}a^{-1};p) \theta_0(za; r)}\\
\Omega_a(pz; r, p) &= \frac{\theta_0(za; r)}{\theta_0(za^{-1};r)}\Omega_a(z; r, p) \\
\Omega_a(p^{-1}z; r, p) &= \frac{\theta_0(za^{-1}p^{-1}; r)}{\theta_0(za p^{-1};r)} \Omega_a(z; r, p).
\end{align*}
\end{lemma}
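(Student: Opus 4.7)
The plan is to rewrite $\Omega_a(z; r, p)$ as the ratio of elliptic gamma functions $\Gamma(za; r, p)/\Gamma(za^{-1}; r, p)$ from definition (\ref{eq:phase-func-def}) and then extract all three transformation identities from two standard properties of $\Gamma$: the multiplicative recursions
\[
\Gamma(ru; r, p) = \theta_0(u; p)\,\Gamma(u; r, p), \qquad \Gamma(pu; r, p) = \theta_0(u; r)\,\Gamma(u; r, p),
\]
together with the reflection identity $\Gamma(u; r, p)\,\Gamma(u^{-1}rp; r, p) = 1$. All three of these facts can be verified directly from the product definition of the double Pochhammer symbol by telescoping one of the two indices; for instance, writing $\Gamma(ru; r, p)/\Gamma(u; r, p)$ as an infinite product over $(n, m)$ and noticing that everything cancels except the boundary $n = 0$ terms yields $(u; p)(pu^{-1}; p) = \theta_0(u; p)$ at once.

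With these identities in hand, the second transformation $\Omega_a(pz; r, p) = \frac{\theta_0(za; r)}{\theta_0(za^{-1}; r)}\Omega_a(z; r, p)$ is immediate: apply $\Gamma(pu; r, p) = \theta_0(u; r)\Gamma(u; r, p)$ separately to $\Gamma(pza; r, p)$ and $\Gamma(pza^{-1}; r, p)$ and divide. The third transformation is then simply the formal inverse of the second, obtained by substituting $z \mapsto p^{-1}z$ and rearranging. For the inversion identity, I will use the reflection formula to write $\Omega_a(z^{-1}; r, p) = \Gamma(zarp; r, p)/\Gamma(za^{-1}rp; r, p)$, then iterate both recursions to obtain
\[
\Gamma(urp; r, p) = -u^{-1}\,\theta_0(u; r)\,\theta_0(u; p)\,\Gamma(u; r, p),
\]
where the factor $-u^{-1}$ arises from the quasi-periodicity $\theta_0(ru; r) = -u^{-1}\theta_0(u; r)$. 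Substituting $u = za$ and $u = za^{-1}$ and forming the ratio expresses $\Omega_a(z^{-1}; r, p)/\Omega_a(z; r, p)$ in terms of $\theta_0$'s of arguments $za^{\pm 1}$, with an overall factor of $a^{-2}$. A final application of the reflection $\theta_0(u^{-1}; q) = -u^{-1}\theta_0(u; q)$ converts the $p$-theta factors from arguments $za^{\pm 1}$ to the arguments $z^{-1}a^{\mp 1}$ appearing on the right-hand side of the stated identity, and the accompanying factor $a^2$ from this conversion exactly cancels the $a^{-2}$ from the previous step.

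No step presents a genuine obstacle; the entire proof reduces to careful bookkeeping with infinite products. The only subtlety is keeping track of the interlocking $-u^{-1}$ signs from the two separate applications of $\theta_0(u^{-1}; q) = -u^{-1}\theta_0(u; q)$ — once implicitly inside the double shift formula for $\Gamma(urp; r, p)$ and once explicitly when converting the $p$-theta factors in the final step — so that the various powers of $a$ and $z$ combine correctly.
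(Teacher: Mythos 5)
Your proposal is correct; I checked the bookkeeping in the inversion identity and the factor of $a^{2}$ produced by converting $\theta_0(za^{\pm 1};p)$ to $\theta_0(z^{-1}a^{\mp 1};p)$ does cancel the $a^{-2}$ coming from the two $-u^{-1}$ factors in the double-shift formula $\Gamma(urp;r,p) = -u^{-1}\theta_0(u;r)\theta_0(u;p)\Gamma(u;r,p)$. The difference from the paper is one of packaging rather than substance: the paper proves all three identities by directly forming the ratio of double Pochhammer symbols, telescoping one index, and grouping the surviving single Pochhammer factors into theta functions $\theta_0(u;q) = (u;q)(qu^{-1};q)$ with the arguments already in the form appearing in the statement — so no quasi-periodicity signs ever enter. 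You instead first establish the shift recursions $\Gamma(ru;r,p) = \theta_0(u;p)\Gamma(u;r,p)$, $\Gamma(pu;r,p) = \theta_0(u;r)\Gamma(u;r,p)$ and the reflection $\Gamma(u;r,p)\Gamma(u^{-1}rp;r,p)=1$, and derive the lemma from those. For the two $p$-shift identities the computations coincide; for the inversion identity your route is genuinely different and trades the paper's one-line regrouping for reusable structural identities of the elliptic gamma function, at the cost of the interlocking $-u^{-1}$ sign bookkeeping you flag. Both are complete proofs.
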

\begin{proof}
Observe that 
\begin{align*} 
\Omega_{a}(z; r, p) &= \Omega_{a}(z^{-1}; r, p) \frac{(1 - z a^{-1})(za^{-1} r; r)(za^{-1}p, p)}{(1 - za) (zar;r)(zap;p)} \frac{(1-z^{-1}a)(z^{-1}ap;p)(z^{-1}ar;r)}{(1 - z^{-1}a^{-1})(z^{-1}a^{-1}p;p)(z^{-1}a^{-1}r;r)}\\
&= \Omega_{a}(z^{-1}; r, p) \frac{\theta_0(z^{-1}a; p)\theta_0(za^{-1};r)}{\theta_0(z^{-1}a^{-1};p) \theta_0(za; r)}.
\end{align*}
In addition, we have that 
\[
\Omega_a(pz; r, p) = \frac{(za; r)(z^{-1}a^{-1}r;r)}{(za^{-1}; r)(z^{-1}ar;r)} \Omega_a(z; r, p)= \frac{\theta_0(za; r)}{\theta_0(za^{-1};r)} \Omega_a(z; r, p) 
\]
and that 
\[
\Omega_a(p^{-1}z; r, p) = \frac{(za^{-1}p^{-1}; r) (z^{-1}ar p;r)}{(za p^{-1};r)(z^{-1} a^{-1} p r; r)} \Omega_a(z; r, p) = \frac{\theta_0(za^{-1}p^{-1}; r)}{\theta_0(za p^{-1};r)} \Omega_a(z; r, p). \qedhere
\]
\end{proof}

\section{Computations of vertex operators, OPE's, and one loop correlation functions} \label{sec:comps}

We give some explicit computations of OPE's and one loop correlation functions in the method of coherent states. For each OPE, we write $=$ to denote equality of analytic continuations outside of the specified domain.

\subsection{Table of vertex operators} \label{sec:vert-op}

We give the vertex operators used in the free field construction of \cite{Mat}. Define first $Y^{\pm}(z)$, $Z_{\pm}(z)$, $W_{\pm}(z)$, and $U(z)$ by
\begin{align*}
Y^{\pm}(z) &= \exp\left(\pm \sum_{m > 0} q^{\mp \frac{km}{2}} \frac{z^m}{[km]} (\alpha_{-m} + \bar{\alpha}_{-m})\right) e^{\pm 2 (\alpha + \bar{\alpha})} z^{\pm \frac{1}{k}(\alpha_0 + \bar{\alpha}_0)} \exp\left(\mp \sum_{m > 0} q^{\mp \frac{km}{2}} \frac{z^m}{[km]} (\alpha_m + \bar{\alpha}_m)\right)\\
Z_{\pm}(z) &= \exp\left(\mp(q - q^{-1}) \sum_{m > 0} z^{\mp m} \frac{[m]}{[2m]} \bar{\alpha}_{\pm m}\right) q^{\mp \bar{\alpha}_0}\\
W_{\pm}(z) &= \exp\left(\mp (q - q^{-1}) \sum_{m > 0} z^{\mp m} \frac{[m]}{[2m]} \beta_{\pm m}\right) q^{\mp \frac{1}{2}\beta_0}\\
U(z) &= \exp\left(-\sum_{m > 0} z^m \frac{q^{- \frac{k+2}{2}m}}{[(k+2)m]} \beta_{-m}\right) e^{-2 \beta} z^{-\frac{1}{k+2}\beta_0}\exp\left(\sum_{m > 0}z^{-m} \frac{q^{-\frac{k+2}{2}m}}{[(k+2)m]}\beta_m\right).
\end{align*}
In terms of these operators, we define
\[
X^+(z) := \frac{1}{(q - q^{-1})z} (X^+_+(z) - X^+_-(z))\qquad \text{ and } \qquad X^-(z) : = -\frac{1}{(q - q^{-1})z} (X^-_+(z) - X^-_-(z))
\]
for 
\begin{align*}
X^+_+(z) &:=\, : Y^+(z) Z_+(q^{-\frac{k +2}{2}} z) W_+(q^{-\frac{k}{2}}z): \qquad \text{ and } \qquad X^+_-(z) :=\, : Y^+(z) W_-(q^{\frac{k}{2}}z) Z_-(q^{\frac{k+2}{2}}z):\\
X^-_+(z) &:=\, : Y^-(z) Z_+(q^{\frac{k +2}{2}} z) W_+(q^{\frac{k}{2}}z)^{-1}: \qquad \text{ and } \qquad X^-_-(z) :=\, : Y^-(z) W_-(q^{-\frac{k}{2}}z)^{-1} Z_-(q^{-\frac{k+2}{2}}z):.
\end{align*}
Define also the screening operator 
\[
S(z) := - \frac{1}{(q - q^{-1})z} (S_+(z) - S_-(z))
\]
for 
\[
S_+(z) :=\, : U(z) Z_+(q^{-\frac{k+2}{2}}z)^{-1} W_+(q^{-\frac{k}{2}}z)^{-1}: \qquad \text{ and } \qquad S_-(z) :=\, : U(z) W_-(q^{\frac{k}{2}}z)^{-1}Z_-(q^{\frac{k+2}{2}}z)^{-1}:.
\]
Finally, define the operators
\begin{align*}
\eta(w_0) &= \exp\Big(\sum_{m > 0} \frac{w_0^m}{[2m]}(q^{\frac{k}{2}m} \beta_{-m} + q^{\frac{k+2}{2}m}\bar{\alpha}_{-m})\Big) e^{(k+2)\beta + k \bar{\alpha}} w_0^{\frac{1}{2}(\beta_0 + \bar{\alpha}_0)} \exp\Big(\!\!-\!\!\sum_{m > 0} \frac{w_0^{-m}}{[2m]}(q^{\frac{k}{2}m}\beta_m + q^{\frac{k+2}{2}m}\bar{\alpha}_m)\Big)\\
\xi(z_0) &=  \exp\Big(\!\!-\!\!\sum_{m > 0} \frac{z_0^m}{[2m]}(q^{\frac{k}{2}m} \beta_{-m} + q^{\frac{k+2}{2}m}\bar{\alpha}_{-m})\Big) e^{-(k+2)\beta - k \bar{\alpha}} z_0^{-\frac{1}{2}(\beta_0 + \bar{\alpha}_0)} \exp\Big(\sum_{m > 0} \frac{z_0^{-m}}{[2m]}(q^{\frac{k}{2}m}\beta_m + q^{\frac{k+2}{2}m}\bar{\alpha}_m)\Big).
\end{align*}
and the intertwining vertex operator
\begin{align} \nonumber
\phi_j(z) = \exp\Big(\sum_{m > 0}& \frac{(q^{k + 2} z)^m q^{\frac{km}{2}} [2jm]}{[km][2m]} (\alpha_{-m} + \ba_{-m})\Big) e^{2j(\alpha + \ba)}z^{\frac{j}{k+2}(\alpha_0 + \ba_0)}\exp\Big(\!\!-\!\!\sum_{m > 0} \frac{(q^{k+2}z)^{-m} q^{\frac{km}{2}}[2jm]}{[km][2m]} (\alpha_m + \ba_m)\Big) \\ 
&\phantom{=} \exp\Big(\sum_{m > 0} \frac{(q^{k+2} z)^m q^{\frac{k+2}{2} m } [2jm]}{[(k+2)m][2m]} \beta_{-m}\Big) e^{2j\beta} z^{\frac{j}{k+2} \beta_0} \exp\Big(-\sum_{m > 0} \frac{(q^{k+2}z)^{-m} q^{\frac{k+2}{2}m} [2jm]}{[2m][(k+2)m]}\beta_m\Big). \label{eq:phi-def}
\end{align}
We define the modes $\{a_n\}$ of each vertex operator $A(z)$ listed by 
\[
A(z) = \sum_{n \in \ZZ} a_n z^{-n - 1}
\]
with the exception of $\xi(z)$, whose modes are defined by $\xi(z) = \sum_{n \in \ZZ}\xi_n z^{-n}$.

\begin{remark}
Our definition of $\phi_j(z)$ corrects a typographical error in \cite[Equation 5.1]{Mat}.
\end{remark}

\subsection{Operator product expansions} \label{sec:ope-comp}

We give some OPE's which will be used in our computations.  In each case, we mention the domain on which the relevant OPE converges.  If no domain is specified, then the OPE converges for all values of the variables.

\subsubsection{OPE's between $S(t)$ and $X^-(w)$}

We record the OPE's between $S(t)$ and $X^-(w)$.  We have that 
\begin{align*}
S_+(t) X^-_+(w) &= \frac{qt - wq^{k+1}}{t - wq^{k+2}}:S_+(t) X^-_+(w): \,\, |wq^{k+2}| < |t|\\
X^-_+(w) S_+(t) &= \frac{qt - w q^{k+1}}{t - wq^{k+2}} :S_+(t) X^-_+(w):\,\, |wq^{k+2}| > |t|\\
S_+(t) X^-_-(w) &= q :S_+(t) X^-_-(w):\\
X^-_-(w) S_+(t) &= q :S_+(t) X^-_-(w):\\
S_-(t) X^-_+(w) &= q^{-1} :S_-(t) X^-_+(w):\\
X^-_+(w) S_-(t) &= q^{-1} :S_-(t) X^-_+(w):\\
S_-(t) X^-_-(w) &= \frac{tq^{k+1} - wq}{tq^{k+2} - w}:S_-(t) X^-_-(w):  \,\, |w| < |tq^{k+2}|\\
X^-_-(w) S_-(t) &= \frac{tq^{k+1} - wq}{tq^{k+2} - w}:S_-(t) X^-_-(w): \,\, |w| > |tq^{k+2}|.
\end{align*}
We will require the following computation for analysis of convergence of the Jackson integral.
\begin{lemma} \label{lem:x0comm}
We have that 
\begin{multline*}
[x_0^-, S(t)] = - \frac{1}{(q - q^{-1})t} \Big(:U(t) Y^-(t q^{-k-2}) W_+(t q^{-\frac{k}{2} - 2})^{-1} W_+(tq^{-\frac{k}{2}})^{-1}:\\ - :U(t) Y^-(t q^{k + 2}) W_-(t q^{\frac{k}{2} + 2})^{-1} W_-(tq^{\frac{k}{2}})^{-1}:\Big).
\end{multline*}
\end{lemma}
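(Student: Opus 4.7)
The plan is to compute $[x_0^-, S(t)]$ by expanding both operators into their $\pm$ components and then extracting the singular parts of the relevant OPEs, which reduces the problem to a residue calculation followed by a normal-ordering simplification.

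First I would write
\[
x_0^- = -\frac{1}{q-q^{-1}}\oint \frac{dw}{2\pi i\, w}\bigl(X^-_+(w) - X^-_-(w)\bigr), \qquad S(t) = -\frac{1}{(q-q^{-1})t}\bigl(S_+(t) - S_-(t)\bigr),
\]
so that
\[
[x_0^-, S(t)] = \frac{1}{(q-q^{-1})^2 t}\Bigl\{[A,S_+(t)] - [A,S_-(t)] - [B,S_+(t)] + [B,S_-(t)]\Bigr\},
\]
where $A$ and $B$ denote the zero-mode integrals of $X^-_+(w)/w$ and $X^-_-(w)/w$ respectively. Consulting the OPE table in Subsection \ref{sec:ope-comp}, the products $S_+(t)X^-_-(w)$ and $S_-(t)X^-_+(w)$ are regular (they act by the scalars $q$ and $q^{-1}$ independent of the variables), hence $[A,S_-(t)] = [B,S_+(t)] = 0$.

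Next I would evaluate the two surviving commutators as contour residues. Writing the $w$-contour as a difference of contours on the two sides of the singularity of the meromorphic continuation of the OPE, we obtain
\[
[A, S_+(t)] = \Res_{w = tq^{-k-2}}\frac{1}{w}\cdot\frac{qt - wq^{k+1}}{t - wq^{k+2}}\,{:}S_+(t) X^-_+(w){:}
\]
and likewise $[B, S_-(t)]$ picks up the residue of $\frac{tq^{k+1}-wq}{tq^{k+2}-w}$ at $w = tq^{k+2}$. A short computation gives
\[
\Res_{w = tq^{-k-2}}\frac{qt - wq^{k+1}}{t - wq^{k+2}} = -t(q-q^{-1})q^{-k-2}, \qquad \Res_{w = tq^{k+2}}\frac{tq^{k+1} - wq}{tq^{k+2}-w} = tq^{k+2}(q-q^{-1}),
\]
so that $[A,S_+(t)] = -(q-q^{-1}){:}S_+(t)X^-_+(tq^{-k-2}){:}$ and $[B,S_-(t)] = (q-q^{-1}){:}S_-(t)X^-_-(tq^{k+2}){:}$.

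The last step is to simplify the two normal-ordered products. In $:S_+(t)X^-_+(tq^{-k-2}):$, the $Z_+$ factors from $S_+(t)$ and from $X^-_+(tq^{-k-2})$ are evaluated at the same point $tq^{-(k+2)/2}$, with opposite signs in the exponents, and thus cancel; the surviving $Y^-$, $U$, and $W_+$ factors give the first term in the statement. The analogous cancellation of $Z_-$ factors in $:S_-(t)X^-_-(tq^{k+2}):$ yields the second term. Substituting back with the overall prefactor $1/((q-q^{-1})^2 t)$ and combining produces exactly the claimed identity. The only place that requires care is the sign and contour bookkeeping in the residue step, together with the verification that the $Z_\pm$ factors really do cancel when placed under a single normal ordering.
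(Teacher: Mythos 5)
Your proposal is correct and is exactly the argument the paper intends: the paper's proof consists of the single sentence that the lemma ``follows from the OPE's between $X^-_\pm(w)$ and $S_\pm(t)$ computed above,'' and your residue computation (with the vanishing of the cross terms $[A,S_-]$ and $[B,S_+]$, the residues $-t(q-q^{-1})q^{-k-2}$ and $tq^{k+2}(q-q^{-1})$, and the cancellation of the $Z_\pm$ factors at the coincident points $tq^{\mp(k+2)/2}$) is precisely the omitted calculation. All signs, prefactors, and evaluation points check out against the stated identity.
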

\begin{proof}
This follows from the OPE's between $X^-_\pm(w)$ and $S_\pm(t)$ computed above.
\end{proof}

\subsubsection{OPE's between $S(t)$ and $X^+(w)$}

We record the OPE's between $S(t)$ and $X^+(w)$.  We have that 
\begin{align*}
S_+(t) X^+_+(w) &= \frac{t - wq^2}{q(t - w)} :S_+(t) X^+_+(w): \,\, |w| < |t|\\
X^+_+(w) S_+(t) &= \frac{t - wq^2}{q(t - w)}:S_+(t) X^+_+(w): \,\, |w| > |t|\\
S_+(t) X^+_-(w) &= q^{-1} :S_+(t) X^+_-(w):\\
X^+_-(w) S_+(t) &= q^{-1} :S_+(t) X^+_-(w):\\
S_-(t) X^+_+(w) &= q :S_-(t) X^+_+(w):\\
X^+_+(w) S_-(t) &= q :S_-(t) X^+_+(w):\\
S_-(t) X^+_-(w) &= \frac{q^2t - w}{q(t - w)}:S_-(t) X^+_-(w): \,\, |w| < |t| \\
X^+_-(w) S_-(t) &= \frac{q^2t - w}{q(t - w)}:S_-(t) X^+_-(w): \,\, |w| > |t|.
\end{align*}

\subsubsection{OPE's between $S(t)$ and $\phi_1(z)$}

We compute OPE's between $S(t)$ and $\phi_1(z)$.  We obtain
\begin{align*}
S_+(t) \phi_1(z) &= t^{-\frac{2}{k + 2}} \frac{(z t^{-1} q^{-2}; q^{-2k-4})}{(zt^{-1} q^2; q^{-2k-4})} :S_+(t) \phi_1(z):\,\, |t| > |zq^2|, |zq^{-2}|\\
\phi_1(z) S_+(t) &= z^{-\frac{2}{k + 2}} \frac{(t z^{-1} q^{-2k-6}; q^{-2k-4})}{(tz^{-1} q^{-2k-2}; q^{-2k-4})} :S_+(t) \phi_1(z):\,\, |t| < |zq^{2k + 2}|, |zq^{2k+6}|\\
S_-(t) \phi_1(z) &= t^{-\frac{2}{k + 2}} \frac{(z t^{-1} q^{-2}; q^{-2k-4})}{(zt^{-1} q^2; q^{-2k-4})} :S_-(t) \phi_1(z):\,\, |t| > |zq^{-2}|, |zq^2|\\
\phi_1(z) S_-(t) &=  z^{-\frac{2}{k + 2}} \frac{(t z^{-1} q^{-2k-6}; q^{-2k-4})}{(tz^{-1} q^{-2k-2}; q^{-2k-4})} :S_-(t) \phi_1(z):\,\, |t| < |zq^{2k+6}|, |zq^{2k+2}|,
\end{align*}
which we summarize as
\begin{align*}
S_a(t) \phi_1(z) &= t^{-\frac{2}{k+2}} \frac{(zt^{-1} q^{-2}; q^{-2k-4})}{(zt^{-1}q^2; q^{-2k-4})} : S_a(t) \phi_1(z): \qquad |t| > |zq^2|, |zq^{-2}| \\
\phi_1(z) S_a(t) &= z^{-\frac{2}{k+2}} \frac{(tz^{-1} q^{-2k-6}; q^{-2k-4})}{(tz^{-1} q^{-2k-2}; q^{-2k-4})} : S_a(t) \phi_1(z): \qquad |t| < |zq^{2k+6}|, |zq^{2k+2}|.
\end{align*}

\subsubsection{OPE's between $\phi_j(z)$ and $X^-_b(w)$}

Computing OPE's, we obtain
\begin{align*}
\phi_1(z) X^-_+(w) &= :\phi_1(z) X^-_+(w):\\
X^-_+(w) \phi_1(z) &= \frac{q^2(w - zq^k)}{w - z q^{k+4}}:\phi_1(z) X^-_+(w): \,\,\,\,\, |w| > |z q^{k+4}|, |zq^k| \\
\phi_1(z) X^-_-(w) &= \frac{z q^{k+4} - w}{z q^{k+4} - q^4 w}:\phi_1(z) X^-_-(w):  \,\,\,\,\, |w| < |z q^{k+4}|, |zq^k| \\
X^-_-(w) \phi_1(z) &= q^{-2} :X^-_-(w) \phi_1(z):.
\end{align*}
We conclude that 
\begin{align*}
\phi_1(z) X^-_b(w) &= q^{2b - 2} \frac{zq^{k+4} - w}{z q^{k+ 2b + 2} - w} : \phi_1(z) X^-_b(w): \,\, |w| < |z q^{k+ 4}|, |zq^k|\\
X^-_b(w) \phi_1(z) &= q^{2b} \frac{w - z q^k}{w - zq^{k + 2 + 2b}}  : \phi_1(z) X^-_b(w): \,\, |w| > |z q^{k+ 4}|, |zq^k|
\end{align*}
and therefore that 
\begin{equation} \label{eq:phi-x-comm}
[\phi_1(z), X^-_b(w)]_{q^2} = \sum_{c \in \pm \{1\}} (-1)^{\frac{c - 1}{2}} q^{2b} \frac{w q^{-2c} - z q^{k + 2}}{w - zq^{k + 2 + 2b}} : \phi_1(z) X^-_b(w):,
\end{equation}
where in (\ref{eq:phi-x-comm}) the analytic continuation holds in the region $|w| < |zq^k|, |z q^{k+4}|$ for $c = 1$ and $|w| > |zq^k|, |zq^{k+4}|$ for $c = -1$. 

\subsubsection{OPE's between $X_a^\pm(z)$ and $X_b^\pm(w)$}

Computing OPE's, we obtain
\begin{align*}
X^+_+(z) X^+_+(w) &= q \frac{1 - \frac{w}{z}}{1 - q^2 \frac{w}{z}} :X^+_+(z) X^+_+(w): \qquad |z| > |w|, |wq^2|\\
X^+_+(z) X^+_-(w) &= q \frac{1 - q^{-2} \frac{w}{z}}{1 - q^2 \frac{w}{z}} :X^+_+(z) X^+_-(w): \qquad |z| > |wq^2|, |wq^{-2}|\\
X^+_+(z) X^-_+(w) &= q^{-1} \frac{1 - q^{k + 2} \frac{w}{z}}{1 - q^k\frac{w}{z}} :X^+_+(z)X^-_+(w): \qquad |z| > |wq^k|, |wq^{k + 2}|\\
X^+_+(z) X^-_-(w) &= q^{-1} :X^+_+(z) X^-_-(w):
\end{align*}
and
\begin{align*}
X^+_-(z) X^+_+(w) &= q^{-1} : X^+_-(z) X^+_+(w): \\
X^+_-(z) X^+_-(w) &= q^{-1} \frac{1 - \frac{w}{z}}{1 - q^2 \frac{w}{z}} : X^+_-(z) X^+_-(w): \qquad |z| > |w|, |wq^2| \\
X^+_-(z) X^-_+(w) &= q :X^+_-(z) X^-_+(w): \\
X^+_-(z) X^-_-(w) &= q \frac{1 - q^{k + 2} \frac{w}{z}}{1 - q^k \frac{w}{z}} : X^+_-(z) X^-_-(w): \qquad |z| > |wq^k|, |wq^{k+2}|
\end{align*}
and
\begin{align*}
X^-_+(z) X^+_+(w) &= q \frac{1 - q^{-k-2} \frac{w}{z}}{1 - q^{-k} \frac{w}{z}} : X^-_+(z)X^+_+(w): \qquad |z| > |wq^{-k}|, |wq^{-k-2}| \\
X^-_+(z) X^+_-(w) &= q :X^-_+(z) X^+_-(w):\\
X^-_+(z) X^-_+(w) &= q^{-1} \frac{1 - \frac{w}{z}}{1 - q^{-2} \frac{w}{z}} :X^-_+(z) X^-_+(w): \qquad |z| > |w|, |wq^{-2}|\\
X^-_+(z) X^-_-(w) &= q^{-1} \frac{1 - q^2 \frac{w}{z}}{1 - q^{-2} \frac{w}{z}} :X^-_+(z) X^-_-(w): \qquad |z| > |wq^2|, |wq^{-2}|
\end{align*}
and
\begin{align*}
X^-_-(z) X^+_+(w) &= q^{-1} :X^-_-(z) X^+_+(w):\\
X^-_-(z) X^+_-(w) &= q^{-1} \frac{1 - q^{k+2} \frac{w}{z}}{1 - q^k \frac{w}{z}}:X^-_-(z) X^+_-(w): \qquad |z| > |wq^{k+2}|, |wq^k|\\
X^-_-(z) X^-_+(w) &= q :X^-_-(z) X^-_+(w):\\
X^-_-(z) X^-_-(w) &= q \frac{1 - \frac{w}{z}}{1 - q^{-2} \frac{w}{z}} :X^-_-(z) X^-_-(w): \qquad |z| > |w|, |wq^{-2}|.
\end{align*}

\subsubsection{OPE's between $U(t)$, $W_\pm(t)$, and $\phi_1(z)$}

We record OPE's between the constituents $U(t)$ and $W_{\pm}(t)$ of $S(t)$ and $\phi_1(z)$.  We have that 
\begin{align*}
U(t) \phi_1(z) &= t^{-\frac{2}{k + 2}}\frac{(\frac{z}{t}q^{-2}; q^{-2k - 4})}{(\frac{z}{t} q^2; q^{-2k - 4})} :U(t) \phi_1(z):\qquad |t| > |z q^2|, |zq^{-2}|\\
W_+(t)\phi_1(z) &= q^{-1} \frac{1 - \frac{z}{t}q^{\frac{3}{2}k + 4}}{1 - \frac{z}{t} q^{\frac{3}{2}k + 2}} :W_+(t)\phi_1(z): \qquad |t| > |zq^{\frac{3}{2}k + 2}|, |zq^{\frac{3}{2}k + 4}|\\
W_-(t)\phi_1(z) &= q:W_-(t)\phi_1(z):.
\end{align*}

\subsection{Trace of the degree $0$ part} \label{sec:zero-mode-trace}

In this section we compute the action of the intertwiner in the degree $0$ part of Fock space.

\begin{prop} \label{prop:zero-mode-trace}
The trace in the degree $0$ part of Fock space is 
\[
\Tr|_{\FF_{\mu, s}^0}\Big(\eta(w_0)\xi(z_0) S_a(t) :\phi_1(z) X^-_b(w):q^{2\lambda\rho +2 \omega d}\Big) = q^{(2\lambda + b - a)s + (a + b)\mu + a} z^{\frac{2\mu}{\kappa}} t^{-\frac{2(\mu + 1)}{\kappa}} z_0^{-\mu + s} w_0^{\mu - s - 1}.
\]
\end{prop}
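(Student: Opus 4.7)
The space $\FF_{\mu,s}^0 = \FF_{\beta,\mu,0} \otimes \FF_{\alpha,s,0} \otimes \FF_{\ba,s,0}$ is one-dimensional, spanned by the vacuum $v^0_{\mu,s}$, so the trace collapses to the single diagonal matrix element of the given operator on $v^0_{\mu,s}$. Only the zero-mode pieces of each vertex operator contribute, since every nontrivial creation or annihilation mode either annihilates the vacuum or moves out of $\FF_{\mu,s}^0$ before re-projection. My plan is to extract these zero-mode pieces, verify that the total lattice displacement of the product vanishes (so that $\FF_{\mu,s}^0$ is preserved), and then evaluate the resulting scalar.

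From the explicit vertex operator formulas of Section \ref{sec:vert-op}, each operator factors as a lattice shift $E_A = e^{c_\alpha\alpha + c_{\ba}\ba + c_\beta\beta}$ composed with a diagonal factor built from powers of the form $x^{\star_0}$ and $q^{\star_0}$ for $\star \in \{\alpha,\ba,\beta\}$. Reading these off yields: $\eta(w_0)$ contributes $e^{(k+2)\beta + k\ba}\, w_0^{(\beta_0 + \ba_0)/2}$; $\xi(z_0)$ contributes $e^{-(k+2)\beta - k\ba}\, z_0^{-(\beta_0 + \ba_0)/2}$; $S_a(t)$ contributes $e^{-2\beta}\, t^{-\beta_0/\kappa}\, q^{a\ba_0 + a\beta_0/2}$; $\phi_1(z)$ contributes $e^{2(\alpha+\ba+\beta)}\, z^{(\alpha_0+\ba_0+\beta_0)/\kappa}$; and $X_b^-(w)$ contributes $e^{-2(\alpha+\ba)}\, w^{-(\alpha_0+\ba_0)/k}\, q^{-\ba_0 + b\beta_0/2}$ (here the $\ba_0$-coefficient from the $Z$-factors happens to be independent of $b$, while the $\beta_0$-coefficient from the $W$-factors carries the $b$-dependence). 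Summing the displacements of $E_A$ in each of $\alpha$, $\ba$, $\beta$ gives zero, so $\FF_{\mu,s}^0$ is indeed preserved.

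The diagonal factors then act on $v^0_{\mu,s}$ via the eigenvalues $(\beta_0, \alpha_0, \ba_0) = (2\mu, 2s, -2s)$. By the $q$-Sugawara formula of Proposition \ref{prop:q-sug}, the operator $\delta$ annihilates $v^0_{\mu,s}$, so $q^{2\omega d}$ contributes $1$, while $q^{2\lambda\rho} = q^{\lambda\alpha_0}$ contributes $q^{2\lambda s}$. The remaining calculation is to multiply the five diagonal pieces above, using the commutations $x^{c'\star_0} e^{c\star} = x^{\pm 2cc'} e^{c\star} x^{c'\star_0}$ (with the sign determined by the Heisenberg bracket on $\star$, hence negative for $\star = \ba$) to slide each lattice shift past the eigenvalue operators that appear to its left, and then evaluate on $v^0_{\mu,s}$.

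The only real obstacle is careful bookkeeping of the resulting commutation factors; in particular, the additive constant $a$ in the $q$-exponent of the answer traces to the shift $e^{-2\beta}$ of $S_a$ passing its own $q^{a\beta_0/2}$ factor, while the offset $-1$ in the exponent of $w_0$ arises from an analogous commutation between $E_\eta$ and the $\beta_0$- and $\ba_0$-dependent diagonal factors of the operators standing to its right. Once these signs are pinned down, direct multiplication collects the power factors $z^{2\mu/\kappa}$ and $t^{-2(\mu+1)/\kappa}$ from $\phi_1$ and $S_a$, the factors $z_0^{-\mu + s}$ and $w_0^{\mu - s - 1}$ from $\xi$ and $\eta$, and the total $q$-exponent $(2\lambda + b - a)s + (a+b)\mu + a$ from $q^{2\lambda\rho}$ together with the $q^{\star_0}$ contributions of $S_a$ and $X_b^-$, completing the proof.
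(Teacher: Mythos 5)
Your overall strategy is exactly the paper's: $\FF^0_{\mu,s}$ is one-dimensional, so the trace collapses to the diagonal matrix element of the product of zero modes on the vacuum, and the computation reduces to tracking lattice shifts and momentum factors. Your identification of the zero modes of $\eta$, $\xi$, $\phi_1$, and $S_a$, the check that the total lattice displacement vanishes, and the treatment of $q^{2\omega d}$ via Proposition \ref{prop:q-sug} all match what the paper does.

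However, your zero mode for $X^-_b(w)$ is wrong, and the error is fatal to the claimed bookkeeping. By definition $X^-_+(w)$ contains $Z_+$, whose zero mode is $q^{-\ba_0}$, while $X^-_-(w)$ contains $Z_-$, whose zero mode is $q^{+\ba_0}$; so the $\ba_0$-exponent of $X^-_b$ is proportional to $-b$ (i.e.\ $q^{-b\ba_0}$, or $q^{-b\ba_0/2}$ if one normalizes $Z_\pm$ symmetrically with $W_\pm$), not independent of $b$ as your parenthetical asserts. You in fact handle the analogous sign pattern correctly for $S_a$ (where $S_\pm$ contain $Z_\pm^{-1}$ and you get $q^{a\ba_0}$), which makes the slip for $X^-_b$ inconsistent with your own reasoning. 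Since $\ba_0=-2s$ on the vacuum, this factor is precisely what produces the $q^{bs}$ contribution and hence the $b$-dependence of the coefficient of $s$ in the answer, $(2\lambda+b-a)s$. With your $b$-independent factor the exponent would come out as $(2\lambda+1-a)s+\cdots$ (up to normalization), which is not the stated formula; and because you never display the final multiplication, the proof as written does not actually reach the claimed result. A secondary point you would also have to confront when ``pinning down the signs'': the printed definition of $Z_\pm$ carries $q^{\mp\ba_0}$ while $W_\pm$ carries $q^{\mp\beta_0/2}$, and a literal evaluation on $\ba_0=-2s$ then yields $2bs$ and $-2as$ rather than the $bs$ and $-as$ in the statement, so the $Z_\pm$ exponent must be read as $\mp\ba_0/2$ for the arithmetic to close.
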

\begin{proof}
Because the degree $0$ part of Fock space has dimension $1$, the trace is given by the action of the degree zero part of the intertwiner on the highest weight vector, which is given by
\begin{multline*}
v_{\mu, s} \mapsto q^{2\lambda s} v_{\mu, s} \mapsto q^{2\lambda s} q^{bs} q^{b \mu} z^{\frac{2\mu}{\kappa}} v_{\mu + 1, s} \mapsto q^{2\lambda s} q^{bs} q^{b \mu} z^{\frac{2\mu}{\kappa}} t^{-\frac{2(\mu + 1)}{\kappa}} q^{a(\mu + 1)} q^{-a} v_{\mu, s} \\
\mapsto q^{2\lambda s} q^{bs} q^{b \mu} z^{\frac{2\mu}{\kappa}} t^{-\frac{2(\mu + 1)}{\kappa}} q^{a(\mu + 1)} q^{-as} z_0^{-\mu + s} v_{\mu - \frac{\kappa}{2}, s, s - \frac{k}{2}} \\
\mapsto q^{2\lambda s} q^{bs} q^{b \mu} z^{\frac{2\mu}{\kappa}} t^{-\frac{2(\mu + 1)}{\kappa}} q^{a(\mu + 1)} q^{-as} z_0^{-\mu + s} w_0^{\mu - s - 1} v_{\mu, s},
\end{multline*}
yielding the result after simplification.
\end{proof}

\subsection{One loop correlation functions} \label{sec:one-loop-comp}

In this section we compute the one loop correlation functions
\[
T_{PQ} := (q^{-2\omega}; q^{-2\omega})^3 \prod_{m \geq 1} \Tr|_{\FF_{\beta, \mu, m} \otimes \FF_{\alpha, s, m} \otimes \FF_{\bar{\alpha}_s, m}} \Big(P(z) Q(w) q^{2\omega d}\Big)
\]
for each pair of the vertex operators which appear in 
\[
\Tr|_{\FF_{\mu, s}^{>0}}\Big(\eta(w_0)\xi(z_0) S_a(t) :\phi_1(z) X^-_b(w):q^{2\lambda\rho +2 \omega d}\Big).
\]
We use a general computation of traces in Heisenberg algebras. Consider the algebra
\[
\AA = \langle \alpha_-, \alpha_+ \mid [\alpha_+, \alpha_-] = 1\rangle.
\]
Let $\FF$ be the Fock space for $\AA$ with highest weight vector $v_\FF$. 

\begin{lemma} \label{lem:h-trace}
If $e^z < 1$, the operator $\psi: \FF \to \FF$ defined by 
\[
\psi = \prod_{i = 1}^N \exp(x_i \alpha_-) \exp(y_i \alpha_+) \cdot \exp(z \alpha_- \alpha_+)
\]
has trace
\[
\Tr|_\FF(\psi) = \frac{1}{1 - e^z} \exp\Big(\sum_{i > j} \frac{x_i y_j}{1 - e^z} + \sum_{i \leq j} \frac{e^z x_i y_j}{1 - e^z}\Big).
\]
\end{lemma}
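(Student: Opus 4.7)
The plan is to compute $\Tr|_\FF(\psi)$ by first putting $\psi$ in a ``normal-ordered'' form (all $\alpha_-$'s to the left of all $\alpha_+$'s), and then evaluating the resulting trace on the canonical basis $\{\alpha_-^n v_\FF\}_{n \geq 0}$, on which $\alpha_-\alpha_+$ acts diagonally with eigenvalue $n$.

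First I would establish the commutation identity
\[
e^{y\alpha_+}\, e^{x\alpha_-} \;=\; e^{xy}\, e^{x\alpha_-}\, e^{y\alpha_+},
\]
which follows from Baker--Campbell--Hausdorff using that $[\alpha_+, \alpha_-] = 1$ is central. Applying this inductively to move every $e^{y_i \alpha_+}$ to the right past all subsequent $e^{x_j \alpha_-}$ factors yields
\[
\prod_{i=1}^N e^{x_i\alpha_-} e^{y_i\alpha_+} \;=\; \exp\!\Big(\sum_{i < j} x_j y_i\Big)\, e^{X\alpha_-}\, e^{Y\alpha_+},
\]
where $X := \sum_i x_i$ and $Y := \sum_i y_i$.

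Next I would compute $\Tr|_\FF\!\big(e^{X\alpha_-} e^{Y\alpha_+} e^{z\alpha_-\alpha_+}\big)$ directly. Since $\alpha_+ \alpha_-^n v_\FF = n\, \alpha_-^{n-1} v_\FF$, we have $e^{z\alpha_-\alpha_+}\alpha_-^n v_\FF = e^{zn}\alpha_-^n v_\FF$ and $e^{Y\alpha_+}\alpha_-^n v_\FF = \sum_{k=0}^n \binom{n}{k} Y^k \alpha_-^{n-k}v_\FF$. The diagonal contribution of $e^{X\alpha_-}$ applied to $\alpha_-^{n-k}v_\FF$ is the $\alpha_-^k$-coefficient, giving $X^k/k!$. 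Summing over $n$ and swapping orders of summation yields
\[
\Tr \;=\; \sum_{k\geq 0} \frac{(XY)^k}{k!}\, e^{zk}\sum_{m \geq 0}\binom{m+k}{k} e^{zm} \;=\; \frac{1}{1-e^z}\exp\!\left(\frac{e^z XY}{1-e^z}\right),
\]
using the standard generating function $\sum_{m\geq 0}\binom{m+k}{k} q^m = (1-q)^{-k-1}$, convergent because $|e^z| < 1$ by hypothesis.

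Finally I would combine the normal-ordering factor with this trace and verify the stated exponent by rewriting
\[
\sum_{i < j} x_j y_i \;+\; \frac{e^z XY}{1-e^z} \;=\; \frac{\sum_{i > j} x_i y_j \;+\; e^z \sum_{i \leq j} x_i y_j}{1-e^z},
\]
which follows from expanding $XY = \sum_{i,j} x_iy_j$ into the three regions $\{i>j\}$, $\{i=j\}$, $\{i<j\}$, relabeling $\sum_{i<j}x_j y_i = \sum_{i>j} x_iy_j$, and collecting terms. The computation is largely mechanical; the only subtlety is consistent bookkeeping of the sign and ordering of indices in the BCH-type rearrangement, which determines whether one gets $\sum_{i < j}x_jy_i$ versus its negative and must match the form $\sum_{i > j}x_iy_j/(1-e^z) + \sum_{i\leq j} e^z x_iy_j/(1-e^z)$ at the end.
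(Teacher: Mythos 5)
Your proof is correct, but it evaluates the trace by a genuinely different route than the paper. The paper normal-orders in the opposite direction, writing $\psi = \exp(Y\alpha_+)\exp(X\alpha_-)\exp(z\alpha_-\alpha_+)\exp\bigl(-\sum_{i\le j}x_iy_j\bigr)$ with $X=\sum_i x_i$, $Y=\sum_i y_i$, and then computes the trace of the resulting operator via the coherent-state integral formula $\Tr|_\FF(\psi)=\frac{1}{\pi}\int_\CC d^2\lambda\, e^{-|\lambda|^2}\langle\cdots\rangle$ (quoted from Konno--Tarasov-type references), reducing everything to a pair of real Gaussian integrals; the answer $\frac{1}{1-e^z}\exp\bigl(\frac{XY}{1-e^z}-\sum_{i\le j}x_iy_j\bigr)$ is then rearranged into the stated form exactly as you do in your final step. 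You instead put the $\alpha_-$'s on the left, picking up the prefactor $\exp\bigl(\sum_{i<j}x_jy_i\bigr)$, and compute the diagonal matrix elements directly on the basis $\{\alpha_-^n v_\FF\}$, using $\alpha_+\alpha_-^n v_\FF=n\alpha_-^{n-1}v_\FF$ and the generating function $\sum_{m\ge0}\binom{m+k}{k}q^m=(1-q)^{-k-1}$. Your version is more elementary and self-contained (no appeal to the coherent-state resolution of the identity), and it makes the role of the hypothesis $e^z<1$ transparent as the radius of convergence of the geometric-type series and the justification for interchanging the sums over $n$ and $k$; the paper's version buys uniformity with the ``method of coherent states'' used throughout Section 5 and Appendix B, where the same integral formula is reused for the one-loop correlation functions. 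Your final bookkeeping identity, splitting $XY=\sum_{i>j}x_iy_j+\sum_{i\le j}x_iy_j$ and relabeling $\sum_{i<j}x_jy_i=\sum_{i>j}x_iy_j$, checks out and reproduces the stated exponent.
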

\begin{proof}
Let $X = \sum_i x_i$ and $Y = \sum_i y_i$.  Recalling that for operators $A$ and $B$ with $[A, B]$ central we have $e^A e^B = e^B e^A e^{[A, B]}$, we obtain
\[
\psi = \exp(Y \alpha_+) \exp(X \alpha_-)\exp(z \alpha_- \alpha_+) \exp\Big(-\sum_{i \leq j} x_i y_j\Big).
\]
Applying \cite[Equation B.9]{KT} to compute $\Tr|_\FF(\psi)$, we obtain
\begin{align*}
\Tr|_\FF(\psi) &= \frac{1}{\pi}\int_{\CC} d^2\lambda e^{-|\lambda|^2} \sum_{n, m \geq 0} \left\langle \frac{(\lambda + \bar{X})^n}{n!} \alpha_+^n v_\FF^*,\exp\Big(-\sum_{i\leq j} x_i y_j\Big) \frac{(e^z \lambda + Y)^m}{m!} \alpha_-^m v_\FF\right\rangle\\
%&= \frac{1}{\pi} e^{-\sum_{i\leq j} x_i y_j} \int_\CC d^2\lambda e^{-|\lambda|^2} \exp\Big(e^z |\lambda|^2 + Y \bar{\lambda} + X e^z \lambda + XY\Big)\\
&= \frac{1}{\pi}e^{-\sum_{i\leq j} x_i y_j} e^{XY} \int_\RR \exp\Big((e^z - 1) a^2 + (Y + X e^z) a\Big) da \cdot \int_\RR \exp\Big((e^z - 1)b^2 + i(X e^z - Y)b\Big) db \\
&= \frac{1}{1 - e^z} \exp\Big(\frac{XY}{1 - e^z} - \sum_{i \leq j} x_i y_j\Big)\\
&= \frac{1}{1 - e^z} \exp\Big(\sum_{i > j} \frac{x_i y_j}{1 - e^z} + \sum_{i \leq j} \frac{ e^z x_i y_j}{1 - e^z}\Big)
\end{align*}
using the fact that $\int e^{-ax^2 + bx} dx = \frac{\sqrt{\pi}}{\sqrt{a}} e^{\frac{b^2}{4a}}$ for $a > 0$.
\end{proof}

\begin{corr} \label{corr:scale-trace}
If $e^{zc} < 1$, if $[\beta_+, \beta_-] = c$, then 
\[
\Tr|_\FF\Big(\prod_i \exp(x_i \beta_-) \exp(y_i\beta_+) \cdot \exp(z \beta_- \beta_+)\Big) = \frac{1}{1 - e^{zc}} \exp\Big(\sum_{i > j} \frac{cx_i y_j}{1 - e^{zc}} + \sum_{i \leq j}\frac{c x_i y_j e^{zc}}{1 - e^{zc}}\Big).
\]
\end{corr}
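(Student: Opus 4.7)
The plan is to reduce Corollary \ref{corr:scale-trace} directly to Lemma \ref{lem:h-trace} by a simple rescaling of the Heisenberg generators. Since $e^{zc} < 1$ forces $c \neq 0$, we may pick a square root $\sqrt{c}$ and define $\alpha_{\pm} = \beta_{\pm}/\sqrt{c}$. Then $[\alpha_+, \alpha_-] = c/c = 1$, so $\alpha_{\pm}$ are generators of the standard Heisenberg algebra $\AA$ of Lemma \ref{lem:h-trace}. Moreover the Fock space $\FF$ does not change under this rescaling: the highest weight vector $v_{\FF}$ is annihilated by $\beta_+$ if and only if it is annihilated by $\alpha_+$, and the PBW basis $\{\beta_-^n v_{\FF}\}$ differs from $\{\alpha_-^n v_{\FF}\}$ only by scaling each basis vector by a power of $\sqrt{c}$, which does not affect the trace of an operator.

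Next I would rewrite each factor of the operator appearing in the trace in terms of $\alpha_{\pm}$. We have
\[
\exp(x_i \beta_-) = \exp\bigl((x_i \sqrt{c})\, \alpha_-\bigr), \qquad \exp(y_i \beta_+) = \exp\bigl((y_i \sqrt{c})\, \alpha_+\bigr), \qquad \exp(z \beta_- \beta_+) = \exp(zc\, \alpha_- \alpha_+),
\]
so the operator is exactly of the form treated in Lemma \ref{lem:h-trace} with the substitution $(x_i, y_i, z) \mapsto (x_i \sqrt{c}, y_i \sqrt{c}, zc)$. The condition $e^z < 1$ in Lemma \ref{lem:h-trace} becomes $e^{zc} < 1$, which is our hypothesis.

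Finally, I would apply Lemma \ref{lem:h-trace} with these substituted parameters. The prefactor becomes $\frac{1}{1 - e^{zc}}$ and each product $x_i y_j$ in the exponent is multiplied by $\sqrt{c} \cdot \sqrt{c} = c$, yielding exactly
\[
\frac{1}{1 - e^{zc}} \exp\Big(\sum_{i > j} \frac{c x_i y_j}{1 - e^{zc}} + \sum_{i \leq j} \frac{c x_i y_j e^{zc}}{1 - e^{zc}}\Big),
\]
as required. There is no genuine obstacle here: the only point to verify is that the rescaling is well defined (which it is because $c \neq 0$) and that the choice of branch of $\sqrt{c}$ is immaterial (which it is because only the product $\sqrt{c} \cdot \sqrt{c} = c$ appears in the final formula).
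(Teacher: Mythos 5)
Your proposal is correct and is essentially identical to the paper's proof, which likewise obtains the corollary by setting $\alpha_\pm = \beta_\pm/\sqrt{c}$ and applying Lemma \ref{lem:h-trace}; your additional remarks on the invariance of the Fock space and the independence of the branch of $\sqrt{c}$ are accurate elaborations of that same argument.
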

\begin{proof}
The conclusion follows by taking $\alpha_\pm = \frac{\beta_\pm}{\sqrt{c}}$ in Lemma \ref{lem:h-trace}.
\end{proof}

We may now compute each $T_{PQ}$ by applying Corollary \ref{corr:scale-trace} on the Fock spaces for the Heisenberg algebra generated by $\star_{-m}$ and $\star_m$ for $m > 0$ and $\star \in \{\alpha, \ba, \beta\}$ and multiplying the results.  We state the results and the domains on which the relevant one loop correlation functions converge.  These are also recorded in Table \ref{tab:two-point}.  We first record $T_{PP}$ for each choice of $P$.  We have
\begin{align*}
T_{\eta \eta} &= T_{\xi\xi} = (q^{-2\omega}; q^{-2\omega})\\
T_{S_aS_a} &= \frac{(q^{-2\omega + 2} q^{-2\kappa}; q^{-2\omega}, q^{-2\kappa}) (q^{-2\omega}; q^{-2\omega})}{(q^{-2\omega - 2} q^{-2\kappa}; q^{-2\omega}, q^{-2\kappa})(q^{-2\omega-2}; q^{-2\omega})}\\
T_{\phi \phi} & = \frac{(q^{-2 \omega + 2}; q^{-2\kappa}, q^{-2 \omega})}{(q^{-2 \omega - 2}; q^{-2\kappa}, q^{-2 \omega})}\\
T_{X^-_b X^-_b} &= \frac{(q^{-2\omega}; q^{-2\omega})}{(q^{-2\omega - 2}; q^{-2\omega})}.
\end{align*}
We now record $T_{\eta Q}$ for $Q$ different from $\eta$.  We have
\begin{align*}
T_{\eta \xi} &= \theta_0\Big(\frac{z_0}{w_0}; q^{-2\omega}\Big)^{-1} \qquad |w_0 q^{-2\omega}| < |z_0| < |w_0|\\
T_{\eta S_a} &= \theta_0\Big(\frac{t}{w_0} q^{-a}; q^{-2 \omega}\Big)^{-1} \qquad |w_0 q^{-2\omega}| < |tq^{-a}| < |w_0|\\
T_{\eta \phi}&= 1\\
T_{\eta X^-_b} &= \theta_0\Big(\frac{w}{w_0} q^{b(k + 1)}; q^{-2 \omega}\Big) \qquad |w_0 q^{-2\omega}| < |w q^{(k + 1)b}| < |w_0|.
\end{align*}
For any $P \neq \eta$, we have that $T_{\xi P}(z_0) = T_{\eta P}^{-1}(z_0)$, where we mean that $w_0$ is replaced by $z_0$ and the region of convergence has the same constraints.  The other one loop correlation functions are given by
\begin{align*}
T_{S_a \phi}&= \Omega_{q^2}(\frac{t}{z}; q^{-2\omega}, q^{-2\kappa}) \frac{\theta_0(\frac{t}{z} q^{2}; q^{-2\kappa})\theta_0(\frac{z}{t}q^{-2};q^{-2\omega})}{\theta_0(\frac{t}{z}q^{-2};q^{-2\kappa}) \theta_0(\frac{z}{t}q^2; q^{-2\omega})} \qquad |tq^{-2\omega}q^{-2\kappa}q^2|, |tq^{-2\omega}q^{-2\kappa}q^{-2}| < |z| < |tq^2|, |tq^{-2}|\\
T_{S_a X^-_a}&= \begin{cases} \frac{\theta_0(\frac{w}{t}q^k; q^{-2\omega})}{\theta_0(\frac{w}{t}q^{k+2}; q^{-2\omega})} \qquad |tq^{-k} q^{-2\omega}|, |tq^{-k-2} q^{-2\omega}| < |w| < |tq^{-k}|, |tq^{-k-2}| & a = 1 \\ \frac{\theta_0(\frac{w}{t}q^{-k}; q^{-2\omega})}{\theta_0(\frac{w}{t} q^{-k-2}; q^{-2\omega})} \qquad |tq^k q^{-2\omega}|, |tq^{k+2}q^{-2\omega}| < |w| < |tq^k|, |tq^{k+2}| & a = -1 \end{cases}\\
T_{S_a X^-_{-a}} &= 1\\
T_{:\phi X^-_b:} &= \begin{cases} \frac{(\frac{z}{w} q^k q^{-2\omega}; q^{-2\omega})}{(\frac{z}{w} q^{k+4} q^{-2\omega}; q^{-2\omega})}\qquad |zq^k q^{-2\omega}|, |zq^{k+4}q^{-2\omega}| < |w| & b = 1 \\ \frac{(\frac{w}{z}q^{-k-4}q^{-2\omega}; q^{-2\omega})}{(\frac{w}{z} q^{-k}q^{-2\omega}; q^{-2\omega})}\qquad |w| < |zq^k q^{2\omega}|, |zq^{k+4} q^{2\omega}| & b = -1 \end{cases}.
\end{align*}

\bibliographystyle{alpha}
\bibliography{qafftr-bib}
\end{document}